\documentclass[hidelinks,onefignum,onetabnum]{siamart251216}

\usepackage[a4paper]{geometry}
\usepackage{xspace}
\usepackage{amssymb}
\usepackage{amsopn}
\usepackage{amsfonts}
\usepackage{graphicx}
\usepackage{shuffle}
\usepackage{booktabs}

\usepackage{tikz}
\usetikzlibrary{arrows.meta,positioning}
\usepackage{planarforest}
\newcommand{\forestA}{
\tikz[planar forest default, planar forest, ] {
  \node [  b,  label={[label distance=-1mm]0:{\scriptsize{}}}] at (0.0, 0.0) {}
   ;
 }}

\newcommand{\forestB}{
\tikz[planar forest default, planar forest, ] {
  \node [  b,  label={[label distance=-1mm]0:{\scriptsize{}}}] at (0.0, 0.0) {}
  child {   node [    b,    label={[label distance=-1mm]0:{\scriptsize{}}}  ] at (0.0, 1.0) {}
     edge from parent[    -,    solid, solid,    draw=black  ]   node [!l,right] {\scriptsize{}} }
 ;
 }}

\newcommand{\forestC}{
\tikz[planar forest default, planar forest, ] {
  \node [  b,  label={[label distance=-1mm]0:{\scriptsize{}}}] at (0.0, 0.0) {}
  child {   node [    b,    label={[label distance=-1mm]0:{\scriptsize{}}}  ] at (-0.5, 1.0) {}
     edge from parent[    -,    solid, solid,    draw=black  ]   node [!l,right] {\scriptsize{}} }
child {   node [    b,    label={[label distance=-1mm]0:{\scriptsize{}}}  ] at (0.5, 1.0) {}
     edge from parent[    -,    solid, solid,    draw=black  ]   node [!l,right] {\scriptsize{}} }
 ;
 }}

\newcommand{\forestD}{
\tikz[planar forest default, planar forest, ] {
  \node [  b,  label={[label distance=-1mm]0:{\scriptsize{}}}] at (0.0, 0.0) {}
  child {   node [    b,    label={[label distance=-1mm]0:{\scriptsize{}}}  ] at (0.0, 1.0) {}
  child {   node [    b,    label={[label distance=-1mm]0:{\scriptsize{}}}  ] at (0.0, 1.0) {}
     edge from parent[    -,    solid, solid,    draw=black  ]   node [!l,right] {\scriptsize{}} }
   edge from parent[    -,    solid, solid,    draw=black  ]   node [!l,right] {\scriptsize{}} }
 ;
 }}

\newcommand{\forestE}{
\tikz[planar forest default, planar forest, ] {
  \node [  b,  label={[label distance=-1mm]0:{\scriptsize{}}}] at (0.0, 0.0) {}
  child {   node [    b,    label={[label distance=-1mm]0:{\scriptsize{}}}  ] at (-1.0, 1.0) {}
     edge from parent[    -,    solid, solid,    draw=black  ]   node [!l,right] {\scriptsize{}} }
child {   node [    b,    label={[label distance=-1mm]0:{\scriptsize{}}}  ] at (0.0, 1.0) {}
     edge from parent[    -,    solid, solid,    draw=black  ]   node [!l,right] {\scriptsize{}} }
child {   node [    b,    label={[label distance=-1mm]0:{\scriptsize{}}}  ] at (1.0, 1.0) {}
     edge from parent[    -,    solid, solid,    draw=black  ]   node [!l,right] {\scriptsize{}} }
 ;
 }}

\newcommand{\forestF}{
\tikz[planar forest default, planar forest, ] {
  \node [  b,  label={[label distance=-1mm]0:{\scriptsize{}}}] at (0.0, 0.0) {}
  child {   node [    b,    label={[label distance=-1mm]0:{\scriptsize{}}}  ] at (-0.5, 1.0) {}
     edge from parent[    -,    solid, solid,    draw=black  ]   node [!l,right] {\scriptsize{}} }
child {   node [    b,    label={[label distance=-1mm]0:{\scriptsize{}}}  ] at (0.5, 1.0) {}
  child {   node [    b,    label={[label distance=-1mm]0:{\scriptsize{}}}  ] at (0.0, 1.0) {}
     edge from parent[    -,    solid, solid,    draw=black  ]   node [!l,right] {\scriptsize{}} }
   edge from parent[    -,    solid, solid,    draw=black  ]   node [!l,right] {\scriptsize{}} }
 ;
 }}

\newcommand{\forestG}{
\tikz[planar forest default, planar forest, ] {
  \node [  b,  label={[label distance=-1mm]0:{\scriptsize{}}}] at (0.0, 0.0) {}
  child {   node [    b,    label={[label distance=-1mm]0:{\scriptsize{}}}  ] at (0.0, 1.0) {}
  child {   node [    b,    label={[label distance=-1mm]0:{\scriptsize{}}}  ] at (-0.5, 1.0) {}
     edge from parent[    -,    solid, solid,    draw=black  ]   node [!l,right] {\scriptsize{}} }
child {   node [    b,    label={[label distance=-1mm]0:{\scriptsize{}}}  ] at (0.5, 1.0) {}
     edge from parent[    -,    solid, solid,    draw=black  ]   node [!l,right] {\scriptsize{}} }
   edge from parent[    -,    solid, solid,    draw=black  ]   node [!l,right] {\scriptsize{}} }
 ;
 }}

\newcommand{\forestH}{
\tikz[planar forest default, planar forest, ] {
  \node [  b,  label={[label distance=-1mm]0:{\scriptsize{}}}] at (0.0, 0.0) {}
  child {   node [    b,    label={[label distance=-1mm]0:{\scriptsize{}}}  ] at (0.0, 1.0) {}
  child {   node [    b,    label={[label distance=-1mm]0:{\scriptsize{}}}  ] at (0.0, 1.0) {}
  child {   node [    b,    label={[label distance=-1mm]0:{\scriptsize{}}}  ] at (0.0, 1.0) {}
     edge from parent[    -,    solid, solid,    draw=black  ]   node [!l,right] {\scriptsize{}} }
   edge from parent[    -,    solid, solid,    draw=black  ]   node [!l,right] {\scriptsize{}} }
   edge from parent[    -,    solid, solid,    draw=black  ]   node [!l,right] {\scriptsize{}} }
 ;
 }}

\newcommand{\forestI}{
\tikz[planar forest default, planar forest, ] {
  \node [  b,  label={[label distance=-1mm]0:{\scriptsize{}}}] at (0.0, 0.0) {}
  child {   node [    b,    label={[label distance=-1mm]0:{\scriptsize{}}}  ] at (-0.5, 1.0) {}
     edge from parent[    -,    solid, solid,    draw=black  ]   node [!l,right] {\scriptsize{}} }
child {   node [    b,    label={[label distance=-1mm]0:{\scriptsize{}}}  ] at (0.5, 1.0) {}
  child {   node [    b,    label={[label distance=-1mm]0:{\scriptsize{}}}  ] at (0.0, 1.0) {}
     edge from parent[    -,    solid, solid,    draw=black  ]   node [!l,right] {\scriptsize{}} }
   edge from parent[    -,    solid, solid,    draw=black  ]   node [!l,right] {\scriptsize{}} }
 ;
 }}

\newcommand{\forestJ}{
\tikz[planar forest default, planar forest, ] {
  \node [  b,  label={[label distance=-1mm]0:{\scriptsize{}}}] at (0.0, 0.0) {}
  child {   node [    b,    label={[label distance=-1mm]0:{\scriptsize{}}}  ] at (-0.5, 1.0) {}
  child {   node [    b,    label={[label distance=-1mm]0:{\scriptsize{}}}  ] at (0.0, 1.0) {}
     edge from parent[    -,    solid, solid,    draw=black  ]   node [!l,right] {\scriptsize{}} }
   edge from parent[    -,    solid, solid,    draw=black  ]   node [!l,right] {\scriptsize{}} }
child {   node [    b,    label={[label distance=-1mm]0:{\scriptsize{}}}  ] at (0.5, 1.0) {}
     edge from parent[    -,    solid, solid,    draw=black  ]   node [!l,right] {\scriptsize{}} }
 ;
 }}

\newcommand{\forestK}{
\tikz[planar forest default, planar forest, ] {
  \node [  b,  label={[label distance=-1mm]0:{\scriptsize{}}}] at (0.0, 0.0) {}
   ;
 }}

\newcommand{\forestL}{
\tikz[planar forest default, planar forest, ] {
  \node [  b,  label={[label distance=-1mm]0:{\scriptsize{}}}] at (0.0, 0.0) {}
  child {   node [    b,    label={[label distance=-1mm]0:{\scriptsize{}}}  ] at (-0.5, 1.0) {}
     edge from parent[    -,    solid, solid,    draw=black  ]   node [!l,right] {\scriptsize{}} }
child {   node [    b,    label={[label distance=-1mm]0:{\scriptsize{}}}  ] at (0.5, 1.0) {}
     edge from parent[    -,    solid, solid,    draw=black  ]   node [!l,right] {\scriptsize{}} }
 ;
 }}

\newcommand{\forestM}{
\tikz[planar forest default, planar forest, ] {
  \node [  b,  label={[label distance=-1mm]0:{\scriptsize{}}}] at (0.0, 0.0) {}
  child {   node [    b,    label={[label distance=-1mm]0:{\scriptsize{}}}  ] at (0.0, 1.0) {}
  child {   node [    b,    label={[label distance=-1mm]0:{\scriptsize{}}}  ] at (-0.5, 1.0) {}
     edge from parent[    -,    solid, solid,    draw=black  ]   node [!l,right] {\scriptsize{}} }
child {   node [    b,    label={[label distance=-1mm]0:{\scriptsize{}}}  ] at (0.5, 1.0) {}
     edge from parent[    -,    solid, solid,    draw=black  ]   node [!l,right] {\scriptsize{}} }
   edge from parent[    -,    solid, solid,    draw=black  ]   node [!l,right] {\scriptsize{}} }
 ;
 }}

\newcommand{\forestN}{
\tikz[planar forest default, planar forest, ] {
  \node [  b,  label={[label distance=-1mm]0:{\scriptsize{}}}] at (0.0, 0.0) {}
   ;
\node [  b,  label={[label distance=-1mm]0:{\scriptsize{}}}] at (1.0, 0.0) {}
  child {   node [    b,    label={[label distance=-1mm]0:{\scriptsize{}}}  ] at (0.0, 1.0) {}
     edge from parent[    -,    solid, solid,    draw=black  ]   node [!l,right] {\scriptsize{}} }
 ;
\node [  b,  label={[label distance=-1mm]0:{\scriptsize{}}}] at (2.0, 0.0) {}
   ;
 }}

\newcommand{\forestO}{
\tikz[planar forest default, planar forest, ] {
  \node [  b,  label={[label distance=-1mm]0:{\scriptsize{}}}] at (0.0, 0.0) {}
  child {   node [    b,    label={[label distance=-1mm]0:{\scriptsize{}}}  ] at (-1.0, 1.0) {}
     edge from parent[    -,    solid, solid,    draw=black  ]   node [!l,right] {\scriptsize{}} }
child {   node [    b,    label={[label distance=-1mm]0:{\scriptsize{}}}  ] at (0.0, 1.0) {}
  child {   node [    b,    label={[label distance=-1mm]0:{\scriptsize{}}}  ] at (0.0, 1.0) {}
     edge from parent[    -,    solid, solid,    draw=black  ]   node [!l,right] {\scriptsize{}} }
   edge from parent[    -,    solid, solid,    draw=black  ]   node [!l,right] {\scriptsize{}} }
child {   node [    b,    label={[label distance=-1mm]0:{\scriptsize{}}}  ] at (1.0, 1.0) {}
     edge from parent[    -,    solid, solid,    draw=black  ]   node [!l,right] {\scriptsize{}} }
 ;
 }}

\newcommand{\forestP}{
\tikz[planar forest default, planar forest, ] {
  \node [  b,  label={[label distance=-1mm]0:{\scriptsize{}}}] at (0.0, 0.0) {}
   ;
 }}

\newcommand{\forestQ}{
\tikz[planar forest default, planar forest, ] {
  \node [  b,  label={[label distance=-1mm]0:{\scriptsize{}}}] at (0.0, 0.0) {}
  child {   node [    b,    label={[label distance=-1mm]0:{\scriptsize{}}}  ] at (-0.5, 1.0) {}
     edge from parent[    -,    solid, solid,    draw=black  ]   node [!l,right] {\scriptsize{}} }
child {   node [    b,    label={[label distance=-1mm]0:{\scriptsize{}}}  ] at (0.5, 1.0) {}
  child {   node [    b,    label={[label distance=-1mm]0:{\scriptsize{}}}  ] at (0.0, 1.0) {}
     edge from parent[    -,    solid, solid,    draw=black  ]   node [!l,right] {\scriptsize{}} }
   edge from parent[    -,    solid, solid,    draw=black  ]   node [!l,right] {\scriptsize{}} }
 ;
 }}

\newcommand{\forestR}{
\tikz[planar forest default, planar forest, ] {
  \node [  b,  label={[label distance=-1mm]0:{\scriptsize{}}}] at (0.0, 0.0) {}
   ;
 }}

\newcommand{\forestS}{
\tikz[planar forest default, planar forest, ] {
  \node [  b,  label={[label distance=-1mm]0:{\scriptsize{}}}] at (0.0, 0.0) {}
  child {   node [    b,    label={[label distance=-1mm]0:{\scriptsize{}}}  ] at (-0.5, 1.0) {}
     edge from parent[    -,    solid, solid,    draw=black  ]   node [!l,right] {\scriptsize{}} }
child {   node [    b,    label={[label distance=-1mm]0:{\scriptsize{}}}  ] at (0.5, 1.0) {}
     edge from parent[    -,    solid, solid,    draw=black  ]   node [!l,right] {\scriptsize{}} }
 ;
 }}

\newcommand{\forestT}{
\tikz[planar forest default, planar forest, ] {
  \node [  b,  label={[label distance=-1mm]0:{\scriptsize{}}}] at (0.0, 0.0) {}
  child {   node [    b,    label={[label distance=-1mm]0:{\scriptsize{}}}  ] at (-1.0, 1.0) {}
     edge from parent[    -,    solid, solid,    draw=black  ]   node [!l,right] {\scriptsize{}} }
child {   node [    b,    label={[label distance=-1mm]0:{\scriptsize{}}}  ] at (0.0, 1.0) {}
     edge from parent[    -,    solid, solid,    draw=black  ]   node [!l,right] {\scriptsize{}} }
child {   node [    b,    label={[label distance=-1mm]0:{\scriptsize{}}}  ] at (1.0, 1.0) {}
  child {   node [    b,    label={[label distance=-1mm]0:{\scriptsize{}}}  ] at (-0.5, 1.0) {}
     edge from parent[    -,    solid, solid,    draw=black  ]   node [!l,right] {\scriptsize{}} }
child {   node [    b,    label={[label distance=-1mm]0:{\scriptsize{}}}  ] at (0.5, 1.0) {}
     edge from parent[    -,    solid, solid,    draw=black  ]   node [!l,right] {\scriptsize{}} }
   edge from parent[    -,    solid, solid,    draw=black  ]   node [!l,right] {\scriptsize{}} }
 ;
 }}

\newcommand{\forestU}{
\tikz[planar forest default, planar forest, ] {
  \node [  b,  label={[label distance=-1mm]0:{\scriptsize{}}}] at (0.0, 0.0) {}
  child {   node [    b,    label={[label distance=-1mm]0:{\scriptsize{}}}  ] at (-1.0, 1.0) {}
     edge from parent[    -,    solid, solid,    draw=black  ]   node [!l,right] {\scriptsize{}} }
child {   node [    b,    label={[label distance=-1mm]0:{\scriptsize{}}}  ] at (0.0, 1.0) {}
     edge from parent[    -,    solid, solid,    draw=black  ]   node [!l,right] {\scriptsize{}} }
child {   node [    b,    label={[label distance=-1mm]0:{\scriptsize{}}}  ] at (1.0, 1.0) {}
     edge from parent[    -,    solid, solid,    draw=black  ]   node [!l,right] {\scriptsize{}} }
 ;
 }}

\newcommand{\forestV}{
\tikz[planar forest default, planar forest, ] {
  \node [  b,  label={[label distance=-1mm]0:{\scriptsize{}}}] at (0.0, 0.0) {}
  child {   node [    b,    label={[label distance=-1mm]0:{\scriptsize{}}}  ] at (-0.5, 1.0) {}
     edge from parent[    -,    solid, solid,    draw=black  ]   node [!l,right] {\scriptsize{}} }
child {   node [    b,    label={[label distance=-1mm]0:{\scriptsize{}}}  ] at (0.5, 1.0) {}
  child {   node [    b,    label={[label distance=-1mm]0:{\scriptsize{}}}  ] at (0.0, 1.0) {}
     edge from parent[    -,    solid, solid,    draw=black  ]   node [!l,right] {\scriptsize{}} }
   edge from parent[    -,    solid, solid,    draw=black  ]   node [!l,right] {\scriptsize{}} }
 ;
 }}

\newcommand{\forestW}{
\tikz[planar forest default, planar forest, ] {
  \node [  b,  label={[label distance=-1mm]0:{\scriptsize{}}}] at (0.0, 0.0) {}
   ;
 }}

\newcommand{\forestX}{
\tikz[planar forest default, planar forest, ] {
  \node [  b,  label={[label distance=-1mm]0:{\scriptsize{}}}] at (0.0, 0.0) {}
  child {   node [    b,    label={[label distance=-1mm]0:{\scriptsize{}}}  ] at (0.0, 1.0) {}
     edge from parent[    -,    solid, solid,    draw=black  ]   node [!l,right] {\scriptsize{}} }
 ;
 }}

\newcommand{\forestY}{
\tikz[planar forest default, planar forest, ] {
  \node [  b,  label={[label distance=-1mm]0:{\scriptsize{}}}] at (0.0, 0.0) {}
  child {   node [    b,    label={[label distance=-1mm]0:{\scriptsize{}}}  ] at (-0.5, 1.0) {}
     edge from parent[    -,    solid, solid,    draw=black  ]   node [!l,right] {\scriptsize{}} }
child {   node [    b,    label={[label distance=-1mm]0:{\scriptsize{}}}  ] at (0.5, 1.0) {}
     edge from parent[    -,    solid, solid,    draw=black  ]   node [!l,right] {\scriptsize{}} }
 ;
 }}

\newcommand{\forestAB}{
\tikz[planar forest default, planar forest, ] {
  \node [  b,  label={[label distance=-1mm]0:{\scriptsize{}}}] at (0.0, 0.0) {}
  child {   node [    b,    label={[label distance=-1mm]0:{\scriptsize{}}}  ] at (-1.0, 1.0) {}
  child {   node [    b,    label={[label distance=-1mm]0:{\scriptsize{}}}  ] at (0.0, 1.0) {}
     edge from parent[    -,    solid, solid,    draw=black  ]   node [!l,right] {\scriptsize{}} }
   edge from parent[    -,    solid, solid,    draw=black  ]   node [!l,right] {\scriptsize{}} }
child {   node [    b,    label={[label distance=-1mm]0:{\scriptsize{}}}  ] at (0.0, 1.0) {}
     edge from parent[    -,    solid, solid,    draw=black  ]   node [!l,right] {\scriptsize{}} }
child {   node [    b,    label={[label distance=-1mm]0:{\scriptsize{}}}  ] at (1.0, 1.0) {}
     edge from parent[    -,    solid, solid,    draw=black  ]   node [!l,right] {\scriptsize{}} }
 ;
 }}

\newcommand{\forestBB}{
\tikz[planar forest default, planar forest, ] {
  \node [  b,  label={[label distance=-1mm]0:{\scriptsize{}}}] at (0.0, 0.0) {}
   ;
 }}

\newcommand{\forestCB}{
\tikz[planar forest default, planar forest, ] {
  \node [  b,  label={[label distance=-1mm]0:{\scriptsize{}}}] at (0.0, 0.0) {}
  child {   node [    b,    label={[label distance=-1mm]0:{\scriptsize{}}}  ] at (-0.5, 1.0) {}
     edge from parent[    -,    solid, solid,    draw=black  ]   node [!l,right] {\scriptsize{}} }
child {   node [    b,    label={[label distance=-1mm]0:{\scriptsize{}}}  ] at (0.5, 1.0) {}
     edge from parent[    -,    solid, solid,    draw=black  ]   node [!l,right] {\scriptsize{}} }
 ;
 }}

\newcommand{\forestDB}{
\tikz[planar forest default, planar forest, ] {
  \node [  b,  label={[label distance=-1mm]0:{\scriptsize{}}}] at (0.0, 0.0) {}
  child {   node [    b,    label={[label distance=-1mm]0:{\scriptsize{}}}  ] at (-1.0, 1.0) {}
     edge from parent[    -,    solid, solid,    draw=black  ]   node [!l,right] {\scriptsize{}} }
child {   node [    b,    label={[label distance=-1mm]0:{\scriptsize{}}}  ] at (0.0, 1.0) {}
     edge from parent[    -,    solid, solid,    draw=black  ]   node [!l,right] {\scriptsize{}} }
child {   node [    b,    label={[label distance=-1mm]0:{\scriptsize{}}}  ] at (1.0, 1.0) {}
     edge from parent[    -,    solid, solid,    draw=black  ]   node [!l,right] {\scriptsize{}} }
 ;
 }}

\newcommand{\forestEB}{
\tikz[planar forest default, planar forest, ] {
  \node [  b,  label={[label distance=-1mm]0:{\scriptsize{}}}] at (0.0, 0.0) {}
  child {   node [    b,    label={[label distance=-1mm]0:{\scriptsize{}}}  ] at (-1.5, 1.0) {}
     edge from parent[    -,    solid, solid,    draw=black  ]   node [!l,right] {\scriptsize{}} }
child {   node [    b,    label={[label distance=-1mm]0:{\scriptsize{}}}  ] at (-0.5, 1.0) {}
     edge from parent[    -,    solid, solid,    draw=black  ]   node [!l,right] {\scriptsize{}} }
child {   node [    b,    label={[label distance=-1mm]0:{\scriptsize{}}}  ] at (0.5, 1.0) {}
     edge from parent[    -,    solid, solid,    draw=black  ]   node [!l,right] {\scriptsize{}} }
child {   node [    b,    label={[label distance=-1mm]0:{\scriptsize{}}}  ] at (1.5, 1.0) {}
     edge from parent[    -,    solid, solid,    draw=black  ]   node [!l,right] {\scriptsize{}} }
 ;
 }}

\newcommand{\forestFB}{
\tikz[planar forest default, planar forest, ] {
  \node [  b,  label={[label distance=-1mm]0:{\scriptsize{}}}] at (0.0, 0.0) {}
  child {   node [    b,    label={[label distance=-1mm]0:{\scriptsize{}}}  ] at (-1.0, 1.0) {}
     edge from parent[    -,    solid, solid,    draw=black  ]   node [!l,right] {\scriptsize{}} }
child {   node [    b,    label={[label distance=-1mm]0:{\scriptsize{}}}  ] at (0.0, 1.0) {}
  child {   node [    b,    label={[label distance=-1mm]0:{\scriptsize{}}}  ] at (0.0, 1.0) {}
     edge from parent[    -,    solid, solid,    draw=black  ]   node [!l,right] {\scriptsize{}} }
   edge from parent[    -,    solid, solid,    draw=black  ]   node [!l,right] {\scriptsize{}} }
child {   node [    b,    label={[label distance=-1mm]0:{\scriptsize{}}}  ] at (1.0, 1.0) {}
     edge from parent[    -,    solid, solid,    draw=black  ]   node [!l,right] {\scriptsize{}} }
 ;
 }}

\newcommand{\forestGB}{
\tikz[planar forest default, planar forest, ] {
  \node [  b,  label={[label distance=-1mm]0:{\scriptsize{}}}] at (0.0, 0.0) {}
  child {   node [    b,    label={[label distance=-1mm]0:{\scriptsize{}}}  ] at (-0.5, 1.0) {}
  child {   node [    b,    label={[label distance=-1mm]0:{\scriptsize{}}}  ] at (0.0, 1.0) {}
     edge from parent[    -,    solid, solid,    draw=black  ]   node [!l,right] {\scriptsize{}} }
   edge from parent[    -,    solid, solid,    draw=black  ]   node [!l,right] {\scriptsize{}} }
child {   node [    b,    label={[label distance=-1mm]0:{\scriptsize{}}}  ] at (0.5, 1.0) {}
  child {   node [    b,    label={[label distance=-1mm]0:{\scriptsize{}}}  ] at (0.0, 1.0) {}
     edge from parent[    -,    solid, solid,    draw=black  ]   node [!l,right] {\scriptsize{}} }
   edge from parent[    -,    solid, solid,    draw=black  ]   node [!l,right] {\scriptsize{}} }
 ;
 }}

\newcommand{\forestHB}{
\tikz[planar forest default, planar forest, ] {
  \node [  b,  label={[label distance=-1mm]0:{\scriptsize{}}}] at (0.0, 0.0) {}
   ;
 }}

\newcommand{\forestIB}{
\tikz[planar forest default, planar forest, ] {
  \node [  b,  label={[label distance=-1mm]0:{\scriptsize{}}}] at (0.0, 0.0) {}
  child {   node [    b,    label={[label distance=-1mm]0:{\scriptsize{}}}  ] at (-0.5, 1.0) {}
     edge from parent[    -,    solid, solid,    draw=black  ]   node [!l,right] {\scriptsize{}} }
child {   node [    b,    label={[label distance=-1mm]0:{\scriptsize{}}}  ] at (0.5, 1.0) {}
     edge from parent[    -,    solid, solid,    draw=black  ]   node [!l,right] {\scriptsize{}} }
 ;
 }}

\newcommand{\forestJB}{
\tikz[planar forest default, planar forest, ] {
  \node [  b,  label={[label distance=-1mm]0:{\scriptsize{}}}] at (0.0, 0.0) {}
  child {   node [    b,    label={[label distance=-1mm]0:{\scriptsize{}}}  ] at (-0.5, 1.0) {}
     edge from parent[    -,    solid, solid,    draw=black  ]   node [!l,right] {\scriptsize{}} }
child {   node [    b,    label={[label distance=-1mm]0:{\scriptsize{}}}  ] at (0.5, 1.0) {}
     edge from parent[    -,    solid, solid,    draw=black  ]   node [!l,right] {\scriptsize{}} }
 ;
 }}

\newcommand{\forestKB}{
\tikz[planar forest default, planar forest, ] {
  \node [  b,  label={[label distance=-1mm]0:{\scriptsize{}}}] at (0.0, 0.0) {}
  child {   node [    b,    label={[label distance=-1mm]0:{\scriptsize{}}}  ] at (-1.0, 1.0) {}
     edge from parent[    -,    solid, solid,    draw=black  ]   node [!l,right] {\scriptsize{}} }
child {   node [    b,    label={[label distance=-1mm]0:{\scriptsize{}}}  ] at (0.0, 1.0) {}
     edge from parent[    -,    solid, solid,    draw=black  ]   node [!l,right] {\scriptsize{}} }
child {   node [    b,    label={[label distance=-1mm]0:{\scriptsize{}}}  ] at (1.0, 1.0) {}
     edge from parent[    -,    solid, solid,    draw=black  ]   node [!l,right] {\scriptsize{}} }
 ;
 }}

\newcommand{\forestLB}{
\tikz[planar forest default, planar forest, ] {
  \node [  b,  label={[label distance=-1mm]0:{\scriptsize{}}}] at (0.0, 0.0) {}
  child {   node [    b,    label={[label distance=-1mm]0:{\scriptsize{}}}  ] at (-1.0, 1.0) {}
     edge from parent[    -,    solid, solid,    draw=black  ]   node [!l,right] {\scriptsize{}} }
child {   node [    b,    label={[label distance=-1mm]0:{\scriptsize{}}}  ] at (0.0, 1.0) {}
     edge from parent[    -,    solid, solid,    draw=black  ]   node [!l,right] {\scriptsize{}} }
child {   node [    b,    label={[label distance=-1mm]0:{\scriptsize{}}}  ] at (1.0, 1.0) {}
     edge from parent[    -,    solid, solid,    draw=black  ]   node [!l,right] {\scriptsize{}} }
 ;
 }}

\newcommand{\forestMB}{
\tikz[planar forest default, planar forest, ] {
  \node [  b,  label={[label distance=-1mm]0:{\scriptsize{}}}] at (0.0, 0.0) {}
  child {   node [    b,    label={[label distance=-1mm]0:{\scriptsize{}}}  ] at (-0.5, 1.0) {}
     edge from parent[    -,    solid, solid,    draw=black  ]   node [!l,right] {\scriptsize{}} }
child {   node [    b,    label={[label distance=-1mm]0:{\scriptsize{}}}  ] at (0.5, 1.0) {}
     edge from parent[    -,    solid, solid,    draw=black  ]   node [!l,right] {\scriptsize{}} }
 ;
 }}

\newcommand{\forestNB}{
\tikz[planar forest default, planar forest, ] {
  \node [  b,  label={[label distance=-1mm]0:{\scriptsize{}}}] at (0.0, 0.0) {}
  child {   node [    b,    label={[label distance=-1mm]0:{\scriptsize{}}}  ] at (-1.5, 1.0) {}
     edge from parent[    -,    solid, solid,    draw=black  ]   node [!l,right] {\scriptsize{}} }
child {   node [    b,    label={[label distance=-1mm]0:{\scriptsize{}}}  ] at (-0.5, 1.0) {}
     edge from parent[    -,    solid, solid,    draw=black  ]   node [!l,right] {\scriptsize{}} }
child {   node [    b,    label={[label distance=-1mm]0:{\scriptsize{}}}  ] at (0.5, 1.0) {}
     edge from parent[    -,    solid, solid,    draw=black  ]   node [!l,right] {\scriptsize{}} }
child {   node [    b,    label={[label distance=-1mm]0:{\scriptsize{}}}  ] at (1.5, 1.0) {}
     edge from parent[    -,    solid, solid,    draw=black  ]   node [!l,right] {\scriptsize{}} }
 ;
 }}

\newcommand{\forestOB}{
\tikz[planar forest default, planar forest, ] {
  \node [  b,  label={[label distance=-1mm]0:{\scriptsize{}}}] at (0.0, 0.0) {}
  child {   node [    b,    label={[label distance=-1mm]0:{\scriptsize{}}}  ] at (-1.5, 1.0) {}
     edge from parent[    -,    solid, solid,    draw=black  ]   node [!l,right] {\scriptsize{}} }
child {   node [    b,    label={[label distance=-1mm]0:{\scriptsize{}}}  ] at (-0.5, 1.0) {}
     edge from parent[    -,    solid, solid,    draw=black  ]   node [!l,right] {\scriptsize{}} }
child {   node [    b,    label={[label distance=-1mm]0:{\scriptsize{}}}  ] at (0.5, 1.0) {}
     edge from parent[    -,    solid, solid,    draw=black  ]   node [!l,right] {\scriptsize{}} }
child {   node [    b,    label={[label distance=-1mm]0:{\scriptsize{}}}  ] at (1.5, 1.0) {}
     edge from parent[    -,    solid, solid,    draw=black  ]   node [!l,right] {\scriptsize{}} }
 ;
 }}

\newcommand{\forestPB}{
\tikz[planar forest default, planar forest, ] {
  \node [  b,  label={[label distance=-1mm]0:{\scriptsize{}}}] at (0.0, 0.0) {}
  child {   node [    b,    label={[label distance=-1mm]0:{\scriptsize{}}}  ] at (-1.0, 1.0) {}
     edge from parent[    -,    solid, solid,    draw=black  ]   node [!l,right] {\scriptsize{}} }
child {   node [    b,    label={[label distance=-1mm]0:{\scriptsize{}}}  ] at (0.0, 1.0) {}
     edge from parent[    -,    solid, solid,    draw=black  ]   node [!l,right] {\scriptsize{}} }
child {   node [    b,    label={[label distance=-1mm]0:{\scriptsize{}}}  ] at (1.0, 1.0) {}
     edge from parent[    -,    solid, solid,    draw=black  ]   node [!l,right] {\scriptsize{}} }
 ;
 }}

\newcommand{\forestQB}{
\tikz[planar forest default, planar forest, ] {
  \node [  b,  label={[label distance=-1mm]0:{\scriptsize{}}}] at (0.0, 0.0) {}
  child {   node [    b,    label={[label distance=-1mm]0:{\scriptsize{}}}  ] at (-0.5, 1.0) {}
     edge from parent[    -,    solid, solid,    draw=black  ]   node [!l,right] {\scriptsize{}} }
child {   node [    b,    label={[label distance=-1mm]0:{\scriptsize{}}}  ] at (0.5, 1.0) {}
     edge from parent[    -,    solid, solid,    draw=black  ]   node [!l,right] {\scriptsize{}} }
 ;
 }}

\newcommand{\forestRB}{
\tikz[planar forest default, planar forest, ] {
  \node [  b,  label={[label distance=-1mm]0:{\scriptsize{}}}] at (0.0, 0.0) {}
  child {   node [    b,    label={[label distance=-1mm]0:{\scriptsize{}}}  ] at (-1.0, 1.0) {}
     edge from parent[    -,    solid, solid,    draw=black  ]   node [!l,right] {\scriptsize{}} }
child {   node [    b,    label={[label distance=-1mm]0:{\scriptsize{}}}  ] at (0.0, 1.0) {}
  child {   node [    b,    label={[label distance=-1mm]0:{\scriptsize{}}}  ] at (0.0, 1.0) {}
     edge from parent[    -,    solid, solid,    draw=black  ]   node [!l,right] {\scriptsize{}} }
   edge from parent[    -,    solid, solid,    draw=black  ]   node [!l,right] {\scriptsize{}} }
child {   node [    b,    label={[label distance=-1mm]0:{\scriptsize{}}}  ] at (1.0, 1.0) {}
     edge from parent[    -,    solid, solid,    draw=black  ]   node [!l,right] {\scriptsize{}} }
 ;
 }}

\newcommand{\forestSB}{
\tikz[planar forest default, planar forest, ] {
  \node [  b,  label={[label distance=-1mm]0:{\scriptsize{}}}] at (0.0, 0.0) {}
  child {   node [    b,    label={[label distance=-1mm]0:{\scriptsize{}}}  ] at (-1.0, 1.0) {}
     edge from parent[    -,    solid, solid,    draw=black  ]   node [!l,right] {\scriptsize{}} }
child {   node [    b,    label={[label distance=-1mm]0:{\scriptsize{}}}  ] at (0.0, 1.0) {}
  child {   node [    b,    label={[label distance=-1mm]0:{\scriptsize{}}}  ] at (0.0, 1.0) {}
     edge from parent[    -,    solid, solid,    draw=black  ]   node [!l,right] {\scriptsize{}} }
   edge from parent[    -,    solid, solid,    draw=black  ]   node [!l,right] {\scriptsize{}} }
child {   node [    b,    label={[label distance=-1mm]0:{\scriptsize{}}}  ] at (1.0, 1.0) {}
     edge from parent[    -,    solid, solid,    draw=black  ]   node [!l,right] {\scriptsize{}} }
 ;
 }}

\newcommand{\forestTB}{
\tikz[planar forest default, planar forest, ] {
  \node [  b,  label={[label distance=-1mm]0:{\scriptsize{}}}] at (0.0, 0.0) {}
  child {   node [    b,    label={[label distance=-1mm]0:{\scriptsize{}}}  ] at (-1.0, 1.0) {}
     edge from parent[    -,    solid, solid,    draw=black  ]   node [!l,right] {\scriptsize{}} }
child {   node [    b,    label={[label distance=-1mm]0:{\scriptsize{}}}  ] at (0.0, 1.0) {}
     edge from parent[    -,    solid, solid,    draw=black  ]   node [!l,right] {\scriptsize{}} }
child {   node [    b,    label={[label distance=-1mm]0:{\scriptsize{}}}  ] at (1.0, 1.0) {}
     edge from parent[    -,    solid, solid,    draw=black  ]   node [!l,right] {\scriptsize{}} }
 ;
 }}

\newcommand{\forestUB}{
\tikz[planar forest default, planar forest, ] {
  \node [  b,  label={[label distance=-1mm]0:{\scriptsize{}}}] at (0.0, 0.0) {}
  child {   node [    b,    label={[label distance=-1mm]0:{\scriptsize{}}}  ] at (-0.5, 1.0) {}
     edge from parent[    -,    solid, solid,    draw=black  ]   node [!l,right] {\scriptsize{}} }
child {   node [    b,    label={[label distance=-1mm]0:{\scriptsize{}}}  ] at (0.5, 1.0) {}
     edge from parent[    -,    solid, solid,    draw=black  ]   node [!l,right] {\scriptsize{}} }
 ;
 }}

\newcommand{\forestVB}{
\tikz[planar forest default, planar forest, ] {
  \node [  b,  label={[label distance=-1mm]0:{\scriptsize{}}}] at (0.0, 0.0) {}
  child {   node [    b,    label={[label distance=-1mm]0:{\scriptsize{}}}  ] at (-0.5, 1.0) {}
  child {   node [    b,    label={[label distance=-1mm]0:{\scriptsize{}}}  ] at (0.0, 1.0) {}
     edge from parent[    -,    solid, solid,    draw=black  ]   node [!l,right] {\scriptsize{}} }
   edge from parent[    -,    solid, solid,    draw=black  ]   node [!l,right] {\scriptsize{}} }
child {   node [    b,    label={[label distance=-1mm]0:{\scriptsize{}}}  ] at (0.5, 1.0) {}
  child {   node [    b,    label={[label distance=-1mm]0:{\scriptsize{}}}  ] at (0.0, 1.0) {}
     edge from parent[    -,    solid, solid,    draw=black  ]   node [!l,right] {\scriptsize{}} }
   edge from parent[    -,    solid, solid,    draw=black  ]   node [!l,right] {\scriptsize{}} }
 ;
 }}

\newcommand{\forestWB}{
\tikz[planar forest default, planar forest, ] {
  \node [  b,  label={[label distance=-1mm]0:{\scriptsize{}}}] at (0.0, 0.0) {}
  child {   node [    b,    label={[label distance=-1mm]0:{\scriptsize{}}}  ] at (-0.5, 1.0) {}
  child {   node [    b,    label={[label distance=-1mm]0:{\scriptsize{}}}  ] at (0.0, 1.0) {}
     edge from parent[    -,    solid, solid,    draw=black  ]   node [!l,right] {\scriptsize{}} }
   edge from parent[    -,    solid, solid,    draw=black  ]   node [!l,right] {\scriptsize{}} }
child {   node [    b,    label={[label distance=-1mm]0:{\scriptsize{}}}  ] at (0.5, 1.0) {}
  child {   node [    b,    label={[label distance=-1mm]0:{\scriptsize{}}}  ] at (0.0, 1.0) {}
     edge from parent[    -,    solid, solid,    draw=black  ]   node [!l,right] {\scriptsize{}} }
   edge from parent[    -,    solid, solid,    draw=black  ]   node [!l,right] {\scriptsize{}} }
 ;
 }}

\newcommand{\forestXB}{
\tikz[planar forest default, planar forest, ] {
  \node [  b,  label={[label distance=-1mm]0:{\scriptsize{}}}] at (0.0, 0.0) {}
  child {   node [    b,    label={[label distance=-1mm]0:{\scriptsize{}}}  ] at (-0.5, 1.0) {}
     edge from parent[    -,    solid, solid,    draw=black  ]   node [!l,right] {\scriptsize{}} }
child {   node [    b,    label={[label distance=-1mm]0:{\scriptsize{}}}  ] at (0.5, 1.0) {}
     edge from parent[    -,    solid, solid,    draw=black  ]   node [!l,right] {\scriptsize{}} }
 ;
 }}

\newcommand{\forestYB}{
\tikz[planar forest default, planar forest, ] {
  \node [  b,  label={[label distance=-1mm]0:{\scriptsize{}}}] at (0.0, 0.0) {}
  child {   node [    b,    label={[label distance=-1mm]0:{\scriptsize{}}}  ] at (-1.0, 1.0) {}
     edge from parent[    -,    solid, solid,    draw=black  ]   node [!l,right] {\scriptsize{}} }
child {   node [    b,    label={[label distance=-1mm]0:{\scriptsize{}}}  ] at (0.0, 1.0) {}
     edge from parent[    -,    solid, solid,    draw=black  ]   node [!l,right] {\scriptsize{}} }
child {   node [    b,    label={[label distance=-1mm]0:{\scriptsize{}}}  ] at (1.0, 1.0) {}
     edge from parent[    -,    solid, solid,    draw=black  ]   node [!l,right] {\scriptsize{}} }
 ;
 }}

\newcommand{\forestAC}{
\tikz[planar forest default, planar forest, ] {
  \node [  b,  label={[label distance=-1mm]0:{\scriptsize{}}}] at (0.0, 0.0) {}
  child {   node [    b,    label={[label distance=-1mm]0:{\scriptsize{}}}  ] at (-1.5, 1.0) {}
     edge from parent[    -,    solid, solid,    draw=black  ]   node [!l,right] {\scriptsize{}} }
child {   node [    b,    label={[label distance=-1mm]0:{\scriptsize{}}}  ] at (-0.5, 1.0) {}
     edge from parent[    -,    solid, solid,    draw=black  ]   node [!l,right] {\scriptsize{}} }
child {   node [    b,    label={[label distance=-1mm]0:{\scriptsize{}}}  ] at (0.5, 1.0) {}
     edge from parent[    -,    solid, solid,    draw=black  ]   node [!l,right] {\scriptsize{}} }
child {   node [    b,    label={[label distance=-1mm]0:{\scriptsize{}}}  ] at (1.5, 1.0) {}
     edge from parent[    -,    solid, solid,    draw=black  ]   node [!l,right] {\scriptsize{}} }
 ;
 }}

\newcommand{\forestBC}{
\tikz[planar forest default, planar forest, ] {
  \node [  b,  label={[label distance=-1mm]0:{\scriptsize{}}}] at (0.0, 0.0) {}
  child {   node [    b,    label={[label distance=-1mm]0:{\scriptsize{}}}  ] at (-1.0, 1.0) {}
     edge from parent[    -,    solid, solid,    draw=black  ]   node [!l,right] {\scriptsize{}} }
child {   node [    b,    label={[label distance=-1mm]0:{\scriptsize{}}}  ] at (0.0, 1.0) {}
  child {   node [    b,    label={[label distance=-1mm]0:{\scriptsize{}}}  ] at (0.0, 1.0) {}
     edge from parent[    -,    solid, solid,    draw=black  ]   node [!l,right] {\scriptsize{}} }
   edge from parent[    -,    solid, solid,    draw=black  ]   node [!l,right] {\scriptsize{}} }
child {   node [    b,    label={[label distance=-1mm]0:{\scriptsize{}}}  ] at (1.0, 1.0) {}
     edge from parent[    -,    solid, solid,    draw=black  ]   node [!l,right] {\scriptsize{}} }
 ;
 }}

\newcommand{\forestCC}{
\tikz[planar forest default, planar forest, ] {
  \node [  b,  label={[label distance=-1mm]0:{\scriptsize{}}}] at (0.0, 0.0) {}
  child {   node [    b,    label={[label distance=-1mm]0:{\scriptsize{}}}  ] at (-0.5, 1.0) {}
  child {   node [    b,    label={[label distance=-1mm]0:{\scriptsize{}}}  ] at (0.0, 1.0) {}
     edge from parent[    -,    solid, solid,    draw=black  ]   node [!l,right] {\scriptsize{}} }
   edge from parent[    -,    solid, solid,    draw=black  ]   node [!l,right] {\scriptsize{}} }
child {   node [    b,    label={[label distance=-1mm]0:{\scriptsize{}}}  ] at (0.5, 1.0) {}
  child {   node [    b,    label={[label distance=-1mm]0:{\scriptsize{}}}  ] at (0.0, 1.0) {}
     edge from parent[    -,    solid, solid,    draw=black  ]   node [!l,right] {\scriptsize{}} }
   edge from parent[    -,    solid, solid,    draw=black  ]   node [!l,right] {\scriptsize{}} }
 ;
 }}

\newcommand{\forestDC}{
\tikz[planar forest default, planar forest, ] {
  \node [  b,  label={[label distance=-1mm]0:{\scriptsize{}}}] at (0.0, 0.0) {}
   ;
 }}

\newcommand{\forestEC}{
\tikz[planar forest default, planar forest, ] {
  \node [  b,  label={[label distance=-1mm]0:{\scriptsize{}}}] at (0.0, 0.0) {}
  child {   node [    b,    label={[label distance=-1mm]0:{\scriptsize{}}}  ] at (-1.5, 1.0) {}
  child {   node [    b,    label={[label distance=-1mm]0:{\scriptsize{}}}  ] at (0.0, 1.0) {}
     edge from parent[    -,    solid, solid,    draw=black  ]   node [!l,right] {\scriptsize{}} }
   edge from parent[    -,    solid, solid,    draw=black  ]   node [!l,right] {\scriptsize{}} }
child {   node [    b,    label={[label distance=-1mm]0:{\scriptsize{}}}  ] at (-0.5, 1.0) {}
  child {   node [    b,    label={[label distance=-1mm]0:{\scriptsize{}}}  ] at (0.0, 1.0) {}
     edge from parent[    -,    solid, solid,    draw=black  ]   node [!l,right] {\scriptsize{}} }
   edge from parent[    -,    solid, solid,    draw=black  ]   node [!l,right] {\scriptsize{}} }
child {   node [    .l,    label={[label distance=-1mm]0:{\scriptsize{}}}  ] at (0.5, 1.0) {$\times$}
     edge from parent[    -,    solid, solid,    draw=none  ]   node [!l,right] {\scriptsize{}} }
child {   node [    b,    label={[label distance=-1mm]0:{\scriptsize{}}}  ] at (1.5, 1.0) {}
  child {   node [    b,    label={[label distance=-1mm]0:{\scriptsize{}}}  ] at (0.0, 1.0) {}
     edge from parent[    -,    solid, solid,    draw=black  ]   node [!l,right] {\scriptsize{}} }
   edge from parent[    -,    solid, solid,    draw=black  ]   node [!l,right] {\scriptsize{}} }
 ;
 }}

\newcommand{\forestFC}{
\tikz[planar forest default, planar forest, ] {
  \node [  b,  label={[label distance=-1mm]0:{\scriptsize{}}}] at (0.0, 0.0) {}
  child {   node [    b,    label={[label distance=-1mm]0:{\scriptsize{}}}  ] at (-0.5, 1.0) {}
     edge from parent[    -,    solid, solid,    draw=black  ]   node [!l,right] {\scriptsize{}} }
child {   node [    b,    label={[label distance=-1mm]0:{\scriptsize{}}}  ] at (0.5, 1.0) {}
     edge from parent[    -,    solid, solid,    draw=black  ]   node [!l,right] {\scriptsize{}} }
 ;
\node [  b,  label={[label distance=-1mm]0:{\scriptsize{}}}] at (1.5, 0.0) {}
  child {   node [    b,    label={[label distance=-1mm]0:{\scriptsize{}}}  ] at (0.0, 1.0) {}
     edge from parent[    -,    solid, solid,    draw=black  ]   node [!l,right] {\scriptsize{}} }
 ;
\node [  .l,  label={[label distance=-1mm]0:{\scriptsize{}}}] at (2.5, 0.0) {$\times$}
   ;
\node [  b,  label={[label distance=-1mm]0:{\scriptsize{}}}] at (3.5, 0.0) {}
   ;
 }}

\newcommand{\forestGC}{
\tikz[planar forest default, planar forest, ] {
  \node [  b,  label={[label distance=-1mm]0:{\scriptsize{}}}] at (0.0, 0.0) {}
  child {   node [    b,    label={[label distance=-1mm]0:{\scriptsize{}}}  ] at (-0.5, 1.0) {}
     edge from parent[    -,    solid, solid,    draw=black  ]   node [!l,right] {\scriptsize{}} }
child {   node [    b,    label={[label distance=-1mm]0:{\scriptsize{}}}  ] at (0.5, 1.0) {}
     edge from parent[    -,    solid, solid,    draw=black  ]   node [!l,right] {\scriptsize{}} }
 ;
\node [  b,  label={[label distance=-1mm]0:{\scriptsize{}}}] at (1.5, 0.0) {}
  child {   node [    b,    label={[label distance=-1mm]0:{\scriptsize{}}}  ] at (0.0, 1.0) {}
     edge from parent[    -,    solid, solid,    draw=black  ]   node [!l,right] {\scriptsize{}} }
 ;
\node [  .l,  label={[label distance=-1mm]0:{\scriptsize{}}}] at (2.5, 0.0) {$\times$}
   ;
\node [  b,  label={[label distance=-1mm]0:{\scriptsize{}}}] at (3.5, 0.0) {}
   ;
 }}

\newcommand{\forestHC}{
\tikz[planar forest default, planar forest, ] {
  \node [  b,  label={[label distance=-1mm]0:{\scriptsize{}}}] at (0.0, 0.0) {}
  child {   node [    b,    label={[label distance=-1mm]0:{\scriptsize{}}}  ] at (0.0, 1.0) {}
     edge from parent[    -,    solid, solid,    draw=black  ]   node [!l,right] {\scriptsize{}} }
 ;
\node [  b,  label={[label distance=-1mm]0:{\scriptsize{}}}] at (1.5, 0.0) {}
  child {   node [    b,    label={[label distance=-1mm]0:{\scriptsize{}}}  ] at (-0.5, 1.0) {}
     edge from parent[    -,    solid, solid,    draw=black  ]   node [!l,right] {\scriptsize{}} }
child {   node [    b,    label={[label distance=-1mm]0:{\scriptsize{}}}  ] at (0.5, 1.0) {}
     edge from parent[    -,    solid, solid,    draw=black  ]   node [!l,right] {\scriptsize{}} }
 ;
\node [  .l,  label={[label distance=-1mm]0:{\scriptsize{}}}] at (2.5, 0.0) {$\times$}
   ;
\node [  b,  label={[label distance=-1mm]0:{\scriptsize{}}}] at (3.5, 0.0) {}
   ;
 }}

\newcommand{\forestIC}{
\tikz[planar forest default, planar forest, ] {
  \node [  b,  label={[label distance=-1mm]0:{\scriptsize{}}}] at (0.0, 0.0) {}
  child {   node [    b,    label={[label distance=-1mm]0:{\scriptsize{}}}  ] at (-0.5, 1.0) {}
     edge from parent[    -,    solid, solid,    draw=black  ]   node [!l,right] {\scriptsize{}} }
child {   node [    b,    label={[label distance=-1mm]0:{\scriptsize{}}}  ] at (0.5, 1.0) {}
     edge from parent[    -,    solid, solid,    draw=black  ]   node [!l,right] {\scriptsize{}} }
 ;
\node [  b,  label={[label distance=-1mm]0:{\scriptsize{}}}] at (1.5, 0.0) {}
  child {   node [    b,    label={[label distance=-1mm]0:{\scriptsize{}}}  ] at (0.0, 1.0) {}
     edge from parent[    -,    solid, solid,    draw=black  ]   node [!l,right] {\scriptsize{}} }
 ;
\node [  .l,  label={[label distance=-1mm]0:{\scriptsize{}}}] at (2.5, 0.0) {$\times$}
   ;
\node [  b,  label={[label distance=-1mm]0:{\scriptsize{}}}] at (3.5, 0.0) {}
   ;
 }}

\newcommand{\forestJC}{
\tikz[planar forest default, planar forest, ] {
  \node [  b,  label={[label distance=-1mm]0:{\scriptsize{}}}] at (0.0, 0.0) {}
   ;
\node [  .l,  label={[label distance=-1mm]0:{\scriptsize{}}}] at (1.0, 0.0) {$\times$}
   ;
\node [  b,  label={[label distance=-1mm]0:{\scriptsize{}}}] at (2.0, 0.0) {}
  child {   node [    b,    label={[label distance=-1mm]0:{\scriptsize{}}}  ] at (-0.5, 1.0) {}
     edge from parent[    -,    solid, solid,    draw=black  ]   node [!l,right] {\scriptsize{}} }
child {   node [    b,    label={[label distance=-1mm]0:{\scriptsize{}}}  ] at (0.5, 1.0) {}
     edge from parent[    -,    solid, solid,    draw=black  ]   node [!l,right] {\scriptsize{}} }
 ;
\node [  b,  label={[label distance=-1mm]0:{\scriptsize{}}}] at (3.5, 0.0) {}
  child {   node [    b,    label={[label distance=-1mm]0:{\scriptsize{}}}  ] at (0.0, 1.0) {}
     edge from parent[    -,    solid, solid,    draw=black  ]   node [!l,right] {\scriptsize{}} }
 ;
 }}

\newcommand{\forestKC}{
\tikz[planar forest default, planar forest, ] {
  \node [  b,  label={[label distance=-1mm]0:{\scriptsize{}}}] at (0.0, 0.0) {}
  child {   node [    b,    label={[label distance=-1mm]0:{\scriptsize{}}}  ] at (0.0, 1.0) {}
     edge from parent[    -,    solid, solid,    draw=black  ]   node [!l,right] {\scriptsize{}} }
 ;
\node [  .l,  label={[label distance=-1mm]0:{\scriptsize{}}}] at (1.0, 0.0) {$\times$}
   ;
\node [  b,  label={[label distance=-1mm]0:{\scriptsize{}}}] at (2.0, 0.0) {}
   ;
 }}

\newcommand{\forestLC}{
\tikz[planar forest default, planar forest, ] {
  \node [  b,  label={[label distance=-1mm]0:{\scriptsize{}}}] at (0.0, 0.0) {}
  child {   node [    b,    label={[label distance=-1mm]0:{\scriptsize{}}}  ] at (0.0, 1.0) {}
     edge from parent[    -,    solid, solid,    draw=black  ]   node [!l,right] {\scriptsize{}} }
 ;
 }}

\newcommand{\forestMC}{
\tikz[planar forest default, planar forest, ] {
  \node [  b,  label={[label distance=-1mm]0:{\scriptsize{}}}] at (0.0, 0.0) {}
   ;
 }}

\newcommand{\forestNC}{
\tikz[planar forest default, planar forest, ] {
  \node [  b,  label={[label distance=-1mm]0:{\scriptsize{}}}] at (0.0, 0.0) {}
   ;
 }}

\newcommand{\forestOC}{
\tikz[planar forest default, planar forest, ] {
  \node [  b,  label={[label distance=-1mm]0:{\scriptsize{}}}] at (0.0, 0.0) {}
  child {   node [    b,    label={[label distance=-1mm]0:{\scriptsize{}}}  ] at (0.0, 1.0) {}
     edge from parent[    -,    solid, solid,    draw=black  ]   node [!l,right] {\scriptsize{}} }
 ;
 }}

\newcommand{\forestPC}{
\tikz[planar forest default, planar forest, ] {
  \node [  b,  label={[label distance=-1mm]0:{\scriptsize{}}}] at (0.0, 0.0) {}
  child {   node [    b,    label={[label distance=-1mm]0:{\scriptsize{}}}  ] at (-1.0, 1.0) {}
     edge from parent[    -,    solid, solid,    draw=black  ]   node [!l,right] {\scriptsize{}} }
child {   node [    .l,    label={[label distance=-1mm]0:{\scriptsize{}}}  ] at (0.0, 1.0) {$\times$}
     edge from parent[    -,    solid, solid,    draw=none  ]   node [!l,right] {\scriptsize{}} }
child {   node [    b,    label={[label distance=-1mm]0:{\scriptsize{}}}  ] at (1.0, 1.0) {}
     edge from parent[    -,    solid, solid,    draw=black  ]   node [!l,right] {\scriptsize{}} }
 ;
\node [  b,  label={[label distance=-1mm]0:{\scriptsize{}}}] at (1.0, 0.0) {}
   ;
\node [  .l,  label={[label distance=-1mm]0:{\scriptsize{}}}] at (2.0, 0.0) {$\times$}
   ;
\node [  b,  label={[label distance=-1mm]0:{\scriptsize{}}}] at (3.0, 0.0) {}
   ;
 }}

\newcommand{\forestQC}{
\tikz[planar forest default, planar forest, ] {
  \node [  b,  label={[label distance=-1mm]0:{\scriptsize{}}}] at (0.0, 0.0) {}
  child {   node [    b,    label={[label distance=-1mm]0:{\scriptsize{}}}  ] at (-1.0, 1.0) {}
     edge from parent[    -,    solid, solid,    draw=black  ]   node [!l,right] {\scriptsize{}} }
child {   node [    .l,    label={[label distance=-1mm]0:{\scriptsize{}}}  ] at (0.0, 1.0) {$\times$}
     edge from parent[    -,    solid, solid,    draw=none  ]   node [!l,right] {\scriptsize{}} }
child {   node [    b,    label={[label distance=-1mm]0:{\scriptsize{}}}  ] at (1.0, 1.0) {}
     edge from parent[    -,    solid, solid,    draw=black  ]   node [!l,right] {\scriptsize{}} }
 ;
\node [  .l,  label={[label distance=-1mm]0:{\scriptsize{}}}] at (1.5, 0.0) {$\times$}
   ;
\node [  b,  label={[label distance=-1mm]0:{\scriptsize{}}}] at (3.0, 0.0) {}
  child {   node [    b,    label={[label distance=-1mm]0:{\scriptsize{}}}  ] at (-1.0, 1.0) {}
     edge from parent[    -,    solid, solid,    draw=black  ]   node [!l,right] {\scriptsize{}} }
child {   node [    .l,    label={[label distance=-1mm]0:{\scriptsize{}}}  ] at (0.0, 1.0) {$\times$}
     edge from parent[    -,    solid, solid,    draw=none  ]   node [!l,right] {\scriptsize{}} }
child {   node [    b,    label={[label distance=-1mm]0:{\scriptsize{}}}  ] at (1.0, 1.0) {}
     edge from parent[    -,    solid, solid,    draw=black  ]   node [!l,right] {\scriptsize{}} }
 ;
 }}

\newcommand{\forestRC}{
\tikz[planar forest default, planar forest, ] {
  \node [  b,  label={[label distance=-1mm]0:{\scriptsize{}}}] at (0.0, 0.0) {}
  child {   node [    b,    label={[label distance=-1mm]0:{\scriptsize{}}}  ] at (-0.5, 1.0) {}
     edge from parent[    -,    solid, solid,    draw=black  ]   node [!l,right] {\scriptsize{}} }
child {   node [    b,    label={[label distance=-1mm]0:{\scriptsize{}}}  ] at (0.5, 1.0) {}
     edge from parent[    -,    solid, solid,    draw=black  ]   node [!l,right] {\scriptsize{}} }
 ;
\node [  .l,  label={[label distance=-1mm]0:{\scriptsize{}}}] at (1.0, 0.0) {$\times$}
   ;
\node [  b,  label={[label distance=-1mm]0:{\scriptsize{}}}] at (2.0, 0.0) {}
  child {   node [    b,    label={[label distance=-1mm]0:{\scriptsize{}}}  ] at (-0.5, 1.0) {}
     edge from parent[    -,    solid, solid,    draw=black  ]   node [!l,right] {\scriptsize{}} }
child {   node [    b,    label={[label distance=-1mm]0:{\scriptsize{}}}  ] at (0.5, 1.0) {}
     edge from parent[    -,    solid, solid,    draw=black  ]   node [!l,right] {\scriptsize{}} }
 ;
 }}

\newcommand{\forestSC}{
\tikz[planar forest default, planar forest, ] {
  \node [  b,  label={[label distance=-1mm]0:{\scriptsize{}}}] at (0.0, 0.0) {}
  child {   node [    b,    label={[label distance=-1mm]0:{\scriptsize{}}}  ] at (-1.5, 1.0) {}
     edge from parent[    -,    solid, solid,    draw=black  ]   node [!l,right] {\scriptsize{}} }
child {   node [    b,    label={[label distance=-1mm]0:{\scriptsize{}}}  ] at (-0.5, 1.0) {}
     edge from parent[    -,    solid, solid,    draw=black  ]   node [!l,right] {\scriptsize{}} }
child {   node [    .l,    label={[label distance=-1mm]0:{\scriptsize{}}}  ] at (0.5, 1.0) {$\times$}
     edge from parent[    -,    solid, solid,    draw=none  ]   node [!l,right] {\scriptsize{}} }
child {   node [    b,    label={[label distance=-1mm]0:{\scriptsize{}}}  ] at (1.5, 1.0) {}
  child {   node [    b,    label={[label distance=-1mm]0:{\scriptsize{}}}  ] at (0.0, 1.0) {}
     edge from parent[    -,    solid, solid,    draw=black  ]   node [!l,right] {\scriptsize{}} }
   edge from parent[    -,    solid, solid,    draw=black  ]   node [!l,right] {\scriptsize{}} }
 ;
\node [  .l,  label={[label distance=-1mm]0:{\scriptsize{}}}] at (1.0, 0.0) {$\times$}
   ;
\node [  b,  label={[label distance=-1mm]0:{\scriptsize{}}}] at (2.0, 0.0) {}
   ;
 }}

\newcommand{\forestTC}{
\tikz[planar forest default, planar forest, ] {
  \node [  b,  label={[label distance=-1mm]0:{\scriptsize{}}}] at (0.0, 0.0) {}
   ;
 }}

\newcommand{\forestUC}{
\tikz[planar forest default, planar forest, ] {
  \node [  b,  label={[label distance=-1mm]0:{\scriptsize{}}}] at (0.0, 0.0) {}
   ;
 }}

\newcommand{\forestVC}{
\tikz[planar forest default, planar forest, ] {
  \node [  b,  label={[label distance=-1mm]0:{\scriptsize{}}}] at (0.0, 0.0) {}
   ;
\node [  b,  label={[label distance=-1mm]0:{\scriptsize{}}}] at (1.0, 0.0) {}
   ;
 }}

\newcommand{\forestWC}{
\tikz[planar forest default, planar forest, ] {
  \node [  b,  label={[label distance=-1mm]0:{\scriptsize{}}}] at (0.0, 0.0) {}
  child {   node [    b,    label={[label distance=-1mm]0:{\scriptsize{}}}  ] at (0.0, 1.0) {}
     edge from parent[    -,    solid, solid,    draw=black  ]   node [!l,right] {\scriptsize{}} }
 ;
 }}

\newcommand{\forestXC}{
\tikz[planar forest default, planar forest, ] {
  \node [  b,  label={[label distance=-1mm]0:{\scriptsize{}}}] at (0.0, 0.0) {}
  child {   node [    b,    label={[label distance=-1mm]0:{\scriptsize{}}}  ] at (0.0, 1.0) {}
     edge from parent[    -,    solid, solid,    draw=black  ]   node [!l,right] {\scriptsize{}} }
 ;
 }}

\newcommand{\forestYC}{
\tikz[planar forest default, planar forest, ] {
  \node [  b,  label={[label distance=-1mm]0:{\scriptsize{}}}] at (0.0, 0.0) {}
   ;
\node [  b,  label={[label distance=-1mm]0:{\scriptsize{}}}] at (1.0, 0.0) {}
  child {   node [    b,    label={[label distance=-1mm]0:{\scriptsize{}}}  ] at (0.0, 1.0) {}
     edge from parent[    -,    solid, solid,    draw=black  ]   node [!l,right] {\scriptsize{}} }
 ;
 }}

\newcommand{\forestAD}{
\tikz[planar forest default, planar forest, ] {
  \node [  b,  label={[label distance=-1mm]0:{\scriptsize{}}}] at (0.0, 0.0) {}
  child {   node [    b,    label={[label distance=-1mm]0:{\scriptsize{}}}  ] at (-1.0, 1.0) {}
     edge from parent[    -,    solid, solid,    draw=black  ]   node [!l,right] {\scriptsize{}} }
child {   node [    .l,    label={[label distance=-1mm]0:{\scriptsize{}}}  ] at (0.0, 1.0) {$\times$}
     edge from parent[    -,    solid, solid,    draw=none  ]   node [!l,right] {\scriptsize{}} }
child {   node [    b,    label={[label distance=-1mm]0:{\scriptsize{}}}  ] at (1.0, 1.0) {}
     edge from parent[    -,    solid, solid,    draw=black  ]   node [!l,right] {\scriptsize{}} }
 ;
 }}

\newcommand{\forestBD}{
\tikz[planar forest default, planar forest, ] {
  \node [  b,  label={[label distance=-1mm]0:{\scriptsize{}}}] at (0.0, 0.0) {}
  child {   node [    b,    label={[label distance=-1mm]0:{\scriptsize{}}}  ] at (-0.5, 1.0) {}
     edge from parent[    -,    solid, solid,    draw=black  ]   node [!l,right] {\scriptsize{}} }
child {   node [    b,    label={[label distance=-1mm]0:{\scriptsize{}}}  ] at (0.5, 1.0) {}
     edge from parent[    -,    solid, solid,    draw=black  ]   node [!l,right] {\scriptsize{}} }
 ;
 }}

\newcommand{\forestCD}{
\tikz[planar forest default, planar forest, ] {
  \node [  b,  label={[label distance=-1mm]0:{\scriptsize{}}}] at (0.0, 0.0) {}
   ;
\node [  b,  label={[label distance=-1mm]0:{\scriptsize{}}}] at (1.0, 0.0) {}
   ;
\node [  b,  label={[label distance=-1mm]0:{\scriptsize{}}}] at (2.0, 0.0) {}
   ;
 }}

\newcommand{\forestDD}{
\tikz[planar forest default, planar forest, ] {
  \node [  b,  label={[label distance=-1mm]0:{\scriptsize{}}}] at (0.0, 0.0) {}
  child {   node [    b,    label={[label distance=-1mm]0:{\scriptsize{}}}  ] at (0.0, 1.0) {}
     edge from parent[    -,    solid, solid,    draw=black  ]   node [!l,right] {\scriptsize{}} }
 ;
\node [  b,  label={[label distance=-1mm]0:{\scriptsize{}}}] at (1.0, 0.0) {}
   ;
 }}

\newcommand{\forestED}{
\tikz[planar forest default, planar forest, ] {
  \node [  b,  label={[label distance=-1mm]0:{\scriptsize{}}}] at (0.0, 0.0) {}
  child {   node [    b,    label={[label distance=-1mm]0:{\scriptsize{}}}  ] at (-0.5, 1.0) {}
     edge from parent[    -,    solid, solid,    draw=black  ]   node [!l,right] {\scriptsize{}} }
child {   node [    b,    label={[label distance=-1mm]0:{\scriptsize{}}}  ] at (0.5, 1.0) {}
     edge from parent[    -,    solid, solid,    draw=black  ]   node [!l,right] {\scriptsize{}} }
 ;
 }}

\newcommand{\forestFD}{
\tikz[planar forest default, planar forest, ] {
  \node [  b,  label={[label distance=-1mm]0:{\scriptsize{}}}] at (0.0, 0.0) {}
  child {   node [    b,    label={[label distance=-1mm]0:{\scriptsize{}}}  ] at (0.0, 1.0) {}
  child {   node [    b,    label={[label distance=-1mm]0:{\scriptsize{}}}  ] at (0.0, 1.0) {}
     edge from parent[    -,    solid, solid,    draw=black  ]   node [!l,right] {\scriptsize{}} }
   edge from parent[    -,    solid, solid,    draw=black  ]   node [!l,right] {\scriptsize{}} }
 ;
 }}

\newcommand{\forestGD}{
\tikz[planar forest default, planar forest, ] {
  \node [  b,  label={[label distance=-1mm]0:{\scriptsize{}}}] at (0.0, 0.0) {}
  child {   node [    b,    label={[label distance=-1mm]0:{\scriptsize{}}}  ] at (0.0, 1.0) {}
  child {   node [    b,    label={[label distance=-1mm]0:{\scriptsize{}}}  ] at (0.0, 1.0) {}
     edge from parent[    -,    solid, solid,    draw=black  ]   node [!l,right] {\scriptsize{}} }
   edge from parent[    -,    solid, solid,    draw=black  ]   node [!l,right] {\scriptsize{}} }
 ;
 }}

\newcommand{\forestHD}{
\tikz[planar forest default, planar forest, ] {
  \node [  b,  label={[label distance=-1mm]0:{\scriptsize{}}}] at (0.0, 0.0) {}
   ;
 }}

\newcommand{\forestID}{
\tikz[planar forest default, planar forest, ] {
  \node [  b,  label={[label distance=-1mm]0:{\scriptsize{}}}] at (0.0, 0.0) {}
   ;
 }}

\newcommand{\forestJD}{
\tikz[planar forest default, planar forest, ] {
  \node [  b,  label={[label distance=-1mm]0:{\scriptsize{}}}] at (0.0, 0.0) {}
  child {   node [    .l,    label={[label distance=-1mm]0:{\scriptsize{}}}  ] at (0.0, 1.0) {$\vdots$}
  child {   node [    b,    label={[label distance=-1mm]0:{\scriptsize{}}}  ] at (0.0, 1.0) {}
     edge from parent[    -,    solid, solid,    draw=black  ]   node [!l,right] {\scriptsize{}} }
   edge from parent[    -,    solid, solid,    draw=black  ]   node [!l,right] {\scriptsize{}} }
 ;
 }}

\newcommand{\forestKD}{
\tikz[planar forest default, planar forest, ] {
  \node [  b,  label={[label distance=-1mm]0:{\scriptsize{}}}] at (0.0, 0.0) {}
  child {   node [    .l,    label={[label distance=-1mm]0:{\scriptsize{}}}  ] at (0.0, 1.0) {$\vdots$}
  child {   node [    b,    label={[label distance=-1mm]0:{\scriptsize{}}}  ] at (0.0, 1.0) {}
     edge from parent[    -,    solid, solid,    draw=black  ]   node [!l,right] {\scriptsize{}} }
   edge from parent[    -,    solid, solid,    draw=black  ]   node [!l,right] {\scriptsize{}} }
 ;
 }}

\newcommand{\forestLD}{
\tikz[planar forest default, planar forest, ] {
  \node [  b,  label={[label distance=-1mm]0:{\scriptsize{}}}] at (0.0, 0.0) {}
   ;
\node [  .l,  label={[label distance=-1mm]0:{\scriptsize{}}}] at (1.0, 0.0) {$\times$}
   ;
\node [  b,  label={[label distance=-1mm]0:{\scriptsize{}}}] at (2.0, 0.0) {}
   ;
 }}

\newcommand{\forestMD}{
\tikz[planar forest default, planar forest, ] {
  \node [  b,  label={[label distance=-1mm]0:{\scriptsize{}}}] at (0.0, 0.0) {}
  child {   node [    b,    label={[label distance=-1mm]0:{\scriptsize{}}}  ] at (0.0, 1.0) {}
     edge from parent[    -,    solid, solid,    draw=black  ]   node [!l,right] {\scriptsize{}} }
 ;
\node [  .l,  label={[label distance=-1mm]0:{\scriptsize{}}}] at (1.0, 0.0) {$\times$}
   ;
\node [  b,  label={[label distance=-1mm]0:{\scriptsize{}}}] at (2.0, 0.0) {}
  child {   node [    b,    label={[label distance=-1mm]0:{\scriptsize{}}}  ] at (0.0, 1.0) {}
     edge from parent[    -,    solid, solid,    draw=black  ]   node [!l,right] {\scriptsize{}} }
 ;
 }}

\newcommand{\forestND}{
\tikz[planar forest default, planar forest, ] {
  \node [  b,  label={[label distance=-1mm]0:{\scriptsize{}}}] at (0.0, 0.0) {}
  child {   node [    b,    label={[label distance=-1mm]0:{\scriptsize{}}}  ] at (0.0, 1.0) {}
     edge from parent[    -,    solid, solid,    draw=black  ]   node [!l,right] {\scriptsize{}} }
 ;
\node [  .l,  label={[label distance=-1mm]0:{\scriptsize{}}}] at (1.0, 0.0) {$\times$}
   ;
\node [  b,  label={[label distance=-1mm]0:{\scriptsize{}}}] at (2.0, 0.0) {}
   ;
\node [  b,  label={[label distance=-1mm]0:{\scriptsize{}}}] at (3.0, 0.0) {}
   ;
 }}

\newcommand{\forestOD}{
\tikz[planar forest default, planar forest, ] {
  \node [  b,  label={[label distance=-1mm]0:{\scriptsize{}}}] at (0.0, 0.0) {}
   ;
\node [  b,  label={[label distance=-1mm]0:{\scriptsize{}}}] at (1.0, 0.0) {}
   ;
\node [  .l,  label={[label distance=-1mm]0:{\scriptsize{}}}] at (2.0, 0.0) {$\times$}
   ;
\node [  b,  label={[label distance=-1mm]0:{\scriptsize{}}}] at (3.0, 0.0) {}
   ;
\node [  b,  label={[label distance=-1mm]0:{\scriptsize{}}}] at (4.0, 0.0) {}
   ;
 }}

\newcommand{\forestPD}{
\tikz[planar forest default, planar forest, ] {
  \node [  b,  label={[label distance=-1mm]0:{\scriptsize{}}}] at (0.0, 0.0) {}
   ;
\node [  .l,  label={[label distance=-1mm]0:{\scriptsize{}}}] at (1.0, 0.0) {$\otimes$}
   ;
\node [  b,  label={[label distance=-1mm]0:{\scriptsize{}}}] at (2.0, 0.0) {}
   ;
 }}

\newcommand{\forestQD}{
\tikz[planar forest default, planar forest, ] {
  \node [  b,  label={[label distance=-1mm]0:{\scriptsize{}}}] at (0.0, 0.0) {}
  child {   node [    b,    label={[label distance=-1mm]0:{\scriptsize{}}}  ] at (0.0, 1.0) {}
     edge from parent[    -,    solid, solid,    draw=black  ]   node [!l,right] {\scriptsize{}} }
 ;
\node [  .l,  label={[label distance=-1mm]0:{\scriptsize{}}}] at (1.0, 0.0) {$\otimes$}
   ;
\node [  b,  label={[label distance=-1mm]0:{\scriptsize{}}}] at (2.0, 0.0) {}
  child {   node [    b,    label={[label distance=-1mm]0:{\scriptsize{}}}  ] at (0.0, 1.0) {}
     edge from parent[    -,    solid, solid,    draw=black  ]   node [!l,right] {\scriptsize{}} }
 ;
 }}

\newcommand{\forestRD}{
\tikz[planar forest default, planar forest, ] {
  \node [  b,  label={[label distance=-1mm]0:{\scriptsize{}}}] at (0.0, 0.0) {}
  child {   node [    b,    label={[label distance=-1mm]0:{\scriptsize{}}}  ] at (0.0, 1.0) {}
     edge from parent[    -,    solid, solid,    draw=black  ]   node [!l,right] {\scriptsize{}} }
 ;
\node [  .l,  label={[label distance=-1mm]0:{\scriptsize{}}}] at (1.0, 0.0) {$\otimes$}
   ;
\node [  b,  label={[label distance=-1mm]0:{\scriptsize{}}}] at (2.0, 0.0) {}
   ;
\node [  b,  label={[label distance=-1mm]0:{\scriptsize{}}}] at (3.0, 0.0) {}
   ;
 }}

\newcommand{\forestSD}{
\tikz[planar forest default, planar forest, ] {
  \node [  b,  label={[label distance=-1mm]0:{\scriptsize{}}}] at (0.0, 0.0) {}
   ;
\node [  b,  label={[label distance=-1mm]0:{\scriptsize{}}}] at (1.0, 0.0) {}
   ;
\node [  .l,  label={[label distance=-1mm]0:{\scriptsize{}}}] at (2.0, 0.0) {$\otimes$}
   ;
\node [  b,  label={[label distance=-1mm]0:{\scriptsize{}}}] at (3.0, 0.0) {}
   ;
\node [  b,  label={[label distance=-1mm]0:{\scriptsize{}}}] at (4.0, 0.0) {}
   ;
 }}

\newcommand{\forestTD}{
\tikz[planar forest default, planar forest, ] {
  \node [  b,  label={[label distance=-1mm]0:{\scriptsize{}}}] at (0.0, 0.0) {}
   ;
\node [  .l,  label={[label distance=-1mm]0:{\scriptsize{}}}] at (1.0, 0.0) {$\times$}
   ;
\node [  b,  label={[label distance=-1mm]0:{\scriptsize{}}}] at (2.0, 0.0) {}
   ;
\node [  .l,  label={[label distance=-1mm]0:{\scriptsize{}}}] at (3.0, 0.0) {$\otimes$}
   ;
\node [  b,  label={[label distance=-1mm]0:{\scriptsize{}}}] at (4.0, 0.0) {}
   ;
 }}

\newcommand{\forestUD}{
\tikz[planar forest default, planar forest, ] {
  \node [  b,  label={[label distance=-1mm]0:{\scriptsize{}}}] at (0.0, 0.0) {}
   ;
\node [  .l,  label={[label distance=-1mm]0:{\scriptsize{}}}] at (1.0, 0.0) {$\times$}
   ;
\node [  b,  label={[label distance=-1mm]0:{\scriptsize{}}}] at (2.0, 0.0) {}
   ;
\node [  .l,  label={[label distance=-1mm]0:{\scriptsize{}}}] at (3.0, 0.0) {$\otimes$}
   ;
\node [  b,  label={[label distance=-1mm]0:{\scriptsize{}}}] at (4.0, 0.0) {}
   ;
\node [  .l,  label={[label distance=-1mm]0:{\scriptsize{}}}] at (5.0, 0.0) {$\times$}
   ;
\node [  b,  label={[label distance=-1mm]0:{\scriptsize{}}}] at (6.0, 0.0) {}
   ;
 }}

\newcommand{\forestVD}{
\tikz[planar forest default, planar forest, ] {
  \node [  b,  label={[label distance=-1mm]0:{\scriptsize{}}}] at (0.0, 0.0) {}
   ;
\node [  b,  label={[label distance=-1mm]0:{\scriptsize{}}}] at (1.0, 0.0) {}
   ;
\node [  .l,  label={[label distance=-1mm]0:{\scriptsize{}}}] at (2.0, 0.0) {$\times$}
   ;
\node [  b,  label={[label distance=-1mm]0:{\scriptsize{}}}] at (3.0, 0.0) {}
   ;
\node [  .l,  label={[label distance=-1mm]0:{\scriptsize{}}}] at (4.0, 0.0) {$\otimes$}
   ;
\node [  b,  label={[label distance=-1mm]0:{\scriptsize{}}}] at (5.0, 0.0) {}
   ;
 }}

\newcommand{\forestWD}{
\tikz[planar forest default, planar forest, ] {
  \node [  b,  label={[label distance=-1mm]0:{\scriptsize{}}}] at (0.0, 0.0) {}
   ;
\node [  b,  label={[label distance=-1mm]0:{\scriptsize{}}}] at (1.0, 0.0) {}
   ;
\node [  .l,  label={[label distance=-1mm]0:{\scriptsize{}}}] at (2.0, 0.0) {$\otimes$}
   ;
\node [  b,  label={[label distance=-1mm]0:{\scriptsize{}}}] at (3.0, 0.0) {}
   ;
\node [  .l,  label={[label distance=-1mm]0:{\scriptsize{}}}] at (4.0, 0.0) {$\times$}
   ;
\node [  b,  label={[label distance=-1mm]0:{\scriptsize{}}}] at (5.0, 0.0) {}
   ;
 }}

\newcommand{\forestXD}{
\tikz[planar forest default, planar forest, ] {
  \node [  b,  label={[label distance=-1mm]0:{\scriptsize{}}}] at (0.0, 0.0) {}
  child {   node [    b,    label={[label distance=-1mm]0:{\scriptsize{}}}  ] at (0.0, 1.0) {}
     edge from parent[    -,    solid, solid,    draw=black  ]   node [!l,right] {\scriptsize{}} }
 ;
\node [  .l,  label={[label distance=-1mm]0:{\scriptsize{}}}] at (1.0, 0.0) {$\otimes$}
   ;
\node [  b,  label={[label distance=-1mm]0:{\scriptsize{}}}] at (2.0, 0.0) {}
   ;
\node [  .l,  label={[label distance=-1mm]0:{\scriptsize{}}}] at (3.0, 0.0) {$\times$}
   ;
\node [  b,  label={[label distance=-1mm]0:{\scriptsize{}}}] at (4.0, 0.0) {}
   ;
 }}

\newcommand{\forestYD}{
\tikz[planar forest default, planar forest, ] {
  \node [  b,  label={[label distance=-1mm]0:{\scriptsize{}}}] at (0.0, 0.0) {}
  child {   node [    b,    label={[label distance=-1mm]0:{\scriptsize{}}}  ] at (0.0, 1.0) {}
     edge from parent[    -,    solid, solid,    draw=black  ]   node [!l,right] {\scriptsize{}} }
 ;
\node [  .l,  label={[label distance=-1mm]0:{\scriptsize{}}}] at (1.0, 0.0) {$\times$}
   ;
\node [  b,  label={[label distance=-1mm]0:{\scriptsize{}}}] at (2.0, 0.0) {}
   ;
\node [  .l,  label={[label distance=-1mm]0:{\scriptsize{}}}] at (3.0, 0.0) {$\otimes$}
   ;
\node [  b,  label={[label distance=-1mm]0:{\scriptsize{}}}] at (4.0, 0.0) {}
   ;
 }}

\newcommand{\forestAE}{
\tikz[planar forest default, planar forest, ] {
  \node [  b,  label={[label distance=-1mm]0:{\scriptsize{}}}] at (0.0, 0.0) {}
  child {   node [    b,    label={[label distance=-1mm]0:{\scriptsize{}}}  ] at (-0.5, 1.0) {}
     edge from parent[    -,    solid, solid,    draw=black  ]   node [!l,right] {\scriptsize{}} }
child {   node [    b,    label={[label distance=-1mm]0:{\scriptsize{}}}  ] at (0.5, 1.0) {}
     edge from parent[    -,    solid, solid,    draw=black  ]   node [!l,right] {\scriptsize{}} }
 ;
 }}

\newcommand{\forestBE}{
\tikz[planar forest default, planar forest, ] {
  \node [  b,  label={[label distance=-1mm]0:{\scriptsize{}}}] at (0.0, 0.0) {}
   ;
\node [  .l,  label={[label distance=-1mm]0:{\scriptsize{}}}] at (1.0, 0.0) {$\times$}
   ;
\node [  b,  label={[label distance=-1mm]0:{\scriptsize{}}}] at (2.0, 0.0) {}
   ;
 }}

\newcommand{\forestCE}{
\tikz[planar forest default, planar forest, ] {
  \node [  b,  label={[label distance=-1mm]0:{\scriptsize{}}}] at (0.0, 0.0) {}
   ;
\node [  .l,  label={[label distance=-1mm]0:{\scriptsize{}}}] at (1.0, 0.0) {$\otimes$}
   ;
\node [  b,  label={[label distance=-1mm]0:{\scriptsize{}}}] at (2.0, 0.0) {}
   ;
 }}

\newcommand{\forestDE}{
\tikz[planar forest default, planar forest, ] {
  \node [  b,  label={[label distance=-1mm]0:{\scriptsize{}}}] at (0.0, 0.0) {}
  child {   node [    b,    label={[label distance=-1mm]0:{\scriptsize{}}}  ] at (-1.0, 1.0) {}
     edge from parent[    -,    solid, solid,    draw=black  ]   node [!l,right] {\scriptsize{}} }
child {   node [    b,    label={[label distance=-1mm]0:{\scriptsize{}}}  ] at (0.0, 1.0) {}
     edge from parent[    -,    solid, solid,    draw=black  ]   node [!l,right] {\scriptsize{}} }
child {   node [    b,    label={[label distance=-1mm]0:{\scriptsize{}}}  ] at (1.0, 1.0) {}
     edge from parent[    -,    solid, solid,    draw=black  ]   node [!l,right] {\scriptsize{}} }
 ;
 }}

\newcommand{\forestEE}{
\tikz[planar forest default, planar forest, ] {
  \node [  b,  label={[label distance=-1mm]0:{\scriptsize{}}}] at (0.0, 0.0) {}
   ;
\node [  .l,  label={[label distance=-1mm]0:{\scriptsize{}}}] at (1.0, 0.0) {$\times$}
   ;
\node [  b,  label={[label distance=-1mm]0:{\scriptsize{}}}] at (2.0, 0.0) {}
   ;
\node [  .l,  label={[label distance=-1mm]0:{\scriptsize{}}}] at (3.0, 0.0) {$\otimes$}
   ;
\node [  b,  label={[label distance=-1mm]0:{\scriptsize{}}}] at (4.0, 0.0) {}
   ;
 }}

\newcommand{\forestFE}{
\tikz[planar forest default, planar forest, ] {
  \node [  b,  label={[label distance=-1mm]0:{\scriptsize{}}}] at (0.0, 0.0) {}
  child {   node [    b,    label={[label distance=-1mm]0:{\scriptsize{}}}  ] at (-0.5, 1.0) {}
  child {   node [    b,    label={[label distance=-1mm]0:{\scriptsize{}}}  ] at (0.0, 1.0) {}
     edge from parent[    -,    solid, solid,    draw=black  ]   node [!l,right] {\scriptsize{}} }
   edge from parent[    -,    solid, solid,    draw=black  ]   node [!l,right] {\scriptsize{}} }
child {   node [    b,    label={[label distance=-1mm]0:{\scriptsize{}}}  ] at (0.5, 1.0) {}
  child {   node [    b,    label={[label distance=-1mm]0:{\scriptsize{}}}  ] at (0.0, 1.0) {}
     edge from parent[    -,    solid, solid,    draw=black  ]   node [!l,right] {\scriptsize{}} }
   edge from parent[    -,    solid, solid,    draw=black  ]   node [!l,right] {\scriptsize{}} }
 ;
 }}

\newcommand{\forestGE}{
\tikz[planar forest default, planar forest, ] {
  \node [  b,  label={[label distance=-1mm]0:{\scriptsize{}}}] at (0.0, 0.0) {}
  child {   node [    b,    label={[label distance=-1mm]0:{\scriptsize{}}}  ] at (0.0, 1.0) {}
     edge from parent[    -,    solid, solid,    draw=black  ]   node [!l,right] {\scriptsize{}} }
 ;
\node [  .l,  label={[label distance=-1mm]0:{\scriptsize{}}}] at (1.0, 0.0) {$\times$}
   ;
\node [  b,  label={[label distance=-1mm]0:{\scriptsize{}}}] at (2.0, 0.0) {}
  child {   node [    b,    label={[label distance=-1mm]0:{\scriptsize{}}}  ] at (0.0, 1.0) {}
     edge from parent[    -,    solid, solid,    draw=black  ]   node [!l,right] {\scriptsize{}} }
 ;
 }}

\newcommand{\forestHE}{
\tikz[planar forest default, planar forest, ] {
  \node [  b,  label={[label distance=-1mm]0:{\scriptsize{}}}] at (0.0, 0.0) {}
  child {   node [    b,    label={[label distance=-1mm]0:{\scriptsize{}}}  ] at (0.0, 1.0) {}
     edge from parent[    -,    solid, solid,    draw=black  ]   node [!l,right] {\scriptsize{}} }
 ;
\node [  .l,  label={[label distance=-1mm]0:{\scriptsize{}}}] at (1.0, 0.0) {$\times$}
   ;
\node [  b,  label={[label distance=-1mm]0:{\scriptsize{}}}] at (2.0, 0.0) {}
   ;
\node [  b,  label={[label distance=-1mm]0:{\scriptsize{}}}] at (3.0, 0.0) {}
   ;
 }}

\newcommand{\forestIE}{
\tikz[planar forest default, planar forest, ] {
  \node [  b,  label={[label distance=-1mm]0:{\scriptsize{}}}] at (0.0, 0.0) {}
   ;
\node [  b,  label={[label distance=-1mm]0:{\scriptsize{}}}] at (1.0, 0.0) {}
   ;
\node [  .l,  label={[label distance=-1mm]0:{\scriptsize{}}}] at (2.0, 0.0) {$\times$}
   ;
\node [  b,  label={[label distance=-1mm]0:{\scriptsize{}}}] at (3.0, 0.0) {}
   ;
\node [  b,  label={[label distance=-1mm]0:{\scriptsize{}}}] at (4.0, 0.0) {}
   ;
 }}

\newcommand{\forestJE}{
\tikz[planar forest default, planar forest, ] {
  \node [  b,  label={[label distance=-1mm]0:{\scriptsize{}}}] at (0.0, 0.0) {}
  child {   node [    b,    label={[label distance=-1mm]0:{\scriptsize{}}}  ] at (0.0, 1.0) {}
     edge from parent[    -,    solid, solid,    draw=black  ]   node [!l,right] {\scriptsize{}} }
 ;
\node [  .l,  label={[label distance=-1mm]0:{\scriptsize{}}}] at (1.0, 0.0) {$\otimes$}
   ;
\node [  b,  label={[label distance=-1mm]0:{\scriptsize{}}}] at (2.0, 0.0) {}
  child {   node [    b,    label={[label distance=-1mm]0:{\scriptsize{}}}  ] at (0.0, 1.0) {}
     edge from parent[    -,    solid, solid,    draw=black  ]   node [!l,right] {\scriptsize{}} }
 ;
 }}

\newcommand{\forestKE}{
\tikz[planar forest default, planar forest, ] {
  \node [  b,  label={[label distance=-1mm]0:{\scriptsize{}}}] at (0.0, 0.0) {}
  child {   node [    b,    label={[label distance=-1mm]0:{\scriptsize{}}}  ] at (0.0, 1.0) {}
     edge from parent[    -,    solid, solid,    draw=black  ]   node [!l,right] {\scriptsize{}} }
 ;
\node [  .l,  label={[label distance=-1mm]0:{\scriptsize{}}}] at (1.0, 0.0) {$\otimes$}
   ;
\node [  b,  label={[label distance=-1mm]0:{\scriptsize{}}}] at (2.0, 0.0) {}
   ;
\node [  b,  label={[label distance=-1mm]0:{\scriptsize{}}}] at (3.0, 0.0) {}
   ;
 }}

\newcommand{\forestLE}{
\tikz[planar forest default, planar forest, ] {
  \node [  b,  label={[label distance=-1mm]0:{\scriptsize{}}}] at (0.0, 0.0) {}
   ;
\node [  b,  label={[label distance=-1mm]0:{\scriptsize{}}}] at (1.0, 0.0) {}
   ;
\node [  .l,  label={[label distance=-1mm]0:{\scriptsize{}}}] at (2.0, 0.0) {$\otimes$}
   ;
\node [  b,  label={[label distance=-1mm]0:{\scriptsize{}}}] at (3.0, 0.0) {}
   ;
\node [  b,  label={[label distance=-1mm]0:{\scriptsize{}}}] at (4.0, 0.0) {}
   ;
 }}

\newcommand{\forestME}{
\tikz[planar forest default, planar forest, ] {
  \node [  b,  label={[label distance=-1mm]0:{\scriptsize{}}}] at (0.0, 0.0) {}
  child {   node [    b,    label={[label distance=-1mm]0:{\scriptsize{}}}  ] at (-1.0, 1.0) {}
     edge from parent[    -,    solid, solid,    draw=black  ]   node [!l,right] {\scriptsize{}} }
child {   node [    b,    label={[label distance=-1mm]0:{\scriptsize{}}}  ] at (0.0, 1.0) {}
  child {   node [    b,    label={[label distance=-1mm]0:{\scriptsize{}}}  ] at (0.0, 1.0) {}
     edge from parent[    -,    solid, solid,    draw=black  ]   node [!l,right] {\scriptsize{}} }
   edge from parent[    -,    solid, solid,    draw=black  ]   node [!l,right] {\scriptsize{}} }
child {   node [    b,    label={[label distance=-1mm]0:{\scriptsize{}}}  ] at (1.0, 1.0) {}
     edge from parent[    -,    solid, solid,    draw=black  ]   node [!l,right] {\scriptsize{}} }
 ;
 }}

\newcommand{\forestNE}{
\tikz[planar forest default, planar forest, ] {
  \node [  b,  label={[label distance=-1mm]0:{\scriptsize{}}}] at (0.0, 0.0) {}
  child {   node [    b,    label={[label distance=-1mm]0:{\scriptsize{}}}  ] at (0.0, 1.0) {}
     edge from parent[    -,    solid, solid,    draw=black  ]   node [!l,right] {\scriptsize{}} }
 ;
\node [  .l,  label={[label distance=-1mm]0:{\scriptsize{}}}] at (1.0, 0.0) {$\times$}
   ;
\node [  b,  label={[label distance=-1mm]0:{\scriptsize{}}}] at (2.0, 0.0) {}
   ;
\node [  .l,  label={[label distance=-1mm]0:{\scriptsize{}}}] at (3.0, 0.0) {$\otimes$}
   ;
\node [  b,  label={[label distance=-1mm]0:{\scriptsize{}}}] at (4.0, 0.0) {}
   ;
 }}

\newcommand{\forestOE}{
\tikz[planar forest default, planar forest, ] {
  \node [  b,  label={[label distance=-1mm]0:{\scriptsize{}}}] at (0.0, 0.0) {}
   ;
\node [  b,  label={[label distance=-1mm]0:{\scriptsize{}}}] at (1.0, 0.0) {}
   ;
\node [  .l,  label={[label distance=-1mm]0:{\scriptsize{}}}] at (2.0, 0.0) {$\times$}
   ;
\node [  b,  label={[label distance=-1mm]0:{\scriptsize{}}}] at (3.0, 0.0) {}
   ;
\node [  .l,  label={[label distance=-1mm]0:{\scriptsize{}}}] at (4.0, 0.0) {$\otimes$}
   ;
\node [  b,  label={[label distance=-1mm]0:{\scriptsize{}}}] at (5.0, 0.0) {}
   ;
 }}

\newcommand{\forestPE}{
\tikz[planar forest default, planar forest, ] {
  \node [  b,  label={[label distance=-1mm]0:{\scriptsize{}}}] at (0.0, 0.0) {}
  child {   node [    b,    label={[label distance=-1mm]0:{\scriptsize{}}}  ] at (0.0, 1.0) {}
     edge from parent[    -,    solid, solid,    draw=black  ]   node [!l,right] {\scriptsize{}} }
 ;
\node [  .l,  label={[label distance=-1mm]0:{\scriptsize{}}}] at (1.0, 0.0) {$\otimes$}
   ;
\node [  b,  label={[label distance=-1mm]0:{\scriptsize{}}}] at (2.0, 0.0) {}
   ;
\node [  .l,  label={[label distance=-1mm]0:{\scriptsize{}}}] at (3.0, 0.0) {$\times$}
   ;
\node [  b,  label={[label distance=-1mm]0:{\scriptsize{}}}] at (4.0, 0.0) {}
   ;
 }}

\newcommand{\forestQE}{
\tikz[planar forest default, planar forest, ] {
  \node [  b,  label={[label distance=-1mm]0:{\scriptsize{}}}] at (0.0, 0.0) {}
   ;
\node [  b,  label={[label distance=-1mm]0:{\scriptsize{}}}] at (1.0, 0.0) {}
   ;
\node [  .l,  label={[label distance=-1mm]0:{\scriptsize{}}}] at (2.0, 0.0) {$\otimes$}
   ;
\node [  b,  label={[label distance=-1mm]0:{\scriptsize{}}}] at (3.0, 0.0) {}
   ;
\node [  .l,  label={[label distance=-1mm]0:{\scriptsize{}}}] at (4.0, 0.0) {$\times$}
   ;
\node [  b,  label={[label distance=-1mm]0:{\scriptsize{}}}] at (5.0, 0.0) {}
   ;
 }}

\newcommand{\forestRE}{
\tikz[planar forest default, planar forest, ] {
  \node [  b,  label={[label distance=-1mm]0:{\scriptsize{}}}] at (0.0, 0.0) {}
  child {   node [    b,    label={[label distance=-1mm]0:{\scriptsize{}}}  ] at (-1.5, 1.0) {}
     edge from parent[    -,    solid, solid,    draw=black  ]   node [!l,right] {\scriptsize{}} }
child {   node [    b,    label={[label distance=-1mm]0:{\scriptsize{}}}  ] at (-0.5, 1.0) {}
     edge from parent[    -,    solid, solid,    draw=black  ]   node [!l,right] {\scriptsize{}} }
child {   node [    b,    label={[label distance=-1mm]0:{\scriptsize{}}}  ] at (0.5, 1.0) {}
     edge from parent[    -,    solid, solid,    draw=black  ]   node [!l,right] {\scriptsize{}} }
child {   node [    b,    label={[label distance=-1mm]0:{\scriptsize{}}}  ] at (1.5, 1.0) {}
     edge from parent[    -,    solid, solid,    draw=black  ]   node [!l,right] {\scriptsize{}} }
 ;
 }}

\newcommand{\forestSE}{
\tikz[planar forest default, planar forest, ] {
  \node [  b,  label={[label distance=-1mm]0:{\scriptsize{}}}] at (0.0, 0.0) {}
   ;
\node [  .l,  label={[label distance=-1mm]0:{\scriptsize{}}}] at (1.0, 0.0) {$\times$}
   ;
\node [  b,  label={[label distance=-1mm]0:{\scriptsize{}}}] at (2.0, 0.0) {}
   ;
\node [  .l,  label={[label distance=-1mm]0:{\scriptsize{}}}] at (3.0, 0.0) {$\otimes$}
   ;
\node [  b,  label={[label distance=-1mm]0:{\scriptsize{}}}] at (4.0, 0.0) {}
   ;
\node [  .l,  label={[label distance=-1mm]0:{\scriptsize{}}}] at (5.0, 0.0) {$\times$}
   ;
\node [  b,  label={[label distance=-1mm]0:{\scriptsize{}}}] at (6.0, 0.0) {}
   ;
 }}

\tikzstyle planar forest=[scale=2]
\begin{pycode}
pf.add_macro('x', '.$\\times$-.')
pf.add_macro('o', '.$\\otimes$-.')
pf.add_macro('v', '.$\\vdots$')
\end{pycode}

\numberwithin{equation}{section}

\newcommand{\N}{\ensuremath{\mathbb{N}}\xspace}
\newcommand{\R}{\ensuremath{\mathbb{R}}\xspace}

\newcommand{\FF}{\ensuremath{\mathcal{F}}\xspace}
\newcommand{\HH}{\ensuremath{\mathcal{H}}\xspace}
\newcommand{\OO}{\ensuremath{\mathcal{O}}\xspace}
\newcommand{\TT}{\ensuremath{\mathcal{T}}\xspace}
\newcommand{\UU}{\ensuremath{\mathcal{U}}\xspace}

\DeclareMathOperator{\Trace}{Tr}

\DeclareMathOperator{\id}{id}
\DeclareMathOperator{\dexp}{dexp}
\DeclareMathOperator{\Prim}{Prim}

\newcommand\restrict[1]{\raisebox{-.5ex}{$|$}_{#1}}
\newcommand{\butcher}{\to}

\newcommand{\frakgl}{\mathfrak{gl}}
\newcommand{\fraku}{\mathfrak{u}}

\newcommand{\one}{\mathbf{1}}
\newcommand{\dF}{\mathbb{F}}

\newcommand{\BT}{\mathcal{BT}}
\newcommand{\BF}{\mathcal{BF}}
\newcommand{\blank}{{-}}

\newcommand{\graft}{\curvearrowright}

\DeclareMathOperator{\Ad}{Ad}
\DeclareMathOperator{\ad}{ad}

\allowdisplaybreaks

\newtheorem{remark}[theorem]{Remark}

\newtheorem{example}[theorem]{Example}
\newtheorem*{maintheorem}{Main Theorem}

\newcommand{\note}[1]{\textcolor{blue}{#1}\index {#1}}

\title{Backward error analysis for matrix discretizations \\ of 2-D~Euler equations
\thanks{Submitted to the editors \today
\funding{This work was supported by the Swedish Research Council (grant number 2022-03453), the Knut and Alice Wallenberg Foundation (grant numbers KAW~2024.0229 and KAW~2023.0433), and the Göran Gustafsson Foundation for Research in Natural Sciences and Medicine.
The computations were enabled by resources provided by Chalmers e-Commons at Chalmers.}
}}

\author{
    Eugen Bronasco\thanks{Department of Mathematical Sciences, Chalmers University of Technology and University of Gothenburg, Sweden (\email{bronasco@chalmers.se}, \email{klas.modin@chalmers.se})}
    \and Klas Modin\footnotemark[2]
}

\headers{Backward error analysis for matrix discretizations}{E. Bronasco and K. Modin}

\begin{document}

\maketitle

\begin{abstract}
    We introduce a formalism of Lie--Poisson reduction of Butcher series.
    The corresponding forest momentum map allows
    for describing backward error analysis of isospectral symplectic Runge--Kutta methods applied to Zeitlin's matrix discretization of the 2-D Euler equations on the sphere.
    Based thereon, we obtain exponentially small error bounds for the conservation of modified Hamiltonians, valid for exponentially long time intervals.
    Crucially, the error bounds and the length of the time intervals are independent of the spatial discretization parameter $n$ (the matrix size) when the time step for different $n$ is scaled as $h = \mathcal{O}(n^{-1})$.
    Our results thus extend the classical backward error analysis result for finite-dimensional Hamiltonian systems to the infinite-dimensional case of the 2-D~Euler equations discretized via matrix hydrodynamics.
\end{abstract}

\begin{keywords}
    matrix hydrodynamics, backward error analysis, Butcher series, biplanar forests, Zeitlin's model, 2-D Euler,  Hamiltonian PDEs, Lie--Poisson reduction, isospectral flows, symplectic Runge--Kutta methods
\end{keywords}

\begin{MSCcodes}
    65P10, 35Q31, 37M15, 53D50, 65M99
\end{MSCcodes}

\section{Introduction}

The success of symplectic numerical integration schemes for long-term simulations of Hamiltonian systems can conceptually be explained as follows.
Given a symplectic phase space $M$ and a Hamiltonian system on it
\begin{equation}\label{eq:ham_sys}
    \dot y = X_H(y) \qquad \big(\text{if $M=\R^{2d}$ then $X_H =J^{-1}\nabla H$ for $J = \begin{bmatrix}0 & -I \\ I & 0 \end{bmatrix}$}\big),
\end{equation}
let $\Phi_h\colon M\to M$ denote a numerical integration method for the system~\cref{eq:ham_sys}.
If we try to reinterpret the discrete integrator map $\Phi_h$ as the exact flow of a \emph{modified vector field} $\tilde X_h$ (this is the basic idea of backward error analysis), then, if $\Phi_h$ is a symplectic map, the vector field $\tilde X_h$ must be symplectic.
In particular, it (locally) corresponds to a \emph{modified Hamiltonian function} $\tilde H_h$ on $M$, namely $\tilde X = X_{\tilde H_h}$.
This means that the discrete trajectory traced out by $y_{k+1}= \Phi_h(y_k)$ corresponds to the exact flow of a true Hamiltonian system.
All properties shared by solutions to Hamiltonian systems are thereby shared by the numerical trajectory.
More specifically, if the method is consistent and the step size is small enough, all properties shared by Hamiltonian systems \emph{nearby} the system~\cref{eq:ham_sys} are shared by the numerical trajectory.
For example, if the system~\cref{eq:ham_sys} is integrable in the Arnold--Liouville sense,  KAM-theory tells us that nearby Hamiltonian system are ``almost'' integrable (cf.~\cite{Ar1989}), leading to near conservation of the first integrals.
As another and more generic example, as long as the numerical trajectory remains on a compact subset of $M$, we have that $H(y_k) - \tilde H_h(y_k) = \mathcal{O}(h^p)$ where $p$ is the order of the method.
Consequently, since $\tilde H_h$ is exactly preserved by the modified vector field, the original Hamiltonian $H$ is nearly conserved.

This conceptual story of backward error analysis for symplectic integrators is pleasing and the consequences drawn from it are essentially correct for a finite-dimensional phase space $M$.
But the story is not quite right; the devil is in the details.
The core issue is a deep result in the analysis of infinite-dimensional Lie groups.
The set of diffeomorphisms on $M$ form a Lie group (for example in the Fréchet topology of smooth functions, cf.~\cite{Ha1982}).
Its Lie algebra consists of vector fields on $M$.
The corresponding exponential map is obtained by integrating the vector field to a diffeomorphism at time one. 
And herein lies the core issue: this exponential map is never locally surjective (contrary to finite-dimensional Lie groups where it is always locally surjective).
Thus, there are diffeomorphisms arbitrarily close to the identity map that cannot be generated by time one integration of a vector field.
The same is true for the subgroup of symplectic diffeomorphisms and its Lie algebra of symplectic vector fields.
Thus, the very outset of backward error analysis, that there exists a modified vector field, is invalid in general.
As a result, modified Hamiltonians do not exist in general.
Instead, the rigorous theory of backward error analysis looks for an ``almost'' modified Hamiltonian, obtained as an optimally truncated Taylor expansion $$\tilde H_h = H + h H_1 + \cdots + h^{N-1}H_N.$$
This theory, given by Benettin and Giorgilli~\cite{BeGi1994}, is technical.
In particular, it requires a real analytic setting and, because of the truncation, the results are valid only for exponentially long time-intervals $t \leq e^{c/h}$, where the constant $c > 0$ depends on the vector field $X_H$, the numerical method, and the compact domain of the numerical trajectory, but it is independent of the step size $h$.
In practise, the exponentially long time-interval means that other numerical artifacts, such as drift due to round-off errors, will dominate over the truncation error in the modified Hamiltonian. 
The conceptual backward error analysis story, as told in the first paragraph, is thereby valid, in essence, for finite-dimensional Hamiltonian systems.

At first sight, the situation for infinite-dimensional Hamiltonian PDEs seems similar: if the spatial discretization gives a finite-dimensional Hamiltonian system we can use a symplectic integration scheme to obtain near conservation of energy.
But this approach fails, because all error constants in the estimates depend on the spatial discretization parameter $n$ in an uncontrolled way as $n\to\infty$.
Furthermore, the framework depends on qualitative arguments that automatically fail as $n\to\infty$, for example that ``the numerical trajectory remains on a compact subset of~$M$''.
These hurdles for rigorous backward error analysis of Hamiltonian PDEs have persisted since the birth of symplectic numerical integration.
So far, the only rigorous results are for semi-linear PDEs, such as the nonlinear Schrödinger equation, and are based on splitting~\cite{FaGr2011,BaFaGr2013,FaMaSc2025arxiv}.

In this paper, we give rigorous results on backward error analysis and near conservation of energy for full discretizations of the two-dimensional (2-D) Euler equations---a fully nonlinear Hamiltonian PDE.
The spatial discretization is based on \emph{matrix hydrodynamics} (cf.~\cite{ModinTFM24}), leading to an isospectral flow of matrices which we discretize in time using \emph{isospectral symplectic Runge--Kutta (ISOSYRK) methods}~\cite{ModinLPM20}.
The core theoretical tool behind our results is a new theory of Butcher series over biplanar forests, applicable to isospectral flows.
The theory is obtained by lifting discrete Lie--Poisson reduction theory (cf.~\cite{ModinLPM20}) from individual vector fields and Hamiltonian functions to the level of Butcher series.
Interestingly, the resulting series for ISOSYRK methods, via biplanar forests, are distinct from Butcher series for Runge--Kutta--Munthe-Kaas (RKMK) methods.
Although the focus in this paper is on the 2-D Euler equations, the approach works also for other Hamiltonian PDEs admitting matrix discretizations as isospectral flows.

We continue the introduction with a brief recollection of the Hamiltonian (Lie--Poisson) structure of the 2-D Euler equations, spatial discretizations via matrix hydrodynamics, the ISOSYRK methods, and a statement of the main result.
Thereafter, we continue in \cref{sec:butcher_bea} with a review of Butcher series and modified vector fields for symplectic Runge--Kutta methods for canonical Hamiltonian systems.
This theory is the basis for Butcher series and backward error analysis over biplanar forests developed in \cref{sec:butcher_biplanar} and \cref{sec:bea_biplanar_forests}, providing a proof of the main theorem.
Finally, in \cref{sec:numerics}, we give numerical verifications of the theoretical results.

\subsection{The 2-D Euler equations and matrix hydrodynamics}

For the incompressible Euler equations on the sphere $\mathbb{S}^2$,
if $v = v(x,t)$ is the velocity field for the fluid and $\omega = \operatorname{curl}v$ is the corresponding vorticity function, the equations are
\begin{equation}\label{eq:euler2D}
    \dot\omega + \{\omega, \psi \} = 0, \quad -\Delta\psi = \omega,\quad \omega(\cdot,0) = \omega_0\in C^\infty(\mathbb{S}^2),
\end{equation}
where $\dot\omega = \frac{\partial\omega}{\partial t}$ is the time derivative, $\{\cdot,\cdot\}$ is the Poisson bracket on $\mathbb{S}^2$, $\Delta$ is the Laplace-Beltrami operator, and $\psi$ is the stream function.
Euler's equations~\eqref{eq:euler2D} constitute a \emph{Lie--Poisson system} for the infinite-dimensional Lie algebra of divergence free vector fields on $\mathbb{S}^2$ (see e.g.~\cite[sect.~2]{ModinTFM24}).
The Hamiltonian for the system is
\begin{equation*}
    H(\omega) = \frac{1}{2}\int_{S^2} \omega (-\Delta)^{-1}\omega \, \mathrm{d}x .
\end{equation*}

The core idea of \emph{matrix hydrodynamics}, initiated by Zeitlin~\cite{Ze1991, Ze2004}, is to obtain spatial discretizations for Lie--Poisson PDEs via quantization theory: replace the infinite-dimensional Poisson algebra of smooth functions $(C^\infty(\mathbb{S}^2), \{ \cdot,\cdot\})$ by the finite-dimensional Lie algebra of skew-Hermitian matrices $\mathfrak{u}(n)$.
This can be achieved via the \emph{Berezin--Toeplitz quantization operator} $\mathcal T_n\colon C^\infty(\mathbb{S}^2) \to \fraku(n)$ (cf.~\cite{Le2018}).
For the Euler equations~\eqref{eq:euler2D}, it leads to the \emph{Euler--Zeitlin equations}, namely the isospectral matrix flow
\begin{equation}\label{eq:euler-zeitlin}
    \dot W + \frac{1}{\hbar_n}[W, S] = 0, \quad -\Delta_n S = W, \quad W(0) = W_0\in \fraku(n),
\end{equation}
where $W=W(t)$ and $S = S(t)$ are skew-Hermitian matrices, $\hbar_n = 2/\sqrt{n^2-1}$ is a scaling constant, $[\cdot,\cdot]$ is the commutator, and $\Delta_n$ is the Hoppe--Yau operator (see \cref{sec:hoppe_yau_estimates}).
The system~\eqref{eq:euler-zeitlin} constitute itself a Lie--Poisson system for the Lie algebra $\mathfrak{u}(n)$ with the scaled bracket $\frac{1}{\hbar_n}[\cdot,\cdot]$ and the Hamiltonian
\begin{equation}\label{eq:zeitlin_hamiltonian}
    H_n(W) = \frac{2\pi}{n}\Trace(W^\dagger (-\Delta_n)^{-1} W).
\end{equation}
For $\omega\in C^\infty(S^2)$ we have $H_n\big(\mathcal T_n(\omega)\big) \to  H(\omega)$ as $n\to \infty$.
That the flow \eqref{eq:euler-zeitlin} is isospectral means geometrically that it preserves the \emph{coadjoint orbits} (or symplectic leaves) of the Lie--Poisson structure. 
For a full account of matrix hydrodynamics, including convergence results as $n\to\infty$, we refer to the work of Modin and Viviani~\cite{ModinTFM24} and references therein.

\subsection{Isospectral symplectic Runge--Kutta (ISOSYRK) methods}

Consider iso\-spec\-tral flows of the form
\begin{equation}
    \label{eq:isospectral}
    \dot{W} = [f(W), W] \,, \quad W(0) = W_0\in \fraku(n),
\end{equation}
for a matrix $W = W(t)$ and a \emph{linear} function $f\colon \fraku(n)\to\fraku(n)$.\footnote{For simplicity, we work here with the Lie algebra $\fraku(n)$, but the ISOSYRK methods work more generally for a subset $V\subset \frakgl(n,\mathbb{C})$ and $f\colon V \to \{ \xi \in V \, | \, [\xi, V] \subset V \}$ (cf.~\cite{ModinLPM20}).}
When $f(W) = \frac{\delta H}{\delta W}$ for some function~$H$, the flow \eqref{eq:isospectral} constitutes a Hamiltonian system on~$\fraku(n)^*\simeq \fraku(n)$ for the Lie--Poisson structure
\begin{equation*}
    \prec F,G\succ(W) = \Big\langle W, \Big[\frac{\delta F}{\delta W}, \frac{\delta G}{\delta W}\Big]\Big\rangle, \quad F,G\in C^\infty(\fraku(n)^*).
\end{equation*}
The Euler--Zeitlin equations~\eqref{eq:euler-zeitlin} corresponds to the case $f(W) = \frac{\delta H}{\delta W} = -\frac{1}{\hbar}\Delta_n^{-1}W$ for the quadratic Hamiltonian~\eqref{eq:zeitlin_hamiltonian} scaled by $n/(2\pi \hbar)$. 


Given a Butcher tableau $A = (a_{ij})_{i,j=1}^s$ and $b = (b_i)_{i=1}^s$ of a symplectic Runge--Kutta method, 
the corresponding ISOSYRK method $W_k \mapsto W_{k+1}$ for the flow~\eqref{eq:isospectral} is defined by
\begin{equation}\label{eq:isosyrk}
\begin{aligned}
    W_{k+1} &= W_k + h \sum_{i=1}^s b_i [f(\tilde{W}_i), \tilde{W}_i] \,, 
    &\tilde{W}_i &= W_k + h \sum_{j=1}^s a_{ij} (X_j + Y_j + K_{ij}) \,, \\
    X_i &= - (W_n + h \sum_{j=1}^s a_{ij} X_j) f(\tilde{W}_i) \,, 
    &Y_i &= f(\tilde{W}_i) (W_k + h \sum_{j=1}^s a_{ij} Y_j) \,, \\
    K_{ij} &= h \sum_{k=1}^s f(\tilde{W}_i) (a_{ik} X_k + a_{jk} K_{ik}) \,, & &
\end{aligned}    
\end{equation}
where $h>0$ is the timestep and $\tilde W_{i}, X_i, Y_i, K_{ij}$ are intermediate variables.

\begin{theorem}[\cite{ModinLPM20}]
    The ISOSYRK method~\eqref{eq:isosyrk} is isospectral.
    Furthermore, if $f= -\frac{\delta H}{\delta W}$ it preserves the Lie--Poisson structure: the map $W_k\to W_{k+1}$ restricted to a coadjoint orbit (i.e., an isospectral surface) is symplectic.
\end{theorem}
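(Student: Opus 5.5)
The plan is to realize the ISOSYRK scheme as the Lie--Poisson reduction of an ordinary symplectic Runge--Kutta method applied to a canonical Hamiltonian system on $T^*U(n)$, or more conveniently on $\frakgl(n,\mathbb{C})\times\frakgl(n,\mathbb{C})$ with coordinates $(Q,P)$. This follows the discrete reduction approach of \cite{ModinLPM20}.

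\textbf{Step 1: the lifted system.} Take the lifted Hamiltonian $\mathcal H(Q,P) = H(\mu(Q,P))$, where $\mu(Q,P) = Q^\dagger P$ (up to a skew-Hermitian projection) is the momentum map for the right action of $U(n)$. The canonical equations $\dot Q = \partial_P\mathcal H$, $\dot P = -\partial_Q\mathcal H$ then project under $\mu$ to $\dot W = [f(W),W]$.

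\textbf{Step 2: equivariance of the RK map.} Apply the symplectic RK method with tableau $(A,b)$ to the lifted system. The resulting map $\Psi_h$ is symplectic, since the tableau satisfies $b_ia_{ij}+b_ja_{ji}=b_ib_j$. Because the lifted vector field is equivariant under the group action and RK methods are affine-equivariant, $\Psi_h$ commutes with the action.

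\textbf{Step 3: exact reduction.} Check that $\mu\circ\Psi_h$ depends only on $\mu(Q,P)$; this is the reduction identity. Then verify by algebraic manipulation that the reduced map $W_k\mapsto W_{k+1}$ is exactly \eqref{eq:isosyrk}. Here one identifies $\tilde W_i$ with $\mu$ evaluated at the stages, and $X_i$, $Y_i$, $K_{ij}$ with the cross terms in $\tilde Q_i^\dagger\tilde P_i$. Expanding $\mu(Q_k+h\sum a_{ij}\dot Q_j,\ P_k+h\sum a_{ij}\dot P_j)$ produces precisely the bilinear terms $X$, $Y$, $K$.

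\textbf{Step 4: isospectrality.} Isospectrality follows from a conjugation identity. Writing $\tilde P = P_k(\mathrm{I}-\ldots)$ in terms of Cayley-type factors, we get $W_{k+1} = (I-hF)^{-1}\cdots$, i.e.\ $W_{k+1}$ is similar to $W_k$. Alternatively, one can argue that the reduced map of a symplectic, equivariant map preserves coadjoint orbits, since $\mu$ is equivariant and $\Psi_h$ preserves the group orbits' symplectic reduction.

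\textbf{Step 5: symplecticity on orbits.} Apply the Marsden--Weinstein reduction theorem. A symplectic, $G$-equivariant map on $T^*G$ descends to a Poisson map on $\mathfrak g^*$. Poisson maps that preserve coadjoint orbits are symplectic on each orbit with the Kirillov--Kostant--Souriau form.

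\textbf{Main obstacle.} I expect the hardest step to be Step 3: showing that the implicit stage equations close in terms of $W$ alone. The lifted stages involve $Q$ and $P$ separately, and one must identify $X_i$, $Y_i$, $K_{ij}$ as the appropriate reduced combinations; this is where the index bookkeeping in $K_{ij}$ enters. A second technical point is that $\frakgl(n)$ is used as the ambient space instead of $T^*U(n)$, so one must verify that the relevant lifted point lies in a region where the momentum map is a submersion and the reduction applies. I would first try the direct computation in $\frakgl(n)\times\frakgl(n)$ with $\mu(Q,P)=QP$ or a similar bilinear form, since that makes Step 3 purely algebraic.
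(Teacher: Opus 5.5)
Your overall route is the same as the paper's sketch, which defers the details to \cite{ModinLPM20}: lift to $\mathcal H = H\circ\mu$ on $T^*\mathrm{U}(n)$, apply the symplectic Runge--Kutta method to the canonical system \eqref{eq:canonical_equations}, descend by equivariance under the left action $(Q,P)\mapsto(gQ,gP)$, and identify the reduced map with \eqref{eq:isosyrk}. The genuine gap is in Step~4, and Step~5 depends on it.

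The principle you fall back on, that the reduction of a symplectic equivariant map preserves coadjoint orbits, is false. Consider $\Psi(Q,P) = (Q, P + i\lambda Q)$:
\begin{itemize}
\item it preserves $T^*\mathrm{U}(n)$ and commutes with the left action;
\item it is symplectic, since it changes the canonical one-form $\mathrm{Re}\,\mathrm{tr}(P^\dagger dQ)$ by the closed form $\lambda\, d\arg\det Q$;
\item yet it descends to $W\mapsto W+i\lambda I$, a Poisson map that shifts every eigenvalue.
\end{itemize}
In general, a symplectic map commuting with the action preserves the Noether momentum $J(Q,P) = PQ^\dagger$ of the left action only up to an $\mathrm{Ad}^*$-invariant constant. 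So something specific to the Runge--Kutta map is needed.

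Your Cayley-factor argument only sketches the midpoint case. For a general tableau, freeze the stage values $F_i = f(\tilde W_i)$ and define stage propagators $M_i = I + h\sum_j a_{ij}M_jF_j^\dagger$ and $N_i = I - h\sum_j a_{ij}N_jF_j$. Linearity gives $Q_{k+1} = Q_kM$ and $P_{k+1} = P_kN$, hence $W_{k+1} = M^\dagger W_k N$. Similarity, however, requires $NM^\dagger = I$, which you never establish.

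The missing idea is that $J(Q,P)=PQ^\dagger$ is a \emph{quadratic} first integral of the lifted field. Indeed, $\frac{d}{dt}(PQ^\dagger) = -Pf(W)Q^\dagger + Pf(W)Q^\dagger = 0$ whatever the value of $f(W)$. Runge--Kutta methods with $b_ia_{ij}+b_ja_{ji}=b_ib_j$ conserve quadratic first integrals exactly. Concretely, one computes
\[
NM^\dagger - I = h^2\sum_{i,j}(b_ia_{ij}+b_ja_{ji}-b_ib_j)\,N_iF_iF_jM_j^\dagger = 0,
\]
so $W_{k+1} = N^{-1}W_kN$. Equivalently, with the representative $Q_k=I$, $P_k=W_k$, one gets $W_{k+1} = Q_{k+1}^\dagger W_k Q_{k+1}^{-\dagger}$.

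Thus the symplecticity condition on the tableau is used twice: once for the symplecticity of $\Psi_h$, and once for the exact conservation of $J$, on which isospectrality rests. With this in place, Step~5 goes through, since a Poisson map preserving each leaf restricts to a symplectic map on it. Because the identity holds for arbitrary $F_i\in\mathfrak{gl}(n,\mathbb{C})$, it also removes your concern about stages leaving $T^*\mathrm{U}(n)$, at least as far as isospectrality is concerned.
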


The proof of this results uses Lie--Poisson reduction theory.
A brief summary goes as follows. Via the momentum map 
\begin{equation*}
    \mu\colon T^*\mathrm{U}(n) \to \fraku(n)^*, \quad \mu(Q,P) = Q^\dagger P ,  
\end{equation*}
the Hamiltonian $H$ on $\fraku(n)^*$ is lifted to a Hamiltonian $\mathcal H = H\circ\mu$ in the variables $(Q,P)$.
In these variables, the Lie--Poisson system \eqref{eq:isospectral} becomes a canonical Hamiltonian system on $T^*\mathrm{U}(n)$
\begin{equation}\label{eq:canonical_equations}
    \dot Q = \frac{\delta \mathcal H}{\delta P} = Q f(Q^\dagger P)^\dagger ,\quad \dot P = -\frac{\delta \mathcal H}{\delta Q} = -P f(Q^\dagger P), \quad Q(0) = Q_0, \quad P(0) = P_0.
\end{equation}
Its solutions $(Q(t),P(t))$ thus map to solutions $W(t) = \mu(Q(t),P(t))$ of equation~\eqref{eq:isospectral}.
Similarly, via the momentum map the ISOSYRK method~\eqref{eq:isosyrk} corresponds to the underlying symplectic Runge--Kutta method applied to the canonical Hamiltonian system~\eqref{eq:canonical_equations}.

\begin{example}[ISOMP method]\label{example:isomp}
    The simplest ISOSYRK method is the isospectral midpoint method (ISOMP), defined by
    \begin{equation}\label{eq:isomp}
    \begin{aligned}
        W_{k} &= \left(I - \frac{h}{2}f(\tilde W) \right)\tilde W \left(I +  \frac{h}{2}f(\tilde W)\right) \\
        W_{k+1} &= W_k + h[f(\tilde W), \tilde W]
    \end{aligned}        
    \end{equation}
    Here, the underlying Runge--Kutta scheme is the implicit midpoint method.
\end{example}

\subsection{Main result}
Given that ISOSYRK methods are symplectic, it is natural to ask if the energy is nearly conserved for long times $t_k\gg 1$ while at the same time $n\to\infty$.
As explained in the introduction, this is a question about the modified Hamiltonian and backward error analysis.
Our main result is that arbitrary truncations of the modified Hamiltonian for the Euler--Zeitlin equations are well-behaved as $n\to\infty$ as long as the time step is scaled as $h = \mathcal{O}(\epsilon/n)$.

\begin{maintheorem}
    For $\omega_0\in C^\infty(S^2)$, consider an ISOSYRK method of order $p$ applied to the Euler--Zeitlin equations~\eqref{eq:euler-zeitlin} with initial data $W_0 = \mathcal T_n(\omega_0)$ and time step $h= \epsilon \hbar_n = 2 \epsilon/\sqrt{n^2-1}$.
    Then there exists a truncated modified Hamiltonian $\tilde H_{n,h}$ such that $\tilde H_{n,h}(W) = H_n(W) + \mathcal O(h^p)$ and 
    \begin{equation*}
        \left| \tilde H_{n,h}(W_k) - \tilde H_{n,h}(W_0) \right| \leq 2\pi A(\lVert \omega_0 \rVert_{L^\infty})\exp(-\epsilon_0/(2\epsilon \lVert \omega_0 \rVert_{L^\infty}))
    \end{equation*}
    whenever $t_k = hk \leq \exp(\epsilon_0/(2\epsilon \lVert \omega_0 \rVert_\infty))$.
    Here, $A(r) = 2e (1 + 6eC_a) (1+3e C_a + 18 C_a r)r^2$ and $\epsilon_0 = 1/(18e C_a)$, where $C_a>0$ only depends on the Butcher tableau of the ISOSYRK method.\footnote{$C_a = 1$ for the ISOMP method~\eqref{eq:isomp}.}
    In particular, the error estimate and the length of the time interval are independent of $n$.
\end{maintheorem}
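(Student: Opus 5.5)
The plan is to follow the Hairer--Lubich--Wanner version of the Benettin--Giorgilli argument, with the finite-dimensional Cauchy-estimate step replaced by combinatorial estimates on the biplanar Butcher series built later in the paper. The key observation is that the spatial discretization enters only through norms that are uniform in $n$. I would first rescale time. Writing $f(W) = -\frac{1}{\hbar_n}\Delta_n^{-1}W$ and $h=\epsilon\hbar_n$, one step of the ISOSYRK method with step $h$ for $\dot W=[f(W),W]$ is the same as one step with step $\epsilon$ for $\dot W = [g(W),W]$, where $g = -\Delta_n^{-1}$. The $1/\hbar_n$ blow-up is thereby absorbed, and the small parameter becomes $\epsilon$, which is independent of $n$.

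Next I would expand the ISOSYRK map as a series over biplanar forests, obtained by pushing the Butcher series of the underlying symplectic RK method on $T^*\mathrm{U}(n)$ through the momentum map $\mu(Q,P)=Q^\dagger P$. Each term is an elementary differential built from left and right multiplications by $g(W)$ and by $W$. The modified vector field is then given by the substitution/logarithm of this series. By Lie--Poisson reduction of the modified Hamiltonian of the canonical system, each truncation of it is a Lie--Poisson vector field $[\tilde g_N(W),W]$ with $\tilde g_N=\delta\tilde H_N/\delta W$. This yields the truncated modified Hamiltonian $\tilde H_{n,h}=H_n+\epsilon^pH_{p}+\dots+\epsilon^{N-1}H_{N-1}$, and in particular $\tilde H_{n,h}=H_n+\mathcal O(h^p)$.

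For the analytic estimates I would use the operator norm, or the normalized norm in which $\lVert \mathcal T_n\omega\rVert\le\lVert\omega\rVert_{L^\infty}$. Isospectrality keeps $\lVert W_k\rVert=\lVert W_0\rVert\le \lVert\omega_0\rVert_{L^\infty}$ for all $k$, which replaces the compactness assumption that fails as $n\to\infty$. I would also use the Hoppe--Yau estimate $\lVert\Delta_n^{-1}\rVert\le 1$ (or a fixed constant), uniformly in $n$. With these, an elementary differential indexed by a biplanar forest with $m$ nodes is bounded by $r^{m+1}$ times a product of tableau constants, where $r=\lVert\omega_0\rVert_{L^\infty}$. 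The number of biplanar forests and the coefficients of the modified series grow like $m!\,(C_a)^m$ up to geometric factors; I would get this from the implicit equations \eqref{eq:isosyrk} via a fixed-point/majorant argument with $C_a=\max_i\sum_j|a_{ij}|$ and $\sum|b_i|$. This gives $\lVert \epsilon^{m}\tilde g_m\rVert\lesssim r(\epsilon m\, 6eC_a r)^m$. Choosing the optimal $N\approx \epsilon_0/(\epsilon r)$ then gives the local defect $\lVert \Phi_\epsilon(W)-\varphi^{\tilde H}_\epsilon(W)\rVert\le C\epsilon\, e^{-\epsilon_0/(2\epsilon r)}$, the $\tfrac12$ coming from taking half the radius in the Cauchy estimates.

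Finally I would convert this to energy. Since $\tilde H_{n,h}$ is exactly conserved by its own flow, $|\tilde H(W_{k+1})-\tilde H(W_k)|\le \sup\lVert\delta\tilde H/\delta W\rVert\cdot$(local defect). This is bounded using $\lVert\tilde g\rVert\lesssim r(1+3eC_a+18C_a r)$, and the factor $2\pi$ comes from the normalization $\frac{2\pi}{n}\Trace$. Summing over $k\le e^{\epsilon_0/(2\epsilon r)}/h$ steps yields the stated bound $2\pi A(r)e^{-\epsilon_0/(2\epsilon r)}$. The main obstacle is the uniform-in-$n$ coefficient/elementary-differential estimate: one must show that the biplanar B-series for ISOSYRK converges for $\epsilon r< \text{const}$ with constants depending only on the tableau. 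I would attack it with majorant series directly on the stage equations, bounding $X_i,Y_i,K_{ij}$ recursively in the Banach algebra of matrices, rather than counting forests term by term.
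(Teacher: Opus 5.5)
Your architecture matches the paper's. The rescaling $hf=\epsilon g$ is exactly the paper's observation $hC_f=\epsilon$, with $C_f=1/\hbar_n$ from Theorem~\ref{thm:hoppe_yau_estimates}, so $h_0/h=\epsilon_0/\epsilon$. The reduced biplanar B-series, isospectrality in the spectral norm in place of compactness, optimal truncation and the telescoping sum also match. The final assembly, however, fails as written. You claim a local defect $\lVert\Phi_\epsilon(W)-\varphi^{\tilde H}_\epsilon(W)\rVert\le C\epsilon\,e^{-\epsilon_0/(2\epsilon r)}$ and then sum over $k\le e^{\epsilon_0/(2\epsilon r)}/h$ steps. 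The per-step energy errors, each $\mathcal O(\epsilon\,e^{-\epsilon_0/(2\epsilon r)})$, then accumulate to $\mathcal O(k\epsilon\,e^{-\epsilon_0/(2\epsilon r)})=\mathcal O(\hbar_n^{-1})$ in the rescaled Hamiltonian. That is $\mathcal O(1)$ after multiplying by $4\pi/n$ to get $H_n$, so nothing exponentially small survives. What is needed, and what the paper proves in Theorem~\ref{thm:bound_modflow}, is a local defect with the \emph{full} exponent: $\lVert\Phi_h(W)-\tilde\varphi^N_h(W)\rVert_\infty<hC(r)C_fr\,e^{-h_0/(hr)}$ for $N\le h_0/(hr)<N+1$. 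This comes from the maximum modulus principle on the disk of radius $eh_0/(Nr)$ together with $(hNr/(eh_0))^N\le e^{-N}$. The factor $\tfrac12$ has nothing to do with halving a Cauchy radius. It enters only in Theorem~\ref{thm:long_time_Hamiltonian_conservation}, where half of the exponent is traded for the length of the interval: $kh\,e^{-h_0/(hr)}\le e^{-h_0/(2hr)}$ when $kh\le e^{h_0/(2hr)}$. You should also make the $n$-bookkeeping explicit. In rescaled time the admissible interval is $k\epsilon\le e^{\epsilon_0/(2\epsilon r)}/\hbar_n$, which grows like $n$. The constant $2\pi$, and hence $n$-independence, comes from cancelling this $\hbar_n^{-1}$ against the $1/n$ of the normalization via $\frac{1}{n\hbar_n}=\frac{\sqrt{n^2-1}}{2n}\le\frac12$, not from the normalization alone.

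Second, your coefficient estimates are not set up to give the stated constants.
\begin{itemize}
\item In the statement, $C_a$ is the constant in $|a(\tau)|\le C_a^{|\tau|}$ for the B-series coefficients. So $C_a=1$ for ISOMP, whereas your $\max_i\sum_j|a_{ij}|=\tfrac12$.
\item The numbers $18$ and $6e$ come from forest counting: $|BF_j|\le 9^j$ and $|T(BT)_j|\le\tfrac13 9^j$ (Lemma~\ref{lem:combinatorial_bounds}); $\lVert\dF_f(\pi,\eta)\odot W\rVert_\infty\le 2r(2C_fr)^{|\pi|+|\eta|}$ (Proposition~\ref{prop:bound_dj}); and $|b(\tau)|\le e(|\tau|-1)!\,C_a^{|\tau|}$ from $b=\log^{\graft}(a)$ (Lemma~\ref{lem:bound_b}), transported to ISOSYRK through $b\circ\psi$.
\item A majorant argument on the stage equations controls the Taylor coefficients $d_j$ of $\Phi_h$, but not by itself the modified-field coefficients $f_j$. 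For those you need either the $\log^{\graft}$ bound (the paper's route) or the recursive Cauchy estimates of Hairer--Lubich--Wanner on nested complex balls. The latter is dimension-free and so viable here, but gives different constants. Your bound $\lVert\epsilon^m\tilde g_m\rVert\lesssim r(\epsilon m\,6eC_ar)^m$ is derived by neither.
\item You have two constants swapped. $1+6eC_a$ bounds $\lVert\tilde f^N_h\rVert_\infty/(C_fr)$, i.e.\ it is the Lipschitz constant $L$ of $\tilde H^N_h$. $1+3eC_a+18C_ar$ belongs to the defect constant $C(r)$.
\item The stated constants require the sharp bound $\lVert\mathcal L_n^{-1}\rVert_\infty=1$, which follows from positivity and unitality of $\mathcal L_n^{-1}$ plus Russo--Dye. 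Merely "a fixed constant" is not enough.
\end{itemize}
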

\begin{proof}
    Zeitlin's model is of the form of equation~\eqref{eq:isospectral} for $f(W) = \frac{1}{\hbar_n}(-\Delta_n)^{-1}W$.
    Let $C_f$ be the operator norm of $f$ relative to the spectral norm $\lVert\cdot\rVert_\infty$ (cf.~\cref{sec:long_time_behavior}).
    From Theorem~\ref{thm:hoppe_yau_estimates} in the appendix it follows that $C_f = 1/\hbar_n$.
    The Hamiltonian corresponding to $f$ is $\mathcal H_n(W) = \frac{1}{2\hbar_n}\operatorname{tr}(W(-\Delta_n)^{-1}W)$.
    Let $\tilde{\mathcal H}_{n,h}$ denote the corresponding optimally truncated modified Hamiltonian, in accordance with Theorem~\ref{thm:bound_modflow}.
    From Theorem~\ref{thm:long_time_Hamiltonian_conservation} it follows that
    \begin{equation*}
        \lvert \tilde{\mathcal H}_{n,h}(W_k) - \tilde{\mathcal H}_{n,h}(W_0) \rvert \leq \frac{1}{\hbar_n^2}A(\lVert W_0\rVert_\infty) \exp(-h_0/(2h\lVert W_0\rVert_\infty)) 
    \end{equation*}
    for $kh \leq \exp(h_0/(2h\lVert W_0\rVert_\infty))$ and $A(r) = L r^2 C(r)$.
    We notice that $h_0/h = 1/(18e C_a C_f \epsilon \hbar_n) = 1/(18e C_a \epsilon) = \epsilon_0/\epsilon$.
    
    By comparing with \eqref{eq:zeitlin_hamiltonian}, we see that the correctly scaled modified Hamiltonian (which is consistent with $H(\omega)$ as $n\to\infty$) is $\tilde H_{n,h} := \frac{4\pi \hbar_n}{n}\tilde{\mathcal H}_{n,h}$.
    Thus, 
    \begin{equation*}
        \lvert \tilde{H}_{n,h}(W_k) - \tilde{H}_{n,h}(W_0) \rvert \leq \frac{4\pi \hbar_n}{n \hbar_n^2}A(\lVert W_0\rVert_\infty) \exp(-\epsilon_0/(2\epsilon\lVert W_0\rVert_\infty)) .
    \end{equation*}
    The result now follows since $\frac{1}{n\hbar} = \frac{\sqrt{n^2-1}}{2n} \leq 1/2$ and $\lVert W_0 \rVert_\infty \leq \lVert \omega_0 \rVert_{L^\infty} $, where the last inequality is a property of the Berezin--Toeplitz quantization operator $\mathcal T_n$ (cf.~\cite{Le2018}).
\end{proof}

\section{Butcher series and backward error analysis}\label{sec:butcher_bea}

The theory of Butcher series was introduced in the 1960's by Butcher \cite{butcherCoefficientsStudyRungeKutta1963} and later developed by Hairer and  Wanner~\cite{hairerButcherGroupGeneral1974} in the 1970's. Consider an ordinary differential equation in $\R^d$ of the following form
\[ y^\prime = f(y), \quad f : \R^d \to \R^d, \quad y(0) = y_0. \]
The solution $y(h)$ as well as the approximation $y_1$ computed using a Runge--Kutta method can be Taylor expanded around $h=0$. It was noted by Cayley~\cite{Cayley1857} in 1857 that the terms (also called \emph{elementary differntials}) appearing in such Taylor expansions can be represented by non-planar trees. 
Let $T$ be the set of non-planar rooted trees as defined in \cref{def:np_tree}.

\begin{definition}
    \label{def:np_tree}
    A \emph{non-planar tree} $\tau \in T$ is a connected directed acyclic graph with vertices $V(\tau)$ and edges $E(\tau)$ such that all vertices $v \in V(\tau)$ have at most one outgoing edge. Connectedness implies that there is exactly one vertex without an outgoing edge and this vertex is called the \emph{root}.
\end{definition}

All non-planar trees up to size $4$ can be found below with all edges being directed downwards,
\[ \forestA \,, \quad \forestB\,, \quad \forestC\,, \quad \forestD\,, \quad \forestE\,, \quad \forestF\,, \quad \forestG\,, \quad \forestH\,. \]
We note that since the trees are non-planar, the order of branches does not matter and we have
\[ \forestI = \forestJ. \]

Let $F$ be the set of non-planar forests defined in \cref{def:np_forest}.

\begin{definition}
    \label{def:np_forest}
    A \emph{non-planar forest} $\pi \in F$ is a directed acyclic graph such that each connected component is a non-planar tree.
\end{definition}

Note that the empty graph $\one$ is a forest. Let $B^+ : F \to T$ be a surjective map that adds a new vertex to a forest and connects all roots of the forest to the new vertex thus creating a tree whose root is the new vertex. For example, 
\[ B^+(\one) = \forestK \,, \quad B^+ (\forestL) = \forestM \,, \quad B^+(\forestN) = \forestO \,. \]

The correspondence between non-planar trees and elementary differentials is denoted by a map $\dF_f$ defined in \cref{def:F_np}

\begin{definition}
    \label{def:F_np}
    Let $\tau \in T$ be a non-planar tree with $\tau = B^+(\tau_1 \cdots \tau_n)$ for some $\tau_i \in T$. Then, the map $\dF_f$ is defined as
    \begin{align*}
        \dF_f (\tau) &:= \sum_{i_1, \dots, t_n =1}^d \dF_f (\tau_1)^{i_1} \cdots \dF_f (\tau_n)^{i_n} \frac{\partial^n}{\partial x_{i_1}\cdots \partial x_{i_n}} f \\
                    &= f^{(n)} \big(\dF_f (\tau_1), \dots, \dF_f (\tau_n)\big).
    \end{align*}
\end{definition}

Some examples of elementary differentials are,
\[ \dF_f(\forestP) = f \,, \quad \dF_f(\forestQ) = \sum_{i,j,k=1}^d f^i f^k \frac{\partial f^j}{\partial x_i} \frac{\partial^2 f}{\partial x_j \partial x_k} = f^{(2)} (f, f^\prime f).  \]

Let $\sigma : T \to \R$ denote the symmetry of a tree defined as the number of automorphisms of the tree, for example,
\[ \sigma(\forestR) = 1 \,, \quad \sigma(\forestS) = 2 \,, \quad \sigma(\forestT) = 4 \,, \quad \sigma (\forestU) = 3! \,. \]
Given a tree $\tau = B^+(\tau_1^{r_1} \cdots \tau_n^{r_n})$ where all $\tau_i$ are distinct with multiplicities $r_i$ for $i=1,\dots,n$, we have the following recursive formula,
\[ \sigma(\tau) = r_1! \cdots r_n! \sigma(\tau_1)^{r_1} \cdots \sigma(\tau_n)^{r_n} \,. \]
Let $|\tau|$ denote the number of vertices of a tree $\tau$, for example $|\forestV| = 5$.

\begin{definition}
    \label{def:Bseries_np}
    Let $a : T \to \R$ be a \emph{coefficient map}, then, a \emph{Butcher series} is defined as
    \[ B_f (a)(y_0) := \sum_{\tau \in T} h^{|\tau|} \frac{a(\tau)}{\sigma(\tau)} \dF_f(\tau)(y_0) \,, \]
    with the coefficient map $a$ characterizing the Butcher series.
\end{definition}

The Taylor expansions of the exact solution and of Runge--Kutta methods $\Phi_h$ are written using Butcher series as
\[ y(h) = y_0 + B_f(1/\gamma)(y_0), \quad \Phi_h (y_0) = y_0 + B_f(a) (y_0), \]
with the coefficient map $a : T \to \R$ being defined in \cref{prop:coeff_a_np}
and the map $\gamma$ being the factorial of a tree $\tau = B^+(\tau_1 \cdots \tau_n)$ defined as
\[ \gamma(\tau) = |\tau| \cdot \prod_{i=1}^n \gamma(\tau_i) \,, \quad \gamma(\forestW) = 1 \,. \]

\begin{proposition}
    \cite{butcherCoefficientsStudyRungeKutta1963}
    \label{prop:coeff_a_np}
    Consider a Runge--Kutta method $\Phi_h$ with coefficients $b_i, a_{ij}$ for $i, j = 1, \dots, s$. Then, the coefficient map $a : T \to \R$ of the Butcher series $B_f(a)$ such that $\Phi_h(y_0) = y_0 + B_f (a) (y_0)$ is given by
    \begin{align*}
        a\big(B^+(\tau_1 \cdots \tau_n)\big) &= \sum_{i=1}^s b_i a^i (\tau_1) \cdots a^i (\tau_n) \,, \\ 
        a^i\big(B^+(\tau_1 \cdots \tau_n)\big) &= \sum_{j=1}^s a_{ij} a^j (\tau_1) \cdots a^j (\tau_n) \,.
    \end{align*}
\end{proposition}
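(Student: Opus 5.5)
We prove the formula by Taylor expanding the stage equations of the Runge--Kutta method and matching terms against Butcher series. Write the method as
\[ Y_i = y_0 + h\sum_{j=1}^s a_{ij} f(Y_j), \qquad \Phi_h(y_0) = y_0 + h\sum_{i=1}^s b_i f(Y_i). \]
The key ansatz is that each stage is itself a Butcher series: $Y_i = y_0 + B_f(a^i)(y_0)$ for some coefficient maps $a^i : T\to\R$. We then determine $a^i$ recursively in $|\tau|$. Because $f$ is smooth and the stage equations have a unique solution for small $h$ (implicit function theorem), it suffices to show that the ansatz satisfies the stage equations as formal power series in $h$.

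The central step is a lemma on composition. If $y = y_0 + B_f(c)(y_0)$ for some coefficient map $c$, then
\[ h f(y) = B_f(c')(y_0), \qquad c'(B^+(\tau_1\cdots\tau_n)) = c(\tau_1)\cdots c(\tau_n). \]
To prove this, Taylor expand:
\[ hf(y_0+\delta) = h\sum_{n\ge 0}\frac{1}{n!} f^{(n)}(\delta,\dots,\delta), \qquad \delta = \sum_\tau h^{|\tau|}\frac{c(\tau)}{\sigma(\tau)}\dF_f(\tau). \]
Multilinearity of $f^{(n)}$ turns each term into a sum over ordered $n$-tuples of trees $(\tau_1,\dots,\tau_n)$. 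Symmetry of $f^{(n)}$ together with \cref{def:F_np} identifies $f^{(n)}(\dF_f(\tau_1),\dots,\dF_f(\tau_n))$ with $\dF_f(B^+(\tau_1\cdots\tau_n))$. The power of $h$ is $1+\sum|\tau_k| = |B^+(\tau_1\cdots\tau_n)|$, as required.

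The combinatorial bookkeeping is the step I expect to need care. Group the ordered tuples by the unordered multiset $\{\tau_1^{r_1},\dots,\tau_m^{r_m}\}$. The number of ordered tuples with that multiset is $n!/(r_1!\cdots r_m!)$. This cancels the $1/n!$ and leaves
\[ \frac{1}{r_1!\cdots r_m!\,\prod_k\sigma(\tau_k)^{r_k}} = \frac{1}{\sigma(B^+(\cdots))} \]
by the recursive formula for $\sigma$. The coefficient is therefore exactly $\prod c(\tau_k)$, which proves the lemma.

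Applying the lemma with $c = a^j$ and substituting into the stage equation gives
\[ Y_i - y_0 = \sum_j a_{ij}\, B_f\big((a^j)'\big)(y_0). \]
Consistency of the ansatz then forces $a^i(\tau) = \sum_j a_{ij}(a^j)'(\tau)$, which is the second formula. This is a well-founded recursion, since the right-hand side only involves $a^j$ on strictly smaller trees; the base case is $a^i(\bullet)=\sum_j a_{ij}$. Applying the lemma once more to the update step gives
\[ \Phi_h(y_0) - y_0 = \sum_i b_i\, B_f\big((a^i)'\big)(y_0), \]
which is the first formula. Finally, uniqueness of the Taylor coefficients of the true stage solution, order by order in $h$, identifies the formal series with the actual expansion of $\Phi_h(y_0)$.
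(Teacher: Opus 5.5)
Your proof is correct. The composition lemma $h f\big(y_0 + B_f(c)(y_0)\big) = B_f(c')(y_0)$, with the multiset count $n!/(r_1!\cdots r_m!)$ matching the recursive formula for $\sigma$, followed by the well-founded recursion on the stage equations and the uniqueness of Taylor coefficients, is the standard argument (as in Hairer--Lubich--Wanner, Sect.~III.1). The paper gives no proof of its own here and simply cites Butcher, so there is no different route to compare against.
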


\subsection{Backward error analysis of a symplectic integrator}
\label{sec:bea_symplectic}

We present backward error analysis by following \cite[Ch. IX.9, IX.10]{HairerWannerGNI} and use the formalism of rooted trees to compute the modified equation and the modified Hamiltonian. Assume an integrator $\Phi_h$ can be written using Butcher series as
\[ \Phi_h (y) = y + B_f (a)(y) \,. \]
Let $\tau \butcher \gamma$ denote the Butcher product of trees $\tau, \gamma \in T$ which attaches the root of $\tau$ to the root of $\gamma$, for example,
\[ \forestX \butcher \forestY = \forestAB \,. \]
Integrator $\Phi_h$ is \emph{symplectic} if the coefficient map $a$ satisfies the following property
\[ a(\tau \butcher \gamma) + a(\gamma \butcher \tau) = a(\tau) a(\gamma) \quad \text{for } \tau, \gamma \in T \,. \]
Integrator $\Phi_h$ can be viewed as an exact flow of a \emph{modified vector field} $\tilde{f}_h$, that is,
\[ \Phi_h(y) = y + B_f(a)(y) = y + B_{\tilde{f}_h} (1/\gamma)(y) \,. \]

\begin{theorem}
    \label{thm:mod_eq_Bseries}
    \cite[Section IX.9]{HairerWannerGNI}
    Modified vector field $\tilde{f}_h$ of $\Phi_h$ is written as
    \[ \tilde{f}_h (y) = \frac{1}{h} B_f (b) (y) \,, \]
    where $b : T \to \R$ can be computed using formulas derived in \cite[Section IX.9]{HairerWannerGNI}, see also \cite{ChartierNIB07,ChartierASB10a}.
    Moreover, if the integrator $\Phi_h$ is symplectic and $f = J^{-1}\nabla H$ is a Hamiltonian system, then the coefficient map $b$ satisfies,
    \begin{equation}
        \label{eq:b_Butcher_product}
        b(\tau \butcher \gamma) + b(\gamma \butcher \tau) = 0 \,, \quad \text{for all } \tau, \gamma \in T \,.
    \end{equation}
    In particular, $b(\tau \butcher \tau) = 0$ for all $\tau \in T$.
\end{theorem}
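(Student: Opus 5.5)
The plan is to derive $b$ from the defining relation $B_f(a) = B_{\tilde f_h}(1/\gamma)$ with $\tilde f_h = \frac1h B_f(b)$, and then to prove the symplecticity identity \eqref{eq:b_Butcher_product} by induction on the total size $|\tau|+|\gamma|$.

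\textbf{Recursion for $b$.} I will substitute the B-series $\frac1h B_f(b)$ into the flow $B_{\tilde f_h}(1/\gamma)$. By the standard substitution law for B-series (composition of elementary differentials, as in \cite[Section IX.9]{HairerWannerGNI}), this yields $B_f(b\star(1/\gamma))$. Here $(b\star e)(\tau)$ is a sum over partitions of $\tau$ into subtrees. Each subtree is contracted to a vertex and weighted by its $b$-value, and the resulting skeleton tree is weighted by $e$.

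Equating coefficients with $a$ gives
\[
b(\tau) = a(\tau) - \sum_{\text{nontrivial partitions}} \big(\textstyle\prod b(\text{parts})\big)\,\tfrac{1}{\gamma(\text{skeleton})},
\]
where every part on the right is smaller than $\tau$. Since $b(\forestW)=a(\forestW)$, this determines $b$ recursively. That settles the first claim.

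\textbf{Structure of the symplecticity argument.} It is cleanest to avoid coefficient bookkeeping and use the characterization: for Hamiltonian $f$, the maps $\tau\mapsto\dF_f(\tau)$ span vector fields, and the condition \eqref{eq:b_Butcher_product} is exactly the condition that $B_f(b)$ is Hamiltonian for every $H$ \cite{ChartierNIB07}. The induction then runs as follows.

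\begin{enumerate}
\item Truncate: let $f_N = \frac1h\sum_{|\tau|\le N} h^{|\tau|}\frac{b(\tau)}{\sigma(\tau)}\dF_f(\tau)$.
\item By the induction hypothesis, $b$ satisfies \eqref{eq:b_Butcher_product} for all pairs with $|\tau|+|\gamma|\le N$. Hence $f_N$ is Hamiltonian, with Hamiltonian given by the tree-indexed elementary Hamiltonians.
\item Its exact time-$h$ flow is therefore symplectic. Its B-series coefficients $e_N = b_N\star(1/\gamma)$ satisfy the symplectic relation $e(\tau\butcher\gamma)+e(\gamma\butcher\tau)=e(\tau)e(\gamma)$, by the converse direction for B-series integrators.
\item Both $a$ and $e_N$ satisfy this relation, and $e_N$ agrees with $a$ below order $N+1$. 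So at order $N+1$ the difference $a-e_N$ is supported on trees of size $N+1$, where it equals $b$ (the truncation contributes no $b$-terms of that size).
\item The bilinear relation, evaluated on trees with $|\tau|+|\gamma|=N+1$, then reads $\delta(\tau\butcher\gamma)+\delta(\gamma\butcher\tau)=0$ for $\delta=a-e_N$. The products $a(\tau)a(\gamma)=e_N(\tau)e_N(\gamma)$ agree because they involve only lower orders. This gives \eqref{eq:b_Butcher_product} at size $N+1$.
\end{enumerate}

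\textbf{Special case.} Taking $\gamma=\tau$ gives $2b(\tau\butcher\tau)=0$, hence $b(\tau\butcher\tau)=0$.

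\textbf{Main obstacle.} The hardest step is the equivalence \emph{``B-series map symplectic for all Hamiltonian $f$ $\iff$ the coefficient relation''}, used in both directions, together with its vector-field analogue. This needs independence of elementary differentials over the class of Hamiltonian systems. I would take that equivalence from \cite{ChartierNIB07,HairerWannerGNI} rather than reprove it. Alternatively, one can verify directly that the substitution law $\star$ maps the Lie-type relation \eqref{eq:b_Butcher_product} to the group-type symplectic relation, which is a purely combinatorial but tedious check.
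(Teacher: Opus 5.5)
Your proof is correct. It is essentially the standard argument behind the result the paper cites from \cite[Section IX.9]{HairerWannerGNI}: induct on $|\tau|+|\gamma|$, note that the truncated field $\frac1h B_f(b_N)$ is Hamiltonian, so its time-$h$ flow is a symplectic B-series whose coefficients agree with $a$ below order $N+1$ and differ from $a$ by exactly $b$ at order $N+1$, then subtract the two symplecticity relations. The paper gives no proof beyond that citation, and you correctly identify and cite the one non-elementary input. That input is the necessity direction of the characterization of symplectic B-series, or equivalently the algebraic fact that $b\mapsto b\star(1/\gamma)$ sends the Lie-type relation to the group-type one.
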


Property \cref{eq:b_Butcher_product} is a consequence of the symplecticity of the integrator and is equivalent to the fact that the modified vector field $\tilde{f}_h$ is Hamiltonian \cite[Theorem IX.9.3]{HairerWannerGNI}.
It is used to introduce an equivalence relation on $T$,
\[ \tau \butcher \gamma \  \sim \  \gamma \butcher \tau \,, \quad \text{for all } \tau, \gamma \in T \,, \]
which, together with the property $b(\tau \butcher \tau) = 0$, leads to the introduction of \emph{non-superfluous free trees} which are equivalence classes of rooted trees with respect to the equivalence relation $\sim$ excluding equivalence classes which contain $\gamma = \tau \butcher \tau$ for some $\tau \in T$. The set of non-superfluous free trees is denoted by $FT^\prime$.

We set the \emph{canonical representative} of every equivalence class to be the maximal element of the class with respect to a total order of rooted trees defined in \cite{MuruaHAR06,bogfjellmoHamiltonianBseriesLie2017}. Let the canonical representative of $\hat{\tau} \in FT^\prime$ be denoted by $\hat{\tau}_* \in T$. 

The values of the coefficient map $b$ are completely determined by the values of $b$ on the canonical representatives of non-superfluous free trees which greatly reduces the number of computations that need to be performed to compute the values of $b$.

Canonical representatives in $FT^\prime$ up to size $5$ are
\[ \forestBB\,, \quad \forestCB \,, \quad \forestDB \,, \quad \forestEB \,, \quad \forestFB \,, \quad \forestGB \,, \dots. \]

The values of the coefficient map $b$ given in \cref{thm:mod_eq_Bseries} for canonical representatives of order up to $5$ are given in \cref{figure:values_b}.
\begin{table} 
    \centering
    \renewcommand{\arraystretch}{1.5}
    \begin{tabular}{ccc}
        \toprule
        $|\hat{\tau}_*|$ & $\hat{\tau}_*$ & $b(\hat{\tau}_*)$ \\
        \midrule
        $1$ & $\forestHB$            & $1$ \\
        $3$ & $\forestIB$       & $a(\forestJB) - \frac{1}{3}$ \\
        $4$ & $\forestKB$     & $a(\forestLB) - \frac{3}{2} a(\forestMB) + \frac{1}{4}$ \\
        $5$ & $\forestNB$   & $a(\forestOB) - 2 a(\forestPB) + a(\forestQB) - \frac{1}{30}$ \\
        $5$ & $\forestRB$  & $a(\forestSB) - \frac{1}{2} a(\forestTB) + \frac{1}{4} a(\forestUB) - \frac{7}{120}$ \\
        $5$ & $\forestVB$ & $a(\forestWB) - \frac{1}{20}$ \\
        \bottomrule
    \end{tabular}
    \caption{Values of $b(\hat{\tau}_*)$ for all canonical representatives of $\hat\tau \in FT^\prime$ of size up to $5$, $|\hat{\tau}_*| \leq 5$.}
    \label{figure:values_b}
\end{table}

\begin{example}
    For the implicit midpoint method, the coefficient map is given by $a(\tau) = 1 / 2^{|\tau|-1}$. Therefore,
    \[ b(\forestXB) = - \frac{1}{12} \,, \quad b(\forestYB) = 0 \,, \quad b(\forestAC) = \frac{7}{240} \,, \quad b(\forestBC) = \frac{1}{240} \,, \quad b(\forestCC) = \frac{1}{80} \,. \]
\end{example}

We recall that the modified equation of a symplectic integrator applied to a Hamiltonian system is a modified Hamiltonian system. That is, there exists a modified Hamiltonian $\tilde{H}_h$ such that $\tilde{f}_h = J^{-1} \nabla \tilde{H}_h$. Let us now give an explicit formula for $\tilde{H}_h$ as it is derived in \cite[Section IX.9]{HairerWannerGNI}. 

\begin{definition}
    An \emph{elementary Hamiltonian} $H(\tau)$ for $\tau \in T$ is defined as
    \[ H(\bullet) = H\,, \quad H(\tau) = H^{(n)} \big( \dF_f(\tau_1), \dots, \dF_f(\tau_n) \big) \,, \]
    for $\tau = B^+(\tau_1 \cdots \tau_n)$.
\end{definition}

Elementary Hamiltonians are used to represent the modified Hamiltonian as a formal sum over non-superfluous non-rooted trees as follows.

\begin{theorem}
    \label{thm:mod_Hamiltonian}
    Consider a symplectic integrator applied to a Hamiltonian system, then, its modified equation is Hamiltonian with the modified Hamiltonian given by
    \[ \tilde{H}_h (y) = \sum_{\hat\tau \in FT^\prime} h^{|\hat\tau_*|-1} \frac{b(\hat\tau_*)}{\sigma(\hat\tau_*)} H(\hat\tau_*)(y) \,. \]
\end{theorem}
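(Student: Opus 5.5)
We will show directly that $J^{-1}\nabla \tilde H_h = \tilde f_h = \frac1h B_f(b)$. Since both sides are formal series in $h$, it suffices to compare them term by term. The tool is a Leibniz-type formula for the gradient of an elementary Hamiltonian.

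\textbf{Step 1: differentiating an elementary Hamiltonian.} Write $\tau = B^+(\tau_1\cdots\tau_n)$. The derivative of $H(\tau)(y) = H^{(n+1-1)}(\dF_f(\tau_1),\dots,\dF_f(\tau_n))$ in a direction $v$ has two kinds of terms. One term comes from differentiating $H^{(n)}$ itself, which gives $H^{(n+1)}(v,\dF_f(\tau_1),\dots)$. The other terms come from differentiating one of the arguments $\dF_f(\tau_i)$. By recursion these produce contributions in which $v$ is inserted at a vertex of the tree.

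Next we use $f = J^{-1}\nabla H$, which gives $H'(\cdot) = \langle J f,\cdot\rangle$. Transposing each factor with the skew form $J$ then lets us rewrite every such term so that $v$ sits at the root. The result, after applying $J^{-1}$, is a standard identity (as in \cite[Lemma IX.9.x]{HairerWannerGNI}):
\[
J^{-1}\nabla H(\tau) = \sum_{e\in E(\tau)} \pm\, \dF_f(\tau^e),
\]
Here $\tau^e$ is the rooted tree obtained by re-rooting $\tau$ at one endpoint of the edge $e$, that is, the tree $\gamma\butcher\delta$ obtained by splitting $\tau$ at $e$. We also include the term for the root vertex itself. The sign is governed by orientation.

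Equivalently, for $\tau = \delta\butcher\gamma$-type decompositions we obtain $J^{-1}\nabla H(\tau) = \dF_f(\tau) + \sum(\text{re-rooted trees})$, with signs $(-1)$ appearing according to the relation $\tau\butcher\gamma\sim\gamma\butcher\tau$. We will fix these conventions carefully with symmetry factors, because the counts $\sigma$ must match.

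\textbf{Step 2: summing over the equivalence class.} We sum $h^{|\hat\tau_*|-1}\frac{b(\hat\tau_*)}{\sigma(\hat\tau_*)}J^{-1}\nabla H(\hat\tau_*)$ over $\hat\tau\in FT'$. By Step 1, each term expands into elementary differentials of the rooted trees in the class $\hat\tau$. We then use \eqref{eq:b_Butcher_product} in the form $b(\tau\butcher\gamma) = -b(\gamma\butcher\tau)$. Repeated application shows that the value of $b$ on any rooted tree of the class equals $\pm b(\hat\tau_*)$, with exactly the sign produced in Step 1. Hence the expansion becomes $\sum_{\tau\in\hat\tau} h^{|\tau|-1}\frac{b(\tau)}{\sigma(\tau)}\dF_f(\tau)$. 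The symmetry factors are converted via an orbit-counting argument: the number of edges/vertices mapping to a given rooted tree $\tau$ equals $\sigma(\hat\tau_*)/\sigma(\tau)$, where $\sigma(\hat\tau_*)$ is the automorphism count of the free tree.

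Classes excluded from $FT'$ (superfluous ones, containing some $\tau\butcher\tau$) contribute nothing. The reason is that $b(\tau\butcher\tau)=0$, so by the sign relation $b$ vanishes on the whole class. Since $FT'$ together with the superfluous classes partitions $T$, the total is $\frac1h B_f(b)$, which is $\tilde f_h$ by \cref{thm:mod_eq_Bseries}.

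\textbf{Main obstacle.} The delicate point is the bookkeeping in Step 2. We must verify that the sign from re-rooting in the gradient formula agrees with the sign from \eqref{eq:b_Butcher_product}. We must also verify the orbit–stabilizer identity relating $\sigma$ of the free tree to $\sigma$ of its rooted versions, including the edge-symmetric case. In that case an automorphism swaps the two ends of an edge, and it is precisely there that $\tau\butcher\tau$ arises and the term is excluded. I would first check the identity on the size $\le 4$ trees in \cref{figure:values_b} to calibrate conventions, and then prove the general case by induction on $|\tau|$ using the recursion for $\sigma$.
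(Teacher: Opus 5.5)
The paper does not prove this statement; it imports it from \cite[Sect.~IX.9]{HairerWannerGNI}. Your strategy is essentially the argument given there: a signed re-rooting formula for $J^{-1}\nabla H(\tau)$, matched against the antisymmetry \eqref{eq:b_Butcher_product} of $b$. As written, however, Step~2 does not produce the stated coefficient $1/\sigma(\hat\tau_*)$.

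The Leibniz expansion gives one term per vertex $x$ of $\tau$:
$J^{-1}\nabla H(\tau)=\sum_{x\in V(\tau)}(-1)^{d(r,x)}\dF_f(\tau_x)$, where $\tau_x$ is $\tau$ re-rooted at $x$ and $d(r,x)$ is the distance from $x$ to the root. The vertices $x$ with $\tau_x=\theta$ form one orbit of the automorphism group $\mathrm{Aut}(\hat\tau)$ of the \emph{free} tree. That orbit has size $|\mathrm{Aut}(\hat\tau)|/\sigma(\theta)$. So the coefficient of $\dF_f(\theta)$ you actually obtain is $\frac{|\mathrm{Aut}(\hat\tau)|}{\sigma(\hat\tau_*)}\cdot\frac{b(\theta)}{\sigma(\theta)}$.

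You assert that $\sigma(\hat\tau_*)$ ``is the automorphism count of the free tree''. In the theorem, though, $\sigma(\hat\tau_*)$ is the rooted symmetry of the representative, i.e.\ the order of the stabilizer of its root. It equals $|\mathrm{Aut}(\hat\tau)|$ only if that root is fixed by every automorphism, so it depends on the choice of representative. For example, taking the chain $B^+(B^+(\bullet))$ (with $\sigma=1$ but $|\mathrm{Aut}|=2$) as representative of the $3$-vertex class would give $b(B^+(\bullet\bullet))\,H(B^+(\bullet\bullet))$, which is twice the correct term.

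You therefore need an additional lemma: the canonical representative, maximal for the order of \cite{MuruaHAR06,bogfjellmoHamiltonianBseriesLie2017}, is rooted at an $\mathrm{Aut}(\hat\tau)$-fixed vertex. A natural candidate is a central vertex: the center is preserved by all automorphisms, and in a non-superfluous class no automorphism flips the central edge. All representatives in \cref{figure:values_b} are of this kind. Without such a lemma, your argument proves the formula with $|\mathrm{Aut}(\hat\tau)|$ in place of $\sigma(\hat\tau_*)$, not the stated one.

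Step~1 should also be stated in this vertex form. Your edge-indexed description builds $\tau^e$ by Butcher-multiplying the two pieces obtained by cutting $e$. That is correct only for edges at the root. For a deeper vertex, the root-containing piece must itself be re-rooted. The sign is also $(-1)^{d(r,x)}$, one factor for each edge on the path whose skew factor $J^{-1}$ gets transposed, rather than a single $-1$. For example, $J^{-1}\nabla H(B^+(B^+(\bullet)))=2\dF_f(B^+(B^+(\bullet)))-\dF_f(B^+(\bullet\bullet))$, where the leaf, at depth two, contributes with a plus sign.

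Finally, matching these signs with $b(\theta)=(-1)^{d(r,x)}b(\hat\tau_*)$ requires the parity of $d(r,x)$ to be constant on each vertex orbit. In a tree this fails exactly when some automorphism flips an edge, i.e.\ for superfluous classes. This should be proved, not calibrated on small examples.
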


\begin{example}
    The modified Hamiltonian of the implicit midpoint method is
    \begin{align*}
        \tilde{H}_h (y) = H(y) &- \frac{1}{24} h^2 H^{(2)}(y) \big[ f(y), f(y) \big] \\
                               &+ \frac{7}{5760} h^4 H^{(4)} (y) \big[ f(y), f(y), f(y), f(y) \big] \\
                               &+ \frac{1}{480} h^4 H^{(3)} (y) \big[ f(y), f^\prime(y) f(y), f(y) \big] \\
                               &+ \frac{1}{160} h^4 H^{(2)} (y) \big[ f^\prime(y) f(y), f^\prime (y) f(y) \big] + \OO(h^6) \,.
    \end{align*}
\end{example}

\subsubsection{Analytic bounds}

We note that the series defining the modified Hamiltonian $\tilde{H}_h$ is a formal power series in $h$ and its convergence is not guaranteed. As shown in \cite[Ch. IX.8]{HairerWannerGNI} (see also \cite{BeGi1994}), there exists an optimally truncated modified Hamiltonian $\tilde{H}_h^N$ such that the numerical trajectory generated by the integrator $\Phi_h$ preserves $\tilde{H}^N_h$ over exponentially long times. More precisely, there exists a constant $c > 0$ such that
\[ \tilde{H}_h^N (y_n) = \tilde{H}_h^N (y_0) + \OO(e^{-c/h}) \,, \quad \text{for } nh \leq e^{c/h} \,. \]
This result is obtained by assuming that the trajectory stays on a compact domain. This assumption does not hold in our setting as is discussed in the introduction and, therefore, we obtain this result with a different set of assumptions in \cref{sec:long_time_behavior}. The key ingredients for our derivation are the bounds of the number of rooted trees of size $j \in \N$ as well as the bounds of $|b(\tau)|$ for $\tau \in T$. We derive these bounds below.

\begin{lemma}
    \label{lem:combinatorial_bounds_classic}
    The size of the set $T_j$ of classical trees of size $j$ is bounded as,
    \[ |T_j| \leq 3^{j-1} \,. \]
\end{lemma}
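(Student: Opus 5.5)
We need the bound $|T_j|\le 3^{j-1}$ for non-planar rooted trees with $j$ vertices. The plan is to bound non-planar trees by planar ones and then count planar trees exactly. Every non-planar tree has at least one planar embedding, so $|T_j|$ is at most the number of planar (ordered) rooted trees with $j$ vertices. That number is the Catalan number $C_{j-1} = \frac{1}{j}\binom{2j-2}{j-1}$. To see this, encode a planar tree by its depth-first traversal: write an up-step when an edge is first traversed and a down-step when it is traversed back. This gives a Dyck path of length $2(j-1)$, and the encoding is bijective.

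It then suffices to show $C_{j-1}\le 3^{j-1}$, which follows from the cruder bound
\[
C_{m} \le \binom{2m}{m} \le \sum_{k=0}^{2m}\binom{2m}{k} = 4^m .
\]
Since $4^m$ exceeds $3^m$, this is too crude on its own, so I will use the Catalan recursion instead. The ratio satisfies
\[
\frac{C_{m+1}}{C_m} = \frac{2(2m+1)}{m+2} < 4 ,
\]
but this ratio exceeds $3$ once $m\ge 5$, so planar trees alone fail for large $j$ and the comparison must exploit non-planarity.

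The better route is to work directly with non-planar trees. Otter's asymptotics give $|T_j|\sim c\,\alpha^{j} j^{-3/2}$ with $\alpha\approx 2.9558<3$, but I want an elementary argument. I would prove $|T_{j+1}|\le 3|T_j|$, or more flexibly prove the bound by strong induction on $j$. Write $\tau=B^+(\tau_1\cdots\tau_n)$. The subforest $\tau_1\cdots\tau_n$ is a multiset of trees of total size $j-1$. Let $F_m$ denote the number of non-planar forests of size $m$, so that $|T_j|=F_{j-1}$.

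The induction uses the generating function identity
\[
\sum_m F_m x^m=\prod_{k\ge1}(1-x^k)^{-|T_k|}.
\]
Taking the logarithmic derivative gives the standard recursion
\[
m F_m=\sum_{i=1}^{m} F_{m-i}\, s_i, \qquad s_i=\sum_{d\mid i} d\,|T_d| .
\]
Inserting the inductive hypothesis $|T_d|\le 3^{d-1}$ and $F_{m-i}\le 3^{m-i}$ bounds the right-hand side by a sum of geometric terms.

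I expect this to be the main obstacle: the crude estimate $s_i\le i\,3^{i-1}\cdot(\text{small})$ must be made tight enough to close the induction with constant exactly $3$. If the constants do not close, I would check the claim directly for small $j$ (the sequence runs $1,1,2,4,9,20,48,115,\dots$, all at most $3^{j-1}$) and run the induction only from some $j_0$ on, where the geometric slack is available.

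An alternative that avoids these constants is an injective encoding of non-planar trees into ternary words of length $j-1$. For each non-root vertex in a canonical depth-first order, record one of three symbols: ``go to first child'', ``go to next sibling'', or ``go up''. Choosing the canonical ordering of branches, for instance sorted by the total order of the paper's cited references, makes the tree determined by the word. This yields $|T_j|\le 3^{j-1}$ at once. I would try this encoding first and verify that the up-moves can be merged so that each vertex consumes exactly one symbol.
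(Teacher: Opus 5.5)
Your proposal does not yet contain a proof. You are right to discard the planar bound (indeed $C_{17}>3^{17}$), but neither remaining route closes as described.

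In the recursion route the obstruction is structural, not a matter of small cases. Insert $|T_d|\le 3^{d-1}$, hence $F_{m-i}\le 3^{m-i}$, into $mF_m=\sum_{i=1}^m F_{m-i}s_i$. The divisor $d=i$ alone contributes $\sum_{i=1}^m 3^{m-i}\, i\, 3^{i-1}=3^{m-1}m(m+1)/2$. So the bound you obtain for $F_m$ is at least $3^{m-1}(m+1)/2$, which exceeds the target $3^m$ for every $m\ge 6$. The exponential hypothesis loses a polynomial factor at every step, so starting the induction at a later $j_0$ creates no slack. The ratio version fails the same way. From $F_k\le 3F_{k-1}$ for $k<m$ you get only $mF_m\le 3(m-1)F_{m-1}+s_m$, and closing would require $s_m\le 3F_{m-1}$, whereas $s_m\ge m|T_m|=mF_{m-1}$. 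An induction on this recursion must build the $d^{-3/2}$ decay of Otter's asymptotics into its hypothesis. With $\alpha\approx 2.9558$ this close to $3$, that is delicate.

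The ternary encoding breaks at exactly the step you defer. Once the up-moves are merged into one symbol, the word no longer records how many levels you climb, and a canonical ordering of branches does not recover this. List larger subtrees first. Let $A$ be the root with two children, one heading a path of three vertices and the other a leaf. Let $B$ be the root with a single child $a$, where $a$ has two children, one heading a path of two vertices and the other a leaf. Both have five vertices and both produce the word (first child, first child, first child, up), yet the root of $A$ has two children and that of $B$ only one. With smaller subtrees first, the same collision arises from paths of three and four vertices under the root versus paths of two and four vertices under its only child. More generally, since $C_{j-1}>3^{j-1}$ for $j\ge 18$, no ternary word of length $j-1$ can determine a planar tree with $j$ vertices. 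Injectivity would therefore have to come from the canonical-form constraint, and you give no mechanism by which it does.

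For comparison, the paper does not give an elementary argument either: it quotes Otter's asymptotics $|T_j|\sim D\alpha^j j^{-3/2}$ and concludes informally. To make that route rigorous, use two facts from Otter's classical analysis: $\sum_j |T_j|x^j$ has nonnegative coefficients, and it takes the value $1$ at its radius of convergence $1/\alpha$. This gives $|T_j|\le \alpha^j$, which is at most $3^{j-1}$ as soon as $(3/\alpha)^j\ge 3$, i.e.\ for $j\ge 74$. The remaining cases $j\le 73$ are a finite check, and your recursion computes those values exactly.
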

\begin{proof}
    Following \cite{OtterNT48}, the asymptotic estimate for the number $|T_j|$ of non-labeled rooted trees of size $j \in \N$ is given by,
    \[ |T_j| \sim D \alpha^j j^{-3/2} \,, \quad \text{as } j \to \infty \,, \]
    with $D \approx 0.44$ and $\alpha \approx 2.955$. Since $|T_1| = 1$ and the growth of the number of trees accelerates while never growing faster than $3^j$, we have, $|T_j| \leq 3^{j-1}$ for all $j \in \N$.
\end{proof}

Let us consider the product $a \graft b$ of coefficient maps $a, b : T \to \R$ defined as,
\[ (a \graft b) (\tau) = \sum_{e \in E(\tau)} a(\tau \setminus_1 e) b(\tau \setminus_2 e) \,, \]
where $E(\tau)$ is the set of edges of $\tau$, $\tau \setminus e$ is the pair of trees obtained by cutting the edge $e$ of $\tau$, $\tau \setminus_2 e$ is the tree which contains the root of $\tau$, and $\tau \setminus_1 e$ is the tree which does not contain the root of $\tau$. Coefficient map $a^{\graft m}$ is defined as
\begin{align*}
    a^{\graft m} (\tau) &= \big( a \graft (a \graft \cdots (a \graft a) \cdots ) \big) (\tau) \\
                        &= \sum_{\substack{e_i \in E(\tau)\\ i = 1, \dots, m}} a(\tau \setminus_1 e_1) \cdots a(\tau \setminus_1 e_m) a\big(\tau \setminus_2 (e_1, \dots, e_m)\big) \,,
\end{align*}
where $\tau \setminus_2 (e_1, \dots, e_m) = ( \cdots (\tau \setminus_2 e_1) \cdots ) \setminus_2 e_m$ is the tree obtained by cutting the edges $e_1, \dots, e_m$ of $\tau$ and keeping only the part which contains the root of $\tau$. We note that $a^{\graft m} (\tau) = 0$ if $m \geq |\tau|$ since there are only $|\tau| - 1$ edges in $\tau$. We denote $a^{\graft 0} = 0$.

Let $b : T \to \R$ denote the coefficient map for the modified equation of the symplectic Runge--Kutta method with coefficient map $a : T \to \R$.
Following the discussion in \cite[Chapter IX.9.1]{HairerWannerGNI} (see also \cite{ChartierASB10a}), we have,
\[ a = \exp^\graft(b) = \sum_{m=1}^\infty \frac1{m!} b^{\graft m} \,, \]
analogously to $1/\gamma = \exp^\graft(\delta_\bullet)$ for the exact solution where $\delta_\bullet (\forestDC) = 1$ and $\delta_\bullet (\tau) = 0$ for $\tau \neq \bullet$.

\begin{lemma}
    \label{lem:bound_b}
    Let an integrator $\Phi_h$ have a Taylor expansion written as a Butcher series with coefficient map $a$ satisfying $|a(\tau)| \leq C^{|\tau|}_a$ for some constant $C_a \in \R$ and let the modified vector field $\tilde{f}_h$ be expressed using a Butcher series with coefficient map $b$, then,
    \[ |b(\tau)| \leq e (|\tau| - 1)! C_a^{|\tau|} \,. \]
\end{lemma}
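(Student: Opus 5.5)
The plan is a strong induction on $|\tau|$, based on inverting the relation $a = \exp^\graft(b)$. Since $b^{\graft m}(\tau)=0$ for $m\ge |\tau|$ and the $m=1$ term is $b$ itself, isolating it gives the recursion
\[
  b(\tau) \;=\; a(\tau) \;-\; \sum_{m=2}^{|\tau|-1} \frac{1}{m!}\, b^{\graft m}(\tau).
\]
Every value of $b$ on the right-hand side is evaluated on a tree strictly smaller than $\tau$: either a cut-off branch $\tau\setminus_1 e_i$, or the root part $\tau\setminus_2(e_1,\dots,e_m)$. So $b$ is uniquely determined recursively. The base case $|\tau|=1$ gives $b(\bullet)=a(\bullet)$, which satisfies $|b(\bullet)|\le C_a \le e\,C_a$.

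For the inductive step, set $n=|\tau|$ and $\beta_k := \max_{|\sigma|=k}|b(\sigma)|/C_a^{k}$. The homogeneity $|\tau|=\sum_i|\text{pieces}|$ makes every term of $b^{\graft m}(\tau)$ carry exactly the factor $C_a^{n}$, so the powers of $C_a$ factor out. A term with $m$ cuts produces $m+1$ pieces of sizes $k_0,\dots,k_m\ge 1$ with $\sum k_i = n$. I will use the elementary inequality $\prod_i (k_i-1)! \le \big(\sum_i (k_i-1)\big)! = (n-m-1)!$, and bound the number of admissible ordered $m$-tuples of edges by $(n-1)!/(n-1-m)!$. Combining these with the induction hypothesis $\beta_k\le e\,(k-1)!$ yields
\[
  \beta_n \le 1 + (n-1)!\sum_{m\ge 2} \frac{\kappa_m}{m!},
\]
where $\kappa_m$ collects the constants coming from the $m+1$ pieces. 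It then remains to check that the right-hand side is at most $e\,(n-1)!$. The factorial $(n-1)!$ in the target is exactly what these counting estimates produce, and the factor $e$ should be the sum $\sum_m 1/m!$.

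The main obstacle is this last step. The naive bound multiplies the constant $e$ once per piece, giving $\kappa_m = e^{m+1}$, and then the sum $\sum_m e^{m+1}/m!$ is too large to close the induction. I would first sharpen the counting. Only the root piece should carry the full induction constant; the $m$ branches $\tau\setminus_1 e_i$ should be controlled by the stronger per-size bound from the previous level, and the $1/m!$ should be matched against the fact that cuts performed in a different order give the same unordered set of edges. This should reduce $\kappa_m$ to something like $m!\cdot(\text{const})$ per level and leave a convergent geometric or exponential tail, so that the series sums to $e-1$. If this bookkeeping does not close, my fallback is a majorant argument. I would compare $b$ with the coefficients of the scalar model in which $a(\tau)$ is replaced by $C_a^{|\tau|}$, where $\exp^\graft$ can be inverted explicitly by a one-dimensional generating function. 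I would then read off the bound $e\,(n-1)!$ from the Taylor coefficients of the scalar logarithm-type series that results.
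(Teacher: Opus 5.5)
\textbf{There is a genuine gap: the inductive step is never proved, and the estimates you actually write down cannot prove it for any constant.}

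Solving $a=\exp^\graft(b)$ triangularly for $b$ is a sound starting point. Mind the off-by-one, though. If $m$ counts factors, the sum must run up to $m=|\tau|$; otherwise for $\tau=B^+(\bullet)$ you lose the term $-\frac12 b(\bullet)^2$. In the inductive step you then switch to letting $m$ count cuts.

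Suppose you bound every piece by a hypothesis $|b(\sigma)|\le K(|\sigma|-1)!\,C_a^{|\sigma|}$ and use your two counting estimates. Then the term with $j$ factors is at most $\frac{K^j}{j!}(n-1)!$, so you would need $\frac{1}{(n-1)!}+\sum_{j=2}^{n}\frac{K^j}{j!}\le K$.
\begin{itemize}
\item At $n=2$ this reads $1+K^2/2\le K$, which is false for every $K$.
\item As $n\to\infty$ it requires $e^K\le 1+2K$, i.e.\ $K\le 1.26$. But $b(B^+(\bullet))=a(B^+(\bullet))-\frac12 a(\bullet)^2$ can equal $-\frac32 C_a^2$, so any valid $K$ is at least $\frac32$.
\item For the target constant $K=e$ one gets $e^e-1-e\approx 11.4>e$.
\end{itemize}
So the refinements you sketch are not a finishing touch but the whole proof: exact treatment of singleton pieces, exploiting that $\prod_i(k_i-1)!$ is far below $(n-j)!$ when several pieces are large, and matching orders of cuts. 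Nothing in the proposal shows they reach the constant $e$.

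Your fallback is also mis-specified. The $b$ of the model $a(\tau)=C_a^{|\tau|}$ does not majorize $|b|$: for $|\tau|=2$ it equals $\frac12 C_a^2$, below the attainable $\frac32 C_a^2$, because inverting the exponential introduces signs. A genuine majorant is the all-plus recursion $\hat b(\tau)=C_a^{|\tau|}+\sum_{j\ge2}\frac1{j!}\hat b^{\graft j}(\tau)$. Bounding $\hat b$ uniformly over all tree shapes of a given size is exactly the missing estimate.

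\textbf{How the paper proceeds, and why you cannot simply copy it.} The paper avoids induction altogether. It writes $b=\log^\graft(a)$ as an explicit alternating series of products of values of $a$ only. Each term is therefore bounded once by $C_a^{|\tau|}$, and no constants compound. Ordered $m$-tuples of edges are counted by $(n-1)!/(n-m-1)!$. The factor $e$ comes from $\sum_m\frac{1}{m\,(n-m-1)!}\le\sum_k\frac1{k!}$, not from $\sum_m 1/m!$ as you anticipated.

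Since $\graft$ is not associative, this naive logarithm is not the exact inverse of the (correct) exponential you start from. On $B^+(\bullet\bullet)$ it yields $a(B^+(\bullet\bullet))-a(\bullet)\,a(B^+(\bullet))+\frac23 a(\bullet)^3$. Solving your recursion yields the same expression with $\frac16 a(\bullet)^3$ instead. Only the latter reproduces the entry $a(B^+(\bullet\bullet))-\frac13$ of Table~\ref{figure:values_b} for symplectic methods, where $a(\bullet)=1$ and $a(B^+(\bullet))=\frac12$.

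An exact explicit inverse, the logarithm in the Butcher group, also contains simultaneous cuts. For $\tau=B^+(\bullet^8)$ its $1/m$-weighted term count is about $1.75\cdot 10^5$, already above $e\cdot 8!\approx 1.10\cdot 10^5$. So on either route you still need a finer, shape-dependent estimate, or an argument that uses cancellations, to obtain the constant $e$.
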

\begin{proof}
    The coefficient map $b : T \to \R$ can be computed using the logarithm with respect to the product $\graft$ as,
    \[ b = \log^\graft(a) = \sum_{m=1}^\infty \frac{(-1)^m}{m} a^{\graft m} \,. \]
    Therefore, we obtain the following bound for $b(\tau)$,
    \begin{align*}
        |b(\tau)| &\leq \sum_{m=1}^{|\tau|-1} \frac1{m} |a^{\graft m}(\tau)| \\
                  &\leq \sum_{m=1}^{|\tau|-1} \frac1{m} \sum_{\substack{e_i \in E(\tau)\\ i = 1, \dots, m}} |a(\tau \setminus_1 e_1)| \cdots |a(\tau \setminus_1 e_m)| |a\big(\tau \setminus_2 (e_1, \dots, e_m)\big)| \\
                  &\leq \sum_{m=1}^{|\tau|-1} \frac1{m} C_a^{|\tau|} \sum_{\substack{e_i \in E(\tau)\\ i = 1, \dots, m}} 1 \,,
    \end{align*}
    where we use the bound of $|a(\tau)|$ for $\tau \in T$. It remains to bound the number of terms in the sum indexed by $e_i \in E(\tau)$ for $i = 1, \dots, m$. We note that depending on the structure of the tree $\tau$, the order in which the edges are chosen may or may not matter. In the worst case, the order does matter and we have $|E(\tau)| \cdot (|E(\tau)| - 1) \cdots (|E(\tau)| - m + 1)$ terms in the sum, which is bounded by $(|\tau| - 1)! / (|\tau| - m - 1)!$. Therefore, we have,
    \begin{align*}
        |b(\tau)| &\leq \sum_{m=1}^{|\tau|-1} \frac1{m} C_a^{|\tau|} \frac{(|\tau| - 1)!}{(|\tau| - m - 1)!} \\
                  &\leq C_a^{|\tau|} (|\tau| - 1)! \sum_{m=1}^{|\tau|-1} \frac1{m (|\tau| - m - 1)!}
                  \intertext{we remove $1/m$ term and reorder the sum,}
                  &\leq C_a^{|\tau|} (|\tau| - 1)! \sum_{m=1}^{|\tau|-1} \frac1{(m-1)!} \leq e (|\tau| - 1)! C_a^{|\tau|} \,,
    \end{align*}
    where we use $\sum_{m=1}^{|\tau|-1} \frac1{(m-1)!} \leq \sum_{m=1}^{\infty} \frac1{ (m-1)!} = e$.
\end{proof}

\section{Butcher series over biplanar forests}
\label{sec:butcher_biplanar}

We introduce the algebraic tools necessary for the description of Lie--Poisson reduction of Butcher series which leads to the introduction of biplanar forests and forest momentum map. Lie--Poisson reduction of Butcher series is a key ingredient in the backward error analysis of ISOSRYK methods.

Let $\UU$ be the universal enveloping algebra of the Lie algebra $\fraku(n)$. Let us consider the commutative tensor product $\UU^2 = \UU \cdot \UU$ where $(A,B) = (B,A)$ for $(A,B) \in \UU^2$. We endow $\UU^2$ with the product,
\[ (A, B) \cdot (C, D) = (AC, BD) + (AD, BC) \,, \quad \text{for } A, B, C, D \in \UU \,. \]
Let us introduce the following action $\odot : \UU^2 \otimes \fraku(n) \to \fraku(n)$. Let $A, B \in \UU$ and $W \in \fraku(n)$, then,
\[ (A, B) \odot W := AWB^\dagger + BWA^\dagger \,. \]
We define $A \odot W$ as $(A,I) \odot W$. This action arises naturally when considering directional derivatives of the momentum map $\mu : T^*\text{U}(n) \to \fraku(n)^* \cong \fraku(n)$, $\mu(Q,P) = Q^\dagger P$ as is done in \cref{sec:IsoSyRK_expansion}.

In the rest of this section, we introduce Butcher series over biplanar forests which are constructed using the action of $\UU^2$ on $\fraku(n)$. We introduce the forest momentum map and use it to obtain the Butcher series of isospectral symplectic Runge--Kutta (ISOSYRK) methods from the corresponding Butcher series of the symplectic Runge--Kutta methods through the \emph{Lie--Poisson reduction of Butcher series}. We use the obtained biplanar Butcher series to compare ISOSYRK methods to the well-known Runge--Kutta--Munthe-Kaas methods for Lie--Poisson systems.

\subsection{Biplanar forests}

Let us introduce biplanar trees and forests which are used to denote the terms appearing in the Taylor expansion of ISOSYRK methods as well as the exact solution of an isospectral differential equation \cref{eq:isospectral}. In particular, biplanar forests represent elements of $\UU^2$ through the map $\dF_f$ defined in \cref{def:F_f_bp}.
\begin{definition}
    \label{def:bp_tree_forest}
    A \emph{biplanar tree} $\tau = B^+(\pi,\eta)$ is a tree with two sets of branches $\pi$ and $\eta$ where $\pi$ and $\eta$ are ordered monomials of biplanar trees. A \emph{biplanar forest} is a pair $(\pi, \eta)$ of ordered monomials of biplanar trees $\pi$ and $\eta$. We assume that the pairing is commutative, that is, $(\pi, \eta) = (\eta, \pi)$.
\end{definition}

Let $BT$ and $BF$ denote the sets of biplanar trees and forests defined in \cref{def:bp_tree_forest}. 
We draw biplanar forest $(\pi, \eta)$ by separating $\pi$ and $\eta$ by $\times$. If $\pi = \one$ or $\eta = \one$, then $\times$ is omitted.
Some examples of biplanar forests can be found below,
\[ \one, \quad \forestEC, \quad \forestFC\,, \quad \text{where } \forestGC \neq \forestHC\,, \quad \text{but } \forestIC = \forestJC \,. \]
The vector spaces spanned by biplanar forests and trees are denoted by $\BF$ and $\BT$, respectively. Let $T(\BT)$ denote the tensor algebra of biplanar trees, then, $\BF$ is the commutative tensor product of $T(\BT)$ with itself, that is, $\BF = T(\BT)^2 = T(\BT) \cdot T(\BT)$ and $\BT = B^+ (\BF)$ as vector spaces. Let us abuse the notation and denote the set of ordered monomials of biplanar trees by $T(BT)$.

\begin{definition}
    \label{def:F_f_bp}
    Let $\pi, \eta \in T(BT)$ and $(\pi, \eta) \in BF$, then, we define,
    \begin{align*}
        \dF_f (\pi, \eta) (W) &= \big(\dF_f(\pi)(W), \dF_f(\eta)(W)\big) &&\in \UU^2 \,, \\
        \dF_f (\pi \cdot \eta) (W) &= \dF_f(\eta)(W) \cdot \dF_f(\pi)(W) &&\in \UU \,, \\
        \dF_f \big(B^+\big(\pi, \eta)\big) (W) &= f\big(\dF_f (\pi, \eta) \odot W \big) &&\in \fraku(n) \,,
    \end{align*}
    and $\dF_f (\bullet) (W) = f(W)$. Note that the order of $\pi$ and $\eta$ is reversed in the second expression. We often omit writting $W$ for conciseness.
\end{definition}
For example,
\begin{align*}
    \dF_f(\forestKC) \odot W = &\dF_f(\forestLC) (W) \cdot W \cdot \dF_f(\forestMC)^\dagger + \dF_f(\forestNC) (W) \cdot W \cdot \dF_f(\forestOC)^\dagger \\ 
                                    = &f(f(W) \cdot W) \cdot W \cdot f(W)^\dagger + f(W \cdot f(W)^\dagger) \cdot W \cdot f(W)^\dagger  \\
                                      &+ f(W) \cdot W \cdot f(f(W) \cdot W)^\dagger + f(W) \cdot W \cdot f(W \cdot f(W)^\dagger)^\dagger \,.
\end{align*}

Let the \emph{symmetry} $\sigma(\pi, \eta)$ of a biplanar forest $(\pi, \eta) \in BF$ be defined as,
    \[ \sigma(\pi, \eta) = \begin{cases} 2 \sigma(\pi)^2 \,, \quad \text{if } \pi = \eta \,, \\ \sigma(\pi) \sigma(\eta) \,, \quad \text{otherwise,} \end{cases}\]
    and $\sigma(\pi \cdot \eta) = \sigma(\pi) \sigma(\eta) \,, \sigma \big(B^+(\pi,\eta)\big) = \sigma(\pi, \eta) \,, \sigma(\one) = 1$.
Some examples of the values of the symmetry $\sigma(\pi, \eta)$ for $(\pi, \eta) \in BF$ can be found below,
\[ \sigma(\forestPC) = 2 \,, \quad \sigma(\forestQC) = 8 \,, \quad \sigma(\forestRC) = 2 \,, \quad \sigma(\forestSC) = 1 \,. \]

\begin{definition}
    A \emph{biplanar Butcher series} is the following formal sum, 
    \[ B_f (\alpha) := \sum_{(\pi, \eta) \in BF} h^{|\pi,\eta|} \frac{\alpha(\pi, \eta)}{\sigma(\pi, \eta)} \dF_f (\pi, \eta) \,, \]
    where $\alpha : BF \to \R$ is a coefficient map over biplanar forests and $|\pi,\eta|$ denotes the number of vertices in $(\pi, \eta)$.
\end{definition}

We note that we denote the coefficient maps over classical trees and forests by latin letters, for example, $a : F \to \R \,, \  b : T \to \R$, and the coefficient maps over biplanar trees and forests by greek letters, for example, $\alpha : BF \to \R \,, \  \beta : T(BT) \to \R$. This also allows us to distinguish between classical Butcher series and biplanar Butcher series.

\subsubsection{Coadjoint coefficient maps}

In this section, we introduce \emph{coadjoint coeficient maps} and \emph{infinitisimal coadjoint coefficient maps} which correspond to biplanar Butcher series with nice geometric properties.

Recall that the tensor algebra $T(\BT)$ is a Hopf algebra with the deconcatenation coproduct $\Delta_\cdot : T(\BT) \to T(\BT) \otimes T(\BT)$ defined, for $\tau_i \in BT$, as,
\[ \Delta_\cdot (\tau_1 \cdots \tau_n) = \one \otimes \tau_1 \cdots \tau_n + \sum_{i=1}^{n-1} (\tau_1 \cdots \tau_i) \otimes (\tau_{i+1} \cdots \tau_n) + \tau_1 \cdots \tau_n \otimes \one \,, \]
and antipode $S : T(\BT) \to T(\BT)$ defined as,
\[ S(\tau_1 \cdots \tau_n) = (-1)^{n} \tau_n \cdots \tau_1 \,.  \]
For example, $\Delta_\cdot (\tau_1 \tau_2 \tau_3) = \one \otimes \tau_1 \tau_2 \tau_3 + \tau_1 \otimes \tau_2 \tau_3 + \tau_1 \tau_2 \otimes \tau_3 + \tau_1 \tau_2 \tau_3 \otimes \one$ and $S(\tau_1 \tau_2 \tau_3) = - \tau_3 \tau_2 \tau_1$.

Let $\pi \in T(\BT)$, we note the following property of the elementary differential $\dF_f(\pi)$,
\begin{equation}
    \label{eq:F_antipode}
    \dF_f (\pi)^\dagger = \dF_f \big(S(\pi)\big) \,.
\end{equation}

Let $\alpha : BF \to \R$ be a coefficient map over biplanar forests, then, we denote by $\alpha_T$ the restriction of $\alpha $ to $T(BT) \cong T(BT) \cdot \one \subset BF$, that is, $\alpha_T := \alpha \restrict{T(BT)}$. Given two coefficient maps $\alpha, \tilde\alpha : BF \to \R$, we define $\alpha_T \cdot \tilde{\alpha}_T : T(BT) \to \R$ as,
\[ (\alpha_T \cdot \tilde{\alpha}_T)(\pi) = \sum_{(\pi)} \alpha_T (\pi_{(1)}) \tilde{\alpha}_T(\pi_{(2)}) \,, \]
where $\Delta_\cdot (\pi) = \sum_{(\pi)} \pi_{(1)} \otimes \pi_{(2)}$ is the deconcatenation coproduct of $\pi$ written using Sweedler's notation. Denote by $\delta_\one : T(BT) \to \R$ the coefficient map $\delta_\one (\one) = 1$ and $\delta_\one (\pi) = 0$ for $\pi \neq \one$.

\begin{proposition}
    \label{prop:coadjoint_coeff}
    Let $\alpha : BF \to \R$ satisfy the following properties for all $\pi, \eta \in T(BT)$,
    \begin{equation}
        \label{eq:coadjoint_coeff_prop_1}
        \alpha_T(\one) = 1 \,, \quad \alpha(\pi, \eta) = \alpha_T(\pi) \alpha_T(\eta) \,, \quad \alpha_T \cdot (\alpha_T \circ S) = (\alpha_T \circ S) \cdot \alpha_T = \delta_\one \,,
    \end{equation}
    then, $B_f (\alpha_T) \in \text{U}(n)$ is an element of the unitary group and we have,
    \[ B_f (\alpha) \odot W = \Ad^*_{B_f (\alpha_T)} W \,. \]
    Such coefficient maps $\alpha : BF \to \R$ are called \emph{coadjoint coefficient maps}.
\end{proposition}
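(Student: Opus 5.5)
Write $U := B_f(\alpha_T) = \sum_{\pi \in T(BT)} h^{|\pi|}\frac{\alpha_T(\pi)}{\sigma(\pi)}\dF_f(\pi)$, viewed as a formal series in $h$ with values in $\UU$. I will then prove two things: first $UU^\dagger = U^\dagger U = I$, and second $B_f(\alpha)\odot W = UWU^\dagger$. The second identity is exactly $\Ad^*_U W$, under the identification $\fraku(n)^*\cong\fraku(n)$ used throughout the paper.

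For unitarity, I start from \eqref{eq:F_antipode}, which gives $U^\dagger = \sum_\pi h^{|\pi|}\frac{\alpha_T(\pi)}{\sigma(\pi)}\dF_f(S(\pi))$. Since $S(\tau_1\cdots\tau_k) = (-1)^k\tau_k\cdots\tau_1$ and reversal preserves $\sigma$ and $|\cdot|$, reindexing by $\pi' = $ reversal of $\pi$ yields $U^\dagger = \sum_{\pi'} h^{|\pi'|}\frac{(\alpha_T\circ S)(\pi')}{\sigma(\pi')}\dF_f(\pi')$, i.e. $U^\dagger = B_f(\alpha_T\circ S)$.

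Next I need a product rule in $\UU$ for Butcher series over the tensor algebra. By Definition~\ref{def:F_f_bp}, $\dF_f(\pi\cdot\eta) = \dF_f(\eta)\dF_f(\pi)$. Hence
\[
B_f(\beta)\,B_f(\gamma) = \sum_{\pi,\eta} h^{|\pi|+|\eta|}\frac{\beta(\pi)\gamma(\eta)}{\sigma(\pi)\sigma(\eta)}\dF_f(\eta\cdot\pi).
\]
Every word $\rho$ arises from exactly the splittings given by deconcatenation, and $\sigma(\rho) = \sigma(\eta)\sigma(\pi)$ because $\sigma$ is multiplicative on words. Therefore $B_f(\beta)B_f(\gamma) = B_f(\gamma\cdot\beta)$, where $\cdot$ is the convolution with respect to $\Delta_\cdot$; the reversed order comes from the reversal in $\dF_f$. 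Combining this with the hypothesis $\alpha_T\cdot(\alpha_T\circ S) = (\alpha_T\circ S)\cdot\alpha_T = \delta_\one$ gives $UU^\dagger = U^\dagger U = B_f(\delta_\one) = I$. The condition $\alpha_T(\one)=1$ guarantees that the constant term is $I$.

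For the action, I expand $B_f(\alpha)\odot W$ over unordered pairs $(\pi,\eta)$, using $\alpha(\pi,\eta) = \alpha_T(\pi)\alpha_T(\eta)$ and $(A,B)\odot W = AWB^\dagger + BWA^\dagger$. For $\pi\neq\eta$ the summand is $\frac{\alpha_T(\pi)\alpha_T(\eta)}{\sigma(\pi)\sigma(\eta)}\bigl(\dF_f(\pi)W\dF_f(\eta)^\dagger + \dF_f(\eta)W\dF_f(\pi)^\dagger\bigr)$. This is precisely the contribution of the two ordered pairs $(\pi,\eta)$ and $(\eta,\pi)$ in $UWU^\dagger = \sum_{\pi,\eta}h^{|\pi|+|\eta|}\frac{\alpha_T(\pi)\alpha_T(\eta)}{\sigma(\pi)\sigma(\eta)}\dF_f(\pi)W\dF_f(\eta)^\dagger$.

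For $\pi=\eta$ the symmetry is $\sigma(\pi,\pi) = 2\sigma(\pi)^2$, and $(A,A)\odot W = 2AWA^\dagger$. The factor $2$ cancels, leaving the single diagonal term of $UWU^\dagger$. Summing all contributions gives $B_f(\alpha)\odot W = UWU^\dagger$, which is $\Ad^*_U W$ for unitary $U$.

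I expect the main obstacle to be the bookkeeping in the product formula, specifically checking that the order reversal in $\dF_f(\pi\cdot\eta)$ matches the convention of the convolution. Both orderings appear in the hypothesis, so unitarity holds in either convention, which removes the risk. The remaining care is checking that $\sigma$ of a word is the product of the symmetries of its letters with no extra factorials, since the monomials are ordered; this holds by the definition $\sigma(\pi\cdot\eta) = \sigma(\pi)\sigma(\eta)$. Throughout, the identities are understood as identities of formal power series in $h$.
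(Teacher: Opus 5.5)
Your proposal is correct and follows essentially the same route as the paper: expand $B_f(\alpha)\odot W$ using $\alpha(\pi,\eta)=\alpha_T(\pi)\alpha_T(\eta)$ to get $B_f(\alpha_T)\,W\,B_f(\alpha_T)^\dagger$, identify $B_f(\alpha_T)^\dagger = B_f(\alpha_T\circ S)$ via \eqref{eq:F_antipode} and the self-adjointness of $S$, and deduce unitarity from the two convolution-inverse hypotheses. You also make explicit two steps the paper leaves implicit: the order-reversing product rule $B_f(\beta)B_f(\gamma)=B_f(\gamma\cdot\beta)$, and the factor-$2$ bookkeeping for diagonal pairs $\pi=\eta$, which comes from $\sigma(\pi,\pi)=2\sigma(\pi)^2$.
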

\begin{proof}
    We use the properties \cref{eq:coadjoint_coeff_prop_1} to perform the following computation,
    \begin{align*}
        B_f (\alpha) \odot W &= \sum_{(\pi, \eta) \in BF} \frac{\alpha(\pi, \eta)}{\sigma(\pi, \eta)} \dF_f (\pi, \eta) \odot W 
                           = \sum_{\substack{\pi, \eta \in T(\BT)}} \frac{\alpha(\pi)}{\sigma(\pi)} \frac{\alpha(\eta)}{\sigma(\eta)} \dF_f (\pi) \cdot W \cdot \dF_f (\eta)^\dagger \\
                           &= B_f (a_T) \cdot W \cdot B_f (a_T)^\dagger \,,
    \end{align*}
    We need to prove that $B_f (a_T) \in U(n)$.
    We use \cref{eq:F_antipode} and the fact that the antipode $S$ is self-adjoint to obtain,
    \[ B_f (\alpha_T)^\dagger = B_f (\alpha_T \circ S) \,. \]
    Therefore, we have,
    \begin{align*}
        B_f (\alpha_T)^{\dagger} B_f (\alpha_T) &= B_f (\alpha_T \cdot (\alpha_T \circ S)) = I \,, \\
        B_f (\alpha_T) B_f (\alpha_T)^{\dagger} &= B_f ((\alpha_T \circ S) \cdot \alpha_T) = I \,.
    \end{align*}
    This finishes the proof.
\end{proof}

\begin{proposition}
    \label{prop:infinitesimal_coadjoint_coeff}
    Let the coefficient map $\beta : BF \to \R$ satisfy the following properties for all $\pi, \eta \in T(BT)$,
    \begin{equation}
        \label{eq:inf_coadjoint_coeff_prop_1}
        \beta_T(\one) = 0 \,, \quad \beta(\pi, \eta) = \delta_\one (\pi) \beta_T(\eta) + \beta_T(\pi) \delta_\one (\eta) \,, \quad (\beta_T \circ S)(\pi) = -\beta_T(\pi) \,,
    \end{equation}
    then, $B_f (\beta_T) \in \fraku(n)$ is an element of the Lie algebra $\fraku(n)$ and we have,
    \[ B_f (\beta) \odot W = \ad^*_{B_f(\beta_T)(W)} W \,. \]
    Such coefficient maps $\beta : BF \to \R$ are called \emph{infinitesimal coadjoint coefficient maps}.
\end{proposition}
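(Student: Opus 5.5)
We mirror the proof of \cref{prop:coadjoint_coeff}: expand $B_f(\beta)\odot W$ over biplanar forests, use \cref{eq:inf_coadjoint_coeff_prop_1} to collapse the sum, and then show separately that $B_f(\beta_T)$ is skew-Hermitian.

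\emph{Collapsing the sum.} By the second property in \cref{eq:inf_coadjoint_coeff_prop_1}, $\beta(\pi,\eta)$ vanishes unless one of the two components is the empty monomial $\one$. So only forests of the form $(\pi,\one)=(\one,\pi)$ with $\pi\neq\one$ contribute. The first property, $\beta_T(\one)=0$, kills the forest $(\one,\one)$. For such a forest $\beta(\pi,\one)=\beta_T(\pi)$, because $\delta_\one(\pi)=0$ and $\delta_\one(\one)=1$. Its symmetry is $\sigma(\pi,\one)=\sigma(\pi)$, since $\pi\neq\one$ and $\sigma(\one)=1$.

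We must take care that the commutative pairing identifies $(\pi,\one)$ with $(\one,\pi)$, so each such forest appears exactly once in the sum over $BF$. Using $(A,B)\odot W=AWB^\dagger+BWA^\dagger$ with $\dF_f(\one)=I$, we get
\[
B_f(\beta)\odot W=\sum_{\pi\neq\one}h^{|\pi|}\frac{\beta_T(\pi)}{\sigma(\pi)}\big(\dF_f(\pi)W+W\dF_f(\pi)^\dagger\big)=B_f(\beta_T)\,W+W\,B_f(\beta_T)^\dagger .
\]

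\emph{Skew-Hermitian property.} Next we show $B_f(\beta_T)^\dagger=-B_f(\beta_T)$. By \cref{eq:F_antipode}, $\dF_f(\pi)^\dagger=\dF_f(S(\pi))$. The antipode $S$ maps the monomial $\tau_1\cdots\tau_k$ to $\pm\tau_k\cdots\tau_1$. Reversal is a bijection on ordered monomials preserving $|\cdot|$ and $\sigma$, so as in the proof of \cref{prop:coadjoint_coeff} we obtain $B_f(\beta_T)^\dagger=B_f(\beta_T\circ S)$. The third property of \cref{eq:inf_coadjoint_coeff_prop_1} then gives $B_f(\beta_T)^\dagger=-B_f(\beta_T)$. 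Hence $B_f(\beta_T)\in\fraku(n)$, understood as a formal series in $h$ whose coefficients lie in $\fraku(n)$, and
\[
B_f(\beta)\odot W=B_f(\beta_T)W-WB_f(\beta_T)=[B_f(\beta_T),W].
\]
This is $\ad^*_{B_f(\beta_T)}W$ under the paper's identification $\fraku(n)^*\simeq\fraku(n)$, with the sign convention consistent with $\Ad^*_U W=UWU^\dagger$ in \cref{prop:coadjoint_coeff}.

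\emph{Main obstacle.} The delicate point is the bookkeeping behind $B_f(\beta_T)^\dagger=B_f(\beta_T\circ S)$. The sign $(-1)^k$ from $S$ and the relabelling $\pi\mapsto$ reversed $\pi$ must be handled so that $\beta_T\circ S$ is evaluated consistently. Concretely, we want
\[
\sum_\pi\frac{\beta_T(\pi)}{\sigma(\pi)}\dF_f(S\pi)=\sum_\pi\frac{\beta_T(S\pi)}{\sigma(\pi)}\dF_f(\pi),
\]
which holds by substituting $\pi\mapsto S\pi$, using linearity and $S^2=\id$ on monomials. We also need the symmetry factor to be invariant under reversal of a monomial, which holds because $\sigma(\pi\cdot\eta)=\sigma(\pi)\sigma(\eta)$.

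A secondary check is the factor-of-two issue from the commutative pairing. Here it causes no doubling, because $(\pi,\one)$ and $(\one,\pi)$ are the same element of $BF$ and $\sigma(\pi,\one)=\sigma(\pi)$ carries no factor $2$ for $\pi\neq\one$. Finally, the ordering convention in \cref{def:F_f_bp}, namely $\dF_f(\pi\cdot\eta)=\dF_f(\eta)\dF_f(\pi)$, must be respected; it is exactly what makes \cref{eq:F_antipode} compatible with reversal.
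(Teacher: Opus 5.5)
Your proposal is correct and follows essentially the same route as the paper. You use the first two conditions to reduce the sum to forests $(\pi,\one)$, which gives $B_f(\beta_T)W+WB_f(\beta_T)^\dagger$; then $\dF_f(\pi)^\dagger=\dF_f(S(\pi))$ together with $\beta_T\circ S=-\beta_T$ shows $B_f(\beta_T)$ is skew-Hermitian, so the expression equals $[B_f(\beta_T),W]$. (One small aside: the identity $\dF_f(\pi)^\dagger=\dF_f(S(\pi))$ holds under either ordering convention for $\dF_f$ on monomials, since $\dagger$ reverses products and each $\dF_f(\tau)$ is skew-Hermitian, so the reversed convention is not really what makes it work.)
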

\begin{proof}
    We use assumption \cref{eq:inf_coadjoint_coeff_prop_1} to obtain the following computation,
    \begin{align*}
        B_f (\beta) \odot W &= \sum_{\pi \in T(BT)_*} \frac{\beta_T(\pi)}{\sigma(\pi)} \dF_f (\pi) \odot W \\
        \intertext{where the sum $\pi \in T(BT)_*$ is over non-empty monomials $\tau_1 \cdots \tau_n$ with $\tau_i \in BT$,}
                              &= \sum_{\pi \in T(BT)_*} \frac{\beta_T(\pi)}{\sigma(\pi)} \big( \dF_f (\pi)(W) \cdot W + W \cdot \dF_f (\pi)(W)^\dagger \big) \\
                              &= B_f (\beta_T) \cdot W + W \cdot B_f (\beta_T)^\dagger \,.
    \end{align*}
    We need to prove that $B_f (\beta_T) \in \fraku(n)$.
    Since $\dF_f (\pi)^\dagger = \dF_f (S(\pi))$ by \cref{eq:F_antipode} and the antipode $S$ is self-adjoint, we have,
    \[ B_f (\beta_T)^\dagger = B_f (\beta_T \circ S) = - B_f (\beta_T) \,. \]
    The statement then follows.
\end{proof}

\subsection{Forest momentum map}

Let us introduce the \emph{forest momentum map} which plays a central role in the Lie--Poisson reduction of Butcher series.
Let $T^2$ denote the set of non-planar forests with at most two trees.

\begin{definition}
    The \emph{forest momentum map} $\psi : BF \to T^2$ is defined as,
    \begin{align}
        \psi (\pi, \eta) &= \psi(\pi) \cdot \psi(\eta) \,, \label{eq:psi_1} \\
        \psi \big(\pi \cdot B^+ (\eta, \nu) \big) &= B^+ \big( \psi(\pi) \cdot \psi(\eta) \cdot \psi(\nu) \big) \,, \label{eq:psi_2}
    \end{align}
    where $\pi, \eta, \nu \in T(BT)$ and $\psi (\one) = \one$.
\end{definition}

For example,
\[ \psi(\forestTC) = \forestUC \,, \quad \psi(\forestVC) = \psi(\forestWC) = \forestXC \,, \quad \psi(\forestYC) = \psi(\forestAD) = \forestBD \,, \]
\[ \psi(\forestCD) = \psi(\forestDD) = \psi(\forestED) = \psi(\forestFD) = \forestGD \,. \]

The map $\psi$ is related to the Butcher product $\butcher : \TT \otimes \TT \to \TT$ in the following way,
\[ \psi \big(\pi \cdot \tau \big) = \psi(\pi) \butcher \psi(\tau) \,, \]
where $\pi \in T(BT), \tau \in BT$. Let the Butcher product be extended to $\butcher : \TT \cup \{\one\} \otimes \TT \to \TT$ as,
\[ \one \butcher \tau = \tau \,, \quad \text{for } \tau \in \TT \,. \]

Let us abuse the notation and denote by $\langle \blank, \blank \rangle_\sigma$ inner products over non-planar and biplanar forests $\FF$ and $\BF$ defined as
\[ \langle \pi, \eta \rangle_\sigma = \begin{cases} \sigma(\pi) \,, \quad \text{if } \pi = \eta \,, \\ 0 \,, \quad \text{otherwise.} \end{cases}\]

Let $\psi^* : \TT^2 \to \BF$ denote the adjoint of the momentum forest map $\psi$ with respect to the inner products $\langle \blank, \blank \rangle_\sigma$, that is,
\[ \langle \psi(\pi, \eta) , \nu \rangle_\sigma = \langle (\pi, \eta), \psi^* (\nu) \rangle_\sigma \,, \quad \text{for } \pi, \eta \in T(BT) \,, \nu \in \TT^2 \,. \]

The rest of this section is devoted to the derivation of an explicit formula for $\psi^*$. Let us start by introducing the necessary algebraic ingredients.
Let $\Delta_\shuffle : \FF \to \FF \otimes \FF$ denote the deshuffle coproduct defined as
\[ \Delta_\shuffle (\tau) = \one \otimes \tau + \tau \otimes \one \,, \quad \Delta_\shuffle (\pi \cdot \eta) = \Delta_\shuffle (\pi) \cdot \Delta_\shuffle (\eta) \,, \]
where $(a \otimes b) \cdot (c \otimes d) = (a \cdot c) \otimes (b \cdot d)$. For example,
\begin{align*}
    \Delta_\shuffle (\tau_1 \tau_2 \tau_3) &= \tau_1 \tau_2 \tau_3 \otimes \one + \tau_1 \tau_2 \otimes \tau_3 + \tau_1 \tau_3 \otimes \tau_2 + \tau_2 \tau_3 \otimes \tau_1 \\
                                           &\quad + \tau_1 \otimes \tau_2 \tau_3 + \tau_2 \otimes \tau_1 \tau_3 + \tau_3 \otimes \tau_1 \tau_2 + \one \otimes \tau_1 \tau_2 \tau_3 \,.
\end{align*}
Let $\Delta_\butcher : \TT \to \TT \cup \{\one\} \otimes \TT$ denote the map which, for $\tau = B^+(\tau_1 \cdots \tau_n)$, is defined as
\[ \Delta_\butcher (\tau) = \one \otimes \tau + \sum_{i=1}^n \tau_i \otimes B^+(\tau_1 \cdots \tau_{i-1} \tau_{i+1} \cdots \tau_n) \,. \]

\begin{proposition}
    \label{prop:adjoint_coproducts}
    We have the following relations,
    \begin{enumerate}
        \item $\langle \pi \cdot \eta \,, \nu \rangle_\sigma = \langle \pi \otimes \eta \,, \Delta_\cdot (\nu) \rangle_\sigma$  for $\pi, \eta, \nu \in T(BT)$,
        \item $\langle \pi \cdot \eta \,, \nu \rangle_\sigma = \langle \pi \otimes \eta \,, \Delta_\shuffle (\nu) \rangle_\sigma$ for $\pi, \eta, \nu \in \FF$,
        \item $\langle \tau \butcher \gamma \,, \rho \rangle_\sigma = \langle \tau \otimes \gamma \,, \Delta_\butcher (\rho) \rangle_\sigma$ for $\tau \in \TT \cup \{\one\}, \gamma, \rho \in \TT$.
    \end{enumerate}
    That is, $\Delta_\cdot$ is the adjoint of the concatenation $\cdot$ on $T(\BT)$, $\Delta_\shuffle$ is the adjoint of the commutative concatenation $\cdot$ on $\FF$, and $\Delta_\butcher$ is the adjoint of the Butcher product $\butcher$ on $\TT$ with respect to the inner product $\langle \blank, \blank \rangle_\sigma$.
\end{proposition}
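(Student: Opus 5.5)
All three identities are bilinear in their arguments, so I will check them on basis elements: ordered monomials in item 1, non-planar forests in item 2, and trees (or $\one$) in item 3. Throughout I extend $\langle \blank,\blank\rangle_\sigma$ to tensor products factorwise, $\langle a\otimes b, c\otimes d\rangle_\sigma = \langle a,c\rangle_\sigma \langle b,d\rangle_\sigma$. In each case I expand the right-hand side and count how many terms of the coproduct of $\nu$ (resp.\ $\rho$) equal the given pair. I then show that this count, times the product of the two symmetry factors, equals the symmetry factor of $\nu$ (resp.\ $\rho$). The main obstacle is the symmetry-factor bookkeeping in items 2 and 3; item 1 is immediate.

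\emph{Item 1.} Take $\nu\in T(BT)$ an ordered monomial. Each term of $\Delta_\cdot(\nu)$ is $\nu_{(1)}\otimes\nu_{(2)}$ with $\nu_{(1)}\nu_{(2)}=\nu$. Because the monomials are ordered and the split point is fixed by the length of $\pi$, at most one term equals $\pi\otimes\eta$, and such a term exists exactly when $\nu=\pi\cdot\eta$. Both sides therefore equal $\sigma(\pi)\sigma(\eta)=\sigma(\pi\cdot\eta)$ when $\nu=\pi\cdot\eta$, and vanish otherwise.

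\emph{Item 2.} Both sides vanish unless $\nu=\pi\cdot\eta$, so assume this. Write $\nu=\tau_1^{r_1}\cdots\tau_k^{r_k}$ with distinct trees $\tau_i$, and let $\pi,\eta$ contain $\tau_i$ with multiplicities $p_i,q_i$, where $p_i+q_i=r_i$. Expanding $\Delta_\shuffle(\nu)$ multiplicatively, the term $\pi\otimes\eta$ arises once for each choice of which copies of each $\tau_i$ go to the left factor. That is $\prod_i\binom{r_i}{p_i}$ times. Using the product formula $\sigma(\nu)=\prod_i r_i!\,\sigma(\tau_i)^{r_i}$ (and likewise for $\pi,\eta$), the right-hand side becomes
\[ \prod_i \binom{r_i}{p_i}\, p_i!\,q_i!\,\sigma(\tau_i)^{r_i}=\prod_i r_i!\,\sigma(\tau_i)^{r_i}=\sigma(\nu), \]
which equals the left-hand side.

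\emph{Item 3.} Write $\rho=B^+(\rho_1\cdots\rho_n)$. If $\tau=\one$, then $\one\butcher\gamma=\gamma$. Only the term $\one\otimes\rho$ of $\Delta_\butcher(\rho)$ pairs nontrivially with $\one\otimes\gamma$, so both sides equal $\sigma(\rho)\,\delta_{\gamma\rho}$. If $\tau$ is a tree, write $\gamma=B^+(\gamma_1\cdots\gamma_m)$, so that $\tau\butcher\gamma=B^+(\tau\gamma_1\cdots\gamma_m)$. The terms $\rho_i\otimes B^+(\cdots\widehat{\rho_i}\cdots)$ equal $\tau\otimes\gamma$ exactly when $\rho_i=\tau$ and $\rho=\tau\butcher\gamma$. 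Let $r$ be the multiplicity of $\tau$ among the branches of $\rho$; then there are exactly $r$ such terms. By the recursive formula for $\sigma$, the branches of $\gamma$ contain $\tau$ with multiplicity $r-1$, so $\sigma(\gamma)$ carries the factor $(r-1)!\,\sigma(\tau)^{r-1}$ where $\sigma(\rho)$ carries $r!\,\sigma(\tau)^r$. Hence $r\,\sigma(\tau)\sigma(\gamma)=\sigma(\rho)$, which is the required equality. If $\rho\neq\tau\butcher\gamma$, both sides are zero.
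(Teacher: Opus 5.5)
Your proof is correct. The paper states this proposition without proof, so there is nothing to compare against, and your basis-by-basis verification supplies exactly what the paper leaves implicit.

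The structure is right in all three items. Both sides vanish unless $\nu=\pi\cdot\eta$ (resp.\ $\rho=\tau\butcher\gamma$). Otherwise, the number of coproduct terms equal to the given pair has to match the ratio of symmetry factors, and in each case it does:
\begin{itemize}
\item for ordered monomials, a single term, against a multiplicative $\sigma$;
\item for non-planar forests, $\prod_i r_i!/(p_i!\,q_i!)$ terms, against $\sigma(\nu)/(\sigma(\pi)\sigma(\eta))$;
\item for the Butcher product, $r$ terms, against $\sigma(\rho)/(\sigma(\tau)\sigma(\gamma))$.
\end{itemize}

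Two details are worth stating explicitly.
\begin{itemize}
\item The forest symmetry factor $\sigma(\tau_1^{r_1}\cdots\tau_k^{r_k})=\prod_i r_i!\,\sigma(\tau_i)^{r_i}$ that you use in items 2 and 3 is never written down in the paper for forests. It is, however, the convention forced by $\sigma\circ B^+=\sigma$ together with the recursive formula for trees.
\item In item 3 with $\tau\in\TT$, the term $\one\otimes\rho$ of $\Delta_\butcher(\rho)$ contributes nothing, because $\langle\tau,\one\rangle_\sigma=0$.
\end{itemize}
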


We prove some properties of $\psi^*$ in \cref{lemma:psi*_prop} that are used in \cref{prop:psi*_formula} to derive a formula for $\psi^*$.

\begin{lemma}
    \label{lemma:psi*_prop}
    The adjoint $\psi^* : \TT^2 \to \BF$ of the forest momentum map $\psi$ satisfies the following properties,
    \begin{enumerate}
        \item given $\tau \neq \gamma \in T$ and $\pi \in T(BT)$, if $\langle \psi^*(\tau), \pi \rangle_\sigma \neq 0$, then $\langle \psi^*(\gamma), \pi \rangle_\sigma = 0$,
        \item given $\tau \in T$ and $\pi \in F$, we have
            \[ \langle \psi^*\big( B^+(\pi) \big) , \tau \rangle_\sigma = \langle B^+\big( \psi^*(\pi) \big) , \tau \rangle_\sigma \,. \]
    \end{enumerate}
\end{lemma}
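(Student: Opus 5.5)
Both properties follow from the defining relation of the adjoint,
\[ \langle \psi^*(\nu), (\pi,\eta)\rangle_\sigma = \langle \nu, \psi(\pi,\eta)\rangle_\sigma , \]
together with the fact that $\langle\cdot,\cdot\rangle_\sigma$ is diagonal in the basis of forests. In particular, $\langle \nu, \rho\rangle_\sigma \neq 0$ for basis forests $\nu,\rho$ if and only if $\nu = \rho$. I will therefore reduce each claim to a statement about the values of $\psi$ on biplanar forests.

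For property (1), let $\pi \in T(BT)$, viewed as $(\pi,\one)\in BF$. By adjointness, $\langle \psi^*(\tau), \pi\rangle_\sigma = \langle \tau, \psi(\pi)\rangle_\sigma$. Since $\psi(\pi)$ is a single basis element of $T^2$, this is nonzero only if $\psi(\pi) = \tau$. If so, then $\psi(\pi) \neq \gamma$ because $\gamma \neq \tau$, and hence $\langle \psi^*(\gamma), \pi\rangle_\sigma = \langle \gamma,\psi(\pi)\rangle_\sigma = 0$. This is just the statement that $\psi$ is a map of basis sets, so the fibres $\psi^{-1}(\tau)$ are disjoint.

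For property (2), take $\tau\in BT$; since $\tau$ is a biplanar tree, pairing with it only sees the tree component. Write $\tau = B^+(\eta,\nu)$, which is the case $\pi=\one$ in \eqref{eq:psi_2}. Then
\[ \langle \psi^*(B^+(\pi)), \tau\rangle_\sigma = \langle B^+(\pi), \psi(\tau)\rangle_\sigma = \langle B^+(\pi), B^+(\psi(\eta)\cdot\psi(\nu))\rangle_\sigma . \]
The next step uses two facts: $B^+$ is injective on forests, and the symmetry factors agree, $\sigma(B^+(\rho)) = \sigma(\rho)$. The second fact holds for non-planar trees by the recursive formula for $\sigma$. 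For biplanar trees it holds by definition, $\sigma(B^+(\eta,\nu)) = \sigma(\eta,\nu)$. With these facts the right-hand side equals
\[ \langle \pi, \psi(\eta)\psi(\nu)\rangle_\sigma = \langle \pi, \psi(\eta,\nu)\rangle_\sigma = \langle \psi^*(\pi), (\eta,\nu)\rangle_\sigma , \]
where the middle equality is \eqref{eq:psi_1}. Finally, $\langle \psi^*(\pi), (\eta,\nu)\rangle_\sigma = \langle B^+(\psi^*(\pi)), B^+(\eta,\nu)\rangle_\sigma$, again by injectivity of $B^+$ and equality of symmetries.

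The main obstacle is checking the symmetry bookkeeping. For non-planar forests, the recursive formula for $\sigma$ shows that $\sigma(\pi)$ equals $\sigma(B^+(\pi))$. On the biplanar side, the factor $2$ for $\eta=\nu$ must match the $2!$ coming from two identical subtrees, or from the $r_i!$ factors, in the non-planar symmetry. This matching is consistent with the definitions, and I will verify it case by case: $\eta = \nu$ versus $\eta \neq \nu$. I will also note that \eqref{eq:psi_2} with $\pi = \one$ is the only case needed when $\tau$ is a single tree. Multiplication by $\psi(\pi)\butcher$ only arises for monomials of length greater than one, and those are not trees.
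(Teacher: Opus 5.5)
Your proposal is correct and follows the paper's proof essentially step for step. Property (1) holds because $\psi$ sends each basis monomial to a single basis tree, and property (2) follows from the chain $\langle B^+(\pi),\psi(\tau)\rangle_\sigma=\langle B^+(\pi),B^+(\psi(\eta)\psi(\nu))\rangle_\sigma=\langle\pi,\psi(\eta,\nu)\rangle_\sigma=\langle\psi^*(\pi),(\eta,\nu)\rangle_\sigma$, using $\sigma\circ B^+=\sigma$ and injectivity of $B^+$. The case-by-case matching of the biplanar factor $2$ against the non-planar $2!$, which you flag as the main obstacle, is not needed: each use of $\sigma\circ B^+=\sigma$ stays on one side (non-planar or biplanar), and the only cross-side step is the defining identity of $\psi^*$.
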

\begin{proof}
    The first property follows from the fact that $\psi$ restricts to a map between the basis $T(BT)$ of $T(\BT)$ and the basis $T$ of $\TT$. That is, for $\pi \in T(BT)$ we have $\psi(\pi) = \tau$ for some $\tau \in T$.
The second property follows from the following computation,
    \begin{align*}
        \langle \psi^*\big( B^+(\pi) \big) , \tau \rangle_\sigma &= \langle B^+(\pi) , \psi(\tau) \rangle_\sigma
        \intertext{let $\tau = B^+(\pi_\tau)$, then we use the definition of $\psi$,}
                                                                 &= \langle B^+(\pi) , B^+\big( \psi(\pi_\tau) \big) \rangle_\sigma
        \intertext{use the property $\sigma \circ B^+ = \sigma$ and the definition of the inner product,}
                                                                 &= \langle \pi , \psi(\pi_\tau) \rangle_\sigma = \langle \psi^*(\pi) , \pi_\tau \rangle_\sigma = \langle B^+ \big( \psi^*(\pi) \big) , \tau \rangle_\sigma \,.
    \end{align*}
    This finishes the proof.
\end{proof}

\begin{proposition}
    \label{prop:psi*_formula}
    Let us denote $\Delta_\butcher (\tau) = \sum_{(\tau)} \tau_{(1)} \otimes \tau_{(2)}$ and $\pi_{(2)}$ be such that $\tau_{(2)} = B^+(\pi_{(2)})$. The adjoint $\psi^* : \TT^2 \to \BF$ of the forest momentum map $\psi$ has the following form,
    \begin{align}
        \psi^* (\tau) &= \sum_{(\tau)} \psi^* (\tau_{(1)}) B^+ \big( \psi^*(\pi_{(2)}) \big) \,, \label{eq:psi*_1} \\
        \psi^* (\tau_1 \tau_2) &= \big( \psi^* (\tau_1), \psi^*(\tau_2) \big) \,, \label{eq:psi*_2}
    \end{align}
    with $\psi^*(\one) = \one$ and $\psi^*(\forestHD) = \forestID$. 
\end{proposition}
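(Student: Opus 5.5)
Since the inner products $\langle\blank,\blank\rangle_\sigma$ are diagonal in the bases $BF$ and $T^2$, the adjoint is determined by
\[
  \psi^*(\nu)=\sum_{(\pi,\eta)\in BF,\ \psi(\pi,\eta)=\nu}\frac{\sigma(\nu)}{\sigma(\pi,\eta)}\,(\pi,\eta).
\]
The plan is to check that the right-hand sides of \cref{eq:psi*_1}--\cref{eq:psi*_2} pair with every basis element $(\pi,\eta)\in BF$ in the same way as $\psi^*(\nu)$. Equivalently, I will show
\[
  \langle \text{RHS}(\nu),(\pi,\eta)\rangle_\sigma=\langle \nu,\psi(\pi,\eta)\rangle_\sigma \qquad \text{for all } (\pi,\eta)\in BF.
\]
I will proceed by induction on the number of vertices. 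The base cases $\psi^*(\one)=\one$ and $\psi^*(\bullet)=\bullet$ follow directly from $\psi(\one)=\one$ and $\psi(\bullet)=\bullet$, since $\bullet$ is the only biplanar forest mapped to $\bullet$.

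For \cref{eq:psi*_2}, take $\nu=\tau_1\tau_2$. By \cref{eq:psi_1}, $\psi(\pi,\eta)=\psi(\pi)\cdot\psi(\eta)$, and each of $\psi(\pi)$, $\psi(\eta)$ is a single tree or $\one$. I then use part 2 of \cref{prop:adjoint_coproducts}: the deshuffle coproduct of the two-tree forest $\tau_1\tau_2$ has only the terms $\tau_1\otimes\tau_2$, $\tau_2\otimes\tau_1$, and the terms with an empty factor. Pairing with the unordered pair $(\pi,\eta)$, which is symmetric under the commutativity $(\pi,\eta)=(\eta,\pi)$, gives exactly $\langle(\psi^*(\tau_1),\psi^*(\tau_2)),(\pi,\eta)\rangle_\sigma$. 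The one delicate point is the symmetry factor. When $\tau_1=\tau_2$, I will match $\sigma(\tau_1\tau_1)=2\sigma(\tau_1)^2$ against $\sigma(\pi,\pi)=2\sigma(\pi)^2$. Part 1 of \cref{lemma:psi*_prop} guarantees that no cross terms arise.

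For \cref{eq:psi*_1}, take $\nu=\tau$ a tree. A biplanar monomial in $T(BT)$ mapped to a tree must be nonempty. I write it as $\pi\cdot B^+(\eta,\mu)$, splitting off its last tree. By \cref{eq:psi_2}, its image is $\psi(\pi)\butcher B^+(\psi(\eta)\psi(\mu))$. I then use part 3 of \cref{prop:adjoint_coproducts}:
\[
  \langle\tau,\psi(\pi)\butcher\psi(B^+(\eta,\mu))\rangle_\sigma=\langle\Delta_\butcher\tau,\ \psi(\pi)\otimes\psi(B^+(\eta,\mu))\rangle_\sigma.
\]
The left factor is handled by the induction hypothesis. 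The right factor is handled by part 2 of \cref{lemma:psi*_prop}, which converts $\psi^*(B^+(\pi_{(2)}))$ into $B^+(\psi^*(\pi_{(2)}))$. Here $\psi^*(\pi_{(2)})$ is given by \cref{eq:psi*_2} applied to the (at most two-tree) forest $\pi_{(2)}$. For this I need that $\pi_{(2)}$ has at most two trees whenever it contributes. That holds because only such $\pi_{(2)}$ lie in the image of $\psi(\eta)\psi(\mu)$, and the remaining terms pair to zero.

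Finally, I must check that the concatenation $\psi^*(\tau_{(1)})\cdot B^+(\cdot)$ pairs correctly with $\pi\cdot B^+(\eta,\mu)$. This uses part 1 of \cref{prop:adjoint_coproducts} together with the fact that the factorisation "last tree of a monomial" is unique. The uniqueness makes $\sigma$ multiplicative: $\sigma(\pi\cdot\tau)=\sigma(\pi)\sigma(\tau)$. The main obstacle is the symmetry bookkeeping. I must ensure that the $\sigma$-weights coming from $\Delta_\butcher$, which counts each of the $n$ branches and hence repeats identical branches, agree with the weights $\sigma(\tau)/\sigma(\pi,\eta)$ in the fibre-sum formula. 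My approach is to verify this by comparing the recursive formula for $\sigma$ of a non-planar tree, with its $r_i!$ factors, against the planar multiplicativity of $\sigma$ on $T(BT)$. Each identical branch of $\tau$ yields a separate term in $\Delta_\butcher$, and these distinct terms compensate exactly for the $r_i!$ that the planar structure lacks.
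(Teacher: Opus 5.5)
Your proposal is correct and follows the paper's proof essentially step for step. For \cref{eq:psi*_2} you pair against $(\pi_1,\pi_2)$ using the deshuffle adjointness, then treat the cases $\tau_1=\tau_2$ and $\tau_1\neq\tau_2$ with the $2\sigma^2$ symmetry factors and part 1 of \cref{lemma:psi*_prop}. For \cref{eq:psi*_1} you split off the last biplanar tree, use the adjointness of $\Delta_\butcher$ and part 2 of \cref{lemma:psi*_prop}, and recombine via the uniqueness of the last-tree factorisation. Two small redundancies: the induction is not needed, since the left factor is just the definition of the adjoint, and the $r_i!$ bookkeeping you flag as the main obstacle is exactly the content of part 3 of \cref{prop:adjoint_coproducts}, so it needs no separate verification.
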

\begin{proof}
    Let us prove formula \cref{eq:psi*_2} by following the computation with $(\pi_1, \pi_2) \in BF$ and $\tau_1 \tau_2 \in T$,
    \begin{align*}
        \langle \psi^* (\tau_1 \tau_2) , (\pi_1, \pi_2) \rangle_\sigma &= \langle \tau_1 \tau_2 , \psi(\pi_1, \pi_2) \rangle_\sigma \\
                                                                       &= \langle \tau_1 \tau_2 , \psi(\pi_1) \cdot \psi(\pi_2) \rangle_\sigma \\
                                                                       &= \langle \Delta_\shuffle (\tau_1 \tau_2) , \psi(\pi_1) \otimes \psi(\pi_2) \rangle_\sigma
                                                                       \intertext{we use the fact $\langle \psi(\pi_i), \tau_1 \tau_2 \rangle_\sigma = 0$ to ignore the corresponding terms in $\Delta_\shuffle (\tau_1 \tau_2)$,}
                                                                       &= \langle \tau_1 \otimes \tau_2 + \tau_2 \otimes \tau_1 , \psi(\pi_1) \otimes \psi(\pi_2) \rangle_\sigma \\
                                                                       &= \langle \psi^*(\tau_1), \pi_1 \rangle_\sigma \langle \psi^*(\tau_2), \pi_2 \rangle_\sigma + \langle \psi^*(\tau_1), \pi_2 \rangle_\sigma \langle \psi^*(\tau_2), \pi_1 \rangle_\sigma \,.
    \end{align*}
    We have two cases for which $\langle \psi^* (\tau_1 \tau_2) , (\pi_1, \pi_2) \rangle_\sigma \neq 0$. In the first case $\tau_1 = \tau_2$ and $\pi_1 = \pi_2$, then,
    \[ \langle \psi^* (\tau_1 \tau_1) , (\pi_1, \pi_1) \rangle_\sigma = 2 \langle \psi^*(\tau_1), \pi_1 \rangle_\sigma^2 = \langle \big(\psi^*(\tau_1), \psi^*(\tau_1)\big), (\pi_1, \pi_1) \rangle_\sigma \,, \]
    following the definition of the symmetry coefficient $\sigma$ and the inner product on biplanar forests. In the second case $\tau_1 \neq \tau_2$ and $\pi_1 \neq \pi_2$, then, following property (1) in \cref{lemma:psi*_prop} and assuming $\langle \psi^*(\tau_1), \pi_1 \rangle_\sigma \neq 0$ and $\langle \psi^*(\tau_2), \pi_2 \rangle_\sigma \neq 0$, we have,
    \[ \langle \psi^* (\tau_1 \tau_2) , (\pi_1, \pi_2) \rangle_\sigma = \langle \psi^*(\tau_1), \pi_1 \rangle_\sigma \langle \psi^*(\tau_2), \pi_2 \rangle_\sigma = \langle \big(\psi^*(\tau_1), \psi^*(\tau_2)\big) , (\pi_1, \pi_2) \rangle_\sigma \,. \]
    This proves the formula \cref{eq:psi*_2}

    The formula \cref{eq:psi*_1} is follows from the computation with $\pi \in BF, \gamma \in BT$, and $\tau \in T$,
    \begin{align*}
        \langle \psi^*(\tau), \pi \gamma \rangle_\sigma &= \langle \tau, \psi(\pi \gamma) \rangle_\sigma = \langle \tau, \psi(\pi) \butcher \psi(\gamma) \rangle_\sigma
        \intertext{apply property (2) from \cref{prop:adjoint_coproducts},}
                                                        &= \langle \Delta_\butcher (\tau), \psi(\pi) \otimes \psi(\gamma) \rangle_\sigma \\
                                                        &= \sum_{(\tau)} \langle \psi^*(\tau_{(1)}) \otimes \psi^*(\tau_{(2)}), \pi \otimes \gamma \rangle_\sigma \\
                                                        &= \sum_{(\tau)} \langle \psi^*(\tau_{(1)}) , \pi \rangle_\sigma \langle \psi^*(\tau_{(2)}), \gamma \rangle_\sigma
                                                        \intertext{let $\tau_{(2)} = B^+(\pi_{(2)})$, then apply property (2) from \cref{lemma:psi*_prop},}
                                                        &= \sum_{(\tau)} \langle \psi^*(\tau_{(1)}) , \pi \rangle_\sigma \langle B^+ \big( \psi^*(\pi_{(2)}) \big), \gamma \rangle_\sigma \\
                                                        &= \sum_{(\tau)} \langle \psi^*(\tau_{(1)}) B^+ \big( \psi^*(\pi_{(2)}) \big), \pi \gamma \rangle_\sigma \,.
    \end{align*}
    This finishes the proof.
\end{proof}

\subsection{Lie--Poisson reduction of Butcher series}
\label{sec:IsoSyRK_expansion}

In this section, we obtain the biplanar Butcher series of ISOSYRK methods from the Butcher series of the corresponding symplectic Runge--Kutta methods by performing Lie--Poisson reduction of Butcher series.

Let $V \in \fraku(n)$ be an element of the Lie algebra, then $V$ acts on $(Q,P) \in T^* \text{U}(n)$ as
\[ V \cdot (Q, P) = (Q V^\dagger, -P V) \,. \]
The lift of a vector field $f : \fraku(n) \to \fraku(n)$ to $T^* \text{U}(n)$ is then given by the vector field,
\[ (\mu_\sharp f) (Q, P) := f(\mu(Q,P)) \cdot (Q,P) = \Big( Q f(Q^\dagger P)^\dagger, -P f(Q^\dagger P) \Big) \,. \]
We extend the action of $\fraku(n)$ on $T^* \text{U}(n)$ to an action of the universal enveloping algebra $\UU$ on $T^* \text{U}(n)$ in the following way,
\begin{align*}
    \one \cdot (Q, P) &= (Q, P) \,, \\
    (V A) \cdot (Q, P) &= V \cdot \big(A \cdot (Q,P) \big) \,.
\end{align*}
where $V \in \fraku(n), A \in \UU, (Q,P) \in T^* \text{U}(n)$ which can be written explicitly as
\[ A \cdot (Q, P) = \big(Q A^\dagger, P S(A)\big) \,, \]
where $S$ is the antipode of the universal enveloping algebra $\UU$, that is,
\[ S(V_1 \cdots V_n) = (-1)^n V_n \cdots V_1 \,, \]
for $V_i \in \fraku(n)$.
We use this extension to define, for $\pi \in T(BT)$,
\[ \mu_\sharp\dF_f (\pi) (Q, P) = \Big( Q \cdot \dF_f(\pi)^\dagger (W), P \cdot \dF_f\big(S(\pi)\big) (W) \Big) \,, \]
where $W = Q^\dagger P$. We note that $\dF_{\mu_\sharp f} (\tau) : T^*\text{U}(n) \to TT^*\text{U}(n)$ for $\tau \in T$ is an elementary differential defined on $T^* \text{U}(n)$, while $\dF_f (\pi) : \fraku(n) \to \UU$ for $\pi \in T(BT)$ is an elementary differential defined on $\fraku(n)$.

Let $k \in \{0, 1, 2\}$, then the directional derivative of the momentum map $\mu : T^* \text{U}(n) \to \fraku(n)$ is denoted by
\[ \dF_{\mu_\sharp f} (\pi) [\mu] := \mu^{(k)} \Big( \dF_{\mu_\sharp f} (\tau_1), \dots, \dF_{\mu_\sharp f} (\tau_k) \Big) \,, \quad \text{for } \pi = \tau_1 \cdots \tau_k \,, \tau_i \in T \,. \]
Let $T(BT)_* := T(BT) \setminus \{\one\}$ be the set of non-empty monomials $\tau_1 \dots \tau_n$ with $\tau_i \in BT$.

\begin{proposition}
    \label{prop:dF_F_relation}
    The elementary differentials $\dF_{\mu_\sharp f}(\tau)$ for $\tau \in T$ and $\dF_f (\pi)$ for $\pi \in T(BT)_*$ are related in the following way,
    \begin{equation}
        \label{eq:dF_F_relation}
        \dF_{\mu_\sharp f} (\tau) = \mu_\sharp\dF_f \big( \psi^* (\tau) \big) \,.
    \end{equation}
    Moreover, for $\pi \in T^2$ and $W = \mu(Q, P) = Q^\dagger P$, we have,
    \begin{equation}
        \label{eq:dF_F_relation_2}
        \dF_{\mu_\sharp f} (\pi) [\mu] (Q,P) = \dF_f \big( \psi^* (\pi) \big) \odot W \,.
    \end{equation}
\end{proposition}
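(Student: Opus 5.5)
We argue by induction on the number of vertices of $\tau$, and prove both \cref{eq:dF_F_relation} and \cref{eq:dF_F_relation_2} simultaneously, since each feeds into the other.

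\textbf{Base case and the first derivatives of $\mu$.} For $\tau = \bullet$ we have $\dF_{\mu_\sharp f}(\bullet) = \mu_\sharp f$. By definition, $\mu_\sharp f(Q,P) = (Qf(W)^\dagger, -Pf(W)) = \mu_\sharp\dF_f(\bullet)(Q,P)$, because $S(\bullet) = -\bullet$ and $\psi^*(\bullet)=\bullet$.

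Next we record the derivatives of $\mu(Q,P) = Q^\dagger P$:
\[ \mu'(Q,P)[(\delta Q,\delta P)] = \delta Q^\dagger P + Q^\dagger \delta P, \qquad \mu''(Q,P)[(\delta Q_1,\delta P_1),(\delta Q_2,\delta P_2)] = \delta Q_1^\dagger \delta P_2 + \delta Q_2^\dagger \delta P_1, \]
and all higher derivatives vanish. Insert vectors of the form $\mu_\sharp\dF_f(\pi)(Q,P) = (Q\,\dF_f(\pi)^\dagger, P\,\dF_f(S(\pi)))$, and use $Q^\dagger Q = I$ together with \cref{eq:F_antipode}.

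For $k=1$ we obtain $\dF_f(\pi)\,W + W\,\dF_f(\pi)^\dagger = \dF_f(\pi,\one)\odot W$.

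For $k=2$ we obtain $\dF_f(\pi_1)\,W\,\dF_f(\pi_2)^\dagger + \dF_f(\pi_2)\,W\,\dF_f(\pi_1)^\dagger = \dF_f(\pi_1,\pi_2)\odot W$.

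Both sides are linear in the $\pi_i$, so these identities extend to linear combinations. Assuming \cref{eq:dF_F_relation} for $\tau_1,\tau_2$, this gives \cref{eq:dF_F_relation_2} for $\pi = \tau_1$ and $\pi = \tau_1\tau_2$, because $\psi^*(\tau_1\tau_2) = (\psi^*(\tau_1),\psi^*(\tau_2))$ by \cref{eq:psi*_2}. The case $\pi = \one$ is trivial.

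\textbf{Inductive step for \cref{eq:dF_F_relation}.} Let $\tau = B^+(\tau_1\cdots\tau_m)$. The vector field is $\mu_\sharp f(Q,P) = g(\mu(Q,P))\cdot(Q,P)$ with $g = f\circ\mu$, and the action of $V\in\fraku(n)$ on $(Q,P)$ is linear in $(Q,P)$. The $m$-th derivative of this composition at $(\dF_{\mu_\sharp f}(\tau_1),\dots,\dF_{\mu_\sharp f}(\tau_m))$ is computed by the Leibniz rule. Each $\tau_i$ either differentiates the linear factor $(Q,P)$ (at most one of them can, by linearity), or enters the argument of $f$. Since $f$ is linear and $\mu$ is quadratic, the $\tau_i$ entering the argument of $f$ are grouped in sets of one or two through $\mu'$ and $\mu''$.

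The resulting terms are as follows:
\begin{itemize}
\item If no $\tau_i$ hits the linear factor, we get $f\big(\dF_{\mu_\sharp f}(\tau_1\cdots\tau_m)[\mu]\big)\cdot(Q,P)$. This is nonzero only for $m\le 2$, and by the above it equals $\mu_\sharp\dF_f\big(B^+(\psi^*(\tau_1\cdots\tau_m))\big)$.
\item If $\tau_i$ hits the linear factor, we get $f(\cdots)\cdot \dF_{\mu_\sharp f}(\tau_i)$. By induction and the $\UU$-action rule $(VA)\cdot(Q,P) = V\cdot(A\cdot(Q,P))$, this equals $\mu_\sharp\dF_f\big(\psi^*(\tau_i)\,B^+(\psi^*(\pi_{(2)}))\big)$, where $B^+(\pi_{(2)})$ is the tree $\tau$ with branch $\tau_i$ removed. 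Here we use the reversed-order convention $\dF_f(\pi\cdot\eta) = \dF_f(\eta)\dF_f(\pi)$.
\end{itemize}
Summed over the possibilities, these are exactly the terms $\one\otimes\tau$ and $\tau_i\otimes B^+(\cdots)$ of $\Delta_\butcher(\tau)$. Hence the total is $\mu_\sharp\dF_f(\psi^*(\tau))$ by \cref{eq:psi*_1}.

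\textbf{Expected obstacle.} The delicate part is the bookkeeping of symmetry factors and multiplicities. We must make sure that the sum over unordered pairs in $\mu''$ and over repeated branches in $\Delta_\butcher$ agrees with the symmetric-pair convention $(\pi,\eta) = (\eta,\pi)$ of biplanar forests. We would check this against $\psi^*$ being an adjoint with respect to $\langle\cdot,\cdot\rangle_\sigma$, i.e.\ that no extra factors of $2$ appear. We must also check that trees with $m\ge 3$ branches produce only terms where exactly one branch hits the linear factor, which is consistent with $\psi^*$ being supported on forests with at most two trees under the root.
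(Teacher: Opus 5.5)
Your proposal is correct and follows essentially the paper's proof. Both arguments use induction on $|\tau|$, split $(\mu_\sharp f)^{(m)}$ by the Leibniz rule into the term where no branch hits the linear factor and the terms where exactly one branch $\tau_i$ does, match these with the $\one\otimes\tau$ and $\tau_i\otimes B^+(\cdots)$ terms of $\Delta_\butcher$, and conclude via \cref{eq:psi*_1}. The paper works with the $Q$-component only, whereas you treat both components at once through the $\UU$-action and derive \cref{eq:dF_F_relation_2} explicitly from $\mu'$ and $\mu''$.

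The multiplicity worry you flag is not a real obstacle. The Leibniz sum runs over branch positions $i$ exactly as $\Delta_\butcher$ does, and your $k=2$ identity holds bilinearly, including the case $\tau_1=\tau_2$, so no stray factors of $2$ appear.
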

\begin{proof}
    We note that \cref{eq:dF_F_relation_2} follows from \cref{eq:dF_F_relation} and the definition of the action $\odot$ as well as the definition of $\mu_\sharp\dF_f (\pi)$. Therefore, we only need to prove \cref{eq:dF_F_relation}.
    Let $\dF_{\mu_\sharp f}(\tau)_Q$ denote the $Q$-component of $\dF_{\mu_\sharp f}(\tau)$ and $\dF_{\mu_\sharp f}(\tau)_P$ denote the $P$-component of $\dF_{\mu_\sharp f}(\tau)$, then we need to prove the following formulas for $\tau \in T$,
    \[ \dF_{\mu_\sharp f} (\tau)_Q (Q, P) = Q \cdot \dF_f \big(\psi^* (\tau) \big)^\dagger (Q^\dagger P) \,, \quad \text{and} \quad \dF_{\mu_\sharp f} (\tau)_P (Q, P) = P \cdot \dF_f \big(\psi^* (\tau) \big)^\dagger (Q^\dagger P) \,, \]
    where we use the fact that $\dF_f(S(\pi)) = \dF_f(\pi)^\dagger$ for $\pi \in T(BT)$ shown in \cref{eq:F_antipode}.

    We prove $\dF (\tau)_Q = Q \cdot \dF_f \big(\psi^* (\tau) \big)^\dagger$ and the statement for $\dF (\tau)_P$ is proved analogously. We use induction on the size of the $\tau \in T$ with the initial step given by
    \[ \dF_{\mu_\sharp f}(\bullet)_Q (Q,P) = Q \cdot f(Q^\dagger P)^\dagger = Q \cdot \dF_f (\bullet)^\dagger(Q^\dagger P) \,. \]
    Let us assume that the statement holds for all trees with less than $n$ vertices and let $\tau$ be a tree with $n$ vertices. Let $\pi \in F$ be such that $\tau = B^+(\pi)$, then,
    \begin{align*}
        \dF_{\mu_\sharp f} \big(B^+(\pi)\big)_Q &= \dF_{\mu_\sharp f}(\pi) [\dF_{\mu_\sharp f}(\bullet)_Q] = \dF_{\mu_\sharp f}(\pi) \big[ Q \cdot f(Q^\dagger P)^\dagger \big]
    \intertext{we apply the Leibniz rule and use the deshuffle coproduct $\Delta_\shuffle (\pi) = \sum_{(\pi)} \pi_{(1)} \otimes \pi_{(2)}$,}
                                                &= \sum_{(\pi)} \dF_{\mu_\sharp f}(\pi_{(1)})[Q] \cdot \dF_{\mu_\sharp f}(\pi_{(2)}) \big[ f(Q^\dagger P)^\dagger \big]
                             \intertext{use the linearity of $f$, the fact that $Q^\dagger P = \mu(Q, P) =: W$, and \cref{eq:dF_F_relation_2},}
                                                &= \sum_{(\pi)} \dF_{\mu_\sharp f}(\pi_{(1)})[Q] f \Big( \dF_f \big(\psi^* (\pi_{(2)}) \big) \odot W \Big)^\dagger
                                                \intertext{note that the only terms which are non-zero are the ones where $\pi_{(1)} \in \{\one\} \cup T$, for which $\dF_{\mu_\sharp f}(\pi_{(1)})[Q] = \dF_{\mu_\sharp f}(\pi_{(1)})_Q$, use the definition of $\dF_f$,}
                                                &= \sum_{(\pi)} \dF_{\mu_\sharp f}(\pi_{(1)})_Q \dF_f \Big(B^+ \big( \psi^* (\pi_{(2)}) \big) \Big)^\dagger
                                                \intertext{use \cref{eq:dF_F_relation} to obtain,}
                             &= \sum_{(\pi)} Q \cdot \dF_f\big(\psi^*(\pi_{(1)})\big)^\dagger \dF_f \Big(B^+ \big( \psi^* (\pi_{(2)}) \big) \Big)^\dagger \\
                             &= \sum_{(\pi)} Q \cdot \dF_f\Big( \psi^*(\pi_{(1)}) B^+ \big( \psi^* (\pi_{(2)}) \big) \Big)^\dagger
                             \intertext{and we finish by applying the formula for $\psi^*$ from \cref{eq:psi*_1},}
                             &= Q \cdot \dF_f\Big( \psi^* \big( B^+ (\pi) \big) \Big)^\dagger \,.
    \end{align*}
    This proves the statement for $\tau = B^+(\pi)$.
\end{proof}

\begin{figure}[ht]
    \centering

    \begin{tikzpicture}[
        >=Latex,
        node distance=2cm and 3cm,
        every node/.style={font=\normalsize}
    ]

    \node[style={font=\normalsize}] (coeff) at (-3,3.5) {Coefficients};
    \node[style={font=\normalsize}] (integrators) at (0,3.5) {Integrators};
    \node[style={font=\normalsize}] (vecfields) at (3,3.5) {Vector Fields};
    \node[style={font=\normalsize}] (hamiltonians) at (6,3.5) {Hamiltonians};

    \draw[->] (coeff) -- (integrators);
    \draw[<-] (integrators) -- (vecfields);
    \draw[<-] (vecfields) -- (hamiltonians);

    \node (Tstar) at (-3,1.5) {$T^\ast$};

    \node (BHa) at (0,2.8) {$\id + B_{\mu_\sharp f}(a)$};

    \node (TG) at (0,0.8) {$T^\ast G$};

    \node (XTG) at (3,1.5) {$\mathfrak{X}(T^*G)$};

    \node (CinftyTG) at (6,1.5) {$C^\infty(T^\ast G)$};

    \node (BFstar) at (-3,-1.5) {$BF^\ast$};

    \node (BHaphi) at (0,-2.8) {$B_f(a \circ \psi) \odot \blank$};

    \node (gstar) at (0,-0.8) {$\mathfrak{g}^\ast$};

    \node (Xgstar) at (3,-1.5) {$\mathfrak{X}(\mathfrak{g}^\ast)$};

    \node (Cinftygstar) at (6,-1.5) {$C^\infty(\mathfrak{g}^\ast)$};

    \draw[->] (Tstar) -- node[left] {$\psi^\ast$} (BFstar);

    \draw[->] (TG) -- node[right] {$\mu$} (gstar);

    \draw[->] (Cinftygstar) -- node[right] {$\mu^\ast$} (CinftyTG);


    \draw[->] (Cinftygstar) -- (Xgstar);

    \draw[->] (CinftyTG) -- (XTG);

    \draw[->] (Xgstar) -- node[right] {$\mu_\sharp$} (XTG);

    \draw[->] (Tstar) to[out=20,in=180] (BHa);

    \draw[->] (XTG) to[out=160,in=0] (BHa);

    \draw[->] (BFstar) to[out=340,in=180] (BHaphi);

    \draw[->] (Xgstar) to[out=200,in=0] (BHaphi);

    \draw[->] (TG) .. controls +(-3,2) and +(3,2) .. (TG);

    \draw[->] (gstar) .. controls +(-3,-2) and +(3,-2) .. (gstar);

    \end{tikzpicture}

    \caption{Commutative diagram of the momentum maps $\psi$ and $\mu$. Here, $\mu^*H = H \circ \mu$ for a Hamiltonian $H$, and $\psi^*a = a \circ \psi$, where the coefficient map $a$ is extended multiplicatively to $T^2$ by $a(\tau\gamma)=a(\tau)a(\gamma)$. A Lie--Poisson integrator can be obtained in two ways: either by pulling back the Hamiltonian $H:\fraku(n)\to\R$ via $\mu$, applying a symplectic integrator with coefficient map $a\in T^*$, integrating on the symplectic manifold ("upstairs"), and projecting the solution via $\mu$; or by pulling back the coefficient map via $\psi$, thereby constructing the ISOSYRK method from the underlying symplectic Runge--Kutta method, and integrating directly on the Lie--Poisson manifold ("downstairs").}
    \label{fig:commutative_diagram}
\end{figure}

Given a coefficient map $a : T \to \R$, the integrator with the Taylor expansion given by $y_0 + B_{\mu_\sharp f} (a)(y_0)$ is denoted by $\Phi_{\mu_\sharp f}^{(a)}$.
Similarly, given a coefficient map $\alpha : BF \to \R$, the integrator with the Taylor expansion given by $B_f (\alpha) \odot W_0$ is denoted by $\Phi_f^{(\alpha)}$.

\Cref{thm:LP_reduction} shows that the biplanar Butcher series with coefficient map $a \circ \psi$ is related to the Butcher series with coefficient map $a$ via the momentum map $\mu : T^* \text{U}(n) \to \fraku(n)$, and the composition and substitution laws of the biplanar Butcher series follow from the composition and substitution laws of the classical Butcher series.

\begin{theorem}
    \label{thm:LP_reduction}
    Let $a, b : T \to \R$ be coefficient maps extended to $T^2$ as
    \[ a(\one) = 1 \,, a(\tau \gamma) = a(\tau) a(\gamma) \quad \text{and} \quad b(\one) = b(\tau \gamma) = 0 \,. \]
    Then,
    \[ \mu \circ \Phi_{\mu_\sharp f}^{(a)} = \Phi_f^{(a\circ\psi)} \circ \mu \,, \quad B_{\mu_\sharp f} (b) = \mu_\sharp B_f (b \circ \psi) \,. \]
    \emph{Composition} and \emph{substitution} laws follow,
    \begin{align*}
        \Phi_f^{(a\circ\psi)} \circ \Phi_f^{(b \circ \psi)} &= \Phi_f^{\big((b * a) \circ \psi\big)} \,, \\
        \Phi_{\frac1h B_f (b \circ \psi)}^{(a \circ \psi)} &= \Phi_f^{\big((b \star a) \circ \psi\big)} \,.
    \end{align*}
\end{theorem}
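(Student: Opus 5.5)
The plan is to push everything through the momentum map $\mu$, using \cref{prop:dF_F_relation} as the bridge between the two series. Then I derive the composition and substitution laws from their classical counterparts on $T^*\mathrm{U}(n)$.

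\textbf{First identity.} I expand $\mu\big(\Phi^{(a)}_{\mu_\sharp f}(Q,P)\big)$ by Taylor expanding $\mu$ at $(Q,P)$. Since $\mu(Q,P)=Q^\dagger P$ is quadratic, only derivatives of order $0,1,2$ survive:
\[ \mu(y+\Delta)=\mu(y)+\mu'(y)\Delta+\tfrac12\mu''(y)(\Delta,\Delta),\qquad \Delta=B_{\mu_\sharp f}(a)(y). \]
Inserting the Butcher series for $\Delta$ gives a sum over forests $\pi\in T^2$ (with $\one$ for the zeroth-order term). Each term is $h^{|\pi|}\frac{a(\pi)}{\sigma(\pi)}\dF_{\mu_\sharp f}(\pi)[\mu]$, where $a(\pi)$ is extended multiplicatively. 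The factor $\frac12$ in the quadratic term combines with the ordered pairs $(\tau,\gamma)$ to give exactly $1/\sigma(\tau\gamma)$: when $\tau\ne\gamma$ the two orders each contribute, and when $\tau=\gamma$ there is a factor $2$ in the symmetry.

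I then apply \eqref{eq:dF_F_relation_2} to rewrite each term as $\dF_f(\psi^*(\pi))\odot W$. Finally I use the adjoint relation $\langle\psi^*\pi,(\rho,\eta)\rangle_\sigma=\langle\pi,\psi(\rho,\eta)\rangle_\sigma$ to regroup the sum over $BF$:
\[ \sum_{\pi\in T^2}\frac{a(\pi)}{\sigma(\pi)}\dF_f(\psi^*\pi)=\sum_{(\rho,\eta)\in BF}\frac{a(\psi(\rho,\eta))}{\sigma(\rho,\eta)}\dF_f(\rho,\eta). \]
This regrouping holds because $\psi^*(\pi)=\sum_{\psi(\rho,\eta)=\pi}\frac{\sigma(\pi)}{\sigma(\rho,\eta)}(\rho,\eta)$. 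The powers of $h$ match since $\psi$ preserves vertex count. Using $W=\mu(Q,P)$, this is $\Phi_f^{(a\circ\psi)}(\mu(Q,P))$.

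\textbf{Second identity.} For $B_{\mu_\sharp f}(b)=\mu_\sharp B_f(b\circ\psi)$, I argue the same way but more simply. Here $b$ vanishes on $\one$ and on two-tree forests, so only single trees contribute. I use \eqref{eq:dF_F_relation} termwise together with the same adjoint regrouping. The restriction $b\circ\psi$ lives on $T(BT)_*\subset BF$, and the dependence on the monomial set is linear through $\mu_\sharp$.

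\textbf{Composition law.} Classically, $\Phi^{(a)}_{\mu_\sharp f}\circ\Phi^{(b)}_{\mu_\sharp f}=\Phi^{(b*a)}_{\mu_\sharp f}$ by the Butcher group law on $T^*\mathrm{U}(n)$. Applying $\mu$ and using the first identity twice gives
\[ \Phi_f^{(a\circ\psi)}\circ\Phi_f^{(b\circ\psi)}\circ\mu=\Phi_f^{((b*a)\circ\psi)}\circ\mu. \]
I then need to cancel $\mu$. Since $\mu$ is surjective onto $\fraku(n)$ (take $Q=I$, $P=W$), the identity holds on all of $\fraku(n)$.

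\textbf{Substitution law.} By the second identity, the vector field $\frac1hB_{\mu_\sharp f}(b)$ equals $\mu_\sharp g$ with $g=\frac1hB_f(b\circ\psi)$. The classical substitution law gives $\Phi^{(a)}_{\mu_\sharp g}=\Phi^{(b\star a)}_{\mu_\sharp f}$. Applying the first identity with $g$ in place of $f$, and again for $f$, and then cancelling the surjective $\mu$ yields the claim.

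\textbf{Main obstacle.} There is one catch in the substitution step. The first identity and \cref{prop:dF_F_relation} were proved for \emph{linear} $f$, but $g$ is not linear in $W$. So I must either check that the proof of \cref{prop:dF_F_relation} only uses the equivariant lift structure $\mu_\sharp$ rather than linearity, or treat the identity as an equality of formal series and verify it order by order. Alongside this, the symmetry bookkeeping in the regrouping step is where I expect the real care to be needed.
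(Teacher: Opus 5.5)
Your route is the same as the paper's. You expand $\mu$ quadratically; your $\tfrac12\mu''$ bookkeeping giving $1/\sigma(\tau\gamma)$ is exactly the paper's split into linear and ordered-pair terms. You then apply \cref{prop:dF_F_relation}, regroup over $BF$ through the adjoint $\psi^*$, use linearity of $\mu_\sharp$ for the second identity, and finish both laws with a diagram chase plus surjectivity of $\mu$. Your explicit formula $\psi^*(\pi)=\sum_{\psi(\rho,\eta)=\pi}\frac{\sigma(\pi)}{\sigma(\rho,\eta)}(\rho,\eta)$ actually makes the paper's final regrouping step more transparent.

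The obstacle you flag is genuine, and the paper does not resolve it either. It writes $\Phi^{(a\circ\psi)}_{\frac1h B_f(b\circ\psi)}\circ\mu=\mu\circ\Phi^{(a)}_{\frac1h\mu_\sharp B_f(b\circ\psi)}$ without comment, i.e.\ it applies the first identity to the nonlinear field $g=\frac1h B_f(b\circ\psi)$.

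Neither of your two remedies works as stated. The proof of \cref{prop:dF_F_relation} does use linearity, to pull $f$ out of $\dF_{\mu_\sharp f}(\pi_{(2)})[f(Q^\dagger P)^\dagger]$; for nonlinear $g$ this step produces Fa\`a di Bruno terms involving $g^{(k)}$. More fundamentally, \cref{def:F_f_bp} defines biplanar elementary differentials only for linear $f$. Plugging $g$ in formally would put $g(\,\cdot\,)$ where $g'(W)[\,\cdot\,]$ belongs, so there is no series $B_g(a\circ\psi)$ to compare order by order.

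What closes the gap is to take reduction as the definition of the left-hand side. Consider the linear maps $L_U(Q,P)=(UQ,UP)$ for $U\in\mathrm{U}(n)$. They commute with $\mu_\sharp g$ for every $g$, they satisfy $\mu\circ L_U=\mu$, and their orbits are exactly the fibres of $\mu$ on $T^*\mathrm{U}(n)$. B-series are equivariant under such maps, so $\mu\circ\Phi^{(a)}_{\mu_\sharp g}$ descends to a well-defined map $\Phi^{(a\circ\psi)}_g$ on $\fraku(n)$; for Runge--Kutta coefficients this is just the ISOSYRK scheme applied to $g$. With this definition your chain
$\Phi^{(a\circ\psi)}_g\circ\mu=\mu\circ\Phi^{(a)}_{\frac1h B_{\mu_\sharp f}(b)}=\mu\circ\Phi^{(b\star a)}_{\mu_\sharp f}=\Phi^{((b\star a)\circ\psi)}_f\circ\mu$
uses the first identity only for the linear $f$ (with coefficient map $b\star a$) and goes through.

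A related bookkeeping point concerns the composition law. There you (like the paper) apply the first identity to $b$, which requires $b$ to be of integrator type: $b(\one)=1$ and $b$ multiplicative on $T^2$. The hypothesis instead gives the vector-field normalization $b(\one)=b(\tau\gamma)=0$. So $b$ plays different roles in the two laws, and you should say so explicitly.
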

\begin{proof}
    Let $\dF_{\mu_\sharp f}(\tau)_Q$ and $\dF_{\mu_\sharp f}(\tau)_P$ denote the $Q$-component and $P$-component of $\dF_{\mu_\sharp f}(\tau)$, respectively, then we need to show that for any $(Q, P) \in T^*U(n)$,
    \[ \mu \big((Q, P) + B_{\mu_\sharp f} (a) (Q,P) \big) = B_f (a \circ \psi) \odot W \,. \]
    We use the definitions of the momentum map $\mu(Q, P) = Q^\dagger P$ and perform the following computation with $W = \mu(Q, P) = Q^\dagger P$,
    \begin{align*}
        \mu \big((Q, P) + B_{\mu_\sharp f} (a) (Q,P) \big) &= \big( Q + \sum_{\tau \in T} \frac{a(\tau)}{\sigma(\tau)} \dF_{\mu_\sharp f}(\tau)_Q \big)^\dagger \big( P + \sum_{\tau \in T} \frac{a(\tau)}{\sigma(\tau)} \dF_{\mu_\sharp f}(\tau)_P \big) \\
                                                   &= W + \sum_{\tau \in T} \frac{a(\tau)}{\sigma(\tau)} \big( Q^\dagger \dF_{\mu_\sharp f}(\tau)_P + \dF_{\mu_\sharp f}(\tau)_Q^\dagger P \big) \\
                                                   &\quad\quad\quad + \sum_{\tau, \gamma \in T} \frac{a(\tau) a(\gamma)}{\sigma(\tau) \sigma(\gamma)} \dF_{\mu_\sharp f}(\tau)_Q^\dagger \dF_{\mu_\sharp f}(\gamma)_P
        \intertext{apply \cref{prop:dF_F_relation},}
                                                &= W + \sum_{\tau \in T} \frac{a(\tau)}{\sigma(\tau)} \big( W \cdot \dF_f \big(\psi^*(\tau))^\dagger + \dF_f \big(\psi^*(\tau)) \cdot W \big) \\
                                                &\quad\quad\quad + \sum_{\tau, \gamma \in T} \frac{a(\tau) a(\gamma)}{\sigma(\tau) \sigma(\gamma)} \dF_f \big( \psi^*(\tau) \big) \cdot W \cdot \dF_f \big( \psi^*(\gamma) \big)
        \intertext{use the definition of the action $\odot$ and biplanar forests,}
                                                &= W + \sum_{\tau \in T} \frac{a(\tau)}{\sigma(\tau)} \dF_f \big(\psi^*(\tau)) \odot W \\
                                                &\quad\quad\quad + \sum_{\tau\gamma \in T^2} \frac{a(\tau \gamma)}{\sigma(\tau \gamma)} \dF_f \big( \psi^*(\tau \gamma) \big) \odot W
        \intertext{use the definition of $\psi^*$ as the adjoint of $\psi$, $\langle \psi^*(\tau \gamma), (\pi, \eta) \rangle_\sigma = \langle \tau \gamma, \psi(\pi, \eta) \rangle_\sigma$, and the fact that the set of biplanar forests $BF$ includes the cases where $\pi = \one$ and $\eta = \one$,}
                                                &= \sum_{(\pi, \eta) \in BF} \frac{a\big(\psi(\pi, \eta)\big)}{\sigma(\pi, \eta)} \dF_f (\pi, \eta) \odot W = B_f (a \circ \psi) \odot W \,.
    \end{align*}
    The identity $B_{\mu_\sharp f} (b) = \mu_\sharp B_f(b \circ \psi)$ follows from the linearity of $\mu_\sharp$ and \cref{prop:dF_F_relation}.
    The composition and substitution laws follow from 
    \[ \Phi_f^{(a \circ \psi)} \circ \Phi_f^{(b \circ \psi)} \circ \mu = \mu \circ \Phi_{\mu_\sharp f}^{(a)} \circ \Phi_{\mu_\sharp f}^{(b)} = \mu \circ \Phi_{\mu_\sharp f}^{(b * a)} = \Phi_f^{\big((b * a) \circ \psi \big)} \circ \mu \,, \]
    and
    \[ \Phi_{\frac1h B_f (b\circ\psi)}^{(a \circ \psi)} \circ \mu = \mu \circ \Phi_{\frac1h \mu_\sharp B_f(b \circ \psi)}^{(a)} = \mu \circ \Phi_{\frac1h B_{\mu_\sharp f} (b)}^{(a)} = \mu \circ \Phi_{\mu_\sharp f}^{(b \star a)} = \Phi_f^{\big((b \star a) \circ \psi\big)} \circ \mu \,. \]
    Since $\mu$ is surjective, the composition and substitution laws follow.
\end{proof}

\begin{corollary}
    \label{cor:order_IsoSyRK}
    Consider a symplectic Runge--Kutta method $\Phi_{\mu_\sharp f}^{(a)}$, then the corresponding ISOSYRK method is $\Phi_f^{(a \circ \psi)}$.
\end{corollary}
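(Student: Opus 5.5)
The plan is to deduce the corollary from \cref{thm:LP_reduction} together with the characterization of ISOSYRK methods through Lie--Poisson reduction recalled in the introduction (cf.~\cite{ModinLPM20}). The ISOSYRK method \eqref{eq:isosyrk} is, by construction, the unique map $\Phi^{\mathrm{iso}}_h$ on $\fraku(n)$ that satisfies
\[ \mu \circ \Phi_{\mu_\sharp f}^{(a)} = \Phi^{\mathrm{iso}}_h \circ \mu . \]
Here $\Phi_{\mu_\sharp f}^{(a)}$ is the symplectic Runge--Kutta method with tableau $(A,b)$, applied to the lifted canonical system \eqref{eq:canonical_equations} whose vector field is $\mu_\sharp f$. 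By \cref{prop:coeff_a_np}, this Runge--Kutta method has the Butcher series $(Q,P)+B_{\mu_\sharp f}(a)(Q,P)$, with $a$ given by the recursion in the tableau. We then extend $a$ multiplicatively to $T^2$, with $a(\one)=1$ and $a(\tau\gamma)=a(\tau)a(\gamma)$, so that \cref{thm:LP_reduction} applies.

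The first step is to invoke \cref{thm:LP_reduction}, which gives $\mu\circ\Phi_{\mu_\sharp f}^{(a)} = \Phi_f^{(a\circ\psi)}\circ\mu$. Combining this with the defining relation above yields
\[ \Phi^{\mathrm{iso}}_h\circ\mu = \Phi_f^{(a\circ\psi)}\circ\mu , \]
as formal series in $h$. The second step uses surjectivity of $\mu(Q,P)=Q^\dagger P$ onto $\fraku(n)$. Indeed, any $W\in\fraku(n)$ equals $\mu(I,W)$, so we may cancel $\mu$ on the right and conclude $\Phi^{\mathrm{iso}}_h=\Phi_f^{(a\circ\psi)}$. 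This is the same argument the paper uses for the composition and substitution laws.

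The main obstacle is to justify rigorously that \eqref{eq:isosyrk} is exactly the $\mu$-projection of the Runge--Kutta step, and in particular that the result does not depend on the lift $(Q,P)$ chosen for $W$. I would cite \cite{ModinLPM20} for this equivariance and reduction statement. If a self-contained check is wanted, I would choose the lift $Q_k=I$, $P_k=W_k$. I would then write the Runge--Kutta stages for \eqref{eq:canonical_equations} and identify $\tilde W_i=\tilde Q_i^\dagger\tilde P_i$. Expanding this product, the cross terms in $h$ produce precisely the auxiliary quantities $X_i$, $Y_i$ and $K_{ij}$, and $W_{k+1}=Q_{k+1}^\dagger P_{k+1}$ then reproduces the update in \eqref{eq:isosyrk}. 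Independence of the lift follows from $\mathrm{U}(n)$-equivariance of both the method and $\mu$. Since the equality $\Phi^{\mathrm{iso}}_h=\Phi_f^{(a\circ\psi)}$ is an identity of formal Taylor series, the comparison only needs to hold order by order in $h$, and this completes the argument.
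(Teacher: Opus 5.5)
Your argument is correct and is essentially the paper's: the corollary is read off directly from \cref{thm:LP_reduction}, combined with the Lie--Poisson reduction characterization of ISOSYRK methods from \cite{ModinLPM20} recalled in the introduction, and surjectivity of $\mu$ lets you cancel it. Two small remarks: independence of the lift is already built into \cref{thm:LP_reduction}, because its right-hand side depends on $(Q,P)$ only through $W=\mu(Q,P)$. Also, your self-contained check recovers the update $W_{k+1}=W_k+h\sum_i b_i[f(\tilde W_i),\tilde W_i]$ only after you use the symplecticity condition $b_i a_{ij}+b_j a_{ji}=b_i b_j$ on the $h^2$ cross terms.
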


We note that the Butcher product on $\TT$ satisfies the relation,
\begin{equation}
    \label{eq:butcher_product_relation}
    \tau \butcher (\gamma \butcher \eta) = \gamma \butcher (\tau \butcher \eta) \,.
\end{equation}
 
We prove that the coefficient map $a \circ \psi$ of biplanar Butcher series is a coadjoint coefficient map as is defined in \cref{prop:coadjoint_coeff}.

\begin{proposition}
    \label{prop:symplecticity}
    The coefficient map $a \circ \psi : BF \to \R$ of an ISOSYRK method $\Phi^{(a \circ \psi)}_f$ is a coadjoint coefficient map.
\end{proposition}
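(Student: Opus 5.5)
We have to verify the three conditions \cref{eq:coadjoint_coeff_prop_1} for $\alpha = a\circ\psi$, where $a$ is extended multiplicatively to $T^2$ as in \cref{thm:LP_reduction}. The first two are immediate from the definition of $\psi$. First, $\alpha_T(\one) = a(\psi(\one)) = a(\one) = 1$. Second, by \cref{eq:psi_1} we have $\psi(\pi,\eta) = \psi(\pi)\cdot\psi(\eta)$. Each of $\psi(\pi)$ and $\psi(\eta)$ is a single tree or $\one$, so multiplicativity of $a$ on $T^2$ gives $\alpha(\pi,\eta) = a(\psi(\pi))\,a(\psi(\eta)) = \alpha_T(\pi)\alpha_T(\eta)$. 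The substance of the proof is therefore the antipode condition $\alpha_T\cdot(\alpha_T\circ S) = (\alpha_T\circ S)\cdot\alpha_T = \delta_\one$, and this is where symplecticity of the underlying Runge--Kutta method must enter.

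To make this condition concrete, fix a nonempty monomial $\pi = \tau_1\cdots\tau_n$ with $\tau_i\in BT$. Write $\tau_i = B^+(\eta_i,\nu_i)$ and $t_i := B^+(\psi(\eta_i)\psi(\nu_i))\in T$. Iterating \cref{eq:psi_2}, or equivalently $\psi(\pi\cdot\tau) = \psi(\pi)\butcher\psi(\tau)$, gives
\[ \psi(\tau_1\cdots\tau_n) = t_1\butcher(t_2\butcher(\cdots\butcher t_n)). \]
By \cref{eq:butcher_product_relation}, this tree is symmetric in $t_1,\dots,t_{n-1}$. Similarly, $\psi(S(\pi)) = \psi(\tau_n\cdots\tau_1)$, up to the sign $(-1)^n$. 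Expanding the deconcatenation coproduct, the condition $\alpha_T\cdot(\alpha_T\circ S)=0$ on $\pi$ becomes
\[ \sum_{k=0}^{n} (-1)^{n-k}\, a\big(t_1\butcher\cdots\butcher t_k\big)\, a\big(t_n\butcher\cdots\butcher t_{k+1}\big) = 0, \]
where an empty product is interpreted as $\one$ with $a(\one)=1$. The other order, $(\alpha_T\circ S)\cdot\alpha_T$, gives the same identity with the list reversed, so it suffices to prove this one identity for an arbitrary list $t_1,\dots,t_n$ of trees.

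The plan is to prove this identity by a telescoping argument using the symplecticity condition $a(u\butcher v)+a(v\butcher u) = a(u)a(v)$. Set $u_k := t_1\butcher\cdots\butcher t_k$ and $v_k := t_n\butcher\cdots\butcher t_{k+1}$, both nested to the right. The key lemma to establish is
\[ a(u_k)\,a(v_k) = X_k + X_{k+1}, \]
where $X_k := a(u_k\butcher v_k)$ is the coefficient of a single tree. By symplecticity, $a(u_k)a(v_k) = a(u_k\butcher v_k) + a(v_k\butcher u_k)$. Using \cref{eq:butcher_product_relation} to move branches between roots, I expect $v_k\butcher u_k$ to equal $u_{k+1}\butcher v_{k+1}$: the root of $v_k$ carries $t_{k+1}$ and the rest of $v_k$ as branches, and grafting $u_k$ onto it can be rearranged into $t_{k+1}$ grafted onto $u_k$ with $v_{k+1}$ attached. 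The boundary terms $k=0$ and $k=n$ should be handled directly, since they only involve a single tree with $a(\one)=1$. With the lemma in hand, the alternating sum telescopes to zero.

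The main obstacle is the tree identity $v_k\butcher u_k = u_{k+1}\butcher v_{k+1}$, or whatever correct variant replaces it, together with bookkeeping the boundary cases. If the naive identity fails, I would instead match both sides at the level of non-planar trees. Both sides are a tree whose root is $t_{k+1}$'s root, with the subtrees of the $t_i$ distributed as branches. I would verify the identity by induction on $n$ using \cref{eq:butcher_product_relation}, checking the cases $n=2$ and $n=3$ first to fix signs.
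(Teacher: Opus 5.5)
Your first two conditions are handled correctly, and a telescoping argument via symplecticity and \cref{eq:butcher_product_relation} is also how the paper proceeds. The gap is in the explicit formula for $\psi$ on monomials, which is wrong.

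Iterating $\psi(\pi\cdot\tau)=\psi(\pi)\butcher\psi(\tau)$ gives
\[ \psi(\tau_1\cdots\tau_n)=\psi(\tau_1\cdots\tau_{n-1})\butcher t_n=(\cdots((t_1\butcher t_2)\butcher t_3)\cdots)\butcher t_n . \]
So the nesting is to the \emph{left}: the root of $t_i$ is grafted onto the root of $t_{i+1}$. The result is a chain, not a tree symmetric in $t_1,\dots,t_{n-1}$. The paper's own examples confirm this: $\psi(\bullet\bullet\bullet)$ is the three-vertex ladder, and $\psi(\bullet^n)$ is the ladder of height $n$ (used in the RKMK corollary). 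Your right-nested formula would instead give $B^+(\bullet\bullet)$ and $B^+(\bullet^{n-1})$.

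Since $\butcher$ is not associative, this changes the identity you set out to prove, and the right-nested version is false. Take $n=3$, $t_1=B^+(\bullet)$, $t_2=t_3=\bullet$ and $a=1/\gamma$. Equivalently, take the two-stage Gauss method, which has order $4$ and so agrees with $1/\gamma$ on every tree involved. Your alternating sum is then
\[ -\tfrac14+\tfrac12\cdot\tfrac12-\tfrac16\cdot 1+\tfrac18=-\tfrac1{24}\neq 0 . \]
For the same reason, your conjectured tree identity $v_k\butcher u_k=u_{k+1}\butcher v_{k+1}$ fails in general: the two sides are rooted at the roots of $t_k$ and $t_{k+2}$ respectively. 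Checking $n=3$ as you propose would only show that the target identity is wrong; it is not a matter of fixing signs.

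With the correct nesting, set $u_k=\psi(\tau_1\cdots\tau_k)$ and $v_k=\psi(\tau_n\cdots\tau_{k+1})$. Then $u_k=u_{k-1}\butcher t_k$ and $v_{k-1}=v_k\butcher t_k$, so one application of \cref{eq:butcher_product_relation} gives
\[ v_k\butcher u_k=u_{k-1}\butcher(v_k\butcher t_k)=u_{k-1}\butcher v_{k-1} . \]
Writing $X_k:=a(u_k\butcher v_k)$, symplecticity yields $a(u_k)a(v_k)=X_k+X_{k-1}$ for $1\le k\le n-1$. The boundary terms are $a(v_0)=X_0$ and $a(u_n)=X_{n-1}$, so the alternating sum telescopes to zero. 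This is exactly the paper's repeated application of symplecticity.

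A smaller point: $(\alpha_T\circ S)\cdot\alpha_T$ is not ``the same identity with the list reversed,'' since it pairs the reversed prefix with the forward suffix. It does follow, either by the same telescoping or because $\alpha_T(\one)=1$ makes $\alpha_T$ invertible in the convolution algebra, so a right inverse is automatically two-sided.
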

\begin{proof}
    We prove that the coefficient map $\alpha = a \circ \psi$ satisfies the following properties,
    \[ \alpha_T(\one) = 1 \,, \quad \alpha(\pi, \eta) = \alpha_T (\pi) \alpha_T (\eta) \,, \quad \alpha_T \cdot (\alpha_T \circ S) = \delta_\one \,, \]
    with the remaining property $(\alpha_T \circ S) \cdot \alpha_T = \delta_\one$ proven in an analogous way as the third property.
    The first two properties follow from the definition of $\psi$ and the fact that $a$ is extended multiplicatively to $T^2$. To prove the last property, we recall that a coefficient map $a : T \to \R$ corresponding to a symplectic Runge--Kutta method satisfies the following relation,
    \begin{equation}
        \label{eq:symplectic_coeff_condition}
        a (\tau \butcher \gamma) + a(\gamma \butcher \tau) = a(\tau) a(\gamma) \,.
    \end{equation}
    Let us denote $\tilde{\tau}_i = \psi(\tau_i)$ for conciseness and let us apply \cref{eq:symplectic_coeff_condition} repeatedly to obtain the following relation with $n \geq 3$,
    \begin{align*}
        a \big( \psi ( \tau_1 \cdots \tau_n ) \big) &= a\bigg( \psi(\tau_1 \cdots \tau_{n-1}) \butcher \tilde{\tau}_n \bigg)
        \intertext{apply \cref{eq:symplectic_coeff_condition},}
                                                    &= a\big( \psi(\tau_1 \cdots \tau_{n-1}) \big) a \big(\psi(\tau_n)\big) - a\Big( \tilde{\tau}_n \butcher \big( \psi(\tau_1 \cdots \tau_{n-2}) \butcher \tilde{\tau}_{n-1} \big) \Big)
        \intertext{apply \cref{eq:butcher_product_relation} to the second term,}
                                                    &= a\big( \psi(\tau_1 \cdots \tau_{n-1}) \big) a \big(\psi(\tau_n)\big) - a \big( \psi(\tau_1 \cdots \tau_{n-2}) \butcher \psi(\tau_n \tau_{n-1}) \big)
        \intertext{apply \cref{eq:symplectic_coeff_condition} and \cref{eq:butcher_product_relation} again to the second term,}
                                                    &= a\big( \psi(\tau_1 \cdots \tau_{n-1}) \big) a \big(\psi(\tau_n)\big) - a \big( \psi(\tau_1 \cdots \tau_{n-2}) \big) a \big( \psi(\tau_n \tau_{n-1}) \big) \\
                                                    &\quad\quad + a\Big( \psi(\tau_n \tau_{n-1}) \butcher \big( \psi(\tau_1 \cdots \tau_{n-3}) \butcher \tilde{\tau}_{n-2} \big) \Big) \\
                                                    &= a\big( \psi(\tau_1 \cdots \tau_{n-1}) \big) a \big(\psi(\tau_n)\big) - a \big( \psi(\tau_1 \cdots \tau_{n-2}) \big) a \big( \psi(\tau_n \tau_{n-1}) \big) \\
                                                    &\quad\quad + a\big( \psi(\tau_1 \cdots \tau_{n-3}) \butcher \psi(\tau_n \tau_{n-1} \tau_{n-2})\big)
        \intertext{and so on until we obtain the term $a(\psi(\tau_n \cdots \tau_1))$, therefore, we have,}
        a \big( \psi ( \tau_1 \cdots \tau_n ) \big) &= \sum_{k=0}^{n-1} (-1)^{n-k-1} a\big(\psi(\tau_1 \cdots \tau_k)\big) a\big(\psi(\tau_n \cdots \tau_{k+1})\big) \,.
    \end{align*}
    Moving all terms to the left hand side, we obtain the following relation for any $\pi = \tau_1 \cdots \tau_n$ with $n \geq 3$,
    \begin{equation}
        \label{eq:proof_rel_1}
        \sum_{k=0}^{n} (-1)^{n-k} a\big(\psi(\tau_1 \cdots \tau_k)\big) a\big(\psi(\tau_n \cdots \tau_{k+1})\big) = 0 \,.
    \end{equation}
    For $n = 2$, the relation \cref{eq:proof_rel_1} also holds by the symplectic condition \cref{eq:symplectic_coeff_condition}. For $n = 1$, the relation holds trivially. Therefore, the relation \cref{eq:proof_rel_1} holds for any $\pi \in T, \pi \neq \one$ and the statement follows.
\end{proof}

\subsection{Runge--Kutta--Munthe-Kaas methods for Lie--Poisson systems}
\label{sec:RKMK}

In \cite{EngoNIL01}, the authors proposed a class of numerical integrators for Lie--Poisson systems which are obtained by using the Runge--Kutta--Munthe-Kaas (RKMK) framework \cite{Munthe-KaasHOR99,Munthe-KaasRML98}. In this section, we show that RKMK methods applied to the isospectral equation \cref{eq:isospectral} can be expanded using biplanar Butcher series by introducing a subclass of coadjoint coefficient maps called \emph{character coefficient maps}.

The solution of the isospectral equation \cref{eq:isospectral} evolves on the coadjoint orbit $\OO_{W_0}$ defined as
\[ \OO_{W_0} = \{ \Ad^*_Q W_0 \; : \; Q \in \text{U}(n) \} \,, \]
in particular, the solution can be written as $W(t) = \Ad^*_{\exp(\omega(t))} W_0$ for some curve $\omega : \R \to \fraku(n)$ with $\omega(0) = 0$ which solves,
\[ \dot{\omega}(t) = \dexp^{-1}_{\omega(t)} \Big( f\big(W(t)\big) \Big) \,, \quad \text{where } \dexp^{-1}_u (v) = \sum_{k=0}^\infty \frac{B_k}{k!} \ad^k_u (v) \,, \]
with $\ad^k_u (v) = [u, \ad^{k-1}_u (v)]$, $\ad_u^0 (v) = v$, and $B_k$ are the Bernoulli numbers. Therefore, the solution of the isospectral equation can be obtained by solving a differential equation on $\fraku(n)$ and applying the coadjoint action of $\exp\big(\omega(t)\big)$ to $W_0$.

This perspective is used in \cite{EngoNIL01} to propose numerical integrators of the form,
\begin{align*}
    W_{n+1} &= \Ad^*_{\exp(\omega_{n+1})} W_n \,, \\
    \omega_{n+1} &= h \sum_{i=1}^s b_i \dexp^{-1}_{\omega_n^i} \big( f(W_n^i) \big) \,, \\
    W^i_n &= \Ad^*_{\exp(\omega_n^i)} W_n \,, \\
    \omega_n^i &= h \sum_{j=1}^s a_{ij} \dexp^{-1}_{\omega_n^j} \big( f(W_n^j) \big) \,,
\end{align*}
where $b = (b_i)_{i=1}^s$ and $A = (a_{ij})_{i,j=1}^s$ are coefficients of a Runge--Kutta method of order $p$. The resulting integrator $W_n \mapsto W_{n+1}$ is of order $p$.

Let us consider the shuffle product $\shuffle : T(\BT) \otimes T(\BT) \to T(\BT)$ defined as,
\[ \tau\pi \shuffle \gamma\eta := \tau (\pi \shuffle \gamma\eta) + \gamma (\tau\pi \shuffle \eta) \,, \quad \text{for } \tau, \gamma \in BT, \ \  \pi, \eta \in T(BT) \,, \]
for example, $\tau_1 \tau_2 \shuffle \tau_3 \tau_4 = \tau_1 \tau_2 \tau_3 \tau_4 + \tau_1 \tau_3 \tau_2 \tau_4 + \tau_1 \tau_3 \tau_4 \tau_2 + \tau_3 \tau_1 \tau_2 \tau_4 + \tau_3 \tau_1 \tau_4 \tau_2 + \tau_3 \tau_4 \tau_1 \tau_2$.

We introduce a subclass of coadjoint coefficient maps, see \cref{prop:coadjoint_coeff}, that correspond to RKMK methods.

\begin{proposition}
    \label{prop:character_coeff}
    Consider a RKMK method $W_n \mapsto W_{n+1}$ for the isospectral equation \cref{eq:isospectral}. It can be expanded as a Butcher series $B_f (\alpha)$ over biplanar forests with coadjoint coefficient map $\alpha : BF \to \R$ satisfying the following property,
    \[ \alpha_T (\pi \shuffle \eta) = \alpha_T (\pi) \alpha_T (\eta) \,, \quad \text{for } \pi, \eta \in T(BT) \,. \]
    Such coefficient maps are called \emph{character coefficient maps}.
\end{proposition}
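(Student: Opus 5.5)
The plan is to factor the RKMK step into a \emph{Lie-algebra part} and a \emph{group part}. Each stage value $\omega_n^i$ and the update $\omega_{n+1}$ are elements of $\fraku(n)$ built recursively from $f$, commutators and linear combinations. First I show by induction on the order in $h$ that every such quantity is a formal series $B_f(\beta_T)$ with $\beta_T : T(BT)\to\R$ supported on primitive-type combinations. More precisely, $\beta_T$ vanishes on $\one$, and the element $\sum_\pi \beta_T(\pi)\pi/\sigma(\pi)$ is primitive for the deshuffle coproduct dual to $\shuffle$. Equivalently, $\beta_T$ is an infinitesimal character: $\beta_T(\pi\shuffle\eta)=0$ for $\pi,\eta\neq\one$.

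The induction uses the following facts.
\begin{enumerate}
\item $\dF_f(\bullet)=f(W)$ is the image of a single tree, which is primitive.
\item A commutator $[B_f(\beta_1),B_f(\beta_2)]$ corresponds under $\dF_f$ (which reverses concatenation order) to the commutator of the dual elements in $T(\BT)$. Commutators of primitives are primitive.
\item $f$ is linear, so $f(W^i_n)$ with $W^i_n=\Ad^*_{\exp(\omega^i_n)}W_n$ is handled as follows. I write $\Ad^*_{\exp(\omega)}W = \exp(\omega)W\exp(\omega)^\dagger$. This is the $\odot$-action of the pair $(\exp(\omega),\exp(\omega))$, so $f$ of it equals $\dF_f(B^+(\pi,\eta))$ summed against product coefficients. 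Grafting via $B^+$ produces new trees, i.e.\ new primitive letters of $T(\BT)$.
\end{enumerate}
Hence $\dexp^{-1}$ (a series of nested commutators) preserves primitivity, and $\omega_{n+1}=B_f(\beta_T)$ with $\beta_T$ primitive-type. Since $\omega_{n+1}\in\fraku(n)$, I also obtain $\beta_T\circ S=-\beta_T$, consistent with Proposition~\ref{prop:infinitesimal_coadjoint_coeff}.

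Next I treat the exponential. In the shuffle Hopf algebra (tensor algebra with concatenation, dual to shuffle), the exponential of a primitive element is group-like. Dually, $\alpha_T := \exp^{\cdot}(\beta_T)$, computed with the convolution product $\alpha_T\cdot\tilde\alpha_T$ from the deconcatenation coproduct, is a character for $\shuffle$: $\alpha_T(\pi\shuffle\eta)=\alpha_T(\pi)\alpha_T(\eta)$. This is the standard Ree / Friedrichs criterion. I check that $B_f(\exp^{\cdot}\beta_T)=\exp(B_f(\beta_T))$. This holds because $\dF_f$ turns concatenation into (reversed) matrix multiplication and the convolution is dual to concatenation; the symmetry factors $\sigma$ are multiplicative on monomials, so they match.

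Then $W_{n+1}=\exp(\omega_{n+1})W_n\exp(\omega_{n+1})^\dagger$. I set $\alpha(\pi,\eta)=\alpha_T(\pi)\alpha_T(\eta)$, which gives $B_f(\alpha)\odot W_n$ exactly as in the computation of Proposition~\ref{prop:coadjoint_coeff}. For a character, $\alpha_T\circ S$ is its convolution inverse (a standard Hopf fact, since $S$ is the antipode for the shuffle/deconcatenation structure). Therefore \eqref{eq:coadjoint_coeff_prop_1} holds and $\alpha$ is coadjoint, as well as satisfying the shuffle character property.

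The main obstacle is the first step. One must make precise that the nonlinear recursion defining RKMK, in particular $f$ evaluated at $\Ad^*$ of an exponential inside $\dexp^{-1}$, stays within primitive-type coefficient maps when expressed through $\dF_f$ with the order reversal $\dF_f(\pi\cdot\eta)=\dF_f(\eta)\dF_f(\pi)$. I would handle this by working at the level of formal series in $T(\BT)$ \emph{before} applying $\dF_f$. I would define the RKMK recursion abstractly on the completed tensor algebra, with stage values as Lie series in the letters $BT$. Only at the end would I apply $\dF_f$, which is an anti-homomorphism and so maps Lie elements to Lie elements up to sign. Whether $\beta_T$ is a genuine function on basis elements, rather than only a linear functional, needs care with symmetry factors. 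Since $T(BT)$ monomials are planar in the concatenation, $\sigma$ factors multiplicatively and no extra combinatorics arises.
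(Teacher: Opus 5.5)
Your proposal is correct and takes the same approach as the paper. The paper argues that $\omega_n^i$ and $\omega_{n+1}$ are $\dF_f$-images of Lie series in the letters $BT$: each $f(W_n^j)$ is a combination of single trees $B^+(\pi,\eta)$, and $\dexp^{-1}$ only adds nested commutators. Hence $\exp(\omega_{n+1}) = B_f(\alpha_T)$ with $\alpha_T$ a shuffle character, and $\alpha_T\cdot(\alpha_T\circ S)=\delta_\one$ gives the coadjoint property. Your version differs only in making the paper's one-line primitivity claim explicit, by induction on the order in $h$ in the completed tensor algebra. One small slip: $\beta_T\circ S=-\beta_T$ should be deduced from primitivity ($S(X)=-X$ for primitive $X$), not from $\omega_{n+1}\in\fraku(n)$, because $\dF_f$ is not injective.
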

\begin{proof}
    We note that $\omega_{n+1}$ and $\omega_n^i$ are linear combinations of terms of the form $\ad^k_{\omega_n^j} \big( f(W_n^j) \big)$ for $k \in \N$. This implies that $\omega_{n+1}$ and $\omega_n^i$ are linear combinations of terms of the form $\dF_f(\tau)$ for $\tau \in \Prim\big(T(\BT)\big)$, where $\Prim\big(T(\BT)\big)$ is the Lie algebra generated by biplanar trees.
    We recall that $W_{n+1} = \Ad^*_{\exp(\omega_{n+1})} W_n$ which implies that $B_f (\alpha_T) = \exp(\omega_{n+1})$. Since $\omega_{n+1} \in \dF_f \Big(\Prim\big(T(\BT)\big)\Big)$, $\alpha_T$ is a character with respect to the shuffle product $\shuffle$ on $T(\BT)$.
    We check that the character property of $\alpha_T$ implies $\alpha_T \cdot (\alpha_T \circ S) = \delta_\one$. For any $\one \neq \pi \in T(\BT)$,
    \[ \alpha_T \cdot (\alpha_T \circ S) (\pi) = \sum_{(\pi)} \alpha_T(\pi_{(1)}) \alpha_T\big(S(\pi_{(2)})\big) = \sum_{(\pi)} \alpha_T\big(\pi_{(1)} \shuffle S(\pi_{(2)})\big) = 0 \,, \]
    following the property of the antipode $S$ where $\Delta_\cdot (\pi) = \sum_{(\pi)} \pi_{(1)} \otimes \pi_{(2)}$. The property $\alpha_T \cdot (\alpha_T \circ S) = \delta_\one$ follows analagously.
\end{proof}

Note that the converse of \cref{prop:character_coeff} does not hold. Indeed, given a character coefficient map $\alpha : BF \to \R$, the corresponding method is not necessarily an RKMK method. The reason is that the space $\dF_f \Big(\Prim\big(T(\BT)\big)\Big)$ is much larger than the subspace accessible by $\omega_{n+1}$ and $\omega^i_n$. In other words, the stages of an RKMK method probe only a restricted family of primitive elements.

\begin{corollary}
    \label{cor:exact_coeff}
    Let $\hat{\gamma}$ denote the ordered tree factorial defined in \cite{munthe-kaasLieButcherSeries2018}, then $\hat{\gamma} = \gamma \circ \psi$ where $\gamma$ is the tree factorial over classical trees $T$.
\end{corollary}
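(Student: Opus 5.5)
We need to prove that $\hat\gamma = \gamma\circ\psi$, where $\hat\gamma$ is the ordered tree factorial of Munthe-Kaas et al. The plan is to work at the level of the exact flows rather than comparing the combinatorial definitions directly.

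\textbf{Step 1: the exact flow upstairs and downstairs.} The exact flow of $\mu_\sharp f$ on $T^*\mathrm{U}(n)$ has the Butcher series $\id + B_{\mu_\sharp f}(1/\gamma)$. Its coefficient map $1/\gamma$ is multiplicative on $T^2$, so \cref{thm:LP_reduction} applies with $a = 1/\gamma$. Since $\mu$ maps solutions of \eqref{eq:canonical_equations} to solutions of \eqref{eq:isospectral}, the exact flow of the isospectral equation is
\[
B_f\big((1/\gamma)\circ\psi\big)\odot W_0 .
\]

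\textbf{Step 2: the same flow via the Lie--Butcher description.} The exact solution is also $W(t)=\Ad^*_{Q(t)}W_0$, where $Q$ solves $\dot Q = f(\Ad^*_Q W_0)\,Q$ on $\mathrm{U}(n)$. In the Lie--Butcher theory of \cite{munthe-kaasLieButcherSeries2018}, this $Q(h)$ is the series over ordered forests with coefficients $1/\hat\gamma$. Ordered monomials of biplanar trees play the role of ordered forests: the two branch sets of $B^+(\pi,\eta)$ record derivatives acting to the left and right of $W$.

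One then checks that the recursion defining $\dF_f$ on $T(BT)$ reproduces the elementary differentials of $Q$. This gives $Q(h)=B_f(\hat\gamma^{-1})$ with $\alpha_T = 1/\hat\gamma$. By \cref{prop:coadjoint_coeff},
\[
W(h)=B_f(\alpha)\odot W_0, \qquad \alpha(\pi,\eta)=\frac{1}{\hat\gamma(\pi)\,\hat\gamma(\eta)} .
\]

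\textbf{Step 3: compare coefficients.} Comparing the two expressions for $W(h)$ should give $1/\hat\gamma(\pi) = 1/\gamma(\psi(\pi))$ on $T(BT)$. The delicate point is that elementary differentials over $BF$ need not be independent, so equal series do not automatically have equal coefficients.

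\textbf{Main obstacle and fallback.} I expect this independence to be the main obstacle, and I would avoid it by a direct combinatorial induction. The ordered factorial obeys
\[
\hat\gamma(\one)=1, \qquad \hat\gamma(\omega\, B^+(\pi,\eta)) = |\omega B^+(\pi,\eta)|\,\hat\gamma(\omega)\,\hat\gamma(\pi)\,\hat\gamma(\eta) ,
\]
adapted to the biplanar setting; I would verify this recursion matches \cite{munthe-kaasLieButcherSeries2018}. On the other side, \eqref{eq:psi_2} gives
\[
\psi(\omega B^+(\pi,\eta)) = B^+\big(\psi(\omega)\psi(\pi)\psi(\eta)\big) .
\]
Together with $\gamma(B^+(\tau_1\cdots\tau_k)) = |\tau|\prod_i\gamma(\tau_i)$, extended multiplicatively to forests, this yields
\[
\gamma(\psi(\omega B^+(\pi,\eta))) = |\cdot|\,\gamma(\psi(\omega))\,\gamma(\psi(\pi))\,\gamma(\psi(\eta)) .
\]
Here $\psi$ preserves the number of vertices, but $\psi(\omega)$ is a tree rather than a forest. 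I therefore expect the real work to be checking that $\gamma$ of the tree $\psi(\omega)$ equals $\hat\gamma(\omega)$ for the ordered-forest factorial. I would first test this on small cases such as $\omega=\bullet\bullet$ before running induction on the number of vertices.
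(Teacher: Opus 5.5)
Your proposal is correct, and its decisive part (the fallback induction) takes a different route from the paper's. The paper gives no written proof. As its placement after \cref{prop:character_coeff} indicates, the corollary is meant to be read off by writing the exact flow of \eqref{eq:isospectral} in two ways and equating coefficients: once as the reduction $(1/\gamma)\circ\psi$ of the exact flow upstairs via \cref{thm:LP_reduction}, and once as the Lie--Butcher exact flow, whose coefficients are the shuffle character $1/\hat\gamma$. Your Steps 1--3 are exactly this, and you are right that the last step tacitly needs independence of the biplanar elementary differentials, which the paper never addresses.

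Your induction sidesteps that, and it is shorter than you fear. The identity $\gamma(\psi(\omega))=\hat\gamma(\omega)$ for the prefix $\omega$ is not extra work; it is precisely the induction hypothesis. Use strong induction on the number of vertices, since $|\omega|,|\pi|,|\eta|<|\omega B^+(\pi,\eta)|$. Then, with $\gamma(\one)=1$ and $|\psi(\rho)|=|\rho|$, the single line
\[
\gamma\big(\psi(\omega B^+(\pi,\eta))\big)=|\omega B^+(\pi,\eta)|\,\gamma(\psi(\omega))\,\gamma(\psi(\pi))\,\gamma(\psi(\eta))=\hat\gamma\big(\omega B^+(\pi,\eta)\big)
\]
finishes the proof on $T(BT)$. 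Multiplicativity of both sides extends it to $BF$, and no small-case testing is required.

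The trade-off is as follows. The paper's route explains why $(1/\gamma)\circ\psi$ is the exact-flow coefficient map, but it is only as rigorous as the unproved uniqueness of biplanar coefficients. Yours is airtight, but it proves the identity for $\hat\gamma$ \emph{defined} by the biplanar recursion.

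The dynamics enters in the check you defer: that this recursion faithfully transcribes the ordered-forest factorial of \cite{munthe-kaasLieButcherSeries2018}, which is the same recursion with a single ordered forest $\omega_2$ in place of the pair $(\pi,\eta)$. A frozen word acts on $W$ by splitting into a left factor and a right factor. Hence a biplanar tree $B^+(\pi,\eta)$ collects the Lie--Butcher trees $B^+(\omega_2)$ with $\omega_2$ running over the shuffles of $\pi$ and $\eta$. Since $1/\hat\gamma$ is a shuffle character on ordered forests, the sum over these shuffles gives exactly the factor $1/(\hat\gamma(\pi)\hat\gamma(\eta))$ in your recursion.
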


\begin{corollary}
    ISOSYRK methods are not part of the RKMK class of methods for Lie--Poisson systems as considered in \cite{EngoNIL01}.
\end{corollary}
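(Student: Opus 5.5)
The plan is to show that the coefficient map $a\circ\psi$ of an ISOSYRK method is in general not a character coefficient map. By \cref{prop:character_coeff}, every RKMK method for the isospectral equation \cref{eq:isospectral} expands as a biplanar Butcher series whose coefficient map $\alpha$ satisfies the shuffle character property $\alpha_T(\pi\shuffle\eta)=\alpha_T(\pi)\alpha_T(\eta)$. By \cref{cor:order_IsoSyRK}, the ISOSYRK method has coefficient map $a\circ\psi$. It therefore suffices to exhibit one ISOSYRK method, the ISOMP method of \cref{example:isomp}, together with biplanar trees $\tau_1,\tau_2$ such that
\[ (a\circ\psi)_T(\tau_1\shuffle\tau_2)\neq (a\circ\psi)_T(\tau_1)\,(a\circ\psi)_T(\tau_2). \]

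The first attempt uses the smallest case $\tau_1=\tau_2=\bullet$. Then $\tau_1\shuffle\tau_2 = 2\,\bullet\bullet$, and $\psi(\bullet\bullet)=\psi(\bullet)\butcher\psi(\bullet)$ is the two-vertex tree. For the midpoint rule, $a(\tau)=2^{1-|\tau|}$, so the left side is $2\cdot\tfrac12=1$ and the right side is $1\cdot 1 = 1$. This case gives no contradiction, and indeed symplecticity forces $2a(\bullet\butcher\bullet)=a(\bullet)^2$ here.

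Next I would try two distinct trees, $\tau_1=\bullet$ and $\tau_2=B^+(\bullet,\one)$. The right side is $a(\bullet)\,a(\text{two-vertex tree}) = \tfrac12$. The left side is $a(\psi(\bullet\tau_2))+a(\psi(\tau_2\bullet))$. These two images are $\bullet\butcher\psi(\tau_2)$ and $\psi(\tau_2)\butcher\bullet$, namely the cherry and the tall three-vertex tree. Both have three vertices, so the sum is $\tfrac14+\tfrac14=\tfrac12$. This is again consistent, because the symplectic condition \cref{eq:symplectic_coeff_condition} exactly gives $a(\tau\butcher\gamma)+a(\gamma\butcher\tau)=a(\tau)a(\gamma)$. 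So shuffles of two single trees always agree, and the discrepancy must come from longer words.

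The main obstacle is therefore choosing a test word where the shuffle involves non-adjacent reorderings that the symplectic relation does not control. I would take $\pi=\bullet\bullet$ and $\eta=\bullet$, so that
\[ \pi\shuffle\eta = 3\,\bullet\bullet\bullet. \]
Since $\psi(\bullet\bullet\bullet)=B^+(\bullet\bullet)$ is the cherry, the left side is $3a(\text{cherry})=3/4$. The right side is $a(\psi(\bullet\bullet))\,a(\bullet)=\tfrac12$. Since $3/4\neq 1/2$, the character property fails.

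To make the argument independent of the particular method, I would note that for a general order-$p$ method with $p\ge3$ the left side equals $3\cdot\tfrac13=1$ and the right side equals $\tfrac12$. These are the exact-solution values $1/\gamma$, which agree with $a$ up to order $3$, so $a\circ\psi$ is never a character for such methods. Combining this with \cref{prop:character_coeff} yields the corollary, and I would remark that no ISOSYRK method (even ISOMP, which has order $2$) falls into the RKMK class.
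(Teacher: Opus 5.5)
The identification of $\psi(\bullet\bullet\bullet)$ in your main step is wrong. By \eqref{eq:psi_2} with $\pi=\bullet\bullet$ and $\eta=\nu=\one$, you get $\psi(\bullet\bullet\bullet)=B^+\big(\psi(\bullet\bullet)\big)=B^+(B^+(\bullet))$, i.e.\ $\psi(\bullet\bullet)\butcher\bullet$. This is the tall three-vertex tree, not the cherry. It is the same image as $\psi(\tau_2\bullet)$ in your second attempt, and the paper lists it among the examples after the definition of $\psi$. In general $\psi(\bullet^n)$ is the ladder with $n$ vertices. For ISOMP your conclusion still holds, but only because $a(\tau)=2^{1-|\tau|}$ depends on $|\tau|$ alone: the test gives $3\cdot\tfrac14=\tfrac34\neq\tfrac12$. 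So you have correctly shown that ISOMP is not an RKMK method. More generally, the same holds for any ISOSYRK method with $\sum_{i,j}b_i(A^2)_{ij}\neq\tfrac16$. This settles the weak reading, that not every ISOSYRK method is RKMK.

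Your extension to order $p\ge 3$, however, fails. With the correct tree the left side is $3\cdot\tfrac{1}{\gamma(B^+(B^+(\bullet)))}=3\cdot\tfrac16=\tfrac12$, which equals the right side. This is not an accident. An order-$p$ method has $a=1/\gamma$ on trees of size $\le p$. Moreover, $(1/\gamma)\circ\psi$ is the coefficient map of the exact flow $\Ad^*_{\exp(\omega(t))}W_0$, to which the argument of \cref{prop:character_coeff} applies, so it is itself a shuffle character (cf.\ \cref{cor:exact_coeff}). Your remark that these are ``the exact-solution values'' should have been the warning sign. Consequently, no test on words of bounded size can exclude all ISOSYRK methods. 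Your closing claim that no ISOSYRK method is RKMK, which is what the paper proves, is therefore unsupported.

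The missing idea is to apply the shuffle identity along the whole family $\bullet^n$. Since $\bullet^{n-1}\shuffle\bullet=n\,\bullet^n$, the character property forces $a(\psi(\bullet^n))=1/n!$ for every $n$. Equivalently, $\sum_{i,j}b_i(A^{n-1})_{ij}=1/n!$ for all $n$. Then the stability function $R(z)=1+\sum_{n\ge1}z^n\sum_{i,j}b_i(A^{n-1})_{ij}$ would equal $e^z$, so the method would be exact for all linear problems. This is impossible, because $R$ is rational for any finite-stage Runge--Kutta method. This is exactly the paper's argument.
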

\begin{proof}
    The coefficient maps of ISOSYRK methods are given by $a \circ \psi$ where $a : T \to \R$ is a coefficient map of a symplectic Runge--Kutta method. By \cref{prop:character_coeff}, for the method to be an RKMK method, the coefficient map $a \circ \psi$ must be a character with respect to the shuffle product on $T(\BT)$. In particular, $a(\psi(\bullet^n)) = 1/n!$ for any $n \in \N$ assuming $a(\psi(\bullet)) = 1$. Now, $\psi(\bullet^n)$ is the rooted tree consisting of a single branch of height $n$, that is,
    \[ \psi(\bullet^n) = \forestJD \,, \quad \text{hence,} \quad a(\forestKD) = 1/n! \,. \]
    Therefore, the corresponding symplectic Runge--Kutta method must be exact for linear problems. However, no finite-stage Runge--Kutta method can be exact for general linear problems, see \cite[Chapter IV.3]{HairerSOD96}. This yields a contradiction.
\end{proof}

\section{Backward error analysis over biplanar forests}\label{sec:bea_biplanar_forests}

In this section, we study the geometric properties of ISOSYRK methods, in particular, we perform backward error analysis and obtain the modified equation as a Butcher series over biplanar forests. We also give an explicit formula for the modified Hamiltonian by introducing \emph{fair biplanar trees}.

Let us use the substitution law described in \cref{thm:LP_reduction} to perform the backward error analysis. We recall that the backward error analysis of a Runge--Kutta method consists in finding a modified vector field $\tilde{f}_h$ such that the exact flow of $\tilde{f}_h$ coincides with the numerical flow of the Runge--Kutta method.
The modified vector field $\tilde{f}_h$ is expressed as a Butcher series with coefficients given by the substitution law. In particular, we find the coefficient map $b: T \to \R$ such that,
\[ B_f (a) = B_{\frac1h B_f (b)} (1/\gamma) = B_f \big(b \star (1/\gamma)\big) \,. \]
Following \cite{calaqueTwoInteractingHopf2011}, we can endow the set of coefficient maps together with the substitution law as a product with a group structure, therefore, $b = a \star (1/\gamma)^{\star-1}$. Using \cref{thm:LP_reduction} and given a coefficient map $b$ computed for a symplectic Runge--Kutta method, the coefficient map of the modified vector field of the corresponding ISOSYRK method is given by $b \circ \psi$, that is,
\[ B_f (a \circ \psi) = B_{\frac1h B_f (b \circ \psi)} \big( (1/\gamma) \circ \psi \big) \,. \]

We note that the coefficient map $b$ satisfies the identity,
\begin{equation}
    \label{eq:Hamiltonian_vf}
     b(\tau \butcher \gamma) + b(\gamma \butcher \tau) = 0 \,,
 \end{equation}
 \Cref{prop:modified_isospectrality} uses this identity to show that the modified vector field of an ISOSYRK method is isospectral.

\begin{proposition}
    \label{prop:modified_isospectrality}
    Let $b : T \to \R$ be a coefficient map which is extended to $T^2$ as $b(\one) = b(\tau \gamma ) = 0$ for any $\tau, \gamma \in T$, and, additionally, satisfies the identity,
    \[ b(\tau \butcher \gamma) + b(\gamma \butcher \tau) = 0 \,, \quad \text{for any } \tau, \gamma \in T \,. \]
    Then, $b \circ \psi$ is an infinitesimal coadjoint coefficient map, see \cref{prop:infinitesimal_coadjoint_coeff}.
\end{proposition}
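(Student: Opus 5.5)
We must verify the three conditions of \cref{prop:infinitesimal_coadjoint_coeff} for $\beta = b\circ\psi$: (i) $\beta_T(\one)=0$; (ii) $\beta(\pi,\eta) = \delta_\one(\pi)\beta_T(\eta) + \beta_T(\pi)\delta_\one(\eta)$; (iii) $\beta_T\circ S = -\beta_T$. The first two are immediate from the definition of $\psi$ and the extension of $b$ to $T^2$. For (i), $\psi(\one)=\one$ and $b(\one)=0$. For (ii), by \cref{eq:psi_1} we have $\psi(\pi,\eta)=\psi(\pi)\cdot\psi(\eta)$. If both $\pi,\eta\neq\one$, this is a forest of two trees and $b$ vanishes on it, matching the right-hand side, which is also $0$. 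If exactly one of them is $\one$, say $\eta=\one$, the left side is $b(\psi(\pi))=\beta_T(\pi)$ and the right side agrees. If both are empty, both sides are $0$ by (i).

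The substance is (iii), the biplanar analogue of the symplecticity computation in \cref{prop:symplecticity}. Here $S(\tau_1\cdots\tau_n) = (-1)^n\tau_n\cdots\tau_1$, so we must show
\[ b\big(\psi(\tau_n\cdots\tau_1)\big) = (-1)^{n+1} b\big(\psi(\tau_1\cdots\tau_n)\big) \]
for all $n\geq1$ and $\tau_i\in BT$. I would mimic the telescoping argument of \cref{prop:symplecticity}, using the relation $\psi(\pi\cdot\tau)=\psi(\pi)\butcher\psi(\tau)$, together with \cref{eq:butcher_product_relation}. Linearizing the multiplicative symplectic relation replaces $a(\tau)a(\gamma)$ by $0$, which is exactly the hypothesis $b(\tau\butcher\gamma)+b(\gamma\butcher\tau)=0$. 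Write $\tilde\tau_i=\psi(\tau_i)$. Then:
\begin{align*}
b(\psi(\tau_1\cdots\tau_n)) &= b\big(\psi(\tau_1\cdots\tau_{n-1})\butcher\tilde\tau_n\big) = -\,b\big(\tilde\tau_n\butcher(\psi(\tau_1\cdots\tau_{n-2})\butcher\tilde\tau_{n-1})\big)\\
&= -\,b\big(\psi(\tau_1\cdots\tau_{n-2})\butcher\psi(\tau_n\tau_{n-1})\big),
\end{align*}
where the last step uses \cref{eq:butcher_product_relation} and $\tilde\tau_n\butcher\tilde\tau_{n-1}=\psi(\tau_n\tau_{n-1})$.

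Iterating this, the leftover product terms that appeared in \cref{prop:symplecticity} all vanish, and each step contributes a sign. After $n-1$ swaps we reach $(-1)^{n-1} b(\psi(\tau_n\cdots\tau_1))$, which is the claim. The boundary cases are handled separately. For $n=1$ the identity is trivial. For $n=2$ it is exactly the hypothesis $b(\tilde\tau_1\butcher\tilde\tau_2)=-b(\tilde\tau_2\butcher\tilde\tau_1)$. At the last step, where $\psi(\tau_1)=\tilde\tau_1$ alone is grafted, we use the extended Butcher product with $\one$.

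The main obstacle is bookkeeping in this induction: checking that at step $k$ the expression has the form $(-1)^k\, b\big(\psi(\tau_1\cdots\tau_{n-k-1})\butcher\psi(\tau_n\cdots\tau_{n-k})\big)$. One also needs that $\psi(\tau_n\cdots\tau_{n-k})$ equals the iterated Butcher product in the right order, which follows from \cref{eq:psi_2} and \cref{eq:butcher_product_relation}. I would state this as an explicit inductive claim in $k$ and verify it once. Then \cref{prop:infinitesimal_coadjoint_coeff} gives the conclusion.
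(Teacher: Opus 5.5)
Your proposal is correct and is essentially the paper's argument: the paper also proves $b\circ\psi\circ S=-b\circ\psi$ by rerunning the telescoping computation from \cref{prop:symplecticity}. There the products $a(\cdot)\,a(\cdot)$ are replaced by zero (the paper phrases this as replacing $\Delta_\cdot(\pi)$ by $\pi\otimes\one+\one\otimes\pi$), so only the end terms survive; your explicit checks of $\beta_T(\one)=0$ and of the splitting condition on $\beta(\pi,\eta)$, which the paper leaves implicit, are also correct.
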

\begin{proof}
    We reuse the argument of the proof of \cref{prop:symplecticity} to show that $b \circ \psi \circ S = - b \circ \psi$ where we replace the deconcatenation coproduct $\Delta_\cdot (\pi)$ by $\pi \otimes \one + \one \otimes \pi$ due to the difference between the symplecticity condition on a coefficient map $a : T \to \R$ and the identity satisfied by $b: T \to \R$. This proves $b \circ \psi + b \circ \psi \circ S = 0$.
\end{proof}

We recall \cref{sec:bea_symplectic} in which the identity \cref{eq:Hamiltonian_vf} is used to introduce non-rooted trees and the set of non-superfluous non-rooted trees $FT^\prime$. Non-superfluous non-rooted trees are then used to represent the modified vector field $\tilde{f}_h$. This approach is used to dramatically decrease the amount of computation necessary to compute the values of the coefficient map $b : T \to \R$ of the modified vector field as the set of non-superfluous non-rooted trees is much smaller than the set of rooted trees.

We do not represent the biplanar Butcher series of the modified equation using a subset of biplanar forests corresponding to the non-superfluous non-rooted trees since the values of the coefficient map $b \circ \psi$ are already given by the values of the coefficient map $b: T \to \R$. However, we introduce such a set in the next section to obtain an explicit biplanar Butcher series representation of the modified Hamiltonian of an ISOSYRK method.

\subsection{Modified Hamiltonian}
\label{sec:biplanar_elementary_Hamiltonians}

Recall \cref{thm:mod_Hamiltonian} in which the modified Hamiltonian of a symplectic Runge--Kutta method is expressed as
\begin{equation}
    \label{eq:modified_Hamiltonian_series}
    \tilde{\HH}_h (Q, P) = \sum_{\hat{\tau} \in FT^\prime} h^{|\hat\tau|-1} \frac{b(\hat{\tau}_*)}{\sigma(\hat{\tau}_*)} \HH(\hat{\tau}_*)(Q, P) \,,
\end{equation}
with appropriately defined coefficient map $b: T \to \R$ where elementary Hamiltonian $\HH(\hat{\tau}_*)$ is given by
\[ \HH(\hat{\tau}_*) (Q, P) = \dF_{\mu_\sharp f} (\pi) [\HH] (Q, P) \,, \quad \text{for } \hat{\tau}_* = B^+(\pi) \,. \]

Similarly to how the directional derivatives of the momentum map $\mu$ give rise to biplanar forests, the directional derivatives of the Hamiltonian $\HH$ give rise to the space $\BF^{\leq 2} := \BF \oplus \BF^2$ which is a subspace of the symmetric algebra $S(\BF)$. Let the basis of $\BF^2$ be denoted by $(\pi_1, \eta_1) \cdot (\pi_2, \eta_2)$ with $(\pi_i, \eta_i) \in BF$. We have,
\[ (\pi_1, \eta_1) \cdot (\pi_2, \eta_2) = (\pi_2, \eta_2) \cdot (\pi_1, \eta_1) \,, \quad \text{for } (\pi_i, \eta_i) \in BF \,. \]
We extend the forest momentum map $\psi : BF \to T^2$ to $\psi : BF^2 \to T^4$ by defining for $(\pi_i, \eta_i) \in BF$,
\[ \psi\big( (\pi_1, \eta_1) \cdot (\pi_2, \eta_2) \big) = \psi(\pi_1, \eta_1) \psi(\pi_2, \eta_2) \,,  \]
and $\sigma : BF \to \N$ to $\sigma : BF^2 \to \N$ by,
\[ \sigma \big( (\pi_1, \eta_1) \cdot (\pi_2, \eta_2) \big) = \begin{cases} 2 \sigma(\pi_1, \eta_1)^2 \,, \quad \text{if } (\pi_1, \eta_1) = (\pi_2, \eta_2) \,, \\ \sigma(\pi_1, \eta_1) \sigma(\pi_2, \eta_2) \,, \quad \text{otherwise} \,. \end{cases} \]
Let us also define $B^+$ on $BF^2$ with $\psi$ commuting with $B^+$, that is,
\[ \psi \circ B^+ = B^+ \circ \psi \,, \quad \text{and, moreover,} \quad \sigma \circ B^+ = \sigma \,.  \]

\begin{definition}
    \label{def:fair_biplanar_trees}
    A tree $B^+(\pi)$ with $\pi \in BF^{\leq 2}$ is called a \emph{fair biplanar tree} if $\psi \big( B^+(\pi) \big) = \hat{\tau}_*$ for some $\hat{\tau} \in FT^\prime$.
\end{definition}

The set of fair biplanar trees is denoted by $FBT$.
The name \emph{fair} is motivated by the fact that the canonical representatives $\hat{\tau}_*$ of non-rooted trees in $FT^\prime$ are maximal with respect to a total order \cite{MuruaHAR06,bogfjellmoHamiltonianBseriesLie2017} according to which a tree is bigger if it has more branches with each branch having a similar number of vertices. This corresponds to the forest $(\pi_1, \eta_1) \cdot (\pi_2, \eta_2) \in BF^{\leq2}$ having $\pi_1, \pi_2, \eta_1, \eta_2$ with the same number of vertices if possible. A systematic study of fair biplanar trees is left for future work. All non-empty fair biplanar trees up to order $4$ are listed below, where we separate $(\pi_1, \eta_1)$ and $(\pi_2, \eta_2)$ in $(\pi_1, \eta_1) \cdot (\pi_2, \eta_2)$ by $\otimes$,
\[ \forestLD \,, \quad \forestMD \,, \quad\forestND \,, \quad \forestOD \,,  \]
\[ \forestPD \,, \quad \forestQD \,, \quad\forestRD \,, \quad \forestSD \,,  \]
\[ \forestTD \,, \quad \forestUD \,, \quad \forestVD \,, \quad \forestWD \,. \]
\[ \forestXD \,, \quad \forestYD \,. \]

Let $\pi \in F$ be a forest and $P(\pi)$ denote the set of partitions of trees in $\pi$. Let $P_{\leq2}(\pi)$ be the subset of $P(\pi)$ consisting of partitions with at most two parts.
The adjoint of $\psi : BF^{\leq 2} \to T^4$ is denoted by $\psi^* : T^4 \to BF^{\leq 2}$ and is given by,
\[ \psi^* (\pi) = \sum_{\substack{p \in P_{\leq2}(\pi) \\ p_i \in T^2}} \psi^*(p_1) \psi^*(p_2) \,, \]
where $\pi \in T^4$ and $p_2 = \one$ if $p$ has only one part. We also have $B^+ \circ \psi^* = \psi^* \circ B^+$.
We note that the set $\{ p \in P_{\leq2} (\pi) \; | \; p_i \in T^2 \}$ has at most $3$ elements, for example, given $\pi = \tau_1 \cdots \tau_4$,
\[ \{ p \in P_{\leq2} (\pi) \; | \; p_i \in T^2 \} = \big\{ \{ \tau_1 \tau_2, \tau_3 \tau_4 \} \,, \{ \tau_1 \tau_3, \tau_2 \tau_4 \} \,, \{ \tau_1 \tau_4, \tau_2 \tau_3 \} \big\} \,. \]

The elementary Hamiltonian $H\big( B^+(\pi) \big)$ for $\pi \in BF^{\leq 2}$ is defined as follows,
\begin{align*}
    H \big(B^+ ( \pi, \eta ) \big) (W) &:= \dF_f (\pi, \eta) [H] (W) = H^\prime (W) \big(\dF_f(\pi, \eta) \odot W \big) \,, \\
    H \Big(B^+ \big( (\pi_1, \eta_1) \cdot (\pi_2, \eta_2) \big) \Big) (W) &:= \dF_f \big((\pi_1, \eta_1) \cdot (\pi_2, \eta_2)\big) [H] (W) \\
                                                                       &= H^{(2)} (W) \big(\dF_f(\pi_1, \eta_1) \odot W \,, \ \dF_f (\pi_2, \eta_2) \odot W\big) \,.
\end{align*}

\begin{proposition}
    \label{prop:isospectral_modified_Hamiltonian}
    Let $\tilde{\HH}_h : T^*\text{U}(n) \to \R$ be the modified Hamiltonian of a symplectic Runge--Kutta method, then,
    \[ \tilde{\HH}_h = \tilde{H}_h \circ \mu \,, \]
    where $\tilde{H}_h : \fraku(n)^* \to \R$ is the modified Hamiltonian of the corresponding ISOSYRK method. Moreover, the modified Hamiltonian $\tilde{H}_h$ of the ISOSYRK method is given by the series expansion,
    \begin{equation}
        \label{eq:biplanar_modified_Hamiltonian_series}
        \tilde{H}_h (W) = \sum_{\hat\tau \in FBT} h^{|\hat\tau|-1} \frac{b\big(\psi(\hat\tau)\big)}{\sigma(\hat{\tau})} H(\hat\tau)(W) \,.
    \end{equation}
    where $b : T \to \R$ is the coefficient map of the modified vector field of the corresponding symplectic Runge--Kutta method.
\end{proposition}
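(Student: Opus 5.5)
The proof has two parts. First we show that $\tilde{\HH}_h$ factors through $\mu$. Then we rewrite the factored series term by term over biplanar objects, using the adjoint $\psi^*$.

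\textbf{Factoring through $\mu$.} We start from the series \cref{eq:modified_Hamiltonian_series} for $\tilde{\HH}_h$, with $\HH = H\circ\mu$. For $\hat\tau_* = B^+(\pi)$ with $\pi\in F$, the elementary Hamiltonian $\HH(\hat\tau_*) = \dF_{\mu_\sharp f}(\pi)[H\circ\mu]$ is a directional derivative of the composition $H\circ\mu$. We expand it by the chain rule (Faà di Bruno). Since $\mu(Q,P)=Q^\dagger P$ is quadratic, $\mu^{(k)}=0$ for $k\ge 3$. Hence only derivatives $H^{(m)}$ appear, and each argument is of the form $\dF_{\mu_\sharp f}(\rho)[\mu]$, where $\rho$ is a subforest with at most two trees. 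The trees of $\pi$ are thereby distributed into blocks of a partition, each block lying in $T^2$.

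Applying \cref{eq:dF_F_relation_2}, every such argument equals $\dF_f(\psi^*(\rho))\odot W$ with $W=\mu(Q,P)$. Consequently $\HH(\hat\tau_*)$ is a function of $W$ alone, and the same then holds for $\tilde{\HH}_h$. We define $\tilde H_h$ as the resulting function of $W$. By \cref{thm:LP_reduction} we have $B_{\mu_\sharp f}(b)=\mu_\sharp B_f(b\circ\psi)$, so the modified vector field upstairs is the lift of the modified field $\frac1h B_f(b\circ\psi)$ downstairs. Lie--Poisson reduction (the Hamiltonian $\HH=H\circ\mu$ generates $\mu_\sharp$ of the Lie--Poisson field of $H$) then identifies $\tilde H_h$ as the Hamiltonian of the modified ISOSYRK vector field. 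This proves the first claim, $\tilde{\HH}_h=\tilde H_h\circ\mu$.

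\textbf{Series over fair biplanar trees.} We treat the cases $|\pi|\le 2$ and $|\pi|\le 4$ uniformly via $BF^{\le 2}$. The first-order term $H'(\dF_{\mu_\sharp f}(\pi)[\mu])$ requires $\pi\in T^2$. The second-order term $H^{(2)}$ uses partitions $p\in P_{\le2}(\pi)$ with $p_i\in T^2$, which matches the stated formula for $\psi^*$ on $T^4$. We also need to rule out higher derivatives of $H$. For this we use that the chain-rule expansion is evaluated through $\mu^{(k)}$ with $k\le 2$ and that $H$ is quadratic in the application (so $H^{(3)}=0$). More generally, the bookkeeping must be arranged so that exactly $BF^{\le 2}$ arises.

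With this in place, $\HH(B^+(\pi)) = \sum H(\text{biplanar tree})$, the sum running over the biplanar trees $B^+(\psi^*(\pi))$ expanded in the basis with weights $\langle\cdot,\cdot\rangle_\sigma/\sigma$. Concretely,
\[ \frac{\HH(B^+(\pi))}{\sigma(B^+(\pi))} = \sum_{\hat\tau:\ \psi(\hat\tau)=B^+(\pi)} \frac{H(\hat\tau)}{\sigma(\hat\tau)} , \]
which follows from the adjointness of $\psi^*$ and $B^+\circ\psi^*=\psi^*\circ B^+$, exactly as in the proof of \cref{thm:LP_reduction}. We substitute this into \cref{eq:modified_Hamiltonian_series}. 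The sum over $\hat\tau\in FT'$ together with the preimages under $\psi$ becomes a sum over biplanar trees $\hat\tau$ with $\psi(\hat\tau)=\hat\tau'_*$ for some $\hat\tau'\in FT'$, which by \cref{def:fair_biplanar_trees} is precisely $FBT$. The coefficient becomes $b(\psi(\hat\tau))$, and the exponent of $h$ is unchanged because $\psi$ preserves vertex counts. This gives \cref{eq:biplanar_modified_Hamiltonian_series}.

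\textbf{Main obstacle.} The hardest step is the symmetry-factor bookkeeping in the displayed identity. We must check that the $\sigma$ on $BF^2$, including the factor $2$ for equal pairs, together with the deshuffle-type partition sum in $\psi^*$, reproduces the multiplicities from Faà di Bruno with the correct factorials. The plan is to verify this through the inner-product characterization, $\langle\psi^*(\rho),\cdot\rangle_\sigma=\langle\rho,\psi(\cdot)\rangle_\sigma$, mirroring the two-case argument (equal versus distinct components) in the proof of \cref{prop:psi*_formula}, rather than by direct counting.
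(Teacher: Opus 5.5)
Your proposal is correct. For the series it follows the paper's proof: you apply Fa\`a di Bruno to $\dF_{\mu_\sharp f}(\pi)[H\circ\mu]$ and truncate to blocks lying in $T^2$ (since $\mu$ is quadratic) and to at most two blocks (since $H$ is quadratic). Then you use \cref{eq:dF_F_relation_2} to rewrite each block as $\dF_f(\psi^*(p_i))\odot W$. Note that quadraticity of $H$ is not an extra assumption peculiar to ``the application''. It follows from the linearity of $f$, which \cref{prop:dF_F_relation} already uses.

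Where you genuinely differ is in identifying $\tilde H_h$ as the modified Hamiltonian of the ISOSYRK method.
\begin{itemize}
\item The paper argues at the level of maps. It combines the formal invariance $\tilde{\HH}_h\circ\Phi^{(a)}_{\mu_\sharp f}=\tilde{\HH}_h$ with the intertwining $\mu\circ\Phi^{(a)}_{\mu_\sharp f}=\Phi^{(a\circ\psi)}_f\circ\mu$ to conclude $\tilde H_h\circ\Phi^{(a\circ\psi)}_f=\tilde H_h$.
\item You argue at the level of vector fields. You combine $B_{\mu_\sharp f}(b)=\mu_\sharp B_f(b\circ\psi)$ with the collective-Hamiltonian fact that the Hamiltonian field of $G\circ\mu$ is $\mu$-related to the Lie--Poisson field of $G$. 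Surjectivity of $\mu$ then forces the two downstairs fields to coincide. A small phrasing fix: $H\circ\mu$ generates $\mu_\sharp(\delta H/\delta W)$, not ``$\mu_\sharp$ of the Lie--Poisson field of $H$''.
\end{itemize}
The paper's route is shorter and reuses the map-level statement of \cref{thm:LP_reduction}. However, invariance by itself does not pin down the generator; for instance, $\tilde H_h^2$ is invariant as well. Your route shows directly that the Lie--Poisson field of $\tilde H_h$ is the ISOSYRK modified vector field, which is the defining property.

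For the second claim, your identity
\[
\HH(B^+(\pi))/\sigma(B^+(\pi))=\sum_{\psi(\hat\tau)=B^+(\pi)}H(\hat\tau)/\sigma(\hat\tau)
\]
follows from $\psi^*(\tau)=\sum_{\psi(\hat\tau)=\tau}\frac{\sigma(\tau)}{\sigma(\hat\tau)}\hat\tau$. It makes explicit the passage from $H(\psi^*(\hat\tau_*))$ to the sum over $FBT$ with weights $1/\sigma(\hat\tau)$, which the paper leaves implicit.

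The symmetry bookkeeping you flag as the main obstacle does go through via the inner-product characterization. For example, take the forest of four single vertices. The $T^4$ formula for $\psi^*$ yields the three pairings, giving $3(\bullet,\bullet)\cdot(\bullet,\bullet)$. The resulting weight is $3/4!=1/8=1/\sigma\big((\bullet,\bullet)\cdot(\bullet,\bullet)\big)$, in agreement with the coefficient $\frac18$ in the paper's explicit expansion.
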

\begin{proof}
    Note that $\HH = H \circ \mu$ and, therefore, the elementary Hamiltonian $\HH(\tau)$ in \cref{eq:modified_Hamiltonian_series} has the form, for $\HH(\hat{\tau}_*) = \HH\big(B^+(\pi)\big)$,
    \[ \HH\big( B^+(\pi) \big) (Q, P) = \dF_{\mu_\sharp f} (\pi) \big[ H \circ \mu \big] (Q, P) = \sum_{p \in P(\pi)} \Big( \prod_{p_i} \dF_{\mu_\sharp f} (p_i) [\mu] \Big) [H] (Q, P) \,, \]
    using the Fa\`a di Bruno's formula where $P(\pi)$ is the set of partitions of the set of trees in $\pi \in F$. We recall that the vector field $f$ is linear, therefore, the Hamiltonian $H$ is quadratic. This implies that the only non-zero terms remaining are those that correspond to partitions $p \in P(\pi)$ with at most two parts. Moreover, by using \cref{prop:dF_F_relation}, we obtain, 
    \[ \HH\big( B^+(\pi) \big) (Q, P) = \sum_{p \in P_{\leq2}(\pi)} \Big( \prod_{p_i} \dF_f \big(\psi^*(p_i)\big) \odot W \Big) [H] (W) \,. \]
    where $W = \mu(Q, P) = Q^\dagger P$. Since $\psi^* (p_i) = 0$ if $p_i \notin T^2$ we get,
    \[ \HH\big( B^+(\pi) \big) (Q, P) = \sum_{\substack{p \in P_{\leq2}(\pi) \\ p_i \in T^2}} \Big( \prod_{p_i} \dF_f \big(\psi^*(p_i)\big) \odot W \Big) [H] (W) = H \Big( \psi^* \big(B^+(\pi)\big) \Big) (W) \,. \]
    This implies that the modified Hamiltonian $\tilde{\HH}_h$ over $T^*\text{U}(n)$ of a symplectic Runge--Kutta method can be written as a modified Hamiltonian $\tilde{H}_h$ on $\fraku(n)^*$ of the corresponding ISOSYRK method precomposed with the momentum map $\mu(Q, P) = W$, that is,
    \[ \tilde{\HH}_h = \tilde{H}_h \circ \mu \,. \]
    We verify that $\tilde{H}_h : \fraku(n)^* \to \R$ is the Hamiltonian of the ISOSYRK method by following the computation,
    \begin{align*}
        \tilde{H}_h (W) &= (\tilde{H}_h \circ \mu) (Q, P) = \big( \tilde{H}_h \circ \mu \circ \Phi_{\mu_\sharp f}^{(a)} \big) (Q, P) \\
                        &= \big( \tilde{H}_h \circ \Phi_f^{(a \circ \psi)} \circ \mu \big) (Q, P) = \big( \tilde{H}_h \circ \Phi_f^{(a \circ \psi)} \big) (W) \,.
    \end{align*}
    This finishes the proof.
\end{proof}

This gives us an efficient way to compute the modified Hamiltonian of an ISOSYRK method. Using \cref{prop:isospectral_modified_Hamiltonian}, the modified Hamiltonian $\tilde{H}_h$ can be written as,
\begin{align*}
    \tilde{H}_h = H &+ h^2 b(\forestAE) \dF_f \Big( \frac1{2}  \forestBE + \frac1{2} \forestCE \Big)[H] + h^3 b(\forestDE) \dF_f \Big( \frac1{2} \forestEE\Big) [H] \\
                         &+ h^4 b(\forestFE) \dF_f \Big( \frac1{2} \forestGE + \forestHE + \frac1{2} \forestIE \\
                         &\quad \quad \quad \quad \quad \ \  + \frac1{2} \forestJE + \forestKE + \frac1{2} \forestLE \Big) [H] \\
                         &+ h^4 b(\forestME) \dF_f \Big( \forestNE + \forestOE + \frac1{2} \forestPE + \frac1{2} \forestQE \Big) [H] \\
                         &+ h^4 b(\forestRE) \dF_f \Big(\frac1{8}\forestSE\Big) [H] + \OO(h^5) \,.
\end{align*}

See \cref{sec:numerics} for an explicit computation of the modified Hamiltonian of the ISOMP method applied to the Euler-Zeitlin equation.

\subsection{Analytic bounds}
\label{sec:long_time_behavior}

Here we derive analytic bounds concerning the long-time behavior of the numerical flow of ISOSYRK methods.
The procedure from \cite[Ch. IX.7]{HairerWannerGNI} assumes that the trajectory stays on a compact domain and uses Cauchy estimates to bound the derivatives of the vector flow of the problem. 
In our setting, we drop the compactness assumption and replace it by the assumptions
\begin{equation}
    \label{eq:bound_f}
    \lVert f(W) \rVert_\infty \leq C_f \rVert W \rVert_\infty \,, \quad \lVert f^\prime(W) \rVert_\infty \leq C_f \,,
\end{equation}
for some constant $C_f$ independent of $W$. 
Additionally, we assume for $a: T \to \R$ that
\begin{equation}
    \label{eq:bound_coeff}
    | a(\tau) | \leq C_a^{|\tau|} \,,
\end{equation}
for some constant $C_a$ independent of $\tau$.

First, we need a bound on the number of ordered monomials of biplanar trees $T(BT)_j$ and biplanar forests $BF_j$ of size $j \in \N$.

\begin{lemma}
    \label{lem:combinatorial_bounds}
    The sizes of the sets $T(BT)_j$ and $BF_j$ of ordered monomials of biplanar trees and biplanar forests of size $j$ are bounded as,
    \[ |T(BT)_j| \leq \frac13 9^j \quad \text{and} \quad |BF_j| \leq 9^j \,. \]
\end{lemma}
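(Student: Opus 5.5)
The plan is to count with ordinary generating functions. I would show that the true counts are dominated coefficientwise by the solution of an explicit algebraic equation, and then evaluate that equation at $x = 1/9$, which lies inside its radius of convergence.

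Write $t_j = |BT_j|$, $m_j = |T(BT)_j|$ and $f_j = |BF_j|$, with $m_0 = f_0 = 1$ (the empty monomial $\one$ and the empty forest $(\one,\one)$). The structure in \cref{def:bp_tree_forest} gives three relations.
\begin{enumerate}
\item Since $BT = B^+(BF)$ and $B^+$ adds one vertex, $t_j = f_{j-1}$ for $j \geq 1$.
\item An ordered monomial is a first tree followed by an ordered monomial, so $m_j = \sum_{k=1}^{j} t_k m_{j-k}$ for $j \geq 1$.
\item A biplanar forest is an unordered pair $(\pi,\eta) = (\eta,\pi)$, and forgetting the symmetry only overcounts, so $f_j \leq \sum_{k=0}^{j} m_k m_{j-k}$.
\end{enumerate}
Set $M(x) = \sum_j m_j x^j$. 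By induction on $j$, these relations show that $m_j \leq \bar m_j$ and $f_j \leq [x^j]\bar M(x)^2$. Here $\bar M(x) = \sum_j \bar m_j x^j$ is the formal power series with nonnegative coefficients solving
\begin{equation*}
    \bar M = \frac{1}{1 - x\bar M^2}, \quad\text{equivalently}\quad \bar M - 1 = x\bar M^3, \qquad \bar M(0) = 1 .
\end{equation*}
The induction is routine because all coefficients are nonnegative and each recursion only involves strictly smaller indices.

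Next I would locate the singularity of $\bar M$. Inverting, $x = (\bar M-1)/\bar M^3$, and this is maximised at $\bar M = 3/2$, where $x = 4/27$. Hence $\bar M$ is analytic with nonnegative coefficients on $|x| < 4/27$, and $1/9 < 4/27$. At $x = 1/9$, $\bar M(1/9)$ is the smallest positive root of $\bar M - 1 = \bar M^3/9$. A quick fixed-point iteration $\bar M \mapsto 1 + \bar M^3/9$ started at $1$ is monotone and converges to approximately $1.19$. I would justify this rigorously by checking that $1.2 - 1 \geq 1.2^3/9$, so the iteration stays below $1.2$.

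With nonnegative coefficients, every single term is bounded by the tail sum: for $j \geq 1$,
\begin{equation*}
    m_j 9^{-j} \leq \bar M(1/9) - 1 \leq 0.2 \leq \tfrac13,
\end{equation*}
and, using $\bar M^2$ instead,
\begin{equation*}
    f_j 9^{-j} \leq \bar M(1/9)^2 - 1 \leq 1.44 - 1 \leq 1 .
\end{equation*}
The case $j = 0$ is trivial: $1 \leq 1/3$ fails, but $T(BT)_0$ would be excluded, or the bound is taken for $j \geq 1$, as the paper implicitly does.

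The main obstacle is that a naive induction directly on the claimed bounds fails: the convolution sums generate a factor linear in $j$. This is why I would go through the majorant series and exploit the gap between $9$ and the true exponential growth rate $27/4$, rather than attempt an elementary induction.
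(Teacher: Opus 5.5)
Your proof is correct, and it takes a genuinely different route from the paper.

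\textbf{Your route.} You count biplanar objects directly from their recursive description: $BT = B^+(BF)$, monomials are sequences of trees, and forests are unordered pairs of monomials. You then dominate the counts by the ternary-tree series $\bar M = 1 + x\bar M^3$ and evaluate it at $x = 1/9$.

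\textbf{The paper's route.} The paper pulls everything back along the forest momentum map. Every monomial in $T(BT)_j$ occurs as a term of $\psi^*(\tau)$ for some $\tau \in T_j$. The number of terms of $\psi^*(\tau)$ is asserted to be at most $3^{|\tau|}$. Combined with $|T_j| \le 3^{j-1}$ from the lemma on classical trees, this gives $|T(BT)_j| \le 3^{j-1}\cdot 3^j = \frac13 9^j$. The forest bound follows in the same way from $|F_j| = |T_{j+1}| \le 3^j$.

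\textbf{What each buys.} The paper's argument is short and ties the count to $\psi$, the central object of the paper. However, the fibre bound is stated without proof, and $|T_j| \le 3^{j-1}$ is justified only informally from Otter's asymptotic formula. Your argument is self-contained and gives sharper constants: $m_j \le \frac15 9^j$ and $f_j \le 0.44\cdot 9^j$ for $j \ge 1$.

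\textbf{Two small remarks.} First, to make the evaluation at $1/9$ airtight without appealing to analyticity on $|x|<4/27$, note the following. The $k$-th iterate of $y \mapsto 1 + x y^3$ started at $y=1$ is a polynomial with nonnegative coefficients that agrees with $\bar M$ up to order $x^k$. Hence every partial sum of $\sum_j \bar m_j 9^{-j}$ is at most $1.2$. Second, $27/4$ is the growth rate of your majorant, not of $m_j$ itself, because replacing unordered pairs by ordered ones strictly overcounts. This does not affect the bound.

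Your observation that the first inequality fails at $j = 0$ is correct. The paper implicitly takes $j \ge 1$, and its bound $|T_j| \le 3^{j-1}$ needs this too.
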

\begin{proof}
    The bound for $|T(BT)_j|$ follows from \cref{lem:combinatorial_bounds_classic} and the fact that every element of $T(BT)_j$ appears as a term in $\psi^*(\tau)$ for some $\tau \in T_j$ with $\psi^*(\tau)$ having at most $3^{|\tau|}$ terms, therefore, $|T(BT)_j| \leq \frac13 3^j \cdot 3^j = \frac13 9^j$. The bound for $|BF_j|$ follows from a similar argument where we consider the set of forests $F_j$ of size $j$ and $|F_j| = |T_{j+1}| \leq 3^j$.
\end{proof}

Next, we expand the ISOSYRK methods $\Phi_h$ as
\[ \Phi_h (W) = W + h f(W) + h^2 d_2 (W) + h^3 d_3 (W) + \cdots \,. \]

\begin{proposition}
    \label{prop:bound_dj}
    Let $\pi \in T(BT)$ and $(\pi, \eta) \in BF$, then, the elementary differentials $\dF_f(\pi)$ and $\dF_f(\pi, \eta)$ are bounded as,
    \begin{align*}
        \| \dF_f (\pi)(W) \|_\infty &\leq \big( 2 C_f \|W\|_\infty \big)^{|\pi|} \,,\\
        \| \dF_f (\pi, \eta) \odot W \|_\infty &\leq 2 \|W\|_\infty \big( 2 C_f \|W\|_\infty \big)^{|\pi| + |\eta|} \,.
    \end{align*}
    Moreover, for a given $j \in \N$, we have
    \[ \| d_j(W) \|_\infty \leq 2 \|W\|_\infty \big( 18 C_a C_f \|W\|_\infty \big)^j. \]
\end{proposition}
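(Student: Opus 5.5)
The plan is to prove the two elementary-differential bounds by structural induction on biplanar forests, and then assemble the bound on $d_j$ from the biplanar Butcher series of the method.

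\textbf{Elementary differentials.} The induction uses the definition of $\dF_f$ in \cref{def:F_f_bp}. The base cases are $\dF_f(\one)=I$, with $\|I\|_\infty=1$, and $\dF_f(\bullet)(W)=f(W)$. For the latter, \cref{eq:bound_f} gives $\|f(W)\|_\infty\le C_f\|W\|_\infty\le 2C_f\|W\|_\infty$.

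For a monomial $\pi=\tau_1\cdots\tau_k$, submultiplicativity of the spectral norm gives $\|\dF_f(\pi)\|_\infty\le\prod_i\|\dF_f(\tau_i)\|_\infty$. Since $|\pi|=\sum_i|\tau_i|$, it therefore suffices to treat a single tree.

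For a pair $(\pi,\eta)$, the action is $\dF_f(\pi,\eta)\odot W=\dF_f(\pi)W\dF_f(\eta)^\dagger+\dF_f(\eta)W\dF_f(\pi)^\dagger$. The triangle inequality together with $\|A^\dagger\|_\infty=\|A\|_\infty$ bounds this by $2\|W\|_\infty\|\dF_f(\pi)\|_\infty\|\dF_f(\eta)\|_\infty$. This gives the second inequality from the first.

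For a tree $B^+(\pi,\eta)$, linearity of $f$ and \cref{eq:bound_f} give
\[ \|f(\dF_f(\pi,\eta)\odot W)\|_\infty\le C_f\cdot 2\|W\|_\infty(2C_f\|W\|_\infty)^{|\pi|+|\eta|}=(2C_f\|W\|_\infty)^{|\pi|+|\eta|+1}. \]
This is exactly the claimed bound, since $|B^+(\pi,\eta)|=|\pi|+|\eta|+1$. The factor $2$ in $2C_f$ is there precisely to absorb the factor $2$ produced by $\odot$, so the induction closes without loss.

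\textbf{Bound on $d_j$.} By \cref{cor:order_IsoSyRK}, $\Phi_h(W)=B_f(a\circ\psi)\odot W$, so
\[ d_j(W)=\sum_{(\pi,\eta)\in BF_j}\frac{a(\psi(\pi,\eta))}{\sigma(\pi,\eta)}\dF_f(\pi,\eta)\odot W. \]
I will estimate each factor in turn:
\begin{itemize}
\item $\sigma\ge1$, so the symmetry factor can be dropped.
\item $\psi$ preserves the number of vertices, and $a$ is extended multiplicatively to $T^2$. Hence \cref{eq:bound_coeff} gives $|a(\psi(\pi,\eta))|\le C_a^{j}$.
\item The elementary differential is bounded by $2\|W\|_\infty(2C_f\|W\|_\infty)^j$, from the first part.
\item The number of terms is $|BF_j|\le 9^j$, by \cref{lem:combinatorial_bounds}.
\end{itemize}
Multiplying these gives
\[ \|d_j(W)\|_\infty\le 9^j C_a^j\,2\|W\|_\infty(2C_f\|W\|_\infty)^j=2\|W\|_\infty(18C_aC_f\|W\|_\infty)^j, \]
as claimed.

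\textbf{Main obstacle.} The delicate point is bookkeeping rather than analysis. First, one must check that the normalization of $B_f(\alpha)$ puts the $h^j$ coefficient exactly over $BF_j$. Second, the terms with $\pi=\one$ or $\eta=\one$, and the symmetric case $\pi=\eta$, must be counted consistently with $|BF_j|$. Under the commutative pairing $(\pi,\eta)=(\eta,\pi)$ each such term appears once, and the $\sigma$ factor only helps. I also expect to use the hypothesis $\|f'\|\le C_f$ only implicitly, since $f$ is linear and so $f'=f$.
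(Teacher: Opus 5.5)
Your proposal is correct and follows the paper's proof essentially step for step. You use the same three inequalities to close the induction: submultiplicativity for concatenation, the factor-$2$ triangle bound for $\odot$, and $\|f(X)\|_\infty\le C_f\|X\|_\infty$ for $B^+$. You then get the $d_j$ bound the same way, by summing over $BF_j$ with $|BF_j|\le 9^j$ and $|a\circ\psi|\le C_a^j$. Your version only makes explicit the base cases and the induction on size, which the paper leaves implicit.
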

\begin{proof}
The bounds of $\dF_f(\pi)$ and $\dF_f(\pi,\eta)$ follow from the definition of $\dF_f$ and properties \cref{eq:bound_f} and the multiplicativity of the infinite norm, in paraticular, given $\pi, \eta \in T(BT)$, we have,
    \begin{align*}
        \lVert \dF_f (\pi, \eta) \odot W \rVert_\infty &\leq \lVert \dF_f (\pi) \cdot W \cdot \dF_f (\eta)^\dagger + \dF_f (\eta) \cdot W \cdot \dF_f (\pi)^\dagger \rVert_\infty \\
                                                            &\leq 2 \lVert \dF_f (\pi) \rVert_\infty \lVert \dF_f (\eta)  \rVert_\infty \lVert W \rVert_\infty \,, \\
                \lVert \dF_f (\pi \cdot \eta) \rVert_\infty &\leq \lVert \dF_f (\pi) \rVert_\infty \lVert \dF_f (\eta) \rVert_\infty \,, \\
                \lVert \dF_f \big( B^+ (\pi, \eta) \big) \rVert_\infty &\leq C_f \lVert \dF_f (\pi, \eta) \odot W \rVert_\infty \,.
    \end{align*}

    Let $\Phi_h$ denote the ISOSYRK method corresponding to a symplectic Runge--Kutta method with coefficient map $a : T \to \R$. We use the fact that $\Phi_h(W) = B_f(a \circ \psi) \odot W$ to write
    \begin{align*}
        \|d_j (W)\|_\infty &= \Big\| \sum_{(\pi, \eta) \in BF_j} \frac{(a \circ \psi)(\pi, \eta)}{\sigma(\pi, \eta)} \dF_f (\pi, \eta) \odot W \Big\|_\infty \\
                           &\leq |BF_j| \cdot C_a^j \cdot 2 \|W\|_\infty \big( 2 C_f \|W\|_\infty \big)^j \leq 2 \|W\|_\infty \big( 18 C_a C_f \|W\|_\infty \big)^j \,,
    \end{align*}
    where we use \cref{lem:combinatorial_bounds}.
\end{proof}

Let the modified vector field for an ISOSYRK method be written as
\[ \tilde{f}_h (W) = f(W) + h f_2 (W) + h^2 f_3(W) + \cdots \,. \]

\begin{proposition}
    The term $f_j(W)$ of the modified vector field $\tilde{f}_h$ of the ISOSYRK method corresponding to a symplectic Runge--Kutta method with coefficient map $a$ is bounded as,
    \[ \| f_j (W) \|_\infty \leq \frac{e}3 (j-1)! \big( 18 C_a C_f \|W\|_\infty \big)^j \,. \]
\end{proposition}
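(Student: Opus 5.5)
The modified vector field of the ISOSYRK method is the biplanar Butcher series with coefficient map $b\circ\psi$, by \cref{thm:LP_reduction}. Here $b = \log^\graft(a)$ is the classical modified-field coefficient map of the underlying symplectic Runge--Kutta method, extended by $b(\one)=b(\tau\gamma)=0$. By \cref{prop:modified_isospectrality}, $b\circ\psi$ is an infinitesimal coadjoint coefficient map, so \cref{prop:infinitesimal_coadjoint_coeff} lets us write
\[ \tilde f_h(W) = \frac1h B_f(b\circ\psi)\odot W = \frac1h\sum_{(\pi,\eta)\in BF} h^{|\pi,\eta|}\frac{(b\circ\psi)(\pi,\eta)}{\sigma(\pi,\eta)}\dF_f(\pi,\eta)\odot W . \]
Since $b$ vanishes on two-tree forests and on $\one$, only forests with $\pi=\one$ or $\eta=\one$ survive. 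Hence the coefficient of $h^{j-1}$ is
\[ f_j(W) = \sum_{\pi\in T(BT)_j}\frac{b(\psi(\pi))}{\sigma(\pi)}\,\dF_f(\pi,\one)\odot W , \]
summed over nonempty ordered monomials of size $j$. We read the coefficient as $h^{j-1}$ because $\tilde f_h$ carries the prefactor $1/h$.

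We bound each factor separately. First, $\psi(\pi)\in T_j$ is a classical tree of size $j$, so \cref{lem:bound_b} gives $|b(\psi(\pi))|\le e\,(j-1)!\,C_a^{j}$, and $1/\sigma(\pi)\le 1$. Second, \cref{prop:bound_dj} gives
\[ \|\dF_f(\pi,\one)\odot W\|_\infty\le 2\|W\|_\infty(2C_f\|W\|_\infty)^j . \]
A sharper version is available: $\dF_f(\pi,\one)\odot W = \dF_f(\pi)W + W\dF_f(\pi)^\dagger$, whose norm is at most $2\|W\|_\infty(2C_f\|W\|_\infty)^j$. Third, the number of terms is controlled by \cref{lem:combinatorial_bounds}: $|T(BT)_j|\le \tfrac13 9^j$. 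Multiplying these gives
\[ \|f_j(W)\|_\infty \le \tfrac13 9^j\cdot e(j-1)!C_a^j\cdot 2\|W\|_\infty(2C_f\|W\|_\infty)^j . \]

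The main obstacle is the bookkeeping of powers of $\|W\|_\infty$ against the claimed bound $\frac e3 (j-1)!(18C_aC_f\|W\|_\infty)^j$. Naively the estimate above carries an extra factor $2\|W\|_\infty$ and has one factor of $f$ per vertex. In fact the root vertex's $f$ and the outer $\odot W$ should be accounted as one linear map applied to $W$, which is how $f_1=f$ satisfies $\|f(W)\|\le C_f\|W\|$.

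So I would redo the recursive estimate of \cref{prop:bound_dj} directly for the vector-field term, writing $\pi=\tau_1\cdots\tau_k$ with each $\tau_i=B^+(\cdot)$. Each $\dF_f(\tau_i)$ is bounded by $C_f\cdot 2\|W\|(2C_f\|W\|)^{|\tau_i|-1}\le(2C_f\|W\|)^{|\tau_i|}$. I would then match the generic term of $f_j$ to the shape $\dF_f(B^+(\pi,\one))$, whose size yields exactly $(2C_f\|W\|)^j$ without the extra $2\|W\|$; the $\odot W$ is absorbed into the root.

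This reduces the problem to checking the indexing convention, namely that $|\cdot|$ counts the root so that $f_j$ runs over monomials of total size $j$ as intended. Once the indexing is fixed, the product $\tfrac13 9^j\cdot e(j-1)!C_a^j\cdot(2C_f\|W\|_\infty)^j$ equals $\frac e3(j-1)!(18C_aC_f\|W\|_\infty)^j$, which is the stated bound.
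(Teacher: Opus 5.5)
Your counting bound $|T(BT)_j|\le\frac13 9^j$, the coefficient bound $|b(\psi(\pi))|\le e(j-1)!C_a^j$ from \cref{lem:bound_b}, and the final arithmetic are exactly those of the paper. However, the step you single out as the main obstacle comes from misidentifying $f_j$, and your proposed fix for it is not valid.

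In the paper, $\tilde f_h$ is the $\fraku(n)$-valued generator $\frac1h B_f\big((b\circ\psi)_T\big)$, and the modified equation is $\dot W=\tilde f_h(W)\odot W=[\tilde f_h(W),W]$; this is the content of \cref{prop:infinitesimal_coadjoint_coeff}. Consequently
$f_j(W)=\sum_{\pi\in T(BT)_j}\frac{b(\psi(\pi))}{\sigma(\pi)}\dF_f(\pi)(W)$,
with no $\odot W$, and this is what gives $f_1=\dF_f(\bullet)=f$. Your formula instead gives $f_1(W)=\dF_f(\bullet,\one)\odot W=[f(W),W]$, which contradicts your own remark that $f_1=f$. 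For the object you defined, the claimed estimate is false. Since $f$ is linear, $\dF_f(\pi,\one)\odot W$ is homogeneous of degree $j+1$ in $W$, while the right-hand side has degree $j$. So already for $j=1$, the inequality $\lVert[f(W),W]\rVert_\infty\le 6eC_aC_f\lVert W\rVert_\infty$ fails under $W\mapsto tW$, $t\to\infty$, unless $[f(W),W]\equiv0$.

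Absorbing $\odot W$ into the root cannot rescue this. The expression $\dF_f(\pi,\one)\odot W=\dF_f(\pi)W+W\dF_f(\pi)^\dagger$ has no outer $f$ that could absorb anything. It is not $\dF_f(B^+(\pi,\one))=f\big(\dF_f(\pi,\one)\odot W\big)$, and passing to that tree adds a vertex and yields $(2C_f\lVert W\rVert_\infty)^{j+1}$, which is the wrong exponent. So the issue is not an indexing convention but the definition of $f_j$.

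Once the $\odot W$ is dropped, the argument closes immediately. Use the estimate you already wrote for the factors, $\lVert\dF_f(\tau_i)\rVert_\infty\le(2C_f\lVert W\rVert_\infty)^{|\tau_i|}$, together with $\dF_f(\tau_1\cdots\tau_k)=\dF_f(\tau_k)\cdots\dF_f(\tau_1)$ and submultiplicativity (the first bound of \cref{prop:bound_dj}). This gives $\lVert\dF_f(\pi)(W)\rVert_\infty\le(2C_f\lVert W\rVert_\infty)^j$ with no extra factor $2\lVert W\rVert_\infty$. Multiplying by the number of terms and the coefficient bound yields $\frac e3(j-1)!(18C_aC_f\lVert W\rVert_\infty)^j$, which is precisely the paper's proof.
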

\begin{proof}
    We note that $\tilde{f}_h (W) = B_f (b \circ \psi) \odot W$, therefore, we have,
    \[ f_j (W) = \sum_{\pi \in T(BT)_j} \frac{(b\circ\psi) (\pi)}{\sigma(\pi)} \dF_f(\pi) \,. \]
    This implies the following bound for $f_j(W)$,
    \[ \| f_j (W) \|_\infty \leq |T(BT)_j| \cdot e (j-1)! \cdot \big( 2 C_a C_f \|W\|_\infty \big)^j \leq \frac{e}3 (j-1)! \big(18 C_a C_f \|W\|_\infty\big)^j \,, \]
    where we use \cref{lem:combinatorial_bounds} to bound $|T(BT)_j|$ and \cref{lem:bound_b} to bound $|b\big(\psi(\pi)\big)|$ for $\pi \in T(BT)_j$.
\end{proof}

Let us now consider the truncated modified vector field $\tilde{f}_h^N$,
\[ \tilde{f}_h^N (W) = f(W) + h f_2 (W) + \cdots + h^{N-1} f_N(W) \,, \]
and its flow $\tilde{\varphi}_h^N$ for $N \in \N$. We bound $\|\tilde{f}^N_h(W)\|_\infty$ as follows,
\begin{align*}
    \|\tilde{f}^N_h(W)\|_\infty &\leq \|f(W)\|_\infty + \sum_{j=2}^N h^{j-1} \|f_j(W)\|_\infty \\
                             &\leq C_f \|W\|_\infty + \sum_{j=2}^N h^{j-1} \frac{e}3 (j-1)! \big( 18 C_a C_f \|W\|_\infty \big)^j \,.
\end{align*}
Similarly to the discussion in \cite[Ch. IX.7.3]{HairerWannerGNI}, we introduce
\[ h_0 := \frac1{18 e C_a C_f} \,, \]
and assume $N \leq \frac{e h_0}{h\|W\|_\infty}$. We obtain,
\begin{equation}
    \label{eq:bound_truncated_modeq}
    \|\tilde{f}^N_h(W)\|_\infty \leq C_f \|W\|_\infty + 6 e C_a C_f \|W\|_\infty \Big( \sum_{j=2}^N \frac{(j-1)!}{N^{j-1}} \Big) \leq C_f \|W\|_\infty \big( 1 +  6 e C_a \big) \,.
\end{equation}

\begin{theorem}
    \label{thm:bound_modflow}
    Assume $h$ and $N$ are chosen such that $2 \leq N \leq h_0 / (h \|W\|_\infty)< N+1$, then, the numerical flow $\Phi_h$ of an ISOSYRK method and the flow $\tilde{\varphi}^N_h$ of the truncated modified vector field $\tilde{f}^N_h$ satisfy the following bound,
    \[ \| \Phi_h (W) - \tilde{\varphi}^N_h (W) \|_\infty < h C(\|W\|_\infty) C_f \|W\|_\infty e^{-h_0/(h\|W \|_\infty)} \,, \]
    where $C(r) = 2 e (1 + 3e C_a + 18 C_a r)$.
\end{theorem}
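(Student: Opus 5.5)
We follow the scheme of \cite[Thm.~IX.7.6]{HairerWannerGNI}: compare the Taylor coefficients of $\Phi_h(W)$ and $\tilde{\varphi}^N_h(W)$ in $h$, which agree up to order $N$, and bound the remainder by the Cauchy estimate on a disc in the complex $h$-plane. The point is that all growth is measured in $\lVert W\rVert_\infty$ through \cref{eq:bound_f} and \cref{eq:bound_coeff}, never through a compact domain. Throughout, write $r=\lVert W\rVert_\infty$ and regard both maps as analytic functions of a complex step $h$.

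\emph{Step 1 (numerical flow).} By \cref{prop:bound_dj}, the series $\Phi_h(W)=W+\sum_j h^j d_j(W)$ satisfies
\[
\lVert d_j(W)\rVert_\infty\le 2r\,(18C_aC_f r)^j = 2r\,\bigl(h_0 e r\bigr)^{-j}\cdot 1 .
\]
It therefore converges for $|h|<h_0 e/r$. On $|h|\le 2\eta$ with $\eta$ of order $h_0/(er)$, the deviation $\Phi_h(W)-W$ is bounded by a geometric sum proportional to $r$.

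\emph{Step 2 (modified flow).} The bound \cref{eq:bound_truncated_modeq} gives $\lVert\tilde f^N_h(V)\rVert_\infty\le C_f(1+6eC_a)\lVert V\rVert_\infty$ whenever $N\le eh_0/(|h|\lVert V\rVert_\infty)$. I would apply it to $V=\tilde\varphi^N_t(W)$ along the flow, for complex $|h|$ up to a fixed fraction of $h_0/r$. A Gronwall/continuation argument is needed here: along the short time $|h|$, the norm $\lVert V\rVert_\infty$ stays at most a constant times $r$ (for instance $\le 2r$), so the hypothesis on $N$ persists. Combining the two steps bounds $\lVert\tilde\varphi^N_h(W)-W\rVert_\infty$ by $|h|C_f r$ times a constant on the disc, with no dependence on $n$.

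\emph{Step 3 (matching coefficients).} By construction of the modified equation, the biplanar series of $\tilde{\varphi}^N_h$ and of $\Phi_h=B_f(a\circ\psi)\odot W$ agree through order $h^N$ (\cref{thm:LP_reduction}). So $\delta(h):=\Phi_h(W)-\tilde\varphi^N_h(W)$ is $O(h^{N+1})$. The Cauchy estimate on a disc of radius $\rho\approx h_0/(er)$ then gives
\[
\lVert\delta(h)\rVert_\infty\le M(h/\rho)^{N+1},
\]
where $M\lesssim C_f r\rho\,(1+3eC_a+18C_ar)$. The term $18C_ar$ enters from the $d_j$ bound multiplied by $\rho$. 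Finally, with $N$ chosen so that $N\le h_0/(hr)<N+1$, the standard estimate $(hN e/(e\,\cdots))^{N}\le e^{-N}$ converts $(h/\rho)^{N}$ into $e\cdot e^{-h_0/(hr)}$. This yields the constant $C(r)=2e(1+3eC_a+18C_ar)$.

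\emph{Main obstacle.} The delicate part is Step 2: controlling the complexified truncated flow without compactness. One must tune the radius $\rho$ so that the condition $N\le eh_0/(|h|\lVert V\rVert_\infty)$ of \cref{eq:bound_truncated_modeq} holds for every point along the trajectory. This means accepting a growth factor of $\lVert V\rVert_\infty$ of at most $e$ over $|h|\le\rho$, and tracking these constants so they land exactly on $C(r)$ and on the exponent $h_0/(hr)$. I would first try $\rho=h_0/(2er)$ with the a priori bound $\lVert V\rVert_\infty\le 2r$, and adjust the constants afterwards.
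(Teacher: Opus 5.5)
Your outline matches the paper's proof. It applies the maximum principle to $g(z)/z^{N+1}$ with $g(z)=\Phi_z(W)-\tilde\varphi^N_z(W)$, bounds $\Phi_z(W)-W$ geometrically via \cref{prop:bound_dj}, bounds $\tilde\varphi^N_z(W)-W$ via \eqref{eq:bound_truncated_modeq}, and uses $N\le h_0/(hr)<N+1$ at the end.

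The step that fails is your choice of radius. You fix $\rho\approx h_0/(er)$ (or $h_0/(2er)$) independently of $N$, but no radius $\rho=c\,h_0/r$ with fixed $c$ can work:
\begin{itemize}
\item \eqref{eq:bound_truncated_modeq} is only available where $N\le eh_0/(|z|\,\lVert V\rVert_\infty)$. On $|z|\le\rho$, even with $\lVert V\rVert_\infty=r$, this forces $N\le e/c$.
\item Getting $(h/\rho)^N\le e^{-N}$ from $hN\le h_0/r$ requires $h/\rho\le 1/(cN)\le 1/e$, i.e.\ $N\ge e/c$.
\end{itemize}
Since $N\approx h_0/(hr)$ is large exactly in the regime of interest, the available bounds on $|z|^{j-1}\lVert f_j\rVert_\infty$ on your disc grow like $(j-1)!\,(c/e)^{j-1}$. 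So $\max_{|z|\le\rho}\lVert g(z)\rVert_\infty$ is not controlled uniformly in $N$.

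The radius has to be $\epsilon=eh_0/(Nr)$, as in the paper. It is the largest radius on which \eqref{eq:bound_truncated_modeq} holds at $\lVert V\rVert_\infty=r$, and it gives $(h/\epsilon)^N=(hNr/(eh_0))^N\le e^{-N}$ directly. Moreover, $N\ge 2$ yields $\epsilon\le 1/(36C_aC_fr)$. This makes the $d_j$-series geometric with ratio $1/2$ and produces the $36C_ar$ term in $C(r)=e\,(2+6eC_a+36C_ar)$. The conversion you sketch at the end of Step~3 is in fact only valid for this radius.

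Your Step~2 concern is legitimate and goes beyond the paper. The paper bounds $\lVert\tilde\varphi^N_z(W)-W\rVert_\infty$ by $\int_0^z\lVert\tilde f^N_s(W)\rVert_\infty\,ds$, with the field frozen at $W$, and never controls the complex-time trajectory. For complex $z$ the flow conjugates by non-unitary matrices, so $\lVert\cdot\rVert_\infty$ is not conserved.

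However, with the correct radius there is no slack for the continuation you describe. At $|z|=\epsilon$ the hypothesis of \eqref{eq:bound_truncated_modeq} is exactly $\lVert V\rVert_\infty\le r$. An a priori bound $\lVert V\rVert_\infty\le 2r$ (let alone $er$) is therefore useless unless you do two things:
\begin{itemize}
\item First re-derive \eqref{eq:bound_truncated_modeq} under the weaker condition $|z|\,\lVert V\rVert_\infty\le 2eh_0/N$. This is possible, since $\sum_{k<N}k!\,(2/N)^k$ is still bounded by an absolute constant.
\item Then prove the a priori bound by a comparison argument. The modified equation is $\dot V=[\tilde f^N_z(V),V]$ (cf.\ \cref{prop:infinitesimal_coadjoint_coeff}), so the speed is bounded by $2\lVert\tilde f^N_z(V)\rVert_\infty\lVert V\rVert_\infty$, which is quadratic in $\lVert V\rVert_\infty$.
\end{itemize}
Shrinking the disc to $\epsilon/2$ instead only gives $(2/e)^N$, degrading the exponent to $(1-\ln 2)\,h_0/(hr)$. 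Either way the constants change, so a genuine continuation argument will not land on $C(r)$ exactly; that constant comes from the paper's frozen-field estimate.
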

\begin{proof}
    We adapt the proof of Theorem IX.7.6 from \cite{HairerWannerGNI} to our setting. We define,
    \[ g(h) := \Phi_h (W) - \tilde{\varphi}^N_h (W) \,, \]
    which is an analytic function in $h$ in the neighborhood of $h=0$ due to the analyticity of $\Phi_h$ and $\tilde{\varphi}^N_h$. Due to the definition of $\tilde{\varphi}^N_h$, $\Phi_h$ and $\tilde{\varphi}^N_h$ agree up to term $h^N$, therefore, $g(h)$ has a factor $h^{N+1}$ and maximum modulus principle implies that,
    \[ \| g(h) \|_\infty \leq \Big(\frac{h}{\epsilon}\Big)^{N+1} \max_{|z| \leq \epsilon} \| g(z) \|_\infty \,, \quad \text{for } 0 \leq h \leq \epsilon \,, \]
    if $g(z)$ is analytic in the disk of radius $\epsilon := e h_0 / (N\|W\|_\infty)$ around $0$. 
    We estimate $\|g(z)\|_\infty$ for $|z| \leq \epsilon$ by estimating separately $\|\Phi_z (W) - W\|_\infty$ and $\|\tilde{\varphi}^N_z (W) - W\|_\infty$.
    
    The term $\Phi_z (W)$ converges if $|z|^j \|d_j(W)\|_\infty \leq c (1/2)^j$ for some constant $c$. 
    Therefore, using \cref{prop:bound_dj}, $\Phi_z(W)$ converges for,
    \[ |z| \leq \epsilon \leq \frac{e h_0}{2 \|W\|_\infty} = \frac1{36 C_a C_f \|W\|_\infty} \,, \]
    where we use the assumption $N \geq 2$. Moreover, we obtain,
    \begin{align*}
        \| \Phi_z (W) - W \|_\infty &\leq |z| \Big( \|f(W)\|_\infty + \sum_{j=2}^\infty |z|^{j-1} \|d_j(W)\|_\infty \Big) \\
        \intertext{use the bounds on $|z|$, $\|f(W)\|_\infty$, and $\|d_j(W)\|_\infty$ from \cref{prop:bound_dj},}
                                    &\leq |z| \Big( C_f \|W\|_\infty + 36 C_a C_f \|W\|^2_\infty \sum_{j=2}^\infty \frac1{2^{j-1}} \Big) \\
                                    &\leq |z| C_f \|W\|_\infty (1 + 36 C_a \|W\|_\infty) \,.
    \end{align*}
    Analogously, we bound $\|\tilde{\varphi}^N_z (W) - W \|_\infty$ using,
    \[ \|\tilde{\varphi}^N_z (W) - W \|_\infty \leq \int_0^z \|\tilde{f}^N_s (W) \|_\infty ds \leq |z| C_f \|W\|_\infty (1 + 6eC_a) \,. \]
    Let $\tilde{C} := 2 + 6e C_a + 36 C_a \|W\|_\infty$, we bound $\| g(h) \|_\infty$ as,
    \begin{align*}
        \| g(h) \|_\infty &\leq \epsilon \tilde{C} C_f \|W\|_\infty \Big(\frac{h}{\epsilon}\Big)^{N+1} \leq h \tilde{C} C_f \|W\|_\infty \Big(\frac{h}{\epsilon}\Big)^N \\
                          &= h \tilde{C} C_f \|W\|_\infty \Big(\frac{hN \|W\|_\infty}{e h_0}\Big)^N \leq h \tilde{C} C_f \|W\|_\infty e^{-N} \leq h e \tilde{C} C_f \|W\|_\infty e^{-h_0/(h\|W\|_\infty)} \,,
    \end{align*}
    using $hN \leq h_0/\|W\|_\infty$. 
    Since $N \leq h_0 / (h\|W\|_\infty) < N+1$, we have $e^{-N} < e \cdot e^{-h_0 / (h\|W\|_\infty)}$ and the statement follows.
\end{proof}

We now turn to the truncated modified Hamiltonian 
\[ \tilde{H}^N_h (W) := H(W) + h^p H_{p+1} (W) + \cdots + h^{N-1} H_N (W) \,, \]
where $f_j(W) = \nabla H_j(W)^\dagger$. 
We use the bound \cref{eq:bound_truncated_modeq} of $\|f_h^N(W)\|_\infty$ as the Lipschitz constant of the truncated Hamiltonian $\tilde{H}_h^N$ in the following proposition, which is used to prove the conservation of the modified Hamiltonian for exponential time in \cref{thm:long_time_Hamiltonian_conservation}.

\begin{proposition}
    \label{prop:bounds2}
    Assume $\|W\|_\infty = \|\tilde{W}\|_\infty$. We have the following bound,
    \[ | \tilde{H}^N_h (W) - \tilde{H}^N_h (\tilde{W}) | \leq L C_f \|W\|_\infty \| W - \tilde{W} \|_\infty \,, \]
    where 
    $L :=  1 +  6 e C_a$.
\end{proposition}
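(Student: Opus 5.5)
The estimate is a mean value argument along the straight segment joining $W$ and $\tilde W$. The Lipschitz constant should come from the bound \cref{eq:bound_truncated_modeq} on the truncated modified vector field $\tilde f^N_h = \nabla \tilde H^N_h{}^\dagger$, which is the gradient of $\tilde H^N_h$ with respect to the trace pairing.

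\textbf{Step 1: mean value formula.} Set $W_s = \tilde W + s(W-\tilde W)$ for $s\in[0,1]$. Then
\[ \tilde H^N_h(W) - \tilde H^N_h(\tilde W) = \int_0^1 (\tilde H^N_h)'(W_s)\,(W - \tilde W)\, ds = \int_0^1 \big\langle \tilde f^N_h(W_s), W-\tilde W\big\rangle \, ds , \]
where $\langle A,B\rangle = \operatorname{Tr}(A^\dagger B)$ (up to the sign and factor conventions fixed by $f_j = \nabla H_j^\dagger$). Since $\tilde H^N_h$ is a finite sum of polynomial functions of $W$, the formula holds without any analyticity issues.

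\textbf{Step 2: estimate the integrand.} Using the pairing bound $|\operatorname{Tr}(A^\dagger B)| \le \|A\|_\infty \|B\|_1$, or simply working in the normalization where this pairing is controlled by the spectral norm (as the paper implicitly does), I would obtain
\[ |\langle \tilde f^N_h(W_s), W-\tilde W\rangle| \le \|\tilde f^N_h(W_s)\|_\infty \|W-\tilde W\|_\infty . \]
By \cref{eq:bound_truncated_modeq},
\[ \|\tilde f^N_h(W_s)\|_\infty \le C_f\|W_s\|_\infty(1+6eC_a) = L\, C_f \|W_s\|_\infty , \]
provided the hypothesis $N \le e h_0/(h\|W_s\|_\infty)$ behind \cref{eq:bound_truncated_modeq} holds at $W_s$.

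\textbf{Step 3: control $\|W_s\|_\infty$.} Convexity of the norm gives $\|W_s\|_\infty \le (1-s)\|\tilde W\|_\infty + s\|W\|_\infty = \|W\|_\infty$, using the assumption $\|W\|_\infty = \|\tilde W\|_\infty$. This serves two purposes. First, it bounds the integrand uniformly by $L C_f \|W\|_\infty \|W-\tilde W\|_\infty$. Second, since $\|W_s\|_\infty \le \|W\|_\infty$, the truncation condition on $N$ imposed at $W$ (as in \cref{thm:bound_modflow}) automatically holds at every $W_s$. Integrating over $s\in[0,1]$ then yields the claimed bound.

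\textbf{Main obstacle.} The delicate point is the duality in Step 2. The gradient is defined through the trace inner product, whose dual norm to $\|\cdot\|_\infty$ is the trace norm, not $\|\cdot\|_\infty$. I would handle this in one of two ways. The first option is to interpret $\|\cdot\|_\infty$ in Step 2 consistently with the paper's convention, so that the gradient estimate \cref{eq:bound_truncated_modeq} directly bounds the operator norm of the derivative $(\tilde H^N_h)'(W_s)$ on $(\fraku(n),\|\cdot\|_\infty)$. The second option is to absorb any dimension-dependent factor into the normalization of $H$, as is done in the proof of the Main Theorem.
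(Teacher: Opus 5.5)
Your argument is the paper's argument. The ingredients are a mean value step on the segment between $\tilde W$ and $W$ (the paper picks an intermediate point $Z$ instead of integrating), the bound \cref{eq:bound_truncated_modeq} at the intermediate point, and $\|Z\|_\infty\le\|W\|_\infty$ by convexity together with $\|W\|_\infty=\|\tilde W\|_\infty$. You also check that the truncation condition behind \cref{eq:bound_truncated_modeq} holds along the whole segment, which the paper leaves implicit. You correctly single out the duality step, which the paper simply asserts in the form $|\tilde H^N_h(W)-\tilde H^N_h(\tilde W)|\le\|\nabla\tilde H^N_h(Z)\|_\infty\|W-\tilde W\|_\infty$.

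Of your two resolutions only the first is valid, and you should commit to it.

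If the gradient, equivalently the relation $f_j=\nabla H_j^\dagger$, is taken with respect to the normalized pairing $\frac1n\operatorname{Tr}(A^\dagger B)$, then Hölder's inequality for the scaled Schatten norms of the appendix gives $|\frac1n\operatorname{Tr}(A^\dagger B)|\le\|A\|_\infty\|B\|_1\le\|A\|_\infty\|B\|_\infty$. Your Steps 1--3 then prove the proposition as stated. Note that your bound $|\operatorname{Tr}(A^\dagger B)|\le\|A\|_\infty\|B\|_1$ only reaches $\|B\|_\infty$ after this normalization, since the unscaled trace norm can be as large as $n\|B\|_\infty$.

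With the unnormalized trace, the dual norm of $\|\cdot\|_\infty$ is the unscaled trace norm, and the inequality is false. This is the convention used in the Main Theorem's proof, where $\mathcal H_n=\frac{1}{2\hbar_n}\operatorname{tr}(W\mathcal L_n^{-1}W)$ is called the Hamiltonian of $f=\frac{1}{\hbar_n}\mathcal L_n^{-1}$. Take $W=\mathrm{i}I$ and $\tilde W=\mathrm{i}E_{11}$, where $E_{11}$ is the first diagonal matrix unit. Then $\|W\|_\infty=\|\tilde W\|_\infty=\|W-\tilde W\|_\infty=1$. Since $\mathcal L_n^{-1}$ is positive, unital and trace preserving, $(\mathcal L_n^{-1}E_{11})_{11}\in[0,1]$, so $|\mathcal H_n(W)-\mathcal H_n(\tilde W)|\ge\frac{n-1}{2\hbar_n}$. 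The claimed bound is only $L/\hbar_n$, and the correction terms of $\tilde{\mathcal H}^N_h$ vanish as $h\to0$, so they do not help.

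Your second option, absorbing a dimension factor into the normalization of $H$, therefore proves a different inequality, not the stated one. The Main Theorem's rescaling does not absorb the factor either. With either consistent convention, the prefactor there becomes $\frac{4\pi}{\hbar_n}A(\|W_0\|_\infty)$ instead of $\frac{4\pi}{n\hbar_n}A(\|W_0\|_\infty)\le 2\pi A(\|W_0\|_\infty)$, so it grows like $n$.
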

\begin{proof}
    We recall that $\nabla \tilde{H}^N_h (W) = \tilde{f}^N_h (W)^\dagger$ which implies $\| \nabla \tilde{H}^N_h (W) \|_\infty = \| \tilde{f}^N_h (W) \|_\infty$ , therefore, by the Intermediate Value Theorem, there exists $Z$ on the line segment between $W$ and $\tilde{W}$ such that,
    \begin{align*}
        | \tilde{H}^N_h (W) - \tilde{H}^N_h (\tilde{W}) | &\leq \| \nabla \tilde{H}^N_h (Z) \|_\infty \| W - \tilde{W} \|_\infty = \| \tilde{f}^N_h (Z) \|_\infty \| W - \tilde{W} \|_\infty \\
                                                          &= L C_f \|Z\|_\infty \| W - \tilde{W} \|_\infty \,,
    \end{align*}
    where we used the bound \cref{eq:bound_truncated_modeq} of $\|\tilde{f}^N_h(W)\|_\infty$. Since $Z = t W + (1 - t) \tilde{W}$ for some $t \in [0,1]$ and $\|W\|_\infty = \|\tilde{W}\|_\infty$, we have $\|Z\|_\infty \leq \|W\|_\infty$
    and the bound follows.
\end{proof}

\Cref{thm:long_time_Hamiltonian_conservation} is an adaptation to our setting of the well-known result that symplectic integrators for finite-dimensional systems conserve the modified Hamiltonian over long times, see, e.g., \cite[Thm. IX.8.1]{HairerWannerGNI}.

\begin{theorem}
    \label{thm:long_time_Hamiltonian_conservation}
    Under the assumptions in \cref{thm:bound_modflow}, let $\tilde{H}^N_h$ be the modified Hamiltonian of the flow $\varphi_h^N$ and let $W_{k+1} = \Phi_h(W_k)$. Then
    \[ | \tilde{H}^N_h (W_k) - \tilde{H}^N_h (W_0) | \leq L C_f^2   \|W_0\|_\infty^2 C(\|W_0\|_\infty) e^{-h_0/(2h\|W_0\|_\infty)} \,, \]
    for all $k$ such that $kh \leq e^{h_0/(2h\|W_0\|_\infty)}$.
\end{theorem}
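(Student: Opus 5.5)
The plan is to follow the telescoping argument of \cite[Thm.~IX.8.1]{HairerWannerGNI}, with two inputs adapted to the present setting. The first is the one-step defect bound of \cref{thm:bound_modflow}. The second is the Lipschitz estimate of \cref{prop:bounds2}. We write
\[ \tilde{H}^N_h (W_k) - \tilde{H}^N_h (W_0) = \sum_{j=0}^{k-1} \Big( \tilde{H}^N_h (W_{j+1}) - \tilde{H}^N_h (\tilde\varphi^N_h(W_j)) \Big) \,, \]
which uses that $\tilde{H}^N_h$ is exactly conserved by the flow $\tilde{\varphi}^N_h$, so $\tilde{H}^N_h(\tilde\varphi^N_h(W_j)) = \tilde{H}^N_h(W_j)$. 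Each summand compares $\Phi_h(W_j)$ with $\tilde\varphi^N_h(W_j)$.

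The key structural observation, which replaces the compactness assumption, is norm preservation. ISOSYRK methods are isospectral and $W_j \in \fraku(n)$ is normal, so $\|W_j\|_\infty = \|W_0\|_\infty$ for all $j$. Likewise the modified vector field is isospectral by \cref{prop:modified_isospectrality}, and the truncated field $\tilde f^N_h$ is built from the same infinitesimal coadjoint terms. Hence $\|\tilde\varphi^N_h(W_j)\|_\infty = \|W_0\|_\infty$ as well. This gives two consequences:
\begin{enumerate}
\item the hypothesis $\|W\|_\infty = \|\tilde W\|_\infty$ of \cref{prop:bounds2} holds for the pair $(\Phi_h(W_j), \tilde\varphi^N_h(W_j))$;
\item the condition $2 \le N \le h_0/(h\|W_j\|_\infty) < N+1$ of \cref{thm:bound_modflow} is the same for every $j$, so a single $N$ works along the whole trajectory.
\end{enumerate}

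Combining the two estimates, each summand is bounded by
\[ L C_f \|W_0\|_\infty \cdot h\, C(\|W_0\|_\infty) C_f \|W_0\|_\infty e^{-h_0/(h\|W_0\|_\infty)} \,. \]
Summing over $k$ steps multiplies this by $k$, giving $kh \, L C_f^2 \|W_0\|_\infty^2 C(\|W_0\|_\infty) e^{-h_0/(h\|W_0\|_\infty)}$. For $kh \le e^{h_0/(2h\|W_0\|_\infty)}$ we have $kh\, e^{-h_0/(h\|W_0\|_\infty)} \le e^{-h_0/(2h\|W_0\|_\infty)}$, which is the claimed bound.

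The main obstacle is the norm-preservation step for $\tilde\varphi^N_h$. The full modified field is isospectral, but I must confirm that its truncation is too. My plan is to check that each homogeneous term $f_j$ has the form $B_f(\beta)\odot W$ with $\beta$ an infinitesimal coadjoint coefficient map, so that $f_j(W) = [V_j(W), W]$ with $V_j(W) \in \fraku(n)$. Any finite sum of such terms generates a flow $W(t) = U(t) W_0 U(t)^\dagger$ with $U(t)$ unitary, which preserves the spectrum and hence $\|\cdot\|_\infty$.

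The same structure is needed so that $\tilde H^N_h$ is a genuine first integral of $\tilde f^N_h$. I would argue this through the Lie--Poisson structure: each $f_j$ is the Lie--Poisson Hamiltonian vector field of $H_j$, by \cref{prop:isospectral_modified_Hamiltonian} truncated degree by degree. A Lie--Poisson Hamiltonian is conserved by its own flow, so $\tilde H^N_h$ is conserved by $\tilde\varphi^N_h$, which justifies the telescoping identity above.
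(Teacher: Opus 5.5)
Your proposal is correct and is essentially the paper's proof. Both use the same telescoping sum based on $\tilde H^N_h(\tilde\varphi^N_h(W_j)) = \tilde H^N_h(W_j)$, bound each summand by combining \cref{prop:bounds2} and \cref{thm:bound_modflow} via $\|W_{j+1}\|_\infty = \|\tilde\varphi^N_h(W_j)\|_\infty = \|W_0\|_\infty$, and finish with the restriction $kh \le e^{h_0/(2h\|W_0\|_\infty)}$. The points you flag as obstacles are isospectrality of the truncated field, exact conservation of $\tilde H^N_h$ under $\tilde\varphi^N_h$, and a single $N$ serving the whole trajectory. The paper asserts these without comment, and your degree-by-degree justifications for them are sound.
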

\begin{proof}
    We use the following telescopic sum to bound $| \tilde{H}^N_h (W_k) - H(W_0) |$,
    \begin{align*}
        \tilde{H}^N_h (W_k) - \tilde{H}^N_h(W_0) | &\leq \sum_{i=0}^{n-1} | \tilde{H}^N_h (W_{i+1}) - \tilde{H}^N_h (W_i) | \\
        \intertext{use $\tilde{H}^N_h (W_i) = \tilde{H}^N_h \big( \tilde{\varphi}^N_h(W_i) \big)$,}
                                     &\leq \sum_{i=0}^{k-1} | \tilde{H}^N_h (W_{i+1}) - \tilde{H}^N_h \big(\tilde{\varphi}^N_h (W_i)\big) | \\
                                     \intertext{apply \cref{prop:bounds2} and note that $\|W_0\| _\infty = \|W_{i+1} \|_\infty = \| \tilde{\varphi}^N_h (W_i) \|_\infty$,}
                                     &\leq L C_f^2 \|W_0\|_\infty^2 C(\|W_0\|_\infty) kh e^{-h_0/(h\|W_0\|_\infty)} \,.
    \end{align*}
    The statement follows from the assumption $kh \leq e^{h_0/(2h\|W_0\|_\infty)}$.
\end{proof}

\section{Numerical experiments}\label{sec:numerics}

In this section we carry out numerical simulations with the ISOMP method~\eqref{eq:isomp} applied to the Euler--Zeitlin equations~\eqref{eq:euler-zeitlin}.
We use the Python package QUFLOW.\footnote{Available at \href{https://github.com/klasmodin/quflow}{github.com/klasmodin/quflow}.}
For initial data, we select random normally distributed spherical harmonic coefficients $\omega_{\ell,m}$ for $\ell \leq 16$.
For each matrix size $n$, this gives rise to an initial data matrix $W_0 \in \mathfrak{su}(n)$.
For a detailed example of how to solve the Euler--Zeitlin equation with QUFLOW, see Modin and Viviani~\cite{MoVi2026b}.
Our simulation here follows the same setup.

As a first step, let us compute a few terms of the modified Hamiltonian. 
Let $H_n(W)$ denote the Hamiltonian~\eqref{eq:zeitlin_hamiltonian} and let $\langle A,B\rangle_2 := \frac{4\pi}{n}\Trace(A^\dagger B)$ so that $H_n(W) = \frac12\langle W,(-\Delta)^{-1}W\rangle_2$.
Following the discussion in \cref{sec:biplanar_elementary_Hamiltonians} and using the notation,
\[ S = \mathcal L^{-1}_n W := (-\Delta_n)^{-1}W  \,, \quad U = S^2 + \mathcal L^{-1}_n [S, W] \,, \quad \epsilon = h/\hbar_n \]
the modified Hamiltonian $\tilde{H}_{n,h}(W)$ corresponding to $H_n(W)$ is given by,
\begin{align*}
    \tilde{H}_{n,h}(W) = H_n(W) &- \frac1{12} \epsilon^2 \langle S W S^\dagger, S \rangle_2 - \frac1{24} \epsilon^2 \langle [S, W], \mathcal L^{-1}_n [S, W] \rangle_2 \\
                              &+ \frac1{80} \epsilon^4 \langle U W U^\dagger, S \rangle_2 + \frac1{160} \epsilon^4 \langle U W + W U^\dagger, \mathcal L^{-1}_n (U W) + W U^\dagger \rangle_2 \\
                              &+ \frac1{240} \epsilon^4 \langle U W S^\dagger + S W U^\dagger, \mathcal L^{-1}_n[S, W] \rangle_2 \\
                              &+ \frac1{240} \epsilon^4 \langle U W + W U^\dagger, \mathcal L^{-1}_n (S W S^\dagger) \rangle_2 \\
                              &+ \frac7{480} \epsilon^4 \langle S W S^\dagger, \mathcal L^{-1}_n (S W S^\dagger) \rangle_2 + \OO(\epsilon^6) \,.
\end{align*}
Notice, as expected, that any truncation $\tilde{H}_{n,h}^N(W)$ only depends on $h$ via $\epsilon$, and on $n$ via $\langle\cdot,\cdot\rangle_2$ and $\mathcal L^{-1}_n$. 
Furthermore, under the quantization $\mathcal T_n$, these operations converge to the corresponding infinite-dimensional operations as $n\to \infty$ (see~\cite{ModinTFM24}).

Carrying out short simulations, with $n=128$ and $0 \leq t\leq 2$, for 5 different values of $\epsilon$, we show in \cref{fig:mod_ham} the maximum error in the Hamiltonian $\lvert H_{n}(W_k)-H_{n}(W_0)\rvert$ an the modified Hamiltonian $\lvert \tilde H^N_{n}(W_k)-\tilde H^N_{n}(W_0)\rvert$ truncated to $N=4$.
As expected, the Hamiltonian $H_n$ is preserved up to the order $\mathcal{O}(\epsilon^2)$ of the ISOMP method, whereas the modified Hamiltonian $\tilde H_{n,h}$ is preserved up to order $\mathcal{O}(\epsilon^6)$ (there are no odd $\epsilon^p$-terms since the method is symmetric).

In much longer simulations, $0\leq t\leq 2500$ and fixed $\epsilon = 0.1$, we show in \cref{fig:ham_time_plot} how the relative error of the Hamiltonian $H_n$ varies with time for 4 different choices of $n$.
Notice that it fluctuates in all simulations, but the magnitudes of the fluctuations do not grow nor decrease for growing $n$.

\begin{figure}
    \centering
    \includegraphics{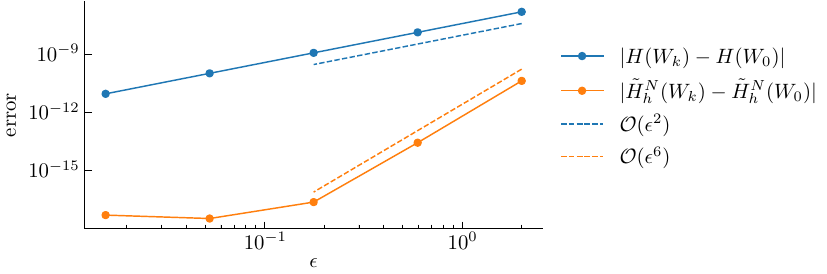}
    \caption{Error in the Hamiltonian and modified truncated Hamiltonian for the ISOMP method~\eqref{eq:isomp} applied to the Euler--Zeitlin equations~\eqref{eq:euler-zeitlin} with different step size parameters $\epsilon = h/\hbar_n$. 
    The matrix size is $n=128$, the truncation for the modified Hamiltonian is $N=4$, and the simulation time interval is $[0,2]$. As expected, the Hamiltonian error decreases as $\mathcal{O}(\epsilon^2)$, whereas the modified Hamiltonian error decreases as $\mathcal{O}(\epsilon^{N+2}) = \mathcal{O}(\epsilon^{6})$ until machine precision is reached (since the method is symmetric there are only even terms in the expansion).}
    \label{fig:mod_ham}
\end{figure}

\begin{figure}
    \centering
    \includegraphics{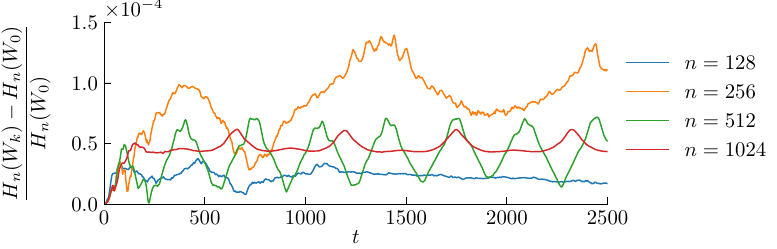}
    \caption{Relative error in the Hamiltonian over time for the ISOMP method~\eqref{eq:isomp} applied to the Euler--Zeitlin equations~\eqref{eq:euler-zeitlin} for different matrix size $n$ but fixed $\epsilon = h/\hbar_n$. 
    Notice that the energy fluctuates but remains bounded as $t$ and $n$ grows, in alignment with the results in Theorem~\ref{thm:long_time_Hamiltonian_conservation}.}
    \label{fig:ham_time_plot}
\end{figure}

\appendix

\section{Estimates for the Hoppe--Yau operator}\label{sec:hoppe_yau_estimates}

The Hoppe--Yau operator~\cite{HoYa1998} (or Hoppe--Yau Laplacian) is the matrix mapping $$\Delta_n\colon \mathfrak{u}(n) \to \mathfrak{u}(n)$$ 
defined by
\begin{equation}\label{eq:hoppe_yau}
    \Delta_n P = \frac{1}{\hbar^2}\sum_{\alpha=1}^3 [[P, X_\alpha], X_\alpha],
\end{equation}
where the constant $\hbar = 2/\sqrt{n^2-1}$ and the matrices $X_1,X_2,X_3 \in \mathfrak{su}(n)$ are generators for an irreducible, unitary representation of $\mathfrak{so}(3)$ on $\mathbb{C}^n$ scaled so that $\sum_{\alpha=1}^3 X_\alpha^\dagger X_\alpha  = I$. 
In physics, these matrices give a spin-$s$ representation for $s = (n-1)/2$.

The Hoppe--Yau operator is an approximation of the Laplacian on the sphere: via Berezin--Toeplitz quantization, the Poisson algebra of smooth functions  $$\big(C^\infty(S^2,\mathbb{R}), \{ \cdot,\cdot\}\big)$$ is approximated by the finite-dimensional Lie algebra $$\big(\mathfrak{u}(n), \frac{1}{\hbar}[\cdot,\cdot]\big)$$ and in this approximation the Hoppe--Yau operator $\Delta_n$ corresponds to the Laplace--Beltrami operator $\Delta$ (cf.~\cite{HoYa1998}).
From the point-of-view of representation theory, the Hoppe--Yau operator~\cref{eq:hoppe_yau} is the Casimir element for the induced representation on $\mathfrak{u}(n)$ generated by the operators $\mathrm{ad}_{X_1},\mathrm{ad}_{X_2},\mathrm{ad}_{X_2}$.
Thus, the spectrum of $\Delta_n$ is $\{-\ell(\ell+1) \}$ for $\ell = 0,\ldots,n-1$ with eigenspaces $V_\ell$ of dimension $2\ell + 1$ (cf.~\cite{ModinTFM24}). 
It is precisely the truncation of the spectrum of $\Delta$.

Just as the Laplace--Beltrami operator has a 1-dimensional kernel given by constant functions, the Hoppe--Yau operator has a 1-dimensional kernel spanned $\mathrm{i}\mathbb{R}I$.
To make it invertible on $\mathfrak{u}(n)$ we make the following extension,  where we also change the sign
\begin{equation}\label{eq:extededHoppeYau}
    \mathcal{L}_n\colon \mathfrak{u}(n) \to \mathfrak{u}(n), \qquad \mathcal{L}_nP = \frac{\operatorname{tr}(P)I}{n} - \Delta_n.
\end{equation}
This operator has positive spectrum $1,2,6,\ldots,n(n-1)$.
It is invertible and fulfills $\operatorname{tr}(\mathcal{L}_n P) = \operatorname{tr}(P)$.

For $p\in [1,2,\ldots,\infty]$, consider the scaled Schatten $p$-norms on $\mathfrak{u}(n)$ 
\begin{equation}
    \lVert P \rVert_p = \left\{ \begin{aligned} \left(\frac{1}{n}\sum_{i=1}^n \lvert \lambda_i \rvert^p \right)^{1/p} & \quad\text{if $p< \infty$} \\ \max_i(\lvert \lambda_1\rvert,\ldots,\lvert \lambda_n\rvert) & \quad\text{if $p=\infty$}\end{aligned} \right.
\end{equation}
where $\lambda_1,\ldots,\lambda_n$ are the eigenvalues of $P$.
These norms correspond, in the limit $n\to\infty$, to the $L^p$-norms on $C^\infty(S^2,\mathbb{R})$.
Since the smallest eigenvalue of $\mathcal{L}_n$ is~$1$, it follows that
\begin{equation}
    \lVert \mathcal{L}_n^{-1}W \rVert_2 \leq \lVert W \rVert_2 .
\end{equation}
In other words, the operator norm of $\mathcal{L}_n^{-1}$ is unitary relative to the $2$-norm on $\mathfrak{u}(n)$.
Our main result in this appendix is the analogous result for all $p$-norms.

\begin{theorem}\label{thm:hoppe_yau_estimates}
    For $W\in \mathfrak{u}(n)$ and $p\in [1,2,\ldots,\infty]$, the extended Hoppe--Yau operator~\cref{eq:extededHoppeYau} fulfills
    \[
        \lVert \mathcal{L}_n^{-1}W \rVert_p \leq \lVert W \rVert_p .
    \]
    Equality is attained if and only if $W \in \mathrm{i}\mathbb{R}I$.
\end{theorem}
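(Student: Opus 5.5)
The plan is to realize $\mathcal{L}_n^{-1}$ as an average of unitary conjugations, which are isometries for every Schatten norm. Concretely, we seek a probability measure $\nu$ on $\mathrm{SU}(2)$ such that
\[
\mathcal{L}_n^{-1}W = \int_{\mathrm{SU}(2)} \rho(g)\, W\, \rho(g)^\dagger \,\mathrm{d}\nu(g) ,
\]
where $\rho$ is the irreducible spin-$s$ representation generated by $X_1,X_2,X_3$. Given such a $\nu$, the estimate follows at once from unitary invariance of $\lVert\cdot\rVert_p$ and the triangle inequality (Minkowski for Bochner integrals):
\[
\lVert \mathcal{L}_n^{-1}W\rVert_p \le \int \lVert \rho(g) W\rho(g)^\dagger\rVert_p \,\mathrm{d}\nu = \lVert W\rVert_p .
\]

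Since $\Delta_n$ is the Casimir of the adjoint action of $\rho$ on $\mathfrak{u}(n)$, the operator $\mathcal{L}_n^{-1}$ commutes with this action. It acts on the isotypic component $V_\ell$ by the scalar $1$ if $\ell=0$ and by $1/(\ell(\ell+1))$ if $1\le \ell\le n-1$. If $\nu$ is conjugation invariant (a class measure, depending only on the rotation angle $\theta$), then by Schur's lemma the averaged operator also acts on $V_\ell$ by a scalar, namely
\[
\int \frac{\chi_\ell(\theta)}{2\ell+1}\,\mathrm{d}\nu(\theta), \qquad \chi_\ell(\theta)=\frac{\sin((2\ell+1)\theta/2)}{\sin(\theta/2)} .
\]
The task therefore reduces to a moment problem: find a probability measure on $[0,\pi]$ whose normalized characters equal $1/(\ell(\ell+1))$ for $1\le\ell\le n-1$. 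The condition at $\ell=0$, namely total mass $1$, is automatic. This is precisely where the extension $\frac{\operatorname{tr}(P)I}{n}$ in $\mathcal{L}_n$ is needed, because it fixes the $\ell=0$ eigenvalue to be $1$.

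To construct $\nu$, I would be guided by the continuum analogue. On $S^2$ the operator $(-\Delta)^{-1}$ on mean-zero functions, plus the mean, is convolution with the zonal kernel
\[
K(s) = \frac{1}{4\pi}\big(1 - \log(1-s) + \log 2 - 1\big) = \frac{1}{4\pi}\log\frac{2}{1-s}, \qquad s = x\cdot y .
\]
This kernel is nonnegative (it vanishes only at antipodes) and has total mass $1$, so the continuum statement holds. For the matrix case, I would try the class measure on $\mathrm{SO}(3)$ obtained by pushing forward $K(\cos\theta)$, viewed as a density in the rotation angle $\theta$, with the weight adjusted to the $\mathrm{SO}(3)$ Haar measure. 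I would then check the moments
\[
\int \chi_\ell\,\mathrm{d}\nu/(2\ell+1) = 1/(\ell(\ell+1))
\]
by computing the Fourier–Legendre coefficients of $\log\frac{2}{1-s}$, which are known to be exactly $\frac{2}{\ell(\ell+1)}$ in the Legendre basis. If the rotation-angle parametrization does not match the Legendre moments directly, the fallback is to use the Berezin–Toeplitz/coherent-state correspondence. Under that correspondence, conjugation by rotations intertwines with rotations of the sphere, so averaging the continuum kernel $K$ over rotations taking a point to another yields an operator on $\mathfrak{u}(n)$ diagonal on each $V_\ell$ with the same Legendre coefficient. The positivity of $K$ then gives positivity of $\nu$.

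\textbf{Main obstacle.} The hardest step is verifying that the truncated moment problem (only $\ell<n$ matters) admits a \emph{nonnegative} solution, i.e., transferring the positivity of $K$ from the sphere to the rotation-angle measure. I would try the explicit Legendre-coefficient identity first. For the equality statement: equality in Minkowski's inequality forces $\rho(g)W\rho(g)^\dagger$ to be essentially constant on the support of $\nu$, and nonnegative multiples of one another, for $\nu$-almost every $g$. When $\nu$ has full support, this forces $W$ to commute with all $\rho(g)$, so by irreducibility $W\in \mathrm{i}\mathbb{R}I$. Conversely, equality clearly holds on $\mathrm{i}\mathbb{R}I$, since $\mathcal{L}_n$ is the identity on $V_0$.
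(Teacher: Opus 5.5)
Your reduction is sound: for a conjugation-invariant probability measure $\nu$, the average $\int \rho(g)W\rho(g)^\dagger\,d\nu(g)$ acts on $V_\ell$ by $\frac{1}{2\ell+1}\int\chi_\ell\,d\nu$, and unitary invariance plus Minkowski then finish. But the step that carries all the content, a \emph{nonnegative} $\nu$ with $\frac{1}{2\ell+1}\int\chi_\ell\,d\nu=\frac{1}{\ell(\ell+1)}$, is left open, and neither route you suggest produces it.

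The Legendre coefficients $\frac{2}{\ell(\ell+1)}$ of $\log\frac{2}{1-s}$ are Funk--Hecke eigenvalues of zonal convolution on $S^2$. That operator comes from a bi-$\mathrm{SO}(2)$-invariant measure, not a conjugation-invariant one, and is tested against $P_\ell(\cos\vartheta)$ in the geodesic angle. A class measure is tested against $\chi_\ell(\theta)/(2\ell+1)$, a different function of the rotation angle. So reusing $K(\cos\theta)$ as an angle density gives the wrong eigenvalues.

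The Berezin--Toeplitz transfer (covariant symbol, convolution by $K$, Toeplitz quantization) is positive and covariant. However, it acts on $V_\ell$ by $\frac{1}{\ell(\ell+1)}$ times the Berezin-transform eigenvalue, which is $<1$ for $\ell\ge 1$. This cannot be compensated by a probability kernel, whose eigenvalues have modulus at most $1$.

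The gap does close, with no truncation issue. Let $\nu$ be the law of a rotation about a uniformly distributed axis, with angle distributed as
\[
d\nu=\frac{1}{\pi}(\pi-\theta)\sin\theta\,d\theta,\qquad \theta\in[0,\pi].
\]
Use $\frac{1}{\pi}\int_0^\pi(\pi-\theta)\sin(m\theta)\,d\theta=\frac1m$, $\chi_\ell=1+2\sum_{k=1}^{\ell}\cos k\theta$, and $2\sin\theta\cos k\theta=\sin((k+1)\theta)-\sin((k-1)\theta)$. The sum telescopes to $\int\chi_\ell\,d\nu=\frac1\ell+\frac{1}{\ell+1}=\frac{2\ell+1}{\ell(\ell+1)}$ for every $\ell\ge1$, and to $1$ for $\ell=0$. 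Hence $\mathcal{L}_n^{-1}=\int\operatorname{Ad}_{\rho(g)}\,d\nu(g)$ for all $n$ simultaneously, and your argument gives the inequality for every $p$.

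This is a genuinely different route from the paper's. The paper shows $\mathcal{L}_n^{-1}$ is positive and unital via a Neumann series in $\Phi(P)=\sum_\alpha X_\alpha^\dagger P X_\alpha$, then applies Russo--Dye for $p=\infty$ and an operator Jensen inequality for $p<\infty$. Your mixed-unitary representation yields positivity, unitality, trace preservation and contractivity in every unitarily invariant norm at once. It also avoids a delicate point in that series: $\Phi(I)=I$, so the series only makes sense on the trace-free part, where its terms $\Phi^k-\frac{\operatorname{tr}(\cdot)I}{n}$ are no longer positive maps.

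The equality clause has its own gap. Equality in the triangle inequality for a Bochner integral forces the integrand to be constant only for a strictly convex norm, that is, for $1<p<\infty$. There your argument works, since the $\nu$ above has full support on $\mathrm{SO}(3)$ and $\rho$ is irreducible.

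For $p=\infty$ you need a different argument. Take a unit vector $x$ with $\lvert\langle x,\mathcal{L}_n^{-1}(-\mathrm{i}W)x\rangle\rvert=\lVert W\rVert_\infty$. Then $\rho(g)^\dagger x$ must lie in a single extremal eigenspace of $-\mathrm{i}W$ for all $g$, and irreducibility forces $W\in\mathrm{i}\mathbb{R}I$.

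For $p=1$ the clause is false. $\mathcal{L}_n^{-1}$ is positive and trace preserving, so $\lVert\mathcal{L}_n^{-1}W\rVert_1=\lVert W\rVert_1$ whenever $-\mathrm{i}W\ge 0$. Concretely, for $n=2$ one has $\mathcal{L}_2^{-1}W=\frac12 W+\frac{\operatorname{tr}W}{4}I$. This maps $W=\mathrm{i}\,\mathrm{diag}(1,0)$ to $\mathrm{i}\,\mathrm{diag}(\frac34,\frac14)$, and both have $1$-norm $\frac12$.

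The paper's proof establishes only the inequality and does not address the equality statement.
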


\subsection{Proof via positivity}

Recall that a linear operator $\Phi\colon \mathfrak{gl}(n,\mathbb{C}) \to \mathfrak{gl}(n,\mathbb{C})$ between complex matrices is positive if it maps positive Hermitian matrices to positive Hermitian matrices.
For details on positive operators, we refer to the monograph by Bhatia~\cite{Bh2015}.

The operator \cref{eq:extededHoppeYau} naturally extends from $\mathfrak{u}(n)$ to an operator on all complex matrices $\mathfrak{gl}(n,\mathbb{C})$.
Indeed, this extension corresponds to the complexification $\mathfrak{u}(n)\otimes\mathbb{C} \simeq \mathfrak{gl}(n,\mathbb{C})$.

\begin{theorem}\label{thm:positivity}
    The inverse operator $\mathcal{L}_n^{-1}\colon \mathfrak{gl}(n,\mathbb{C})\to \mathfrak{gl}(n,\mathbb{C})$ is positive.
    Thus, if $W \in \mathrm{i}\mathfrak{u}(n)$ is positive (i.e., all its eigenvalues are non-negative), then so is $\mathcal{L}_n^{-1}W$.
    Furthermore, the inverse operator is unital, $\mathcal{L}_n^{-1}I = I$.
\end{theorem}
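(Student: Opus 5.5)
The plan is to write $\mathcal{L}_n = \frac{1}{n}\operatorname{tr}(\cdot)I - \Delta_n$ as a positive spectral operator and express its inverse as a Laplace-type integral of a completely positive semigroup. Concretely, define $\Phi(P) := -\hbar^2\Delta_n P = -\sum_\alpha [[P,X_\alpha],X_\alpha]$.

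Expanding the double commutator and using $\sum_\alpha X_\alpha^\dagger X_\alpha = I$ together with $X_\alpha^\dagger = -X_\alpha$ (so $\sum_\alpha X_\alpha^2 = -I$), I expect
\[
  -\sum_\alpha [[P,X_\alpha],X_\alpha] = 2P - 2\sum_\alpha X_\alpha^\dagger P X_\alpha .
\]
Hence $-\Delta_n = \frac{2}{\hbar^2}(\id - \mathcal{K})$, where $\mathcal{K}(P) = \sum_\alpha X_\alpha^\dagger P X_\alpha$ is a Kraus-form map. Such a map is completely positive and unital, since $\mathcal{K}(I) = I$. It is also trace preserving, since $\sum_\alpha X_\alpha X_\alpha^\dagger = I$.

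The semigroup $e^{t\Delta_n} = e^{-2t/\hbar^2}e^{(2t/\hbar^2)\mathcal{K}}$ is then a positive combination of powers of $\mathcal{K}$, so it is positive and unital for every $t\ge0$.

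Next I handle the trace projection $E(P) = \frac{\operatorname{tr}(P)}{n}I$. This map is also positive and unital. Moreover, $E$ commutes with $\Delta_n$, and both $E\Delta_n = 0$ and $\Delta_n E = 0$ hold, because $\Delta_n I = 0$ and $\operatorname{tr}(\Delta_n P) = 0$. On the complement $\ker E$, the spectrum of $-\Delta_n$ is at least $2$.

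I then write
\[
  \mathcal{L}_n^{-1} = E + \int_0^\infty e^{t\Delta_n}(\id - E)\,dt .
\]
The obstacle is that $\id - E$ is not positive, so I need a different representation. I will use instead
\[
  \mathcal{L}_n^{-1} = \int_0^\infty e^{-t\mathcal{L}_n}\,dt,
  \qquad
  e^{-t\mathcal{L}_n} = e^{t\Delta_n}\bigl(\id - E + e^{-t}E\bigr) = e^{t\Delta_n} - (1-e^{-t})E .
\]
Here I use that $e^{t\Delta_n}E = E$.

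The main difficulty is showing that $e^{t\Delta_n} - (1-e^{-t})E$ is positive for every $t \ge 0$. My first attempt will use the eigen-decomposition into the spaces $V_\ell$: $e^{t\Delta_n} = E + \sum_{\ell\ge1} e^{-t\ell(\ell+1)}\Pi_\ell$. Positivity amounts to showing that the "mixing toward the trace" dominates. If this fails, I will fall back on a different route that avoids semigroups.

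The fallback is to write $\mathcal{L}_n = c\,(\id - \mathcal{M})$, with $c = 1 + 2/\hbar^2$ and
\[
  \mathcal{M} = \frac{(2/\hbar^2)\mathcal{K} + (1-0)\,\cdots}{c},
\]
choosing things so that $\mathcal{L}_n = (1+\tfrac{2}{\hbar^2})\id - \tfrac{2}{\hbar^2}\mathcal{K} - (\id - E)$. This does not work directly either. The cleaner choice is to note $\mathcal{L}_n = E + (-\Delta_n)$ and invert blockwise: $\mathcal{L}_n^{-1} = E + (-\Delta_n)^{-1}(\id - E)$. Then I expand $(-\Delta_n)^{-1}$ on $\ker E$ as a Neumann series
\[
  \frac{\hbar^2}{2}\sum_k \mathcal{K}^k(\id - E).
\]
This series converges because $\mathcal{K}$ has spectral radius below $1$ on $\ker E$. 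Using $\mathcal{K}^k(\id - E) = \mathcal{K}^k - E$ and summing, I get
\[
  \mathcal{L}_n^{-1} = \lim_{m\to\infty}\Bigl[\,E + \frac{\hbar^2}{2}\sum_{k<m}(\mathcal{K}^k - E)\Bigr].
\]
For positivity, I will test this against a positive rank-one matrix and use the explicit spin representation: the Perron eigenvalue of $\mathcal{K}$ on positive matrices, combined with a comparison of $\mathcal{K}^k(P)$ to $E(P)$ via convergence of the ergodic channel $\mathcal{K}$ to $E$. I expect this comparison to be the crux of the proof.

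Once positivity and unitality are established, the rest follows quickly. Unitality, $\mathcal{L}_n^{-1}I = I$, is immediate from $\mathcal{L}_n I = I$, since $EI = I$ and $\Delta_n I = 0$. The statement about $W \in \mathrm{i}\mathfrak{u}(n)$ is just the restriction of positivity to Hermitian matrices.
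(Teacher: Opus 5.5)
There is a genuine gap, and it sits at the step you yourself call the crux.

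What you establish is correct. You have $-\Delta_n = \frac{2}{\hbar^2}(\mathrm{Id}-\mathcal{K})$ with $\mathcal{K}$ completely positive, unital and trace preserving, and $\mathcal{K}E = E\mathcal{K} = E$. The identity $\mathcal{L}_n^{-1} = E + \frac{\hbar^2}{2}\sum_{k\ge 0}(\mathcal{K}^k - E)$ holds, since $\mathcal{K}$ has eigenvalue $1-\hbar^2\ell(\ell+1)/2\in(-1,1)$ on $V_\ell$ for $\ell\ge 1$. Unitality follows from $\mathcal{L}_n I = I$.

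Your first route cannot work for $n\ge 3$, because the semigroup $e^{-t\mathcal{L}_n}$ is not positive. Take an eigenbasis $e_s,\dots,e_{-s}$ of $X_3$ with $s=(n-1)/2$; each $X_\alpha$ changes the weight by at most one. For $v=e_s$ and $w=e_{s-2}$ this gives $w^\dagger\mathcal{K}(vv^\dagger)w=0$. Hence $w^\dagger\mathcal{L}_n(vv^\dagger)w = 1/n$, and $w^\dagger e^{-t\mathcal{L}_n}(vv^\dagger)w = -t/n + O(t^2) < 0$ for small $t>0$.

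In your fallback the terms $\mathcal{K}^k - E$ are not positive maps. Positivity of their sum is therefore the entire content of the theorem, and you leave it unproved. You cannot borrow the paper's argument to finish. The paper writes $\mathcal{L}_n^{-1} = E + \frac{\hbar^2}{2}\sum_k \Phi^k$, where its $\Phi$ is your $\mathcal{K}$, and concludes by termwise positivity. It asserts that the spectrum of $\Phi$ lies in $(-1,1)$ and that $\Phi(I)=0$. In fact $\Phi(I)=\sum_\alpha X_\alpha^\dagger X_\alpha = I$, so that series diverges on $\mathbb{C}I$. Once the projection onto $\ker E$ that you correctly inserted is included, termwise positivity is lost.

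The tools you propose, the Perron eigenvalue of $\mathcal{K}$ and the convergence $\mathcal{K}^k\to E$, cannot close this gap, because the inequality is essentially sharp. In the continuum limit, $E + (-\Delta)^{-1}(\mathrm{Id}-E)$ on the unit sphere has kernel $\frac{1}{4\pi}\log\frac{2}{1-x\cdot y}$ with respect to area measure. This kernel is non-negative but vanishes at antipodal points.

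The same happens at finite $n$. For $v=e_s$ and $w=e_{-s}$ the $E$-term contributes $w^\dagger E(vv^\dagger)w = 1/n$. A direct computation for small $n$ gives $w^\dagger\mathcal{L}_n^{-1}(vv^\dagger)w = 1/n^2$, for example $1/9$ for $n=3$. So $\frac{\hbar^2}{2}\sum_k(\mathcal{K}^k-E)$ cancels all but a fraction $1/n$ of the $E$-term.

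Bounds based only on the spectral gap $\hbar^2 = 4/(n^2-1)$ of $\mathcal{K}$ lose a factor of order $n$. For instance, Hilbert--Schmidt norms give only $\lvert w^\dagger(-\Delta_n)^{-1}(vv^\dagger - I/n)w\rvert \le \frac12$, whereas $1/n$ is needed. Any successful argument will have to use the exact eigenvalues $\ell(\ell+1)$, and presumably the $\mathrm{SU}(2)$-covariance of $\mathcal{L}_n^{-1}$, rather than a mixing rate. As it stands, your proposal proves unitality and the correct formula for $\mathcal{L}_n^{-1}$, but not positivity.
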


For the proof, we use the following result.

%
%
\begin{lemma}\label{lem:conjugate_form}
    The operator $\mathcal{L}_n$ applied to $P\in \mathfrak{gl}(n,\mathbb{C})$ can be written
    \begin{equation*}
        \mathcal{L}_nP = \frac{\operatorname{tr}(P)I}{n} + \frac{2}{\hbar^2}\left( P - \sum_{\alpha=1}^3 X_\alpha^\dagger P X_\alpha \right)
    \end{equation*}
\end{lemma}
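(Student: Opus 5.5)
The identity follows from a direct computation of the double commutator. The only ingredients are the normalization $\sum_\alpha X_\alpha^\dagger X_\alpha = I$ and the skew-Hermitian property of the generators.

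First, expand each term of \cref{eq:hoppe_yau}:
\[
[[P,X_\alpha],X_\alpha] = PX_\alpha^2 - 2X_\alpha P X_\alpha + X_\alpha^2 P .
\]
Because $X_\alpha\in\mathfrak{su}(n)$, we have $X_\alpha^\dagger = -X_\alpha$, and therefore $X_\alpha^2 = -X_\alpha^\dagger X_\alpha$. Summing over $\alpha$ and using the normalization gives
\[
\sum_\alpha X_\alpha^2 = -I .
\]
Hence
\[
\sum_\alpha [[P,X_\alpha],X_\alpha] = -2P - 2\sum_\alpha X_\alpha P X_\alpha .
\]
Rewriting once more with $X_\alpha = -X_\alpha^\dagger$, we get $X_\alpha P X_\alpha = -X_\alpha^\dagger P X_\alpha$. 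So the sum equals
\[
-2\Big(P - \sum_\alpha X_\alpha^\dagger P X_\alpha\Big).
\]
Dividing by $\hbar^2$ shows
\[
-\Delta_n P = \frac{2}{\hbar^2}\Big(P - \sum_\alpha X_\alpha^\dagger P X_\alpha\Big).
\]
Inserting this into \cref{eq:extededHoppeYau} gives the claimed formula.

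The computation is purely algebraic and linear in $P$. It is therefore valid on all of $\mathfrak{gl}(n,\mathbb{C})$, consistent with the complexified extension of the operator.

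There is no real obstacle here. The only point needing care is the sign bookkeeping: $X_\alpha^2$ must be converted to $-X_\alpha^\dagger X_\alpha$ so that the scaling convention $\sum_\alpha X_\alpha^\dagger X_\alpha = I$ can be applied.
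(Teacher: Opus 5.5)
Your computation is correct and follows the paper's proof essentially line by line. You expand the double commutator as $PX_\alpha^2 - 2X_\alpha P X_\alpha + X_\alpha^2 P$, use $X_\alpha^\dagger = -X_\alpha$ together with $\sum_\alpha X_\alpha^\dagger X_\alpha = I$ to get $\sum_\alpha X_\alpha^2 = -I$ and $X_\alpha P X_\alpha = -X_\alpha^\dagger P X_\alpha$, and then substitute into the definition of $\mathcal{L}_n$; your remark that the computation is linear in $P$, and hence valid on all of $\mathfrak{gl}(n,\mathbb{C})$, matches the paper's complexified extension.
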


\begin{proof}
    Direct calculations yield
    \begin{align*}
        \mathcal{L}_nP &= \frac{\operatorname{tr}(P)I}{n} - \frac{1}{\hbar^2}\sum_{\alpha=1}^3 [[P,X_\alpha],X_\alpha]  = \\
        &= \frac{\operatorname{tr}(P)I}{n} - \frac{1}{\hbar^2}\sum_{\alpha=1}^3\left( (PX_\alpha-X_\alpha P)X_\alpha - X_\alpha (PX_\alpha-X_\alpha P) \right)  = \\
        &= \frac{\operatorname{tr}(P)I}{n} - \frac{1}{\hbar^2}\sum_{\alpha=1}^3 \left(PX_\alpha^2 -X_\alpha PX_\alpha - X_\alpha PX_\alpha + X_\alpha^2 P \right)  = \\
        &= \frac{\operatorname{tr}(P)I}{n} - \frac{1}{\hbar^2}P\left(\sum_{\alpha=1}^3 X_\alpha^2\right) - \frac{1}{\hbar^2}\left(\sum_{\alpha=1}^3 X_\alpha^2\right)P +  \frac{2}{\hbar^2}\sum_{\alpha=1}^3 X_\alpha PX_\alpha.
    \end{align*}
    The results now follows since $X_\alpha^\dagger = -X_\alpha$ and $\sum_{\alpha=1}^3 X_\alpha^\dagger X_\alpha = I$.
\end{proof}

\begin{proof}[Proof of \cref{thm:positivity}]
    First, we note that the operator $P\mapsto X_\alpha^\dagger P X_\alpha$ is positive.
    Since a positive sum of positive operators is positive, it follows that 
    \[
        \Phi(P) = \sum_{\alpha=1}^3 X_\alpha^\dagger P X_\alpha
    \]
    is positive. 
    The eigenspaces of $\Phi$ are the same as the eigenspace of $\Delta_n$.
    Indeed, if $P\in V_\ell$, so that $\Delta_n P = -\ell(\ell+1)P$, then we get from \cref{lem:conjugate_form} that
    \[
        \Phi(P) = \frac{\hbar^2}{2}\Delta_n P + P = (1-\frac{\hbar^2\ell(\ell+1)}{2})P.
    \]
    It is then straightforward to verify, on each eigenspace $V_\ell$, that the inverse of $\mathcal{L}_n$ can be expressed as
    \begin{equation}\label{eq:inverseL}
        \mathcal{L}_n^{-1} = \frac{\operatorname{tr}(\cdot)I}{n} + \frac{\hbar^2}{2}\left(\mathrm{Id}-\Phi \right)^{-1}.
    \end{equation}
    Since $\hbar^2 = 4/(n^2-1)$ and since $$\ell(\ell+1) \leq n(n-1) < n^2-1$$ it follows that the spectrum of $\Phi$ is strictly contained in $(-1,1)$.
    Thus, $(\mathrm{Id}-\Phi)^{-1}$ can be expanded in a converging power series. 
    Consequently, it follows from equation~\cref{eq:inverseL} that the inverse of $\mathcal{L}_n$ is 
    \begin{equation*}
        \mathcal{L}_n^{-1} = \frac{\operatorname{tr}(\cdot)I}{n} + \frac{\hbar^2}{2}\sum_{k=0}^\infty \Phi^k .
    \end{equation*}
    The first term $\operatorname{tr}(\cdot)I/n$ is positive (see Bhatia~\cite[example 2.2.1]{Bh2015}). 
    For each $k$, the operator $\Phi^k$ is positive since $\Phi$ is positive.
    Thus, all the terms in the series for $\mathcal{L}_n^{-1}$ are positive, so it follows that $\mathcal{L}_n^{-1}$ is positive.
    Since $\Phi(I) = 0$ it also follows that $\mathcal{L}_n^{-1}$ is unital.
\end{proof}

\begin{proof}[Proof of \cref{thm:hoppe_yau_estimates}]
    For any positive unital operator, its operator norm relative to the spectral norm is 1 (the Russo--Dye theorem).
    Thus, it immediately follows that
    \[
        \lVert \mathcal{L}_n^{-1}W\rVert_\infty \leq \underbrace{\lVert \mathcal{L}_n^{-1} \rVert_\infty}_{=1} \lVert W\rVert_\infty 
        = \lVert W\rVert_\infty.
    \]
    For $p<\infty$ we proceed as follows.
    Since $\mathcal{L}_n^{-1}$ is positive unital, it follows that for any convex function $f\colon \mathbb{R}\to\mathbb{R}$ and any $W\in\mathrm{i}\mathfrak{u}(n)$ we have a Jensen-type inequality
    \begin{equation}\label{eq:jensen}
        f(\mathcal{L}_n^{-1} W ) \leq \mathcal{L}_n^{-1}f(W)
    \end{equation}
    where $f$ is applied to elements of $\mathrm{i}\mathfrak{u}(n)$ via diagonalization (the inequality $A\leq B$ between Hermitian matrices means that $B-A$ is positive).
    For the convex function $f(x) = \lvert x\rvert ^p$ we then get
    \[
        \lVert \mathcal{L}_n^{-1}W \rVert_p = \operatorname{tr}(f(\mathcal{L}_n^{-1}W))^{1/p}
        \leq \operatorname{tr}(\mathcal{L}_n^{-1}f(W))^{1/p} \leq
        \operatorname{tr}(f(W))^{1/p} = \lVert W\rVert_p
    \]
    where we first used the inequality \cref{eq:jensen} and then used that $f(W)$ is positive and $\mathcal{L}_n^{-1}$ is unital and positive.
    This concludes the proof of the main theorem since $\lVert W\rVert_p = \lVert \mathrm{i}W\rVert_p$.
\end{proof}

\bibliographystyle{siamplain}
\bibliography{references.bib}

@book{Bh2015,
  title = {Positive definite matrices},
  author = {Bhatia, R.},
  year = 2015,
  publisher = {Princeton University Press},
  address = {Princeton Oxford}
}

@article{MoVi2026b,
    author = {Modin, K. and Viviani, M.},
    journal = {J. Comput. Dyn.},
    pages = {17--35},
    title = {A brief introduction to matrix hydrodynamics},
    volume = {14},
    year = {2026},
}

@article{HoYa1998,
    author = {Hoppe, J. and Yau, S.-T.},
    journal = {Comm. Math. Phys.},
    number = {1},
    pages = {67--77},
    title = {Some properties of matrix harmonics on {$S^2$}},
    volume = {195},
    year = {1998},
}

@article{BaFaGr2013,
    title = {Existence and stability of ground states for fully discrete
             approximations of the nonlinear Schr\"odinger equation},
    author = {Bambusi, D. and Faou, E. and Gr{\'e}bert, B.},
    year = 2013,
    journal = {Numer. Math.},
    volume = {123},
    number = {3},
    pages = {461--492},
    doi = {10.1007/s00211-012-0491-7},
}

@article{FaGr2011,
    title = {Hamiltonian Interpolation of Splitting Approximations for Nonlinear
             PDEs},
    author = {Faou, E. and Gr{\'e}bert, B.},
    year = 2011,
    journal = {Found. Comp. Math.},
    volume = {11},
    number = {4},
    pages = {381--415},
    doi = {10.1007/s10208-011-9094-4},
}

@misc{FaMaSc2025arxiv,
    title = {Fully discrete backward error analysis for the midpoint rule
             applied to the nonlinear Schroedinger equation},
    author = {Faou, E. and Maierhofer, G. and Schratz, K.},
    year = 2025,
    number = {arXiv:2505.03271},
    eprint = {2505.03271},
    primaryclass = {math},
    publisher = {arXiv},
    doi = {10.48550/arXiv.2505.03271},
    archiveprefix = {arXiv},
}

@article{Ze1991,
    author = {Zeitlin, V.},
    journal = {Phys. D},
    number = {3},
    pages = {353--362},
    title = {Finite-mode analogs of {$2$}{D} ideal hydrodynamics: coadjoint
             orbits and local canonical structure},
    volume = {49},
    year = {1991},
}

@article{Ze2004,
    author = {Zeitlin, V.},
    journal = {Phys. Rev. Lett.},
    pages = {264501},
    title = {Self-Consistent Finite-Mode Approximations for the Hydrodynamics of
             an Incompressible Fluid on Nonrotating and Rotating Spheres},
    volume = {93},
    year = {2004},
}

@book{Le2018,
    author = {Le Floch, Y.},
    publisher = {Springer},
    title = {A brief introduction to {B}erezin-{T}oeplitz operators on compact {
             K}{\"a}hler manifolds},
    year = {2018},
}

@article{Ha1982,
    author = {Hamilton, Richard S.},
    journal = {Bull. Amer. Math. Soc. (N.S.)},
    number = {1},
    pages = {65--222},
    title = {The inverse function theorem of {N}ash and {M}oser},
    volume = {7},
    year = {1982},
}

@book{Ar1989,
    address = {New York},
    author = {Arnold, V. I.},
    edition = {Second},
    publisher = {Springer-Verlag},
    series = {Graduate Texts in Mathematics},
    title = {Mathematical Methods of Classical Mechanics},
    volume = {60},
    year = {1989},
}

@article{BeGi1994,
    author = {Benettin, Giancarlo and Giorgilli, Antonio},
    journal = {J. Statist. Phys.},
    number = {5-6},
    pages = {1117--1143},
    title = {On the {H}amiltonian interpolation of near-to-the-identity
             symplectic mappings with application to symplectic integration
             algorithms},
    volume = {74},
    year = {1994},
}

@article{bogfjellmoHamiltonianBseriesLie2017,
    title = {Hamiltonian {{B-series}} and a {{Lie}} Algebra of Non-Rooted Trees},
    author = {Bogfjellmo, Geir and Curry, Charles and Manchon, Dominique},
    year = {2017},
    journal = {Numerische Mathematik},
    shortjournal = {Numer. Math.},
    volume = {135},
    number = {1},
    pages = {97--112},
    issn = {0945-3245},
    doi = {10.1007/s00211-016-0792-3},
    url = {https://doi.org/10.1007/s00211-016-0792-3},
    urldate = {2025-05-27},
    langid = {english},
}

@article{butcherCoefficientsStudyRungeKutta1963,
    title = {Coefficients for the Study of {{Runge-Kutta}} Integration Processes
             },
    author = {Butcher, J. C.},
    year = {1963},
    journal = {Journal of the Australian Mathematical Society},
    volume = {3},
    number = {2},
    pages = {185--201},
    issn = {0004-9735},
    doi = {10.1017/S1446788700027932},
    url = {
           https://www.cambridge.org/core/journals/journal-of-the-australian-mathematical-society/article/coefficients-for-the-study-of-rungekutta-integration-processes/457E8B0413C29B9D7BBCD7D2A45A23D5
           },
    urldate = {2025-05-25},
    langid = {english},
}

@article{calaqueTwoInteractingHopf2011,
    title = {Two Interacting {{Hopf}} Algebras of Trees},
    author = {Calaque, Damien and Ebrahimi-Fard, Kurusch and Manchon, Dominique},
    year = {2011},
    journal = {Advances in Applied Mathematics},
    shortjournal = {Advances in Applied Mathematics},
    volume = {47},
    number = {2},
    eprint = {0806.2238},
    eprinttype = {arXiv},
    eprintclass = {math},
    pages = {282--308},
    issn = {01968858},
    doi = {10.1016/j.aam.2009.08.003},
    url = {http://arxiv.org/abs/0806.2238},
    urldate = {2025-04-24},
}

@article{EngoNIL01,
    title = {Numerical {{Integration}} of {{Lie--Poisson Systems While
             Preserving Coadjoint Orbits}} and {{Energy}}},
    author = {Engø, Kenth and Faltinsen, Stig},
    year = {2001},
    journal = {SIAM Journal on Numerical Analysis},
    shortjournal = {SIAM J. Numer. Anal.},
    volume = {39},
    number = {1},
    pages = {128--145},
    publisher = {{Society for Industrial and Applied Mathematics}},
    issn = {0036-1429},
    doi = {10.1137/S0036142999364212},
    url = {https://epubs.siam.org/doi/10.1137/S0036142999364212},
    urldate = {2025-06-10},
}

@article{hairerButcherGroupGeneral1974,
    title = {On the {{Butcher}} Group and General Multi-Value Methods},
    author = {Hairer, E. and Wanner, G.},
    year = {1974},
    journal = {Computing},
    shortjournal = {Computing},
    volume = {13},
    number = {1},
    pages = {1--15},
    issn = {1436-5057},
    doi = {10.1007/BF02268387},
    url = {https://doi.org/10.1007/BF02268387},
    urldate = {2025-05-25},
    langid = {english},
}

@book{HairerWannerGNI,
    title = {Geometric {{Numerical Integration}}: {{Structure-Preserving
             Algorithms}} for {{Ordinary Differential Equations}}},
    shorttitle = {Geometric {{Numerical Integration}}},
    author = {Hairer, Ernst and Lubich, Christian and Wanner, Gerhard},
    year = {2010},
    eprint = {ssrFQQAACAAJ},
    eprinttype = {googlebooks},
    publisher = {Springer Berlin Heidelberg},
    isbn = {978-3-642-05157-9},
    langid = {english},
    pagetotal = {644},
}

@article{ModinLPM20,
    title = {Lie–{{Poisson Methods}} for {{Isospectral Flows}}},
    author = {Modin, Klas and Viviani, Milo},
    year = {2020},
    journal = {Foundations of Computational Mathematics},
    shortjournal = {Found Comput Math},
    volume = {20},
    number = {4},
    pages = {889--921},
    issn = {1615-3383},
    doi = {10.1007/s10208-019-09428-w},
    url = {https://doi.org/10.1007/s10208-019-09428-w},
    urldate = {2025-06-03},
    langid = {english},
}

@article{Munthe-KaasHOR99,
    title = {High Order {{Runge-Kutta}} Methods on Manifolds},
    author = {Munthe-Kaas, Hans},
    date = {1999-01-01},
    journaltitle = {Applied Numerical Mathematics},
    shortjournal = {Applied Numerical Mathematics},
    series = {Proceedings of the {{NSF}}/{{CBMS Regional Conference}} on {{
              Numerical Analysis}} of {{Hamiltonian Differential Equations}}},
    volume = {29},
    number = {1},
    pages = {115--127},
    issn = {0168-9274},
    doi = {10.1016/S0168-9274(98)00030-0},
    url = {https://www.sciencedirect.com/science/article/pii/S0168927498000300},
    urldate = {2025-07-25},
}

@inproceedings{munthe-kaasLieButcherSeries2018,
    title = {Lie–{{Butcher Series}}, {{Geometry}}, {{Algebra}} and {{Computation
             }}},
    booktitle = {Discrete {{Mechanics}}, {{Geometric Integration}} and {{Lie}}–{
                 {Butcher Series}}},
    author = {Munthe-Kaas, Hans Z. and Føllesdal, Kristoffer K.},
    editor = {Ebrahimi-Fard, Kurusch and Barbero Liñán, María},
    year = {2018},
    pages = {71--113},
    publisher = {Springer International Publishing},
    location = {Cham},
    doi = {10.1007/978-3-030-01397-4_3},
    isbn = {978-3-030-01397-4},
    langid = {english},
}

@article{Munthe-KaasRML98,
    title = {Runge-{{Kutta}} Methods on {{Lie}} Groups},
    author = {Munthe-Kaas, Hans},
    date = {1998-03-01},
    journaltitle = {BIT Numerical Mathematics},
    shortjournal = {Bit Numer Math},
    volume = {38},
    number = {1},
    pages = {92--111},
    issn = {1572-9125},
    doi = {10.1007/BF02510919},
    url = {https://doi.org/10.1007/BF02510919},
    urldate = {2025-07-23},
    langid = {english},
}

@article{ModinTFM24,
    author = {Modin, Klas and Viviani, Milo},
    title = {Two-Dimensional Fluids Via Matrix Hydrodynamics},
    journal = {Archive for Rational Mechanics and Analysis},
    year = {2026},
    volume = {250},
    number = {1},
    pages = {10},
    issn = {1432-0673},
    doi = {10.1007/s00205-025-02154-4},
    url = {https://doi.org/10.1007/s00205-025-02154-4},
}

@article{Cayley1857,
    author = {Cayley, A.},
    title = {On the theory of the analytical forms called trees},
    journal = {Philosophical Magazine},
    volume = {13},
    pages = {172--176},
    year = {1857},
    doi = {10.1080/14786445708642866},
}

@article{MuruaHAR06,
    title = {The {{Hopf Algebra}} of {{Rooted Trees}}, {{Free Lie Algebras}},
             and {{Lie Series}}},
    author = {Murua, A.},
    year = 2006,
    month = nov,
    journal = {Foundations of Computational Mathematics},
    volume = {6},
    number = {4},
    pages = {387--426},
    issn = {1615-3383},
    doi = {10.1007/s10208-003-0111-0},
    urldate = {2026-02-12},
    langid = {english},
}

@article{ChartierNIB07,
    title = {Numerical {{Integrators Based}} on {{Modified Differential
             Equations}}},
    author = {Chartier, Philippe and Hairer, Ernst and Vilmart, Gilles},
    year = 2007,
    journal = {Mathematics of Computation},
    volume = {76},
    number = {260},
    eprint = {40234469},
    eprinttype = {jstor},
    pages = {1941--1953},
    publisher = {American Mathematical Society},
    issn = {0025-5718},
    urldate = {2026-05-20},
}

@article{ChartierASB10a,
    title = {Algebraic {{Structures}} of {{B-series}}},
    author = {Chartier, Philippe and Hairer, Ernst and Vilmart, Gilles},
    year = 2010,
    month = aug,
    journal = {Foundations of Computational Mathematics},
    volume = {10},
    number = {4},
    pages = {407--427},
    issn = {1615-3383},
    doi = {10.1007/s10208-010-9065-1},
    urldate = {2026-05-20},
    langid = {english},
}

@book{HairerSOD96,
    title = {Solving {{Ordinary Differential Equations II}}},
    author = {Hairer, Ernst and Wanner, Gerhard},
    year = 1996,
    series = {Springer {{Series}} in {{Computational Mathematics}}},
    volume = {14},
    publisher = {Springer},
    address = {Berlin, Heidelberg},
    doi = {10.1007/978-3-642-05221-7},
    urldate = {2025-11-12},
    copyright = {http://www.springer.com/tdm},
    isbn = {978-3-642-05220-0 978-3-642-05221-7},
}

@article{OtterNT48,
    title = {The {{Number}} of {{Trees}}},
    author = {Otter, Richard},
    year = 1948,
    journal = {Annals of Mathematics},
    volume = {49},
    number = {3},
    eprint = {1969046},
    eprinttype = {jstor},
    pages = {583--599},
    publisher = {[Annals of Mathematics, Trustees of Princeton University on
                 Behalf of the Annals of Mathematics, Mathematics Department,
                 Princeton University]},
    issn = {0003-486X},
    doi = {10.2307/1969046},
    urldate = {2026-06-09},
}

\end{document}